\documentclass[11pt]{article}

\usepackage[numbers]{natbib}
\usepackage{amsmath}
\usepackage{amssymb}
\usepackage{amsthm}
\usepackage{xcolor}
\usepackage{graphicx}
\usepackage{hyperref}
\usepackage{algorithm}
\usepackage{algpseudocode}
\usepackage[section]{placeins}
\AddToHook{cmd/subsection/before}{\FloatBarrier}
\usepackage[margin=1in]{geometry}

\hypersetup{colorlinks=true,citecolor=blue,linkcolor=blue,hypertexnames=false}
\allowdisplaybreaks

\newcommand{\paperTitle}{Faster Linear Programming with \texorpdfstring{$\sqrt{\mathrm{rank}}$}{sqrt(rank)} Linear System Solves}
\newcommand{\paperAuthor}
{Zhao Song\thanks{\texttt{magic.linuxkde@gmail.com}. The author would lilke to thank Yin Tat Lee, Victor Reis, Omri Weinstein and Lichen Zhang for very helpful discussions. The first version of this draft is finished by September 2, 2026. This is version 2 of this draft. We provide a detailed comparison between the two versions in the Acknowledgments section of this paper.}}

\DeclareMathOperator{\nnz}{nnz}
\DeclareMathOperator{\Var}{Var}
\DeclareMathOperator{\tr}{tr}
\newcommand{\E}{\mathbb E}
\newcommand{\R}{\mathbb R}
\renewcommand{\d}{\mathop{}\mathrm{d}}

\theoremstyle{plain}
\newtheorem{theorem}{Theorem}[section]
\newtheorem{lemma}[theorem]{Lemma}
\newtheorem{definition}[theorem]{Definition}
\newtheorem{corollary}[theorem]{Corollary}
\theoremstyle{definition}
\newtheorem{remark}[theorem]{Remark}

\begin{document}

\date{September 22, 2026}
\title{\paperTitle}
\author{\paperAuthor}
\maketitle

\begin{abstract}
Lee and Sidford~\cite{ls19} showed that the linear program $\min\{c^\top x\ :\ A^\top x=b,\ l\leq x\leq u\}$ over $x\in\R^m$, where $A\in\R^{m\times n}$, can be solved to accuracy $\epsilon$ using $O(\sqrt n\log^{13}m\cdot\log(mU/\epsilon))$ solves of linear systems in $A^\top\mathbf DA$ for positive diagonal matrices $\mathbf D$, where $U$ bounds the magnitudes of the input and of the initial point. Theirs is the first such bound governed by $\mathrm{rank}(A)=n$ rather than by the number of constraints $m$. Song \cite{son19} mentioned that reducing the $\log^{13}m$ factor is an interesting future direction.

Given an interior point and a finite certified magnitude bound $U$, we give an algorithm that  uses $O(\sqrt n\log^{4}m\cdot\log(mU/\epsilon))$ solves of such linear systems, improving the Lee--Sidford bound by a factor of $\log^{9}m$.

We conjecture that $O(\sqrt n\log(mU/\epsilon))$ such linear-system solves suffice.
\end{abstract}
\newpage

\section{Introduction}
\label{sec:intro}

Linear programming has driven the development of algorithmic optimization for
more than seven decades.  Dantzig's simplex method~\cite{dantzig51} moves from
vertex to vertex and remains a foundational practical algorithm.  Its
worst-case running time, however, is exponential: the Klee--Minty family forces
standard pivot rules to visit exponentially many vertices~\cite{kleeminty72}.
Khachiyan's ellipsoid method gave the first polynomial-time algorithm for
linear programming~\cite{khachiyan79}.  For square instances of dimension $d$
and input bit length $L$, its classical analysis uses $O(d^4L)$ arithmetic
operations.  Karmarkar's projective method then initiated the modern theory of
polynomial-time interior-point algorithms~\cite{karmarkar84}, improving this
bound to $O(d^{3.5}L)$ arithmetic operations on $O(L)$-bit numbers.

The interior-point framework was subsequently simplified and generalized by
Newton and path-following methods~\cite{renegar88,nesterovnemirovskii94}.
Renegar obtained $\widetilde O(\sqrt r\log(1/\epsilon))$ Newton iterations for
an LP with $r$ inequalities, and Vaidya's volumetric-center method sharpened
the arithmetic complexity to
$O((rd^2+r^{3/2}d)L)$ for $r$ inequalities and $d$
variables~\cite{vaidya90}.  These developments shifted attention from polynomiality
alone to the cost of the linear algebra performed along a central path.

Modern work accelerates that linear algebra.  Lee and Sidford obtained
$\widetilde O(\sqrt{\operatorname{rank}(A)}\log(1/\epsilon))$ path-following
iterations, each using only $\widetilde O(1)$ linear-system solves
~\cite{ls14,ls19}.  For dense standard-form LPs with $d$ variables, Cohen,
Lee, and Song achieved
$\widetilde{O}(d^\omega+\allowbreak d^{2.5-\alpha/2}
+\allowbreak d^{2+1/6})$
time, which is $\widetilde{O}(d^\omega )$ for the current
matrix-multiplication parameters~\cite{cls19}. Here $\omega$ is the exponent of matrix multiplication; currently, $\omega<2.371177$~\cite{alphaevolve26}.  Brand subsequently
derandomized the Cohen--Lee--Song algorithm while preserving its asymptotic
running time~\cite{brand20}.  For tall dense linear programs with $n$
constraints and $d$ variables, Brand, Lee, Sidford, and Song gave a
randomized $\widetilde O(nd+d^3)$-time algorithm~\cite{blss20}. Later, that running time time has been improved to $\widetilde O(nd+d^{2.5})$~\cite{bllsssw21}.

\subsection{Our results}

We study the linear program
\begin{equation}
\mathrm{OPT}:=
\min_{\substack{x\in\R^m\ :\ A^\top x=b\\ l_i\leq x_i\leq u_i\ \text{for all }i\in[m]}}
c^\top x ,
\label{eq:intro_lp}
\end{equation}
where $A\in\R^{m\times n}$, $b\in\R^n$, $c\in\R^m$ and
$l,u\in(\R\cup\{-\infty,+\infty\})^m$.  Throughout we assume, as in \cite{ls19},
that $A$ is nondegenerate, meaning that it has full column rank and no zero
row; that $l_i<u_i$ and $(l_i,u_i)\neq\R$ for every $i\in[m]$; and that the
feasible region
$\Omega:=\{x\in\R^m:A^\top x=b,\ l_i\leq x_i\leq u_i\}$
has nonempty interior
$\Omega^\circ:=\{x\in\R^m:A^\top x=b,\ l_i<x_i<u_i\}$.
For the complexity statement, assume that $\Omega$ is bounded and that the
input includes a finite certified magnitude bound $U$.  Let $\mathcal S_0$
be the set of distances from $x_0$ to all finite coordinate endpoints,
\[
 \mathcal S_0:=\{x_{0,i}-l_i:l_i> -\infty\}
 \cup\{u_i-x_{0,i}:u_i<+\infty\}.
\]
The supplied bound is required to satisfy
\[
 U\geq4\max\{1,\|c\|_\infty,\operatorname{diam}_\infty(\Omega),
              \max\mathcal S_0,\max\{s^{-1}:s\in\mathcal S_0\}\}.
\]
The theorem does not require the algorithm to compute this global bound.
This normalization implies the standard interval-barrier bounds
$\|\phi'(x_0)\|_\infty\leq U$ and
$\sqrt{\phi_i''(x_i)}\geq1/U$ for every $x\in\Omega^\circ$.
As usual, logarithmic factors in asymptotic complexity bounds in this section
are understood to be at least $1$.

Lee and Sidford \cite{ls19} solve Eq.~\eqref{eq:intro_lp} to accuracy $\epsilon$
using $\widetilde O(\sqrt n\log(1/\epsilon))$ iterations, each of which solves a
constant number of linear systems in $A^\top\mathbf DA$ for a positive diagonal
matrix $\mathbf D$.  Theirs is the first method whose iteration count is
governed by $\mathrm{rank}(A)=n$ rather than by the number of constraints $m$,
and this distinction is substantive: no known transformation
puts Eq.~\eqref{eq:intro_lp} in standard form without increasing the rank. Written
out, the bound they obtain carries thirteen logarithmic factors,
\[
O(\sqrt n\log^{13}m\cdot\log(\frac{mU}{\epsilon})\cdot\mathcal T_w)\ \text{work}.
\]
The corresponding parallel depth bound has the same thirteen logarithmic
factors, with $\mathcal T_w$ replaced by $\mathcal T_d$, the depth of one
linear-system solve. Song \cite{son19} mentioned that reducing the $\log^{13}m$ factor is an interesting future direction.

We measure work and depth in the exact real-arithmetic model with
bounded-fan-in scalar operations.  Any preprocessing of $A$ is independent
of the input diagonal matrix and right-hand side of a solve.
We state our result as follows.

\begin{theorem}[Linear programming, informal version of Theorem~\ref{thm:log_four_formal}]
\label{thm:main}
Given $\epsilon\in(0,1)$, an interior point of Eq.~\eqref{eq:intro_lp}, and a finite certified magnitude bound $U$ satisfying the preceding display, there is an algorithm that outputs a feasible $x$ with $c^\top x\leq\mathrm{OPT}+\epsilon$ with constant probability using
\[
O(\sqrt n\log^{4}m\cdot\log(\tfrac{mU}{\epsilon})\cdot\mathcal T_w)\ \text{work}
\quad\text{and}\quad
O(\sqrt n\log^{3.5}m\cdot\log(\tfrac{mU}{\epsilon})\cdot\mathcal T_d)\ \text{parallel depth},
\]
where $U$ is defined above, and $\mathcal T_w\geq\operatorname{nnz}(A)+m$
and $\mathcal T_d\geq1$ are upper bounds on the work and depth of computing
$(A^\top\mathbf DA)^{-1}\mathbf q$ for an input positive diagonal matrix
$\mathbf D$ and vector $\mathbf q$.  The work normalization charges sparse
matrix-vector products and vector operations as well as the solve.
\end{theorem}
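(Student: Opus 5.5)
We follow the Lee--Sidford weighted path-finding framework: a short-step path-following method on the weighted central path
\[
x_t=\arg\min_{A^\top x=b}\; t\,c^\top x-\sum_{i}w_i\phi_i(x_i),
\]
with the weights $w$ given by an $\ell_p$-Lewis weight regularized barrier with $p=1-1/\log(4m)$. This choice makes $\sum_i w_i=O(n)$ while keeping the weights self-concordant-like, so the path parameter can grow by a factor $1+\Omega(1/(\sqrt n\log^{c}m))$ per step. The improvement over $\log^{13}m$ is then pure accounting. We split the total count into four multiplicative contributions and bound each separately: (i) the number of outer path steps, (ii) the inner Newton/centering steps per outer step, (iii) the number of linear-system solves needed to approximate the Lewis weights and leverage scores at each step, and (iv) the overhead of initialization and rounding, which contributes the additive $\log(mU/\epsilon)$ through $U$ via $\|\phi'(x_0)\|_\infty\le U$ and $\sqrt{\phi_i''}\ge1/U$. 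The target is the budget $\sqrt n\cdot\log^{4}m\cdot\log(mU/\epsilon)$.

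For the centrality measure we use a mixed norm $\|\cdot\|_{\infty}+C_{\mathrm{norm}}\|\cdot\|_{\mathbf W}$ as in \cite{ls19}. The step sizes are tuned so that the $\ell_\infty$ part loses only a factor $\log m$ rather than the polylog cascade in the original analysis. Concretely, we will show that one centering step decreases the potential by a constant factor with step size $\alpha=\Theta(1/\log m)$. We will also show that the weight change per step is bounded in the mixed norm by $O(1/\log m)$ relative to the leverage-score slack. Together these give $O(\sqrt n\log m)$ path steps per constant-factor increase of $t$, and hence $O(\sqrt n\log m\log(mU/\epsilon))$ steps overall.

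Leverage scores and Lewis weights are computed by Johnson--Lindenstrauss sketching with $O(\log m)$ solves per estimate, combined with a gradient-descent fixed-point iteration for $\ell_p$ Lewis weights that contracts in $O(\log m)$ rounds when warm-started. Across steps, the Lewis weight move is only $O(1/\log m)$, so we re-use the previous weights as a warm start. This brings the per-step weight cost down to $O(\log^{3}m)$ solves in the sequential model. That cost combines multiplicatively with the $O(\sqrt n\log m)$ path steps to give $\log^4 m$ work. In the parallel model, the $\log m$ JL sketches are independent solves run simultaneously, and one factor $\sqrt{\log m}$ is saved by batching. This matches the $\log^{3.5}m$ depth; I expect this to follow from running the contraction with a larger step and the sketches in parallel. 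The constant success probability comes from a union bound over JL failures, with failure probability set to $1/\mathrm{poly}(m)$ per sketch, which costs only the $\log m$ already counted.

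The main obstacle will be step (iii) combined with the stability argument. We must prove that approximate weights with multiplicative error $1\pm 1/\log m$, rather than $1\pm1/\log^{c}m$, suffice for centrality to be maintained. This needs a sharper self-concordance estimate for the weight function: a bound on $\|\mathbf W^{-1}\mathrm{J}_w(x)\|$ in the mixed norm that is $1-\Omega(1/\log m)$ instead of the weaker bound in \cite{ls19}. I would first try to derive it directly from the convexity of the Lewis-weight optimization with $p$ near $1$, bounding the Jacobian through the projection matrix $\mathbf P^{(2)}\le\mathbf\Sigma$ entrywise. If that fails, I would fall back to a smoothed $\ell_\infty$ potential (softmax with parameter $\log m$) to absorb the extra error.
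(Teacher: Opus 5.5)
Your outline is a budget split, not a proof. The claim that the gain over \cite{ls19} is ``pure accounting'' is exactly what fails. Each quantitative input you need is asserted without a mechanism and conflicts with the framework you adopt.

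(a) Path count. A centering step contracts centrality only by $1-\Theta(1/c_k)$, not by a constant factor (Lemma~\ref{lem:centrality_path_composition}). The admissible change of $t$ obeys $|\xi|(1+C\sqrt{\|w\|_1})\lesssim R/c_k$. Here $R=O(1/c_k)$ is needed so the quadratic Newton and finite-segment terms fit inside the $\delta/c_k$ margin. Also $C=\Omega(\sqrt{c_k})$ is needed for robust consistency, even after the Pythagorean sharpening. With your $c_k=\Theta(\log m)$ this gives only $|\xi|=O(1/(\sqrt n\log^{2.5}m))$. The paper's squared residual potential $\mathcal V=X^2+\Lambda kY^2$ uses $\phi'''=2\phi'\phi''$ and $g^\top B=0$ to couple the path forcing to the dissipated directions (Lemmas~\ref{lem:four_log_forcing} and~\ref{lem:four_log_potential}). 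Even so, it reaches only $O(\sqrt n\,q\ell)$ steps per constant factor of $t$, so your $O(\sqrt n\log m)$ has no support.

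(b) Score cost. A JL sketch with $O(\log m)$ rows gives only constant-factor accuracy; relative accuracy $\epsilon$ costs $\Theta(\epsilon^{-2}\log m)$ solves. Moreover, $1\pm1/\log m$ is too coarse. The chasing game needs $E\leq K/(400L_{\rm ch})$ with $L_{\rm ch}=\Theta(\log m)$ (Lemma~\ref{lem:residual_chasing}). Even the sharpened robust-consistency argument allows only $K=O(c_k^{-1/2})$. So with your $c_k$ you need $E=\Theta(\log^{-1.5}m)$, and a fresh sketch then costs $\Theta(\log^4m)$ solves per query, before any fixed-point rounds.

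(c) Confidence. The number of sketches grows with $\log(mU/\epsilon)$, which is not polynomially bounded in $m$. A union bound at $1/\mathrm{poly}(m)$ per sketch therefore does not cover the path, and neither does the horizon-dependent Doob bound inside $L_{\rm ch}$. The paper needs certified polynomial-length epochs with exact normalization (Lemma~\ref{lem:checkpointed_path}) to keep every confidence logarithm at $O(\log m)$.

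The key lemma you single out is a mixed-norm consistency bound $1-\Omega(1/\log m)$. That is already the Lee--Sidford consistency condition with $c_k=\Theta(\log m)$, so proving it removes no logarithm. The losses sit elsewhere, and the paper removes each with a specific idea:
\begin{itemize}
\item The triangle-inequality losses $C=\Theta(c_k)$ and $K=\Theta(1/c_k)$ are replaced by the projector and Jacobian-defect Pythagorean bounds (Lemmas~\ref{lem:pythagorean_projector} and~\ref{lem:jacobian_defect}). With the floor placed inside the Lewis fixed point, this gives $C=\Theta(\sqrt q)$ and $K=\Theta(q^{-1/2})$.
\item The chasing enlargement, which costs a factor $c_k$ in $E^{-1}$, is avoided by predicting the weight increment and chasing only the residual.
\item The retained defect $d^2=p^2\|\psi\|_g^2-\|B\psi\|_g^2$ pays for both the stochastic predictor's variance and the score drift. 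This lets $R=\Theta(CE)$ exceed the observation scale.
\item Scores are transported across steps rather than recomputed, at cost $O(M+E^{-2}L_{\rm conf}(1+\text{drift}))$, with total drift controlled by the telescoped potential. This yields $O(\ell^2)$ amortized systems per call and $O(\sqrt n q\ell^3\kappa)$ overall (Lemmas~\ref{lem:candidate_hybrid_maintenance} and~\ref{lem:four_log_ledger}).
\end{itemize}

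Your depth step is likewise unsupported: ``saving $\sqrt{\log m}$ by batching'' is not an argument. The paper combines $O(\log q)$ system rounds per instruction with $O(\ell^2)$ scalar depth from the two bisection-based scalar optimizers. It then absorbs the scalar part using the lower bound $\mathcal T_d=\Omega(q)$, which holds in the bounded-fan-in model.
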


Thus our result improves the Lee--Sidford bound~\cite{ls19} by a factor of
$\log^{9}m$. We conjecture that $O(\sqrt n\log(mU/\epsilon))$ such linear-system solves suffice.

{\bf Organization of this paper.}
Section~\ref{sec:tech} gives an overview of the techniques and explains how the logarithmic overhead is reduced from $\log^{13}m$ to $\log^4m$.
Section~\ref{sec:preliminaries} introduces the notation, interval barriers, centrality measures, and path-following quantities used throughout the paper.
Section~\ref{sec:geometric_foundations} establishes the regularized Lewis-weight geometry and the projector and Jacobian defect bounds, and constructs the mixed-ball optimization oracle.
Section~\ref{sec:residual_defect_geometry} proves stochastic residual tracking and shows how the retained Jacobian defect controls prediction error and the admissible weight update.
Section~\ref{sec:initialization_path_certification} establishes warm starts, fixed-point initialization, and defect certificates, and gives the path-composition and terminal-output guarantees.
Section~\ref{sec:certified_prediction_assembly} constructs the computable predictor and assembles the centering and path-following routines, together with deterministic epoch normalization.
Section~\ref{sec:score_maintenance_checkpoints} develops hybrid score maintenance and certified checkpoints, including state initialization and the accounting for rejected attempts.
Section~\ref{sec:gram_transport_lazy_repairs} constructs Gram-pullback score transport and lazy auxiliary-weight repairs that charge score drift to the retained defect.
Finally, Section~\ref{sec:log_four} uses an amplified predictor and a squared potential of the projected Newton residual to permit larger path steps and prove the four-logarithm bound in Theorem~\ref{thm:main}.
\section{Technique Overview}
\label{sec:tech}

This section explains the ingredients of the four-logarithm bound.
Section~\ref{subsec:why_thirteen} reviews the logarithmic overhead in the
Lee--Sidford method.  Section~\ref{subsec:how_improve} describes the
regularized weight geometry and defect-adaptive prediction.
Section~\ref{subsec:gram_overview} explains how score transport and
lazy auxiliary repairs charge score maintenance to the retained defect.
Finally, Section~\ref{subsec:four_log_overview} uses a squared potential
of the projected residual to bound both the path length and the total
score drift, including the work of rejected attempts.

\subsection{The logarithmic overhead in the Lee--Sidford method}
\label{subsec:why_thirteen}

The Lee--Sidford method~\cite{ls19} combines path following with repeated
Lewis-weight computation.  The outer loop takes centering steps whose
size is restricted by the tracking tolerance and centrality radius.
The inner loop approximately solves a Lewis fixed point, with each pass
estimating leverage scores by linear-system solves.  Together these
costs give their $O(\sqrt n\log^{13}m\cdot\kappa)$ system bound, where
$\kappa$ accounts for the logarithmic path length and the target accuracy.
Our analysis keeps this precision dependence separate from the
logarithmic overhead in $m$.

\subsection{Weight geometry and defect-adaptive prediction}
\label{subsec:how_improve}

\paragraph{Regularized Lewis weights and sharper centrality geometry.}
The regularization floor is placed inside the Lewis fixed point.  Taking
$q:=\log(4m/n)$ and $p=1-\Theta(1/q)$ makes the sensitivity constant while
the consistency is $c_k=\Theta(q)$.  The orthogonal-projector and
Lewis-Jacobian Pythagorean identities permit the mixed norm
$N(z):=\|z\|_\infty+C\|z\|_w$ with $C=\Theta(\sqrt q)$, together with the
tracking tolerance $K=\Theta(q^{-1/2})$ (Lemmas~\ref{lem:pythagorean_projector}
and~\ref{lem:sharper_parameters}).

\paragraph{Retaining and estimating the Jacobian defect.}
For a logarithmic barrier movement $\psi$, let $B$ be the Lewis-weight
Jacobian and define $d^2:=p^2\|\psi\|_g^2-\|B\psi\|_g^2$.
This is the exact ideal-metric defect; maintained and surrogate metrics are
used only through the comparison lemmas in Section~\ref{sec:geometric_foundations}.
The retained-movement inequality gains a margin
$\Theta(C^2d^2/\delta)$.  The off-diagonal part of $B\psi$ factors through
$Q=W^{-1/2}(P\circ P)W^{-1/2}$, and the two-projection estimator has variance
$O(y^\top Qy)=O(d^2)$.  Thus the same defect that creates centrality slack
pays for the stochastic predictor (Lemmas~\ref{lem:retained_jacobian_defect},
\ref{lem:large_radius_retained_movement}, and
\ref{lem:linear_defect_projection}).  The defect can also be certified with
the required one-sided guarantee (Lemma~\ref{lem:defect_certificate}).

\paragraph{A larger centrality radius.}
The centrality geometry uses $E=\Theta(q^{-1/2}\ell^{-1})$ and
$R=\Theta(CE)=\Theta(\ell^{-1})$, where $\ell:=\log(4m)$.
The predictor radius is linear in the observed defect and in
$\delta/c_k$.  Young's inequality charges the linear defect against the
quadratic retained-movement margin, so the centrality radius is larger than
the observation scale by the factor $C$.  The adaptive residual game then
tracks the remaining stochastic error without changing the deterministic
movement cap.

\subsection{Score transport and lazy auxiliary repairs}
\label{subsec:gram_overview}

\paragraph{Diagonal self-transport.}
Consider two consecutive score queries $D_-$ and $D_+$.  After removing the
irrelevant common scalar, write their row-log displacement as $d$.  For a
positive companion $\widetilde w$, define
$\lambda_w:=(\sum_{i=1}^m\widetilde w_i d_i)/(\sum_{i=1}^m\widetilde w_i)$ and
$\rho_w:=\|d-\lambda_w\mathbf1\|_{\widetilde w}$.
If only row $i$ is multiplied by $\gamma$, its leverage score $s$ changes
exactly according to
$\mathcal T_\gamma(s):=\gamma^2s/(1+(\gamma^2-1)s)$
(Lemma~\ref{lem:candidate_one_row_transport}).
The algorithm applies this scalar map to its incoming approximate score and
uses one fresh shared sketch only for the response of all the other rows.
The exact one-row identity makes the deterministic error multiplier at most
$e^{O(\rho_w)}$, while the centered sketch innovation has conditional
variance proxy $O(\rho_w^2/k)$.  Thus a large coordinatewise row displacement
incurs no additional asymptotic cost when its weighted centered energy is small
(Lemma~\ref{lem:candidate_nonlocal_endpoint}).

\paragraph{Hybrid effective drift and restarts.}
For transitions better controlled coordinatewise, define
$\rho_\infty:=\min_\lambda\|d-\lambda\mathbf1\|_\infty$.  The provider uses
the eligible branch with smaller effective drift, namely
$\eta_t:=\min\{C_\infty\rho_{\infty,t},C_w\rho_{w,t}\}$.  It uses
$k_t=\Theta(\eta_tE^{-2}L_{\rm conf})$ rows for that transition and restarts
from a fresh absolute Gaussian sketch before the cumulative effective drift
exceeds a small constant.  After rescaling by the accumulated drift, the
relative-error recurrence is contractive, so maximal Bernstein gives
simultaneous raw relative accuracy (Algorithm~\ref{alg:candidate_hybrid_score}
and Lemma~\ref{lem:candidate_hybrid_maintenance}).

\paragraph{Gram-pullback transport and lazy repairs.}
Section~\ref{sec:gram_transport_lazy_repairs} charges a principal score transition to
$O(\overline d)$, where $\overline d$ is the computable defect certificate.
Pulling the new projector back to the old column space makes the Gram
perturbation control the sketch variance.  A coarse-center projection
of the weighted predictor makes this charge valid for every realization
of its independent weighted chain.  The auxiliary fixed point is repaired
only when its accumulated coordinate error exceeds $E/4$; the repair
cost is charged to the preceding predictor errors.  Thus the total
effective drift is bounded by $O(E+\sum_j\overline d_j)$
(Lemma~\ref{lem:gram_lazy}).  Every query endpoint and drift ticket
is fixed before the corresponding fresh score sketch is drawn.

\paragraph{Long paths and certified checkpoints.}
The score-maintenance restarts above control error inside one score-maintenance
block; they do not by themselves make a confidence bound independent of the
total path length.  For that purpose the complete path is also divided into
polynomial-length epochs.  At an epoch boundary the algorithm uses exact
score diagonals to recenter the path point and greedily reduce the
Lewis-weight tracking potential, certifies the resulting state outward, and
then starts new score and residual-game epochs.  A failed certificate causes
only the mathematical state of that epoch to be retried---resource counters
are never rolled back.  Because the normalization has fixed polynomial cost
and epochs are chosen longer than that cost, checkpoints add only a constant
factor to the ledger while allowing all confidence logarithms to remain
$O(\log m)$ for arbitrarily long paths (Lemmas~\ref{lem:epoch_normalization}
and~\ref{lem:checkpointed_path}, and Algorithm~\ref{alg:checkpointed_path}).
The two path phases and their final output guarantee are supplied by the outer
reduction (Algorithm~\ref{alg:lp_solve_fresh} and
Lemma~\ref{lem:outer_reduction}).

The resulting score cost is $O(M+E^{-2}L_{\rm conf}(1+\Omega))$, where
$\Omega:=\sum_{t=1}^M\eta_t$.  Fresh randomness enters at every block base and every
transition, so no algebraically recycled anchor or estimate is used.

\subsection{Four logarithms from the projected residual}
\label{subsec:four_log_overview}

Section~\ref{sec:log_four} combines the Gram-pullback score provider and lazy
repairs with an amplified weighted predictor and the actual projected
Newton residual $r$.  Write $k:=c_k=8q$ and define $X:=\|r\|_\infty$, $Y:=\|r\|_w$, and
$\mathcal V:=X^2+\Lambda kY^2$ for an absolute $\Lambda$.
For the specified interval barriers, $\phi'''=2\phi'\phi''$.
Together with the conservation identity $g^\top B=0$, this makes the
path forcing couple to the dissipated directions of the residual map.
After controlling the finite Newton step, predictor error, changing
projection, and maintained metric, the result is
\[
 \mathcal V^+\leq(1-1/(2k))\mathcal V
 -c_*k\mathfrak D^2+A_*kn\xi^2,
 \qquad \xi=t^+/t-1.
\]
The four nonnegative losses making up $\mathfrak D^2$ are defined in
Eq.~\eqref{eq:four_log_losses}.  Only $\mathcal V$ is computed by the
algorithm.  This inequality permits $|\xi|=\Theta(R/(k\sqrt n))$,
so the call count becomes $N=O(\sqrt n q\ell\kappa)$.
Telescoping both negative terms also pays for the additive floor in the
defect certificate.  For every good attempted prefix,
\[
 \sum_j\overline d_j^2=O(\frac{R^2}{k}(1+b/k)),
 \qquad \Omega=O(E(1+\sqrt b+b/\sqrt k)).
\]
The preannounced instruction-quota caps price rejected attempts by the same
formula.  Amplification costs $O(k\ell)$ systems per call, which is
absorbed because $q\leq\ell$.  The scalar optimizers retain their
$O(m\ell^2)$ per-call bound.  The resulting leading system count is
$O(\sqrt n q\ell^3\kappa)$, giving the four-logarithm work bound in
Theorem~\ref{thm:log_four_formal}.

Cold initialization costs $O(\min\{q\ell^3,n\log\ell\})$ systems.
The first terminal loop restarts its counter after normalization, while
the final fixed-weight Newton tail runs only once, outside the retry loop.
These costs are included in the same four-logarithm bound
(Lemmas~\ref{lem:four_log_terminal} and~\ref{lem:four_log_ledger}).
 
\section{Preliminaries}
\label{sec:preliminaries}

Section~\ref{subsec:basic_notation} fixes the basic vector and matrix
notation used throughout the paper.  Section~\ref{subsec:centrality_prelim}
then records the barrier, centrality, and path-following quantities needed in
the algorithm and its analysis.

\subsection{Basic notation}
\label{subsec:basic_notation}

For a positive integer $d$, write $[d]:=\{1,\ldots,d\}$.  All vectors are
column vectors; $e_i$ denotes the $i$-th standard basis vector, and $\mathbf1$
denotes the all-ones vector of the dimension determined by context.  For
$x\in\R^d$, write $\|x\|_1:=\sum_{i=1}^d|x_i|$,
$\|x\|_2:=(\sum_{i=1}^d x_i^2)^{1/2}$, and
$\|x\|_\infty:=\max_{i\in[d]}|x_i|$.  The Euclidean inner product is
$\langle x,y\rangle:=x^\top y$.

For a matrix $M$, the symbols $M^\top$, $M^{-1}$, and $M^\dagger$ denote its
transpose, its inverse when it is nonsingular, and its Moore--Penrose
pseudoinverse, respectively.  A superscript $+$ is instead reserved for an
updated quantity.  The identity matrix $I$ always has the dimension determined
by context.  For symmetric matrices, $M\succeq N$ means that $M-N$ is positive
semidefinite.  If $M\succeq0$, then $M^{1/2}$ denotes its positive
semidefinite square root; negative powers are used only when the relevant
matrix is positive definite.  Given vector norms $\|\cdot\|_a$ and
$\|\cdot\|_b$, the induced operator norm is
$\|M\|_{a\to b}:=\sup_{x\neq0}\|Mx\|_b/\|x\|_a$.

For a vector $x$, $\operatorname{Diag}(x)$ is the diagonal matrix with diagonal
$x$; for a square matrix $M$, $\operatorname{diag}(M)$ is its diagonal vector.
We use $\odot$ and $\circ$ for the coordinatewise and Hadamard products,
respectively.  Vector inequalities are coordinatewise, as are vector products,
quotients, powers, square roots, exponentials, and logarithms.  For a scalar
$z$, define $\cosh z:=(e^z+e^{-z})/2$ and
$\sinh z:=(e^z-e^{-z})/2$; both functions act coordinatewise on vectors.

\subsection{Centrality and Lee--Sidford path facts}
\label{subsec:centrality_prelim}

For each coordinate interval $(l_i,u_i)$, let $\phi_i$ be the
one-self-concordant interval barrier used by Lee--Sidford~\cite{ls19}, define
$\Phi''(x):=\operatorname{Diag}(\phi_i''(x_i))$, and write
$A_x:=\Phi''(x)^{-1/2}A$.  For a positive vector $w$, let
$W:=\operatorname{Diag}(w)$,
\[
 \|y\|_w:=(y^\top Wy)^{1/2},
 \qquad
 N_w(y):=\|y\|_\infty+C\|y\|_w.
\]

\begin{definition}[Centrality and permitted systems]
\label{def:centrality_permitted}
For an active cost vector $c_{\rm path}$, define
\[
 f_t^{c_{\rm path}}(x,w)
 :=t c_{\rm path}^\top x+\sum_{i=1}^mw_i\phi_i(x_i)
\]
and
\[
 \delta_t^{c_{\rm path}}(x,w)
 :=\min_{\eta\in\R^n}
 N_w(
 \frac{t c_{\rm path}+w\odot\phi'(x)-A\eta}
      {w\odot\sqrt{\phi''(x)}}).
\]
When the cost is clear, the superscript is omitted.  Define
\[
 P_{x,w}:=I-W^{-1}A_x(A_x^\top W^{-1}A_x)^{-1}A_x^\top.
\]
For
\[
 r_t^{c_{\rm path}}(x,w)
 :=\frac{t c_{\rm path}+w\odot\phi'(x)}
         {w\odot\sqrt{\phi''(x)}},
 \qquad
 \overline\delta_t^{c_{\rm path}}(x,w)
 :=N_w(P_{x,w}r_t^{c_{\rm path}}(x,w)),
\]
the first centrality-equivalence inequality is unconditional.  Whenever
$4g(x)/5\leq w\leq5g(x)/4$, Lemma~\ref{lem:pythagorean_projector}
supplies the uniform constant $c_\gamma$ used below and gives
\begin{equation}\label{eq:computable_centrality}
 \delta_t^{c_{\rm path}}(x,w)
 \leq\overline\delta_t^{c_{\rm path}}(x,w)
 \leq c_\gamma\delta_t^{c_{\rm path}}(x,w).
\end{equation}
All later uses of the upper bound are under this tracking band.
Thus $\overline\delta_t$ is a computable centrality proxy, obtained with the
same permitted solve as the projected Newton residual, and
$\overline\delta_t=0$ if and only if $\delta_t=0$.  We also write
$s_i(x):=\phi_i''(x_i)^{-1/2}$ for the positive barrier row scaling, so that
$A_x=\operatorname{Diag}(s(x))A$.
The projected Newton step is
\[
 h_t^{c_{\rm path}}(x,w)
 :=-\Phi''(x)^{-1/2}P_{x,w}
 \frac{t c_{\rm path}+w\odot\phi'(x)}
      {w\odot\sqrt{\phi''(x)}}.
\]
A \emph{permitted system} is a solve in $A^\top D A$ for a positive
diagonal matrix $D$ and one right-hand side.
\end{definition}

We use the following Lee--Sidford facts with the preceding notation.  Stating
them here fixes the hypotheses under which they are invoked later.

\begin{lemma}[Lee--Sidford path facts]
\label{lem:ls_path_facts}
Suppose $\delta:=\delta_t(x,w)\leq1/10$ and
$4g(x)/5\leq w\leq5g(x)/4$.  If $x^+=x+h_t(x,w)$, then $x^+$ remains in the
barrier domain,
\[
 \delta_t(x^+,w)\leq4\delta^2,
 \qquad
 N_w(\sqrt{\Phi''(x)}(x^+-x))\leq c_\gamma\delta.
\]
If $w'>0$ and $z:=N_w(\log w'-\log w)\leq1/10$, then
\[
 \delta_t(x,w')\leq(1+4z)(\delta_t(x,w)+z).
\]
For $|\xi|<1$ and $(1+\xi)t>0$,
\[
 \delta_{(1+\xi)t}(x,w)
 \leq(1+|\xi|)\delta_t(x,w)
   +|\xi|(1+C\sqrt{\|w\|_1}).
\]
Finally, define
\[
 \mathrm{OPT}(c_{\rm path})
 :=\min\{c_{\rm path}^\top x:x\in\Omega\}.
\]
If $x_t(w)$ minimizes $f_t^{c_{\rm path}}(\cdot,w)$ on the affine
feasible set, then
\[
 c_{\rm path}^\top x_t(w)-\mathrm{OPT}(c_{\rm path})
 \leq\frac{\|w\|_1}{t}.
\]
\end{lemma}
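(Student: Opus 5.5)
We treat the four assertions of Lemma~\ref{lem:ls_path_facts} separately. Each is a restatement of a Lee--Sidford fact adapted to our mixed norm $N_w=\|\cdot\|_\infty+C\|\cdot\|_w$. Each proof uses three ingredients: one-self-concordance of the $\phi_i$, the projector structure of $P_{x,w}$ (Lemma~\ref{lem:pythagorean_projector}), and the equivalence \eqref{eq:computable_centrality}, which is available because $4g(x)/5\le w\le 5g(x)/4$ throughout.

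\emph{Newton step.} Write $r=r_t(x,w)$ and $u:=\sqrt{\Phi''(x)}(x^+-x)=-P_{x,w}r$. Then $N_w(u)=\overline\delta_t(x,w)\le c_\gamma\delta$, which is the displacement bound directly. Since $\|u\|_\infty\le c_\gamma\delta<1$ for $\delta\le1/10$, the usual Dikin-ellipsoid property of self-concordant barriers keeps $x^+$ in the domain. For the quadratic convergence, expand the new gradient coordinatewise: $tc+w\odot\phi'(x^+)=tc+w\odot\phi'(x)+w\odot\phi''(x)(x^+-x)+w\odot\epsilon$. By construction the first-order part lies in the range of $A$, since $P_{x,w}$ removes exactly the component minimized over $\eta$. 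The remainder satisfies $|\epsilon_i|\le \phi_i''(x_i)\,u_i^2/(1-|u_i|)$ by self-concordance. The rescaling from $\sqrt{\phi''(x)}$ to $\sqrt{\phi''(x^+)}$ costs a factor $(1-|u_i|)^{-1}$. Choosing $\eta$ to cancel the linear part bounds $\delta_t(x^+,w)$ by $N_w$ of a vector with entries $O(u_i^2)$. Its $\infty$-part is at most $\|u\|_\infty^2$, and its $w$-part is at most $\|u\|_\infty\|u\|_w$, so the total is at most $\|u\|_\infty N_w(u)\cdot(1+O(\delta))$. This stays within $4\delta^2$ provided $c_\gamma^2(1+O(\delta))\le4$. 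That requirement on $c_\gamma$, which must come from Lemma~\ref{lem:pythagorean_projector}, is the step I expect to be the main obstacle: all the constant-tracking rests on it.

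\emph{Weight change.} Take the minimizing $\eta$ for $w$, and write $w'=w e^{v}$ with $N_w(v)=z$. The numerator changes by $w\odot(e^v-1)\odot\phi'(x)$, and the denominator by the factor $e^{v}$. Dividing, the residual for $w'$ equals $e^{-v}$ times the old residual plus $(1-e^{-v})\odot\phi'/\sqrt{\phi''}$. For the interval barriers, $|\phi_i'|/\sqrt{\phi_i''}\le1$ coordinatewise. The $\|\cdot\|_{w'}$ norm is at most $e^{\|v\|_\infty/2}\|\cdot\|_w$. Collecting the factors $e^{O(z)}\le1+4z$ for $z\le1/10$ gives the stated bound.

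\emph{Path parameter change.} With the same $\eta$ scaled, note that $(1+\xi)tc+w\phi'-(1+\xi)A\eta=(1+\xi)(tc+w\phi'-A\eta)-\xi w\phi'$. The first term contributes $(1+|\xi|)\delta$. The second contributes $|\xi|N_w(\phi'/\sqrt{\phi''})\le|\xi|(1+C\|\mathbf1\|_w)=|\xi|(1+C\sqrt{\|w\|_1})$.

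\emph{Duality gap.} At $x_t(w)$ the optimality condition gives $tc=A\eta-w\odot\phi'(x_t)$. For any $y\in\Omega$ we have $A^\top(y-x_t)=0$, hence $t c^\top(x_t-y)=\sum_i w_i\phi_i'(x_{t,i})(y_i-x_{t,i})$. The barrier property $\phi_i'(s)(y-s)\le1$ for $y$ in the interval (with $\nu=1$ per coordinate) bounds this sum by $\|w\|_1$. Dividing by $t$ and taking $y$ to be a minimizer gives the final claim.
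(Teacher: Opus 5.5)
Your proposal is correct, but it is more self-contained than the paper's proof. The paper proves little directly here. It cites Lee--Sidford for the Newton contraction, the weight-change estimate, the $\xi\ge0$ path-parameter estimate and the normalized-step (centrality-equivalence) bound, remarking only that those proofs use nothing beyond $N_w$ being a norm dominating $\|\cdot\|_\infty$ and $c_\gamma\le2$. It then gives two arguments explicitly: the scaled-multiplier argument for $\xi<0$, and the pairing of first-order optimality with an optimal point for the duality gap. Your last two parts coincide with these, and your path-parameter argument handles both signs at once.

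Your first two parts rebuild the cited estimates from three facts: one-self-concordance, $|\phi_i'|\le\sqrt{\phi_i''}$, and the fact that $w\odot\sqrt{\phi''}\odot(I-P_{x,w})r$ lies in the range of $A$. This verifies the paper's claim that the Lee--Sidford proofs transfer to the new $C$. The price is slightly weaker constants.

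\begin{itemize}
\item Your remainder bound should read $|\epsilon_i|\le\sqrt{\phi_i''(x_i)}\,u_i^2/(1-|u_i|)$; as written it is off by a factor $\sqrt{\phi_i''}$.
\item After renormalizing by $\sqrt{\phi''(x^+)}$, this gives $\delta_t(x^+,w)\le c_\gamma^2\delta^2/(1-c_\gamma\delta)^2$. So you need $c_\gamma\le2/(1+2\delta)$, e.g.\ $c_\gamma\le5/3$, rather than merely $c_\gamma\le2$.
\end{itemize}

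The step you flag as the main obstacle is therefore not one. Lemma~\ref{lem:sharper_parameters} already gives $c_\gamma\le1+1/(128c_k)$, and Lemma~\ref{lem:pythagorean_projector} gives $c_\gamma\le\sqrt{1+2c_s/C^2}$ in general. One small wording fix: $P_{x,w}$ is the $w$-orthogonal projection, not the $N_w$-minimizer over $\eta$. You only need the range property above, which holds.
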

\begin{proof}
The Newton contraction is Lemma 15 of~\cite{ls19}, the
weight-change estimate is Lemma 17, and the $\xi\geq0$ case of the
$t$-change estimate is Lemma 14.  For $\xi<0$, evaluate the new residual
at the old minimizing multiplier scaled by $1+\xi$; the old residual is
multiplied by $1+\xi$ and the added cost term has mixed norm at most
$|\xi|(1+C\sqrt{\|w\|_1})$.  This proves the displayed signed
extension directly rather than attributing the negative case to Lemma 14.
The proofs of these three Lee--Sidford estimates use only that $N_w$ is a
norm dominating $\|\cdot\|_\infty$ and that the
centrality-equivalence constant is at most two.  They therefore apply
verbatim to the present coefficient $C$.  The normalized-step estimate is
their centrality-equivalence lemma.  The last display is their weighted
duality-gap lemma and follows directly by pairing the first-order optimality
equation at $x_t(w)$ with an optimal feasible point.
\end{proof}

\section{Geometric foundations and the mixed-ball oracle}
\label{sec:geometric_foundations}

Section~\ref{subsec:score_perturbations} establishes rowwise projector
stability and the regularized Lewis-weight fixed point used by the score
provider.  Section~\ref{subsec:projector_jacobian_defects} proves the
projector and Jacobian defect inequalities that sharpen the mixed-norm
geometry.  Section~\ref{subsec:robust_centrality_geometry} converts those
inequalities into robust centrality and fixes the structural parameters.
Finally, Section~\ref{subsec:mixed_ball_oracle} gives the finite mixed-ball
optimization oracle used by the residual-chasing step.

\subsection{Projection-score perturbations and regularized Lewis fixed points}
\label{sec:dynamic_oracle}
\label{subsec:score_perturbations}

\paragraph{Overview.} The argument uses a rowwise stability estimate for the
orthogonal projector under a diagonal rescaling, the global fixed-point
contraction, and a predicted warm start.  Along the path, the hybrid score
provider transports the diagonal response exactly, sketches only the
nonlocal endpoint discrepancy, and restarts after constant cumulative
effective drift.  Every transition sketch is drawn only after both query
endpoints are fixed.

This section exploits information unavailable within a single call to the
approximate Lewis-weight routine \texttt{computeApxWeight}: consecutive
centering steps request leverage scores for diagonal rescalings of the
\emph{same} matrix $A$ that differ only by $e^{\pm O(R)}$.  Recomputing an
$E$-accurate estimate from scratch at every step does not use this continuity.

We first isolate the relative-drift estimate used by both transition branches
of the hybrid provider.  Its Gaussian increment primitive is recorded later
in Lemma~\ref{lem:adaptive_increment}; no algebraically recycled anchor is used.

Throughout this section $A\in\R^{m\times n}$ is nondegenerate, $\mathbf D$
denotes a positive diagonal matrix, and
\[
P(\mathbf D):=\mathbf D A(A^\top \mathbf D^2 A)^{-1}A^\top \mathbf D
\]
is the orthogonal projection onto the range of $\mathbf D A$, so that
$\sigma_i(\mathbf D A)=\|P(\mathbf D)e_i\|_2^2$.

The first estimate is fundamental to the scheme: when two rescalings
differ by a small \emph{diagonal} factor, the corresponding columns of the two
projections differ by a small \emph{relative} amount.  An absolute bound would
not suffice, because $\sigma_i$ can be as small as $n/m$.

\begin{lemma}[Relative drift of projection columns]
\label{lem:relative_drift}
Let $\mathbf D=\Gamma\mathbf D'$ with $\Gamma$ positive diagonal and $\|\log\Gamma\|_\infty\leq\rho\leq 1/8$. Then for every $i\in[m]$,
\[
\|P(\mathbf D)e_i-P(\mathbf D')e_i\|_2\leq 10\rho\sqrt{\sigma_i(\mathbf D'A)}.
\]
Moreover,
\[
e^{-4\rho}\sigma_i(\mathbf D'A)
\leq \sigma_i(\mathbf DA)
\leq e^{4\rho}\sigma_i(\mathbf D'A).
\]
\end{lemma}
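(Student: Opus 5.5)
We prove the score bound first and then use it for the column bound. Throughout write $P:=P(\mathbf D)$, $P':=P(\mathbf D')$, $B:=\mathbf D'A$, and $\mathbf D A=\Gamma B$. The column spaces are related by $\operatorname{range}(\Gamma B)=\Gamma\,\operatorname{range}(B)$.

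\textbf{Step 1: the score bound.} We use the variational characterization
\[
\sigma_i(\Gamma B)=\max_{v}\frac{(e_i^\top\Gamma Bv)^2}{\|\Gamma Bv\|_2^2}.
\]
The hypothesis $\|\log\Gamma\|_\infty\le\rho$ gives $e^{-2\rho}\|Bv\|_2^2\le\|\Gamma Bv\|_2^2\le e^{2\rho}\|Bv\|_2^2$ and $(e_i^\top\Gamma Bv)^2=\Gamma_{ii}^2(e_i^\top Bv)^2\in[e^{-2\rho},e^{2\rho}](e_i^\top Bv)^2$. Taking ratios gives $e^{-4\rho}\sigma_i(B)\le\sigma_i(\Gamma B)\le e^{4\rho}\sigma_i(B)$, which is the second display.

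\textbf{Step 2: a formula for the difference.} For the column bound we use $Pe_i-P'e_i=(P-P')e_i$. Let $x'$ be the minimizer defining $P'e_i=Bx'$, namely $x'=(B^\top B)^{-1}B^\top e_i$. Write $G:=B^\top\Gamma^2B$. Then
\[
Pe_i=\Gamma B\,G^{-1}B^\top\Gamma e_i=\Gamma_{ii}\,\Gamma BG^{-1}B^\top e_i .
\]
Set $u:=G^{-1}B^\top e_i$ and $u':=(B^\top B)^{-1}B^\top e_i=x'$. Then
\[
Pe_i-P'e_i=\Gamma_{ii}\Gamma Bu-Bu'=(\Gamma_{ii}-1)\Gamma Bu+(\Gamma-I)Bu+B(u-u').
\]
For the first two terms, $|\Gamma_{ii}-1|\le e^\rho-1\le1.07\rho$ and $\|\Gamma-I\|\le1.07\rho$. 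Also $\|Bu\|_2\le e^{\rho}\|\Gamma Bu\|_2$, and $\|\Gamma Bu\|_2=\|Pe_i\|_2/\Gamma_{ii}=\sqrt{\sigma_i(\Gamma B)}/\Gamma_{ii}\le e^{3\rho}\sqrt{\sigma_i(B)}$ by Step 1. So each of these two terms is $O(\rho)\sqrt{\sigma_i(B)}$.

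\textbf{Step 3: the third term, which is the main obstacle.} We have $u-u'=G^{-1}(B^\top B-G)u'=G^{-1}B^\top(I-\Gamma^2)Bu'$. Hence
\[
\|B(u-u')\|_2\le e^{\rho}\|\Gamma BG^{-1}B^\top\Gamma\cdot\Gamma^{-1}(I-\Gamma^2)Bu'\|_2.
\]
Here $\Gamma BG^{-1}B^\top\Gamma$ is the orthogonal projector $P$, so it has norm at most $1$. Therefore
\[
\|B(u-u')\|_2\le e^{\rho}\|\Gamma^{-1}(I-\Gamma^2)\|\,\|Bu'\|_2\le e^{\rho}\cdot2\sinh(\rho)\cdot e^{\rho}\sqrt{\sigma_i(B)},
\]
using $\|Bu'\|_2=\|P'e_i\|_2=\sqrt{\sigma_i(B)}$.

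\textbf{Step 4: collecting constants.} Summing the three contributions with $\rho\le1/8$ gives roughly $(1.07e^{4\rho}+1.07e^{4\rho}+2.1e^{2\rho})\rho\le(1.8+1.8+2.8)\rho\le10\rho$ times $\sqrt{\sigma_i(B)}$. The only care needed is keeping every factor relative to $\sqrt{\sigma_i(B)}$ rather than using an absolute bound; this is done via the projector-norm trick in Step 3 and Step 1 in Step 2.
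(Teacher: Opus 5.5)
Your proof is correct and follows essentially the paper's route. Like the paper, you pull out $\Gamma_{ii}$ via $\Gamma e_i=\Gamma_{ii}e_i$, split the column difference into three terms by the triangle inequality, and compare leverage scores through one $e^{\pm2\rho}$ factor from the Gram matrix and another from the two copies of $\Gamma_{ii}$. The differences are minor: you work with $B=\mathbf D'A$ rather than an orthonormal basis of its range, and you bound the inverse-Gram term with the identity $\Gamma BG^{-1}B^\top\Gamma=P(\mathbf D)$ instead of the paper's estimate $\|F\|\leq4\rho$. Your version also gives a slightly sharper constant (about $6.4\rho$).
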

\begin{proof}
Let $B\in\R^{m\times n}$ have orthonormal columns spanning the range of $\mathbf D'A$, so that $P(\mathbf D')=BB^\top$ and $\sigma_i(\mathbf D'A)=\|B^\top e_i\|_2^2$. The range of $\mathbf DA$ is the range of $\Gamma B$, and therefore
\[
P(\mathbf D)=\Gamma B(B^\top\Gamma^2B)^{-1}B^\top\Gamma .
\]
Write $c:=B^\top e_i$, so that $\|c\|_2^2=\sigma_i(\mathbf D'A)$. Because $\Gamma$ is diagonal we have $\Gamma e_i=\Gamma_{ii}e_i$, hence $B^\top\Gamma e_i=\Gamma_{ii}c$, and consequently
\[
P(\mathbf D)e_i-P(\mathbf D')e_i
=(\Gamma_{ii}\,\Gamma B(B^\top\Gamma^2B)^{-1}-B)\,c .
\]
It suffices to bound the spectral norm of the matrix in parentheses. Define $F:=(B^\top\Gamma^2B)^{-1}-I$. From $\rho\leq 1/8$ we obtain $\|\Gamma-I\|\leq e^{\rho}-1\leq 1.14\rho$ and $|\Gamma_{ii}-1|\leq 1.14\rho$, and also $\|B^\top\Gamma^2B-I\|\leq e^{2\rho}-1\leq 2.6\rho\leq 0.33$, so that $\|F\|\leq 2.6\rho/(1-0.33)\leq 4\rho$. Using $\|B\|=1$ and $\|\Gamma B\|\leq 1+1.14\rho$, the triangle inequality gives
\[
\begin{aligned}
\|\Gamma_{ii}\Gamma B(I+F)-B\|
&\leq |\Gamma_{ii}-1|\cdot\|\Gamma B(I+F)\|+\|\Gamma B\|\cdot\|F\|+\|\Gamma B-B\|\\
&\leq 1.14\rho(1+1.14\rho)(1+4\rho)+(1+1.14\rho)\cdot 4\rho+1.14\rho
\leq 10\rho ,
\end{aligned}
\]
where the last step uses $\rho\leq 1/8$ and that each coefficient increases in $\rho$. For the leverage-score comparison, note that
\[
e^{-2\rho}(\mathbf D'A)^\top\mathbf D'A
\preceq(\mathbf D'A)^\top\Gamma^2\mathbf D'A
\preceq e^{2\rho}(\mathbf D'A)^\top\mathbf D'A ,
\]
so the inverse quadratic form in the $i$-th leverage score changes by a factor $e^{\pm2\rho}$. The two copies of $\Gamma_{ii}$ contribute another factor $e^{\pm2\rho}$, giving the stated $e^{\pm4\rho}$ comparison.
\end{proof}

\begin{remark}
Diagonality of $\Gamma$ is essential.  For a general $\Gamma$ with
$\|\Gamma-I\|\leq\rho$, one obtains only
$\|P(\mathbf D)e_i-P(\mathbf D')e_i\|_2=O(\rho)$, which is an absolute rather
than a relative bound and provides no improvement for small leverage scores.
The gain comes from $\Gamma e_i=\Gamma_{ii}e_i$, which allows the factor
$\sqrt{\sigma_i}$ to be extracted from the perturbation.
\end{remark}

Each centering step must issue only $O(1)$ score queries.  The loop of
\texttt{computeApxWeight} issues $T=\Theta(\log m)$ of them because its
convergence is measured in a Euclidean norm and the conversion back to
$\ell_\infty$ costs a factor of $n$.  Replacing its inner loop by the
fixed-point iteration below removes that loss, since the iteration contracts
in $\ell_\infty$ directly.  For $0<p<2$ define
$\vartheta:=1-\frac{2}{p}$ and, for a nondegenerate matrix $B$ with rows
$b_i^\top$ and $w\in\R^m_{>0}$, define
$\Lambda_{B,i}(w):=b_i^\top(B^\top\mathbf W^{\vartheta}B)^{-1}b_i$,
suppressing the subscript when the matrix is clear.

We work throughout with the \emph{regularized} Lewis vector $w^v_p(B)$, the solution of $w=\sigma(\mathbf W^{\vartheta/2}B)+v$ for a fixed floor $v\geq0$. Placing the regularizer inside the fixed point rather than adding it afterward is what bounds the dynamic range of the weights, and through it the sensitivity of the weight function; this is the content of Lemma~\ref{lem:sensitivity}, and it is the reason the exponents below are governed by $\log(4m/n)$ rather than by $\log(4m)$. The identity $\sigma_i(\mathbf W^{\vartheta/2}B)=w_i^{\vartheta}\Lambda_{B,i}(w)$ shows that a relative approximation of $\Lambda_B(w)$ is one leverage-score query, and rearranging the fixed-point equation as $w_i^{2/p}=\Lambda_{B,i}(w)+w_i^{-\vartheta}v_i$ exhibits $w^v_p(B)$ as the unique fixed point of
$$F_{B,i}(w):=(\Lambda_{B,i}(w)+w_i^{-\vartheta}v_i)^{p/2}.$$
Writing $\tau_i(w;B):=\sigma_i(\mathbf W^{\vartheta/2}B)+v_i$ for the \emph{regularized} score, the same map takes the multiplicative form
$$F_{B,i}(w)=w_i^{\,1-p/2}\,\tau_i(w;B)^{p/2},$$
because $-\vartheta p/2=1-p/2$. Two things are then immediate. The map sees the leverage scores only through $\tau$, so only $\tau$ has to be estimated relatively, which is what Section~\ref{sec:dynamic_oracle} exploits; and
$$\log F_{B,i}(w)-\log w_i=\tfrac p2(\log\tau_i(w;B)-\log w_i),$$
so the residual of the fixed-point equation is directly observable from an estimate of $\tau$.
The last part of the lemma is stated for the starting error itself rather than for the movement of the target, because in the final construction the target moves by $\Theta(R)$ while the accuracy to be restored is $E$, smaller by an unbounded factor. Lemma~\ref{lem:large_radius_warm_start} reconciles them.

\paragraph{Regularized Lewis fixed points.}
\label{subsec:regularized_fixed_points}

Before this fixed point can be used as a weight function, we need existence,
uniqueness, and differentiability under row rescaling.  The next lemma records
these facts together with the Jacobian formula used throughout the proof.
\begin{lemma}[Well-posed regularized Lewis weights]
\label{lem:regularized_weight_wellposed}
Let $0<p<1$, let $\vartheta:=1-2/p<0$, let $v>0$ coordinatewise, and let
$B\in\mathbb R^{m\times n}$ have full column rank and no zero row.  Then
$w=\sigma(W^{\vartheta/2}B)+v$ has a unique solution
$w=w_p^v(B)>0$.  This solution depends smoothly on every positive diagonal
row scaling of $B$.  If $B=\operatorname{Diag}(c)A$,
$\Lambda:=\operatorname{Diag}(\sigma)-P\circ P$, and
$a:=-\vartheta=2/p-1$, then
\[
 (W+a\Lambda)\,\d\log w=2\Lambda\,\d\log c.
\]
In particular, the derivative operator is invertible.
\end{lemma}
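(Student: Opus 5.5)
Existence and uniqueness come from a contraction argument for the fixed-point map; smoothness and the Jacobian formula come from the implicit function theorem applied to the defining equation, with invertibility of $W+a\Lambda$ proved by a positivity argument.

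\textbf{Existence and uniqueness.} Work in logarithmic coordinates $u:=\log w$ and use the map $F_B$ introduced above,
\[
F_{B,i}(w)=(\Lambda_{B,i}(w)+w_i^{-\vartheta}v_i)^{p/2}.
\]
Its fixed points are exactly the solutions of $w=\sigma(W^{\vartheta/2}B)+v$.

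Suppose $w'=\gamma w$ with $\|\log\gamma\|_\infty\le\rho$. Then $B^\top W^{\vartheta}B$ changes by a factor $e^{\pm|\vartheta|\rho}$ in Loewner order, so each $\Lambda_{B,i}$ changes by at most that factor. The term $w_i^{-\vartheta}v_i=w_i^{a}v_i$ changes by $e^{\pm a\rho}$. Hence $\log F$ is Lipschitz in $\ell_\infty$ with constant $\tfrac p2 a=1-p/2<1$, using $|\vartheta|=a$ and $p<1$.

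The map is therefore a strict contraction of $(\R^m,\ell_\infty)$ in log coordinates. This is a complete metric space, so Banach's theorem gives a unique fixed point $w>0$. Positivity is automatic: $\tau_i\ge v_i>0$ and $w_i\ge v_i$.

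\textbf{Implicit differentiation.} Define
\[
G(u,\log c):=\log\bigl(\sigma(W^{\vartheta/2}\operatorname{Diag}(c)A)+v\bigr)-u .
\]
It suffices to differentiate the leverage scores $\sigma(\operatorname{Diag}(e^{y})A)$ in $y$. The standard formula is $\d\sigma=2\Lambda\,\d y$ with $\Lambda=\operatorname{Diag}(\sigma)-P\circ P$. It follows from $\sigma_i=e^{2y_i}a_i^\top(A^\top e^{2Y}A)^{-1}a_i$ and the derivative of the inverse.

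Here the effective row log-scaling is $y=\tfrac{\vartheta}{2}\log w+\log c$. Differentiating $w=\sigma+v$ gives
\[
\d w=2\Lambda\bigl(\tfrac{\vartheta}{2}\,\d\log w+\d\log c\bigr)=-a\Lambda\,\d\log w+2\Lambda\,\d\log c .
\]
Since $\d w=W\,\d\log w$, this rearranges to $(W+a\Lambda)\,\d\log w=2\Lambda\,\d\log c$, which is the claimed identity.

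\textbf{Invertibility (the main step).} We need $W+a\Lambda$ to be nonsingular. The matrix $\Lambda$ is symmetric PSD. Indeed $P\circ P\preceq\operatorname{Diag}(\sigma)$: for any $x$, $x^\top(P\circ P)x=\tr(XPXP)$ with $X:=\operatorname{Diag}(x)$. By Cauchy--Schwarz this is at most $\|PX\|_F^2=\sum_i x_i^2\sigma_i$, using $P^2=P$.

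Since $W\succ0$ and $a>0$, we get $W+a\Lambda\succeq W\succ0$, so it is invertible. The implicit function theorem then applies: $\partial_u G$ is a positive multiple of $-(W+a\Lambda)$ up to left multiplication by $W^{-1}$, so it is nonsingular. Combined with uniqueness, this gives a smooth global solution map on the open set of positive scalings.

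The delicate bookkeeping is in this step: the scaling $W^{\vartheta/2}$ must be folded into $c$ correctly so that the same $P$ and $\sigma$, evaluated at $W^{\vartheta/2}B$, appear in $\Lambda$.
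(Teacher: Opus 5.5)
Your proof is correct. The derivative step is the same as the paper's, but your existence-and-uniqueness argument takes a different route.

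\textbf{Your route.} You apply Banach's theorem to $z\mapsto\log F_B(e^z)$, using the global $\ell_\infty$ Lipschitz bound $\tfrac p2 a=1-\tfrac p2<1$. This is the same comparison the paper proves in the next lemma, Lemma~\ref{lem:linf_contraction}. Your appeal to $p<1$ there is unnecessary; $p>0$ suffices.

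\textbf{The paper's route.} The paper realizes the solution as the unique minimizer of the strictly convex, coercive potential
$\mathcal J_B(z)=\sum_i e^{z_i}-v^\top z-\vartheta^{-1}\log\det(B^\top\operatorname{Diag}(e^{\vartheta z})B)$.
Its gradient is $w-v-\sigma(W^{\vartheta/2}B)$ and its Hessian is exactly $W+a\Lambda$.

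\textbf{What each buys.} The variational route makes strict convexity, uniqueness, and nonsingularity of the implicit-function operator a single fact, namely $\Lambda\succeq0$. The price is a coercivity argument, which the paper only sketches. Your contraction route needs no coercivity and gives a quantitative rate that is reused algorithmically. It must, however, establish $W+a\Lambda\succ0$ separately. Your bound $P\circ P\preceq\operatorname{Diag}(\sigma)$, obtained from $\tr(XPXP)\le\|PX\|_F^2$, does this correctly.

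\textbf{Shared part.} The derivative formula $\d\sigma=2\Lambda\,\d y$ with $y=\tfrac\vartheta2\log w+\log c$ coincides with the paper's argument. So does the passage from the local implicit-function branch plus global uniqueness to a smooth global solution map.
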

\begin{proof}
For $z=\log w$, define
\[
 \mathcal J_B(z):=\sum_{i=1}^m e^{z_i}-v^\top z
 -\frac1{\vartheta}\log\det(B^\top
          \operatorname{Diag}(e^{\vartheta z})B).
\]
The matrix in the determinant is positive definite.  Differentiating gives
\[
 \nabla\mathcal J_B(z)=w-v-\sigma(W^{\vartheta/2}B),
 \qquad
 \nabla^2\mathcal J_B(z)=W+a\Lambda.
\]
The projection identity
$\Lambda=\operatorname{Diag}(\sigma)-P\circ P\succeq0$ and
$W\succ0$ show that the Hessian is positive definite.  Moreover
$\mathcal J_B(z)\to+\infty$ as $\|z\|_\infty\to\infty$:
an unbounded positive coordinate is dominated by $\sum_{i=1}^m e^{z_i}$, while
if the positive coordinates stay bounded and a coordinate tends to
$-\infty$, the term $-v^\top z$ and the nonnegative
logarithmic-determinant growth prevent escape.  Thus $\mathcal J_B$ is
coercive and strictly convex, so it has a unique minimizer, exactly the
displayed fixed point.

The Hessian is nonsingular, hence the implicit-function theorem gives smooth
dependence on positive row scalings.  Finally,
$\d\sigma=2\Lambda(\d\log c+\vartheta\,\d((\log w)/2))$.
Differentiating $w-v=\sigma$ and rearranging gives the claimed equation.
\end{proof}

The fixed-point iteration must remain contractive when its leverage estimates
are approximate, and its target must move continuously under row rescaling.
The following lemma gives both properties in the same logarithmic metric.
\begin{lemma}[$\ell_\infty$ contraction and target stability]
\label{lem:linf_contraction}
Let $0<p<2$, let $v\geq0$, and let $\widehat F_i=e^{\pm\delta}F_{A,i}(w)$ for every $i\in[m]$, which holds as soon as the quantity $\Lambda_{A,i}(w)+w_i^{-\vartheta}v_i=w_i^{-\vartheta}(\sigma_i(\mathbf W^{\vartheta/2}A)+v_i)$ is estimated to within a factor $e^{\pm2\delta/p}$. Only that \emph{sum} has to be relatively accurate: an estimate of $\sigma_i$ with additive error $O(\delta)(\sigma_i+v_i)$ suffices, which is strictly weaker than a relative estimate of $\sigma_i$ and is supplied by the hybrid score provider. Then
\[
\|\log\widehat F-\log w^v_p(A)\|_\infty
\leq(1-\frac{p}{2})\|\log w-\log w^v_p(A)\|_\infty+\delta .
\]
Moreover, if $A'=\mathbf DA$ for a positive diagonal matrix $\mathbf D$ satisfying $\|\log\mathbf D\|_\infty\leq r$, then
\[
\|\log w^v_p(A')-\log w^v_p(A)\|_\infty\leq 4r.
\]
Consequently, fix $p_0>0$. There is an integer $t_0=O_{p_0}(1)$ with the following property. If $p\in[p_0,1)$, the starting point satisfies
\[
\|\log w^{(0)}-\log w^v_p(A')\|_\infty\leq E
\]
for some $E>0$, and every iteration $w^{(t+1)}=\widehat F_{A'}(w^{(t)})$ has logarithmic error $\delta\leq pE/8$, then
\[
\|\log w^{(t_0)}-\log w^v_p(A')\|_\infty\leq E/2.
\]
\end{lemma}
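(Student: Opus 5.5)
The proof has three parts: the contraction, the target stability, and then the iteration count. All three rest on one Lipschitz estimate for $\log F$ in logarithmic coordinates.

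\textbf{Contraction.} Work in $z=\log w$ and write $w^*:=w^v_p(A)$, so that $F(w^*)=w^*$. The key claim is that $G(z):=\log F_{A}(e^z)$ is $(1-p/2)$-Lipschitz in $\ell_\infty$.

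Fix $z,z'$ with $\|z-z'\|_\infty\leq s$. Then
\[
W'^{\vartheta}=e^{\pm|\vartheta| s}W^{\vartheta}
\]
in the Loewner order, so $\Lambda_i$ changes by a factor $e^{\pm|\vartheta| s}$. The floor term $w_i^{-\vartheta}v_i$ also changes by a factor $e^{\pm|\vartheta|s}$. Hence their sum $S_i$ changes by $e^{\pm|\vartheta|s}$, and $G_i=\frac p2\log S_i$ changes by at most $\frac p2|\vartheta|s=(1-p/2)s$, because $\frac p2(\frac2p-1)=1-\frac p2$.

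This needs $p<2$ so that $\vartheta<0$, which holds. It also needs $\Lambda$ to be monotone in $W^\vartheta$. That monotonicity holds because $\Lambda_i$ is the quadratic form of the inverse of a matrix that is linear and monotone in $W^{\vartheta}$.

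Apply the estimate with $z'=\log w^*$, where $G(z')=z'$. This gives
\[
\|\log F(w)-\log w^*\|_\infty\leq(1-p/2)\|\log w-\log w^*\|_\infty .
\]
Adding the $\delta$ error in $\widehat F$ gives the first display. For the weaker hypothesis, note that an additive error $O(\delta)(\sigma_i+v_i)$ in $\sigma_i$ is a relative error of $e^{\pm O(\delta)}$ in the sum $\sigma_i+v_i$, which is exactly what is needed.

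\textbf{Target stability.} Let $F'$ be the map for $A'=\mathbf DA$. Then $\Lambda_{A',i}(w)=D_{ii}^2\,b_i^\top(A^\top D W^\vartheta D A)^{-1}b_i$. Since $DW^\vartheta D=e^{\pm2r}W^\vartheta$, we get $\Lambda_{A',i}=e^{\pm4r}\Lambda_{A,i}$ at the same $w$, while the floor term is unchanged. Therefore
\[
\|\log F'(w)-\log F(w)\|_\infty\leq\tfrac p2\cdot4r=2pr .
\]
Now iterate $F'$ starting from $w^*$, which is a fixed point of $F$. The usual perturbed-fixed-point bound for a $(1-p/2)$-contraction gives
\[
\|\log w'^*-\log w^*\|_\infty\leq \frac{2pr}{p/2}=4r .
\]

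\textbf{Iteration count.} Let $e_t$ denote the log-error after $t$ steps. The recursion is $e_{t+1}\leq(1-p/2)e_t+pE/8$. Unrolling it gives
\[
e_t\leq(1-p/2)^tE+\frac{pE/8}{p/2}=(1-p/2)^tE+E/4 .
\]
Since $p\geq p_0$, we have $(1-p/2)^t\leq(1-p_0/2)^t$, and $(1-p_0/2)^{t_0}\leq1/4$ once $t_0=\lceil \log 4/(-\log(1-p_0/2))\rceil=O_{p_0}(1)$. Then $e_{t_0}\leq E/2$.

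The main obstacle is the Loewner-monotonicity step. It must cover both factors of $D$ in the rescaled case and both directions of the inequality, and it must use the sign $\vartheta<0$ correctly; for $0<p<2$ that sign holds.
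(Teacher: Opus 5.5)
Your proof is correct and follows the paper's argument essentially step for step. The paper uses the same Loewner comparison to show that $z\mapsto\log F_A(e^z)$ is $(1-\frac p2)$-Lipschitz in $\ell_\infty$, the same $e^{\pm4r}$ perturbation of $\Lambda$ to get $d\leq(1-\frac p2)d+2pr$, and the same unrolled recurrence with $(1-\frac p2)^{t_0}\leq1/4$. The paper also explicitly applies Banach's theorem to this global contraction to obtain existence and uniqueness of $w^v_p(A)$ and $w^v_p(A')$ for all $0<p<2$, $v\geq0$; your perturbed-fixed-point step uses this implicitly, so you should state it.
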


\begin{proof}
The comparison below applies to any two positive inputs, so the map
$z\mapsto\log F_A(e^z)$ is a global contraction in $\ell_\infty$ with factor
$1-p/2<1$.  Banach's theorem therefore supplies the unique positive fixed
point $w^v_p(A)$.  Write $w_*:=w^v_p(A)$ and $w=e^{u}w_*$, so $\mathbf W^{\vartheta}=e^{\vartheta u}\mathbf W_*^{\vartheta}$ with $e^{\vartheta u}$ diagonal. Hence
\[
e^{-|\vartheta|\|u\|_\infty}A^\top\mathbf W_*^{\vartheta}A
\preceq A^\top\mathbf W^{\vartheta}A
\preceq e^{|\vartheta|\|u\|_\infty}A^\top\mathbf W_*^{\vartheta}A ,
\]
and inverting and applying the resulting inequality to $a_i$ gives
\[
\Lambda_{A,i}(w)=e^{\pm|\vartheta|\|u\|_\infty}
\Lambda_{A,i}(w_*).
\]
The second summand obeys the same estimate, since $w_i^{-\vartheta}v_i=e^{-\vartheta u_i}(w_*)_i^{-\vartheta}v_i$; as both summands are nonnegative, their sum is within $e^{\pm|\vartheta|\|u\|_\infty}$ of its value $(w_*)_i^{2/p}$ at $w_*$. Raising to the power $p/2$ yields
\[
F_{A,i}(w)
=e^{\pm\frac{p}{2}|\vartheta|\|u\|_\infty}(w_*)_i,
\qquad
\frac{p}{2}|\vartheta|=1-\frac{p}{2},
\]
and the error in $\widehat F$ contributes a further factor $e^{\pm\delta}$, which proves the first claim.

For target stability, note that for every $w>0$
\[
A'^\top\mathbf W^\vartheta A'=A^\top\mathbf D\mathbf W^\vartheta\mathbf DA
\]
lies between $e^{-2r}A^\top\mathbf W^\vartheta A$ and $e^{2r}A^\top\mathbf W^\vartheta A$. The inverse quadratic form changes by $e^{\pm2r}$, and the factor $\mathbf D_{ii}^2$ in the $i$-th row contributes another $e^{\pm2r}$, so $\Lambda_{A',i}(w)=e^{\pm4r}\Lambda_{A,i}(w)$. The second summand of $F$ does not involve $A$ at all, so the sum moves by at most $e^{\pm4r}$ as well, and
\[
\|\log F_{A'}(w)-\log F_A(w)\|_\infty\leq 2pr.
\]
Define $w_*=w^v_p(A)$, $w_*'=w^v_p(A')$ and $d:=\|\log w_*'-\log w_*\|_\infty$. Applying the first claim to the exact map $F_{A'}$ and then the preceding comparison gives $d\leq(1-\frac p2)d+2pr$, so $d\leq4r$. For the final claim define $\varrho:=1-p/2\leq1-p_0/2<1$. Unrolling the first claim, the inexact iteration satisfies
\[
\|\log w^{(t)}-\log w^v_p(A')\|_\infty
\leq \varrho^{\,t}E+\frac{\delta}{1-\varrho}
\leq \varrho^{\,t}E+\frac E4 ,
\]
where the first step follows from unrolling the inexact contraction recurrence,
and the second step follows from $1-\varrho=p/2$ and $\delta\leq pE/8$.
Choosing $t_0$ with $\varrho^{\,t_0}\leq1/4$ proves the result.
\end{proof}

\subsection{Projector and Jacobian defects}
\label{sec:sharper_norm}
\label{subsec:projector_jacobian_defects}

Section~\ref{sec:dynamic_oracle} changes the Lewis-weight computation while
preserving the path geometry.  This section changes the geometry while
preserving score maintenance.  The two ingredients are combined by
the hybrid provider in Section~\ref{subsec:hybrid_maintenance_stopping}.
Throughout, define
$\ell:=\log(4m)$ and $q:=\log\frac{4m}{n}$, and let $L_*:=\ell$ denote the
deterministic confidence scale.  The
checkpointed execution below keeps every stochastic horizon polynomial in
$m$, independently of the total path length $\kappa$.
$\|y\|_{w+\infty}:=\|y\|_\infty+C\|y\|_w$ is the mixed centrality norm
of~\cite{ls19}, and $K$ is the radius of the invariant
$\|\log w-\log g(x)\|_\infty\leq K$ maintained by the chasing game.  The
parameters $C,K,p,R,E$ will be chosen below.  Here $R$ is the centrality
radius tolerated by the centering argument, whereas $E$ is the score accuracy
observed by the chasing game.  Lemma~\ref{lem:sharper_parameters} fixes
$C$, $K$, and $p$; Section~\ref{subsec:large_radius_refinement} makes the final
choices of $R$ and $E$.  Until then, $R$ is only assumed to be at most
$r_0/c_k$ for a sufficiently small absolute constant $r_0\leq1/160$.

Three parameter choices in the analysis of~\cite{ls19} arise from bounds that
appear linear but admit sharper estimates.  The coefficient
$C=24\sqrt{c_s}c_k$ absorbs the $\Theta(1/C)$ error of two triangle
inequalities, one for the weighted projector and one for the Lewis-weight
Jacobian.  The tolerance $K=1/(16c_k)$ absorbs the $\Theta(K)$ error of
converting the ideal norm into the maintained one.  Finally, the consistency
$c_k=\Theta(1/(1-p))$ grows to $\Theta(\log m)$ because the sensitivity of
their weight function is $m^{\Theta(1-p)}$.  The first two losses are
quadratic under the refined analysis, and the third is an artifact of adding
the regularizer after the Lewis fixed point instead of inside it.  Taking
\[
 p:=1-\frac1{4q}, \qquad c_k:=8q, \qquad
 C:=8\sqrt{2c_sc_k}, \qquad K:=\frac1{128\sqrt{c_k}}
\]
therefore suffices. Enlarging $K$ relaxes the accuracy $E$ to which the weights must be maintained, a smaller $C$ lengthens the admissible change in the path parameter, and the regularized weight function makes both run on $q$ rather than on $\ell$. Section~\ref{sec:predicted} changes what the chasing game is asked to track rather than any of the three quantities above; the defect-adaptive construction in Section~\ref{sec:defect_adaptive} subsequently separates $R$ from $E$ and supplies the radius used by the final theorem.

The first geometric improvement is a Pythagorean estimate for the projected
Newton direction in the mixed norm.  It replaces two independent triangle
inequalities by one joint bound.
\begin{lemma}[Projector defect]
\label{lem:pythagorean_projector}
Let $x\in\Omega^\circ$ and let $w$ satisfy $\frac45g(x)\leq w\leq\frac54g(x)$. For every $C>0$ and every $y\in\R^m$,
$$\|P_{x,w}y\|_\infty+C\|P_{x,w}y\|_w\leq\|y\|_\infty+\sqrt{C^2+2c_s}\,\|y\|_w,$$
and consequently $c_\gamma\leq\sqrt{1+{2c_s}/{C^2}}$.
\end{lemma}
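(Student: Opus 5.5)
The plan is to exploit the fact that $P_{x,w}$ is an orthogonal projection in the $\|\cdot\|_w$ geometry, so that the weighted norm obeys a Pythagorean identity, and to control the $\ell_\infty$ part of the complementary projection by a leverage-score bound.

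\textbf{Step 1: orthogonality.} Write $Q:=I-P_{x,w}=W^{-1}M$ with $M:=A_x(A_x^\top W^{-1}A_x)^{-1}A_x^\top$. One checks that $Q^2=Q$ and that $WQ=M$ is symmetric, so $Q$ and $P_{x,w}$ are $W$-self-adjoint complementary projections. Hence
\[
 \|P_{x,w}y\|_w^2+\|Qy\|_w^2=\|y\|_w^2,
 \qquad\text{and}\qquad
 \|Qy\|_w^2=y^\top MW^{-1}My=y^\top My,
\]
using $MW^{-1}M=M$.

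\textbf{Step 2: coordinatewise bound on $Qy$.} This is the only place where the band $\frac45g(x)\le w\le\frac54g(x)$ enters. For each $i$, Cauchy--Schwarz in the positive semidefinite form $M$ gives
\[
 (e_i^\top Qy)^2=w_i^{-2}(e_i^\top My)^2\le w_i^{-2}M_{ii}\,y^\top My .
\]
Here $M_{ii}/w_i^2=\sigma_i(W^{-1/2}A_x)/w_i$. The Lee--Sidford consistency/sensitivity property of $g$ bounds $\sigma_i(G^{-1/2}A_x)\le c_sg_i$. Comparing $W$ with $G$ inside the inverse (a factor $5/4$) and in $w_i$ (another factor $5/4$) costs at most a factor $(5/4)^2\le2$. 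This yields
\[
 \|Qy\|_\infty\le\sqrt{2c_s}\,\|Qy\|_w .
\]
I expect this step to be the main obstacle. One has to pin down exactly which normalization of $c_s$ is being used, and check that the band slack really fits under the factor $2$; if it does not, the constant absorbs it.

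\textbf{Step 3: combine.} Set $a:=\|y\|_w$ and $b:=\|Qy\|_w\le a$. Using $P_{x,w}y=y-Qy$,
\[
 \|P_{x,w}y\|_\infty+C\|P_{x,w}y\|_w
 \le\|y\|_\infty+\sqrt{2c_s}\,b+C\sqrt{a^2-b^2}
 \le\|y\|_\infty+\sqrt{C^2+2c_s}\,a ,
\]
where the last step is Cauchy--Schwarz applied to $(\sqrt{2c_s},C)\cdot(b,\sqrt{a^2-b^2})$.

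\textbf{Step 4: the bound on $c_\gamma$.} The right-hand side above is at most $\sqrt{1+2c_s/C^2}\,N_w(y)$, since $\sqrt{C^2+2c_s}\,\|y\|_w=\sqrt{1+2c_s/C^2}\cdot C\|y\|_w$ and the coefficient of $\|y\|_\infty$ is $1$.

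For the centrality equivalence, every candidate vector in the definition of $\delta_t$ has the form $y=r_t-\frac{A\eta}{w\odot\sqrt{\phi''}}$, which equals $r_t-W^{-1}A_x\eta$. The term $W^{-1}A_x\eta$ lies in the range of $Q$ and is annihilated by $P_{x,w}$, so $P_{x,w}y=P_{x,w}r_t$ for every $\eta$. Applying Step 3 to the minimizing $y$ gives $\overline\delta_t\le c_\gamma\delta_t$. Conversely, $P_{x,w}r_t=r_t-Qr_t$ is itself an admissible candidate, which gives $\delta_t\le\overline\delta_t$.
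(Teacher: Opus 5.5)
Your proof is correct and follows the paper's argument: $w$-orthogonality of $P_{x,w}$, then the bound $\|(I-P_{x,w})y\|_\infty\le\sqrt{2c_s}\,\|(I-P_{x,w})y\|_w$ charged to the complementary component rather than to $\|y\|_w$, then Cauchy--Schwarz in $\R^2$. The differences are that you derive the $w\to\infty$ bound yourself through the $M$-seminorm instead of citing Lee--Sidford and using idempotence, and that you spell out the centrality equivalence the paper leaves implicit. One bookkeeping fix in Step~2: the band costs $(5/4)^3=125/64$, not $(5/4)^2$. One factor comes from $(A_x^\top W^{-1}A_x)^{-1}\preceq\frac54(A_x^\top G^{-1}A_x)^{-1}$, and $(g_i/w_i)^2\le(5/4)^2$ comes from the two powers of $w_i$ in $M_{ii}/w_i^2$. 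This is still below $2$, so no absorption into the constant is needed; none would be permitted anyway, since the lemma fixes the constant $2c_s$.
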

\begin{proof}
Write $P:=P_{x,w}$, so that $I-P=W^{-1}A_x(A_x^\top W^{-1}A_x)^{-1}A_x^\top$. This matrix is idempotent, and $W(I-P)=A_x(A_x^\top W^{-1}A_x)^{-1}A_x^\top$ is symmetric, so $I-P$ is self-adjoint for the inner product $\langle u,y\rangle_w:=u^\top Wy$. Hence $P$ is an orthogonal projection in that inner product, and with $s:=\|Py\|_w$ and $t:=\|(I-P)y\|_w$ we have $s^2+t^2=\|y\|_w^2$.

Under the hypothesis on $w$, \cite{ls19} bound
$\|I-P\|_{w\to\infty}^2\leq2c_s$.  Applying that bound to $(I-P)y$ rather
than to $y$, and using $(I-P)^2=I-P$, gives
$\|(I-P)y\|_\infty\leq\sqrt{2c_s}\,t$.  The key observation is that the
error may therefore be charged to $t$ instead of to $\|y\|_w$.  Hence
$\|Py\|_\infty\leq\|y\|_\infty+\sqrt{2c_s}\,t$, and Cauchy--Schwarz in
$\R^2$ gives
$\sqrt{2c_s}\,t+Cs\leq\sqrt{2c_s+C^2}\sqrt{t^2+s^2}$.  The second claim
follows from $\sqrt{C^2+2c_s}=C\sqrt{1+{2c_s}/{C^2}}$ together with
$\sqrt{1+{2c_s}/{C^2}}\geq1$.
\end{proof}

The second geometric improvement retains the same Pythagorean slack inside
the Lewis-weight Jacobian.  The next lemma couples its weighted contraction
with a coordinatewise approximation by a diagonal map.

\begin{lemma}[Regularized Jacobian defects]
\label{lem:jacobian_defect}
Let $0<p<1$, let $v>0$ coordinatewise, let $c>0$ be a row scaling and let $w:=w^v_p(\mathrm{Diag}(c)A)$, so that $w=\sigma(\mathbf W^{\vartheta/2}\mathrm{Diag}(c)A)+v$ with $\vartheta=1-2/p$. Let $B$ denote the Jacobian of $\log w$ with respect to $\log c$ and define $b:=Bh$. Then
\begin{itemize}
\item[(a)] \phantomsection\label{item:jacobian_defect_a}
$\|b-ph\|_w^2+\|b\|_w^2\leq p^2\|h\|_w^2$.
\item[(b)] \phantomsection\label{item:jacobian_defect_b}
There is a diagonal matrix $D$ with $0\preceq D\preceq pI$ such that
$\|b-Dh\|_\infty^2\leq\frac{4}{p^2}(p^2\|h\|_w^2-\|b\|_w^2)$.
\item[(c)] \phantomsection\label{item:jacobian_defect_c}
$\|b\|_{w+\infty}\leq p\sqrt{1+{4}/{(p^2C^2)}}\,\|h\|_{w+\infty}$ for every $C>0$.
\end{itemize}
\end{lemma}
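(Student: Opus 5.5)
The plan is to diagonalize the Jacobian equation of Lemma~\ref{lem:regularized_weight_wellposed} in the $W$-weighted geometry, exactly as the proof of Lemma~\ref{lem:pythagorean_projector} uses orthogonality in $\langle\cdot,\cdot\rangle_w$. By that lemma, with $a=2/p-1$ and $\Lambda=\operatorname{Diag}(\sigma)-P\circ P$,
\[
(W+a\Lambda)b=2\Lambda h .
\]
Since $P\circ P\succeq0$ and $\Lambda\succeq0$, and $\sigma=w-v\leq w$, we have $0\preceq\Lambda\preceq\operatorname{Diag}(\sigma)\preceq W$. Set $M:=W^{-1/2}\Lambda W^{-1/2}$, so that $0\preceq M\preceq I$, and put $\tilde b:=W^{1/2}b$ and $\tilde h:=W^{1/2}h$. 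Then $\tilde b=f(M)\tilde h$ with $f(\lambda)=2\lambda/(1+a\lambda)$. The function $f$ is increasing on $[0,1]$, and $f(1)=2/(1+a)=p$, so $0\leq f\leq p$ on the spectrum.

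\textbf{Part (a).} This is now a scalar identity applied eigenvalue by eigenvalue:
\[
p^2-(p-f)^2-f^2=2f(p-f)\geq0 .
\]
Hence $\|b-ph\|_w^2+\|b\|_w^2\leq p^2\|h\|_w^2$. It also records the defect exactly:
\[
d^2:=p^2\|h\|_w^2-\|b\|_w^2=\tilde h^\top\bigl(p^2-f(M)^2\bigr)\tilde h ,
\qquad
p-f(\lambda)=\frac{2(1-\lambda)}{(1+a)(1+a\lambda)} .
\]

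\textbf{Part (b).} Split $\Lambda$ into its diagonal and Hadamard parts and take
\[
D:=\operatorname{Diag}\bigl(2\sigma/(w+a\sigma)\bigr).
\]
Because $\sigma\leq w$, each entry lies in $[0,2/(1+a)]=[0,p]$. Rearranging the Jacobian equation gives
\[
b-Dh=\bigl(W+a\operatorname{Diag}(\sigma)\bigr)^{-1}(P\circ P)\,y,
\qquad y:=ab-2h .
\]
Since $P\circ P\succeq0$ and $(P\circ P)_{ii}=\sigma_i^2$, Cauchy--Schwarz in the $P\circ P$ form gives
\[
|((P\circ P)y)_i|\leq\sigma_i\sqrt{y^\top(P\circ P)y}.
\]
Dividing by $w_i+a\sigma_i\geq(1+a)\sigma_i$ yields
\[
\|b-Dh\|_\infty\leq\frac p2\sqrt{y^\top(P\circ P)y}.
\]
It remains to bound $y^\top(P\circ P)y$ by a multiple of $d^2$. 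Spectrally, $W^{1/2}y=-2(1+aM)^{-1}\tilde h$, and
\[
y^\top(P\circ P)y=y^\top\operatorname{Diag}(\sigma)y-y^\top\Lambda y\leq y^\top(W-\Lambda)y,
\]
which equals $\tilde h^\top\,4(1-M)(1+aM)^{-2}\,\tilde h$. Comparing this with $p^2-f^2=(p-f)(p+f)$ reduces the claim to a scalar inequality in $\lambda\in[0,1]$.

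\textbf{Main obstacle.} I expect this last comparison to be the hardest step. Near $\lambda=1$ both sides vanish like $1-\lambda$, so the comparison is fine there. Near $\lambda=0$ the defect factor $p+f$ stays bounded below by $p$, so the ratio is still bounded, but I need to check that the constant is exactly $16/p^4$, which is what the target $4/p^2$ requires. If the crude step $\operatorname{Diag}(\sigma)\preceq W$ costs too much, I would instead use $\Lambda y=-Wb$ to write
\[
(P\circ P)y=\operatorname{Diag}(\sigma)y+Wb,
\]
and bound that quadratic form directly. As a fallback, I would adjust $D$ within $[0,p]$ so that the residual is expressed through $b-ph$, which part (a) already controls.

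\textbf{Part (c).} This follows from (a) and (b) by the same Cauchy--Schwarz trick as Lemma~\ref{lem:pythagorean_projector}. Part (b) gives
\[
\|b\|_\infty\leq p\|h\|_\infty+\tfrac2p d,
\qquad\text{and}\qquad
\|b\|_w=\sqrt{p^2\|h\|_w^2-d^2}.
\]
Then
\[
C\|b\|_w+\tfrac2p d\leq\sqrt{C^2+4/p^2}\;p\|h\|_w .
\]
Adding the $\infty$-term, the result is bounded by $p\sqrt{1+4/(p^2C^2)}\,(\|h\|_\infty+C\|h\|_w)$.
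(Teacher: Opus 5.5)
Your proof is correct. The step you flag as the main obstacle closes at once with the crude bound $\operatorname{Diag}(\sigma)\preceq W$, so neither fallback is needed. Using $pa=2-p$, one has $p^2-f(\lambda)^2=(p-f)(p+f)=p^2(1-\lambda)\bigl(1+\tfrac{4-p}{p}\lambda\bigr)(1+a\lambda)^{-2}$. All factors are commuting functions of $M$, with $I-M\succeq0$ and $I+\tfrac{4-p}{p}M\succeq I$, so
\[
 y^\top(P\circ P)y\leq4\,\tilde h^\top(I-M)(I+aM)^{-2}\tilde h
 \leq\frac{4}{p^2}\,\tilde h^\top\bigl(p^2I-f(M)^2\bigr)\tilde h=\frac{4}{p^2}\,d^2 .
\]
Together with your bound $\|b-Dh\|_\infty\leq\frac p2\sqrt{y^\top(P\circ P)y}$, this gives $\|b-Dh\|_\infty^2\leq d^2$. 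That is stronger than the required $\frac{4}{p^2}d^2$, because $p<1$. In particular, the regime near $\lambda=0$ costs only a factor $4/p^2$ against an allowance of $16/p^4$.

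Parts (a) and (c) are the paper's argument. In (b) you share the paper's ingredients:
\begin{itemize}
\item the same diagonal $D=2(I+aS)^{-1}S$;
\item the same split of $\Lambda$ into its diagonal and Hadamard parts;
\item the same comparison $Q\preceq I-L$ (your $\operatorname{Diag}(\sigma)\preceq W$; your $M$ is the paper's $L$);
\item the same factorization of $p^2-T^2$, with $T=f(M)$.
\end{itemize}

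The difference is where Cauchy--Schwarz is applied. The paper writes $T-D=-2(I+aS)^{-1}Q(I+aL)^{-1}$ and uses Cauchy--Schwarz in the seminorm of $p^2I-T^2$. That requires a pseudo-inverse, a check that the columns of $T-D$ are orthogonal to $\ker(p^2I-T^2)$, and the operator inequality $Q(I-L)^\dagger Q\preceq Q$.

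You apply it in the $P\circ P$ seminorm to $y=ab-2h=-2W^{-1/2}(I+aM)^{-1}\tilde h$. There the diagonal entry $\sigma_i^2$ is explicit, and the comparison with $d^2$ reduces to a scalar inequality on the spectrum of $M$, with no kernel bookkeeping. Your placement also keeps the factor $\sigma_i/(w_i+a\sigma_i)\leq p/2$, which the paper discards as $(1+aS_{ii})^{-2}\leq1$; that is where your sharper constant comes from.
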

\begin{proof}
\noindent{\bf Proof of (a).}
Define $a:=-\vartheta=\frac2p-1>0$, let $\Sigma$ and $P$ be the
leverage-score matrix and the orthogonal projection matrix of
$\mathbf W^{\vartheta/2}\mathrm{Diag}(c)A$, and let
$\Lambda:=\Sigma-P\circ P$.  Lemma~\ref{lem:regularized_weight_wellposed}
makes the fixed point unique and smooth in $\log c$.  Differentiating its
equation and using
$\d\sigma=2\Lambda\,\d\log(\mathbf W^{\vartheta/2}\mathrm{Diag}(c))$
therefore gives $(W+a\Lambda)\,\d\log w=2\Lambda\,\d\log c$, so $B=(W+a\Lambda)^{-1}2\Lambda$. Passing to $w$-normalized coordinates, with
$$S:=W^{-1/2}\Sigma W^{-1/2}, \quad Q:=W^{-1/2}(P\circ P)W^{-1/2}, \quad L:=S-Q,$$
we get $T:=W^{1/2}BW^{-1/2}=2(I+aL)^{-1}L$. Here $\Lambda\succeq0$ gives $L\succeq0$, and $\sigma_i\leq w_i$ gives $S\preceq I$, hence $0\preceq Q\preceq S\preceq I$ and $0\preceq L\preceq I$. The eigenvalues $2\lambda/(1+a\lambda)$ of $T$ increase in $\lambda$ and equal $p$ at $\lambda=1$, so $T$ is symmetric with $0\preceq T\preceq pI$. Writing $x:=W^{1/2}h$, so that $\|h\|_w=\|x\|_2$ and $\|b\|_w=\|Tx\|_2$, the first inequality is the scalar bound $(p-\lambda)^2+\lambda^2\leq p^2$ on $[0,p]$ applied in the eigenbasis of $T$.

\medskip
\noindent{\bf Proof of (b).}
Define $D:=2(I+aS)^{-1}S$, diagonal because $S$ is, with $0\preceq D\preceq pI$ for the same reason as $T$. From $(I+aX)^{-1}X=a^{-1}(I-(I+aX)^{-1})$ we get $K:=T-D=\frac2a((I+aS)^{-1}-(I+aL)^{-1})$, and the resolvent identity together with $S-L=Q$ gives
$$K=-2(I+aS)^{-1}Q(I+aL)^{-1},$$
which is symmetric, being the difference of two symmetric matrices. Factoring $p^2-t^2$ at $t=2\lambda/(1+a\lambda)$ as $(p(1+a\lambda)-2\lambda)(p(1+a\lambda)+2\lambda)$ and using $pa=2-p$,
$$M:=p^2I-T^2=p^2(I-L)(I+\tfrac{4-p}{p}L)(I+aL)^{-2}.$$
Hence $KM^\dagger K=\frac4{p^2}(I+aS)^{-1}Q(I-L)^\dagger(I+\frac{4-p}{p}L)^{-1}Q(I+aS)^{-1}$, the powers of $I+aL$ cancelling because they commute with the other factors. The two middle factors are commuting positive semidefinite functions of $L$ and the second is at most $I$, while $Q\preceq I-S+Q=I-L$ gives $Q(I-L)^\dagger Q\preceq Q$; therefore
$$KM^\dagger K\preceq\frac4{p^2}(I+aS)^{-1}Q(I+aS)^{-1}.$$
Taking the $i$-th diagonal entry, which is legitimate because $I+aS$ is diagonal, and using $Q_{ii}=\sigma_i^2/w_i\leq w_i$, we get
\begin{equation}
\label{eq:jacobian_defect_diagonal}
e_i^\top KM^\dagger Ke_i\leq\frac4{p^2}w_i.
\end{equation}
The pseudo-inverse is used legitimately here: $\ker M$ is the eigenspace of $L$ at the eigenvalue $1$, on which $Q$ vanishes because $0\preceq Q\preceq I-L$, so every column of $K$ is orthogonal to $\ker M$. Since $b-Dh=W^{-1/2}Kx$, Cauchy--Schwarz in the seminorm of $M$ gives
$$|(b-Dh)_i|^2=\frac{|e_i^\top Kx|^2}{w_i}\leq\frac{e_i^\top KM^\dagger Ke_i}{w_i}\,x^\top Mx\leq\frac4{p^2}(p^2\|h\|_w^2-\|b\|_w^2),$$
where the first step follows from $b-Dh=W^{-1/2}Kx$, the second step follows from Cauchy--Schwarz in the seminorm induced by $M$, and the third step follows from Eq.~\eqref{eq:jacobian_defect_diagonal} and $x^\top Mx=p^2\|h\|_w^2-\|b\|_w^2$. Taking the maximum over $i\in[m]$ gives the claimed $\ell_\infty$ bound.

\medskip
\noindent{\bf Proof of (c).}
Define $r:=\|h\|_w$, $s:=\|b\|_w$ and $d:=\sqrt{p^2r^2-s^2}$. Then $\|b\|_\infty\leq\|Dh\|_\infty+\|b-Dh\|_\infty\leq p\|h\|_\infty+\frac2pd$, and Cauchy--Schwarz in $\R^2$ gives $\frac2pd+Cs\leq\sqrt{4/p^2+C^2}\sqrt{d^2+s^2}=pC\sqrt{1+4/(p^2C^2)}\,r$. As $\sqrt{1+4/(p^2C^2)}\geq1$, the claim follows.
\end{proof}

\subsection{Robust centrality geometry}
\label{subsec:robust_centrality_geometry}

The centrality proof needs contraction in the norm defined by the maintained
weights rather than by the inaccessible exact weights.  The next estimate
shows that this change of norm incurs only a quadratic loss.
\begin{lemma}[Robust consistency]
\label{lem:robust_consistency}
Let $b=Bh$ be as in Lemma~\ref{lem:jacobian_defect}-\hyperref[item:jacobian_defect_a]{(a)}, let $\widetilde w>0$ satisfy $\|\log\widetilde w-\log w\|_\infty\leq H\leq1/4$, and define $N(y):=\|y\|_\infty+C\|y\|_{\widetilde w}$. Then, with $L_H:=e^{3H/2}-1\leq2H$,
$$N(b)\leq p(1+\frac{e^{H/2}}{2}(\frac{2}{pC}+L_H)^2)\,N(h).$$
\end{lemma}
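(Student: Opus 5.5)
The plan is to reduce everything to the two scalars of Lemma~\ref{lem:jacobian_defect}, measured in the \emph{exact} weights $w$, and to pay for the change to $\widetilde w$ only through the factors $e^{\pm H/2}$. Put $r:=\|h\|_w$, $s:=\|b\|_w$ and $d:=\sqrt{p^2r^2-s^2}$; part~(a) guarantees $s\leq pr$, so $d$ is real. Since $e^{-H}w\leq\widetilde w\leq e^{H}w$ coordinatewise, we have $\|y\|_{\widetilde w}=e^{\pm H/2}\|y\|_w$ for every $y$.

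First, bound the $\ell_\infty$ part. Part~(b) of Lemma~\ref{lem:jacobian_defect} gives a diagonal $D$ with $0\preceq D\preceq pI$, so
\[
 \|b\|_\infty\leq p\|h\|_\infty+\tfrac2p d .
\]
Second, the weighted parts satisfy $C\|b\|_{\widetilde w}\leq Ce^{H/2}s$ and $pC\|h\|_{\widetilde w}\geq pCe^{-H/2}r$. Subtracting $p$ times the second estimate gives
\[
 N(b)-pN(h)\leq \tfrac2p d+Ce^{H/2}s-pCe^{-H/2}r .
\]

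Third, parametrize $s=pr\cos\theta$ and $d=pr\sin\theta$ with $\theta\in[0,\pi/2]$. Set $\beta:=Ce^{-H/2}$ and $1+L:=e^{H}$, so that $Ce^{H/2}=\beta(1+L)$. The right-hand side above becomes
\[
 pr\bigl(\tfrac2p\sin\theta+\beta(1+L)\cos\theta-\beta\bigr)
 \leq pr\bigl(\sqrt{4/p^2+\beta^2(1+L)^2}-\beta\bigr),
\]
by Cauchy--Schwarz in $\R^2$, exactly as in part~(c). Using $\sqrt{\beta^2+X}-\beta\leq X/(2\beta)$, this excess is at most
\[
 \frac{pr}{2\beta}\Bigl(\frac4{p^2}+\beta^2(2L+L^2)\Bigr).
\]
Finally, convert $r$ back to $N(h)$ via $N(h)\geq C\|h\|_{\widetilde w}\geq Ce^{-H/2}r=\beta r$, so that $pr\leq pN(h)/\beta$. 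It then remains to verify the scalar inequality
\[
 \frac{1}{2\beta^2}\Bigl(\frac4{p^2}+\beta^2(2L+L^2)\Bigr)\leq\frac{e^{H/2}}2\Bigl(\frac2{pC}+L_H\Bigr)^2 .
\]

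The main obstacle is this last constant bookkeeping. The left side equals $\tfrac{e^{H}}{2}\cdot\tfrac{4}{p^2C^2}+L+\tfrac{L^2}2$, with $L=e^H-1$. The right side expands to $\tfrac{e^{H/2}}2\bigl(\tfrac4{p^2C^2}+\tfrac{4L_H}{pC}+L_H^2\bigr)$. Two mismatches must be absorbed:
\begin{itemize}
\item the pure term $\tfrac{4}{p^2C^2}$ carries $e^{H}$ on the left but only $e^{H/2}$ on the right, and
\item the term $L$ on the left is linear in $H$, while the right side has no linear term in $L_H$ unless $C$ is small.
\end{itemize}
So the crude step $\sqrt{\beta^2+X}-\beta\leq X/(2\beta)$ is too lossy. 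Instead I will keep the exact maximization $\sqrt{\alpha^2+\gamma^2}-\beta$, with $\alpha=2/p$ and $\gamma=\beta(1+L)$. I will also not discard the favourable $e^{-H/2}$ on the $-\beta$ term; rather, I will treat both $\|h\|_{\widetilde w}$ and the $\ell_\infty$ comparison jointly so the $L$-loss is multiplied by the defect. If $L$ still enters linearly, the fix is to bound $\|b\|_{\widetilde w}\leq\|b\|_w+L_H\|b\|_w$. Then $(1+L_H)s$ combines with $\tfrac2p d$ into a single Cauchy--Schwarz with coefficient $\tfrac2{pC}+L_H$, whose square is exactly the quantity in the statement. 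The slack $L_H=e^{3H/2}-1$ is larger than $e^H-1$, and this margin is what I expect to absorb the residual $e^{H/2}$ factors.
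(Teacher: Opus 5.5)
\textbf{There is a genuine gap in your treatment of the weighted part, and neither proposed repair closes it.} Your $\ell_\infty$ step via part~(b), the angular parametrization of $(s,d)$, and the final conversion $N(h)\geq Ce^{-H/2}r$ are all sound.

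Bounding $\|b\|_{\widetilde w}\leq e^{H/2}s$ and $\|h\|_{\widetilde w}\geq e^{-H/2}r$ separately uses only the scalar consequence $s\leq pr$ of part~(a). At $\theta=0$ (that is, $d=0$, $s=pr$) your right-hand side is $pCr(e^{H/2}-e^{-H/2})\approx pCrH$. The exact maximization cannot remove this, since $\sqrt{\alpha^2+\gamma^2}-\beta\geq\gamma-\beta=\beta L$. The alternative $\|b\|_{\widetilde w}\leq(1+L_H)s$ likewise leaves a term $CL_Hs\approx CL_Hpr$, and no Cauchy--Schwarz between $d$ and $s$ can make a term proportional to $s$ small when $d=0$.

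Take $h$ whose mixed norm is dominated by its weighted part. Up to $e^{O(H)}$ factors, the statement then only permits an excess $pr\bigl(\frac{2}{p^2C}+\frac{2L_H}{p}+\frac C2L_H^2\bigr)$. So a loss $CH\cdot pr$ fits only if $C=O(1)$ or $H=O(C^{-2})$. In the regime of Lemma~\ref{lem:sharper_parameters}, $H$ may be as large as $2K$, so $CH$ is an absolute constant and $C=\Theta(\sqrt q)$. There your loss is $\Theta(pr)$ against an allowance of $O(q^{-1/2})pr$.

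This is not constant bookkeeping. Knowing only $\|b\|_w$, $\|h\|_w$ and the $\ell_\infty$ bound, $b$ could live where $\widetilde w=e^{H}w$ and $h$ where $\widetilde w=e^{-H}w$, so the linear loss is really present in your chain. The extra room in $L_H=e^{3H/2}-1$ cannot absorb it. You correctly sense that the metric-change loss must be multiplied by the defect, but you give no mechanism for that.

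\textbf{The missing idea is to use part~(a) in its vector form,} $\|b-ph\|_w^2\leq p^2r^2-s^2=d^2$. This says $b$ is close to $ph$ itself, not merely in norm. The paper's proof sets $F(y):=\|y\|_{\widetilde w}-\|y\|_w$, which has two properties:
\begin{itemize}
\item it is positively homogeneous;
\item it is $L_H$-Lipschitz for $\|\cdot\|_w$, because its gradient is $\|y\|_w^{-1}(\lambda_y\widetilde W-W)y$ with $\lambda_y:=\|y\|_w/\|y\|_{\widetilde w}\in[e^{-H/2},e^{H/2}]$, and $\lambda_y\widetilde w_i/w_i\in[e^{-3H/2},e^{3H/2}]$. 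This is where the exponent $3H/2$ comes from.
\end{itemize}
Homogeneity gives $pF(h)=F(ph)$, hence
\[
 C\|b\|_{\widetilde w}-pC\|h\|_{\widetilde w}
 =C(s-pr)+C\bigl(F(b)-F(ph)\bigr)
 \leq C(s-pr)+CL_Hd .
\]
The change of metric is now charged to $d$. Adding your $\ell_\infty$ estimate gives $N(b)-pN(h)\leq(\frac2p+CL_H)d+C(s-pr)$, still with no $e^{\pm H/2}$ factor.

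Write $d=pru$ and use $s-pr\leq-pru^2/2$. Equivalently, use your $\R^2$ Cauchy--Schwarz, now against the vector $(\frac2p+CL_H,C)$, followed by $\sqrt{a^2+C^2}-C\leq a^2/(2C)$. The maximum over $u$ is $pr(\frac2p+CL_H)^2/(2C)$. Only at this point does $pr\leq pe^{H/2}N(h)/C$ produce exactly the stated constant.
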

\begin{proof}
Define $r:=\|h\|_w$, $s:=\|b\|_w$ and $d:=\sqrt{p^2r^2-s^2}$, which is real by Lemma~\ref{lem:jacobian_defect}-\hyperref[item:jacobian_defect_a]{(a)}. Lemma~\ref{lem:jacobian_defect}-\hyperref[item:jacobian_defect_a]{(a)} gives $\|b-ph\|_w\leq d$, and Lemma~\ref{lem:jacobian_defect}-\hyperref[item:jacobian_defect_b]{(b)} gives $\|b\|_\infty\leq p\|h\|_\infty+\frac2pd$.

Let $F(y):=\|y\|_{\widetilde w}-\|y\|_w$, a positively homogeneous function. For $y\neq0$ define $\theta:=\|y\|_w/\|y\|_{\widetilde w}$, so that $\theta\in[e^{-H/2},e^{H/2}]$. Then $\nabla F(y)=\|y\|_w^{-1}(\theta\widetilde W-W)y$, so its norm dual to $\|\cdot\|_w$ is at most the operator norm of $\theta\widetilde WW^{-1}-I$. The entries of $\widetilde WW^{-1}$ lie in $[e^{-H},e^{H}]$, so that norm is at most $L_H$, and therefore $|F(y)-F(y')|\leq L_H\|y-y'\|_w$.

Hence $C\|b\|_{\widetilde w}-pC\|h\|_{\widetilde w}=C(s-pr)+C(F(b)-F(ph))\leq C(s-pr)+CL_Hd$, using homogeneity and $\|b-ph\|_w\leq d$. Adding the $\ell_\infty$ estimate gives $N(b)-pN(h)\leq(\frac2p+CL_H)d+C(s-pr)$. Write $d=pru$ and $s=pr\sqrt{1-u^2}$ with $u\in[0,1]$; then $s-pr\leq-pru^2/2$, so the right-hand side is at most $pr((\frac2p+CL_H)u-\frac C2u^2)\leq{pr(\frac2p+CL_H)^2}/{(2C)}$. Finally $N(h)\geq C\|h\|_{\widetilde w}\geq Ce^{-H/2}r$, and dividing proves the claim, since $(\frac2p+CL_H)^2/C^2=(\frac2{pC}+L_H)^2$.
\end{proof}

For the rest of the paper define
\[
 c_1:=\sup_{x\in\Omega^\circ}\|g(x)\|_1,
 \qquad
 c_s:=\sup_{x\in\Omega^\circ,\,i\in[m]}
 \frac{\sigma_i(G(x)^{-1/2}A_x)}{g_i(x)}.
\]
The symbol $c_\gamma$ denotes the centrality-equivalence constant in
Eq.~\eqref{eq:computable_centrality}, and $c_k$ is the explicit consistency
scale fixed in Lemma~\ref{lem:sharper_parameters}.

To instantiate the abstract path-following framework, we need uniform bounds
on both the total weight and its leverage sensitivity.  Regularization makes
both quantities absolute up to the natural factor $n$.
\begin{lemma}[Size and sensitivity]
\label{lem:sensitivity}
Let $p=1-\frac1{4q}$, $v:=\frac nm\mathbf 1$ and $g(x):=w^v_p(A_x)$. Then $c_1(g)\leq2n$ and $c_s(g)\leq2$.
\end{lemma}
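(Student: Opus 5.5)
The proof computes both quantities directly from the fixed-point equation $g=\sigma(G^{\vartheta/2}A_x)+v$ of Lemma~\ref{lem:regularized_weight_wellposed} (applied with $B=A_x$, which is nondegenerate), at an arbitrary $x\in\Omega^\circ$; both bounds will then hold uniformly in $x$.

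\textbf{Size bound.} We sum the fixed-point equation over the coordinates. The leverage scores of the full-column-rank matrix $G^{\vartheta/2}A_x$ sum to $n$, and $\sum_i v_i=m\cdot\frac nm=n$. Hence $\|g(x)\|_1=2n$ exactly, which gives $c_1(g)\le 2n$.

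\textbf{Dynamic range of the weights.} Each $g_i=\sigma_i+v_i$ satisfies
\[
\tfrac nm\le g_i\le 1+\tfrac nm\le 2,
\]
because $0\le\sigma_i\le1$. Therefore $\max_j g_j/\min_j g_j\le 2m/n\le 4m/n=e^{q}$. This is the point where putting the regularizer inside the fixed point matters: it is what keeps the ratio polynomial in $m/n$ rather than unbounded.

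\textbf{Sensitivity bound.} We compare $G^{-1/2}A_x$ with $M:=G^{\vartheta/2}A_x$. Since $-\tfrac12-\tfrac\vartheta2=\tfrac1p-1=:\alpha$, we have $G^{-1/2}A_x=G^{\alpha}M$. For a positive diagonal $D$, the bound $M^\top D^2M\succeq(\min_j D_{jj}^2)M^\top M$ gives
\[
\sigma_i(DM)=D_{ii}^2\,m_i^\top(M^\top D^2M)^{-1}m_i\le \frac{D_{ii}^2}{\min_j D_{jj}^2}\,\sigma_i(M).
\]
Taking $D=G^\alpha$ yields
\[
\sigma_i(G^{-1/2}A_x)\le \Bigl(\frac{g_i}{\min_j g_j}\Bigr)^{2\alpha}\sigma_i(M)\le e^{2\alpha q}\,\sigma_i(M).
\]
Since $\sigma_i(M)=g_i-v_i\le g_i$, this gives $\sigma_i(G^{-1/2}A_x)/g_i\le e^{2\alpha q}$.

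\textbf{Exponent check.} With $p=1-\frac1{4q}$ we get $\alpha=\frac{1}{4q-1}$, so
\[
2\alpha q=\frac{2q}{4q-1}.
\]
This is decreasing in $q$. Since $q\ge\log 4$ (because $m\ge n$), it is at most $2\log4/(4\log4-1)<0.62$. Hence $e^{2\alpha q}<e^{0.62}<2$, and so $c_s(g)\le 2$.

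The only step needing care is the exponent bookkeeping: $p$ was chosen exactly so that $\alpha q=O(1)$, which turns the polynomial dynamic range $(m/n)^{2\alpha}$ into an absolute constant. The constant $2$ is not tight; any slack in the dynamic-range estimate can be absorbed there.
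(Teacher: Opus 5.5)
Your proof is correct and follows essentially the same route as the paper. Both arguments sum the fixed-point equation to get $\|g\|_1=2n$, use the dynamic range $g_i\in[n/m,2]$, and apply the comparison $\sigma_i(DM)\le (d_i^2/\min_j d_j^2)\,\sigma_i(M)$ with $D=G^{(1-p)/p}$ and $\sigma_i(M)\le g_i$. The only cosmetic difference is the exponent bound: the paper bounds $2q/(4q-1)=1/(2p)$ via $p\ge 3/4$, whereas you bound it by monotonicity at $q=\log 4$.
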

\begin{proof}
Summing the fixed-point equation gives $\|g(x)\|_1=\sum_{i=1}^m\sigma_i(\mathbf G^{\vartheta/2}A_x)+\|v\|_1=n+n=2n$, which is the first claim. For the second, $\sigma_i\leq1$ and $v_i=n/m$ give $g_i\in[n/m,2]$, so the largest coordinate exceeds the smallest by a factor at most $2m/n$. Define $\gamma:=\frac{1-p}{p}\geq0$ and $M:=\mathbf G^{\vartheta/2}A_x$, so that $\mathbf G^{-1/2}A_x=\mathbf G^{\gamma}M$. For a positive diagonal $D$ one has $M^\top D^2M\succeq(\min_jd_j^2)M^\top M$ and hence $\sigma_i(DM)\leq(d_i^2/\min_jd_j^2)\sigma_i(M)$; with $d_i=g_i^{\gamma}$ and $\sigma_i(M)\leq g_i$ this gives
$$c_s(g)=\max_{i\in[m]}\frac{\sigma_i(\mathbf G^{-1/2}A_x)}{g_i}\leq(\frac{2m}{n})^{2\gamma}=\exp(\frac{2(1-p)}{p}\log\frac{2m}{n})\leq\exp(\frac1{2p})\leq2,$$
where the last two steps use $1-p=\frac1{4q}$ with $\log\frac{2m}{n}\leq q$, and $p\geq\frac34$, which holds because $q\geq\log4>1$.
\end{proof}

For the executable algorithm fix the certified numerical bounds
\[
 \overline c_1:=2n,\qquad
 \overline c_s:=2,\qquad
 \overline c_\gamma:=1+\frac1{128c_k}.
\]
The unbarred quantities remain the analytic suprema above.  Every parameter
and failure test in the pseudocode uses the displayed barred constants; no
supremum over $\Omega^\circ$ is evaluated by the algorithm.

This is the one place where the position of the regularizer matters. In \cite{ls19} the floor is added after the Lewis fixed point, so the exponent $\frac12-\frac1p$ acts on the \emph{unregularized} Lewis weights, whose coordinates have no lower bound; the same computation then yields only $c_s=O(m^{\Theta(1-p)})$, and $c_s=O(1)$ forces $1-p=O(1/\ell)$. Putting the floor inside the fixed point bounds the dynamic range by $2m/n$ and replaces $\ell$ by $q$ throughout.

Since $c_s$ is only required to be an upper bound on the sensitivity, we assume throughout that $c_s\in[1,2]$.

The preceding estimates determine a compatible choice of the weight exponent,
mixed-norm coefficient, and tracking radius.  The next lemma verifies these
parameters and the resulting contraction gap along every Newton segment.
\begin{lemma}[Weight-function parameters]
\label{lem:sharper_parameters}
With $p=1-\frac1{4q}$, $v=\frac nm\mathbf 1$, $g(x)=w^v_p(A_x)$ and
\[
 c_k:=8q,\qquad C:=8\sqrt{2\overline c_sc_k},\qquad
 K:=1/(128\sqrt{c_k}),
\]
one has $c_\gamma(g)\leq1+1/(128c_k)$.  Moreover, there is an
absolute $r_0>0$ such that the following holds.  If a Newton segment starts
at
\[
 \delta:=\delta_t(x,w)\leq R\leq r_0/c_k,
 \qquad
 \|\log w-\log g(x)\|_\infty\leq K,
\]
then, in the maintained norm, at every point of that segment,
\[
 \|G^{-1}J_g(x)(\Phi''(x))^{-1/2}\|_{w+\infty\to w+\infty}
 \leq1-\frac{15}{8c_k}.
\]
\end{lemma}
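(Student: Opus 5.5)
The lemma has two parts: the bound on $c_\gamma$, and the contraction of the weight Jacobian in the maintained norm. Both reduce to earlier estimates once the parameter values are substituted.

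\textbf{The bound on $c_\gamma$.} By Lemma~\ref{lem:sensitivity}, $c_s\le\overline c_s=2$. Lemma~\ref{lem:pythagorean_projector} then gives
\[
c_\gamma\le\sqrt{1+2c_s/C^2}\le 1+c_s/C^2 .
\]
With $C^2=128\,\overline c_s c_k$ and $c_s\le\overline c_s$, this is at most $1+1/(128c_k)$, as claimed.

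\textbf{Reduction of the contraction bound.} Write $J_g$ for the Jacobian of $g(x)=w^v_p(A_x)$. The rows of $A_x$ are scaled by $s(x)=\phi''(x)^{-1/2}$, so by the chain rule
\[
G^{-1}J_g(x)\,\Phi''(x)^{-1/2}h = B\,\big(\Phi''(x)^{-1/2}\,\d\log s\big),
\]
where $B$ is the Lewis Jacobian of Lemma~\ref{lem:jacobian_defect} at $c=s(x)$. For the interval barriers, $\d\log s_i/\d x_i=-\tfrac12\phi_i'''/\phi_i''$. By one-self-concordance this quantity is bounded by $\sqrt{\phi_i''}$, so the inner map is diagonal with entries of absolute value at most $1$. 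A diagonal contraction does not increase either $\|\cdot\|_\infty$ or $\|\cdot\|_{\widetilde w}$. The problem therefore reduces to bounding $N(Bh')$ by $N(h')$ in the maintained norm $N(y)=\|y\|_\infty+C\|y\|_w$, where $w$ is the maintained weight.

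\textbf{Tracking band along the segment.} Next I check that the hypotheses of Lemma~\ref{lem:robust_consistency} hold at every point $y$ of the Newton segment, with $\widetilde w=w$ and $H=\|\log w-\log g(y)\|_\infty$. At the start, $H\le K$. Along the segment, Lemma~\ref{lem:ls_path_facts} gives $N_w(\sqrt{\Phi''}(y-x))\le c_\gamma\delta\le 2R$. The row scalings therefore move by $O(R)$ in $\ell_\infty$, and by target stability (Lemma~\ref{lem:linf_contraction}) $g$ moves by at most $4\cdot O(R)=O(r_0/c_k)$. Choosing $r_0$ small makes this at most $K$, so $H\le 2K$ throughout the segment. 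Since $K=1/(128\sqrt{c_k})$, we get $L_H\le 2H\le 4K=1/(32\sqrt{c_k})$. The band $4g/5\le w\le 5g/4$ also holds, because $H\le 1/4$.

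\textbf{The arithmetic (expected main obstacle).} Lemma~\ref{lem:robust_consistency} gives a factor $p\,(1+\tfrac{e^{H/2}}{2}\epsilon^2)$ with $\epsilon=\tfrac{2}{pC}+L_H$.
\begin{itemize}
\item With $C=8\sqrt{2\overline c_s c_k}=16\sqrt{c_k}$ and $p\ge 3/4$, we have $\tfrac{2}{pC}\le \tfrac{1}{6\sqrt{c_k}}$.
\item Adding $L_H\le\tfrac{1}{32\sqrt{c_k}}$ gives $\epsilon\le\tfrac{0.2}{\sqrt{c_k}}$, hence $\epsilon^2\le 0.04/c_k$.
\item The resulting factor is at most $p\,(1+0.021/c_k)$.
\end{itemize}
Now use $p=1-\tfrac{1}{4q}=1-\tfrac{2}{c_k}$. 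The factor is at most
\[
1-\frac{2}{c_k}+\frac{0.021}{c_k}\le 1-\frac{15}{8c_k}.
\]
The delicate part is keeping every constant, in particular the $e^{H/2}$ factor and the segment drift of $g$, inside the margin $2/c_k-15/(8c_k)=1/(8c_k)$. The drift is harmless because it only enters through $r_0$.
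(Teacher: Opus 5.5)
Your proposal is correct and follows the paper's proof essentially step for step. It uses the Pythagorean projector bound with $C^2=128\overline c_s c_k$ for $c_\gamma$, self-concordance plus target stability to keep the whole segment in the $2K$ band, the diagonal barrier factor $-\tfrac12\phi_i'''\phi_i''^{-3/2}$ of modulus at most one, and then Lemma~\ref{lem:robust_consistency} with $\widetilde w=w$, $H\le 2K$ and $p=1-2/c_k$.

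Substituting $\overline c_s=2$ (so $C=16\sqrt{c_k}$) only sharpens the constant relative to the paper's $0.236/\sqrt{c_k}$. Your figure $0.021$ needs $e^{H/2}\approx1$, which holds because $H\le2K\le1/64$, not merely because $H\le1/4$. Likewise $H\le1/4$ alone would not give the $4g/5\le w\le5g/4$ band you assert. The $1/(8c_k)$ margin absorbs this either way.
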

\begin{proof}
Lemma~\ref{lem:pythagorean_projector} and $C^2=128\overline c_sc_k$ give
$c_\gamma\leq\sqrt{1+1/(64c_k)}\leq1+1/(128c_k)$.

We first verify the tracking band all along the segment without using the
Jacobian estimate that is to be proved.  Write
$x_s:=x+s\Delta$, where $\Delta$ is the projected Newton direction, and
define $h:=\sqrt{\Phi''(x)}\Delta$.  Lemma~\ref{lem:ls_path_facts}
gives $N_w(h)\leq c_\gamma\delta$, hence
$\|h\|_\infty\leq2R$ after decreasing $r_0$.  Coordinatewise
self-concordance gives
\[
 |\frac{\d}{\d s}\log s_i(x_s)|
 \leq\frac{|h_i|}{1-s|h_i|}.
\]
Consequently, for every $s\in[0,1]$,
\[
 \|\log s(x_s)-\log s(x)\|_\infty
 \leq\frac{\|h\|_\infty}{1-\|h\|_\infty}
 \leq4\delta.
\]
The target-stability part of Lemma~\ref{lem:linf_contraction} now gives
\[
 \|\log g(x_s)-\log g(x)\|_\infty\leq16\delta\leq16R.
\]
Fix $r_0$ so that $16R\leq K$ uniformly for $R\leq r_0/c_k$.
Thus
$\|\log w-\log g(x_s)\|_\infty\leq2K$ for the entire
segment.  This closes the former first-exit seam before the operator bound is
invoked.

Apply Lemma~\ref{lem:robust_consistency} with
$\widetilde w:=w$ and $H\leq2K$.  Then
\begin{equation}
\label{eq:robust_consistency_lh_bound}
 L_H\leq2H\leq\frac{1}{32\sqrt{c_k}}
 <\frac{0.032}{\sqrt{c_k}}.
\end{equation}
Moreover, $p\geq3/4$ and $\overline c_s\geq1$ give
\begin{equation}
\label{eq:mixed_norm_coefficient_bound}
 \frac{2}{pC}\leq\frac{1}{3\sqrt2\sqrt{c_k}}
 <\frac{0.236}{\sqrt{c_k}}.
\end{equation}
Therefore
\[
 \frac{e^{H/2}}2(\frac2{pC}+L_H)^2
 \leq\frac{e^{1/8}}2\frac{(0.236+0.032)^2}{c_k}
 <\frac1{8c_k},
\]
where the first step follows from $H/2\leq1/8$,
Eq.~\eqref{eq:mixed_norm_coefficient_bound}, and
Eq.~\eqref{eq:robust_consistency_lh_bound}, and the second step follows from the
numerical inequality $e^{1/8}(0.236+0.032)^2/2<1/8$.
The chain rule through the barrier contributes the diagonal map
$q_i=-\frac12\phi_i'''(x_i)\phi_i''(x_i)^{-3/2}h_i$.
Self-concordance gives $|q_i|\leq|h_i|$, so this map increases neither
component of the maintained norm.  Finally, $p=1-2/c_k$, and hence
\[
 p(1+\frac1{8c_k})
 =1-\frac{15}{8c_k}-\frac1{4c_k^2}
 \leq1-\frac{15}{8c_k},
\]
where the first step follows from $p=1-2/c_k$, and the second step follows
from $c_k>0$.
\end{proof}

\subsection{The mixed-ball response oracle}
\label{sec:predicted}
\label{subsec:mixed_ball_oracle}

The chasing game is played with an enlargement parameter: the ideal weights move inside a set $U$ and the player is allowed to move inside $(1+\epsilon_{\mathrm{slack}})U$. The overshoot $\epsilon_{\mathrm{slack}}|U|$ is charged against the centrality margin, which is only $\Theta(\delta/c_k)$, so \cite{ls19} must take $\epsilon_{\mathrm{slack}}=\Theta(1/c_k)$. Their chasing theorem then returns a tracking error $\frac{12E}{\epsilon_{\mathrm{slack}}}\log\frac{12m\tau}{\epsilon_{\mathrm{slack}}}$, and requiring this to be at most $K$ forces $E=\Theta(K/(c_kL_{\mathrm{ch}}))$. One whole factor $c_k$ in $E^{-1}$ is therefore the price of the enlargement.

That price is avoidable. Suppose that before moving the weights we can \emph{predict} the movement $u_k:=\log g(x_k)-\log g(x_{k-1})$ to within $\Theta(\delta/c_k)$ in the mixed norm. Chase the prediction error instead of the movement itself: the residual game has a movement set of radius $\Theta(\delta/c_k)$ to begin with, so a \emph{constant} enlargement already costs only $\Theta(\delta/c_k)$, and the tracking error becomes $O(EL_{\mathrm{ch}})$ rather than $O(Ec_kL_{\mathrm{ch}})$. The stochastic residual game below proves the needed tracking statement; Section~\ref{sec:defect_adaptive} supplies the final predictor.

Prediction buys a second saving, of a different kind, and it is the larger of the two. Once only the residual is chased, what the chasing theorem constrains is the accuracy $E$ with which $g(x)$ is observed and the radius of the residual movement set; the centrality of the iterate enters neither hypothesis. The final defect-adaptive construction exploits this separation by taking $R=\Theta(CE)=\Theta(1/\ell)$ while keeping $E=\Theta(q^{-1/2}/\ell)$.

The greedy player uses the following deterministic oracle.  Recording its
certified approximation guarantee closes one of the two non-system
optimizations inside a centering step.  All scalar-work and scalar-depth
bounds for this oracle and the defect projection use the exact real-RAM
model; the theorem makes no bit-complexity claim.  In particular, logarithms,
exponentials, linear-system solves, equality tests, and emptiness tests are
exact real operations.  The finite-precision certificates below show that an
approximate scalar optimizer is never mistaken for an exact KKT point; they do
not assert a Turing-model bit bound.  A bit-complexity version would replace
the zero and endpoint-equality branches by certified tolerance tests and is
outside the present theorem.

\begin{algorithm}[!ht]
\caption{Linear optimization over a mixed ball}
\label{alg:mixed_ball_linear_oracle}
\begin{algorithmic}[1]
\Procedure{MixedBallLinearOracle}{$w,C,s,a,\epsilon_{\rm g}$}
\Comment{Lemma~\ref{lem:mixed_ball_linear_oracle}}
\If{$a=0$}
  \State \Return $0$.
\EndIf
\State $a\gets a/\|a\|_\infty$,
$G\gets\|a\|_1+C^{-1}\|W^{-1/2}a\|_2$, and
$H_{\rm lb}\gets s/(1+C\sqrt{\max_{i\in[m]}w_i})$.
\State $J\gets\lceil\log_2(2Gs/(\epsilon_{\rm g}H_{\rm lb}))\rceil$,
$t_-\gets0$, $t_+\gets s$, and $\chi_{\rm best}\gets0$.
\For{$j=1,\ldots,J$}
  \State $t\gets(t_-+t_+)/2$, $A\gets(s-t)/C$, and
  $y_i\gets-t\operatorname{sign}(a_i)$.
  \If{$\|y\|_w\leq A$}
    \State $\lambda\gets0$, $\chi\gets y$, and
    $\alpha_i\gets|a_i|$ for every $i$.
  \Else
    \State $b_i\gets |a_i|/(w_it)$; sort the $b_i$ in nonincreasing order.
    \State For each resulting prefix $I$ with positive denominator, compute
    \[
      \lambda_I\gets
      \sqrt{\frac{\sum_{i\in[m]\setminus I} a_i^2/w_i}
      {A^2-t^2\sum_{i\in I} w_i}}
    \]
    \State Choose the prefix $I$ satisfying
    $b_i\geq\lambda_I$ on $I$ and $b_i\leq\lambda_I$ off $I$,
    and set $\lambda\gets\lambda_I$.
    \State $\chi_i\gets-\operatorname{sign}(a_i)
    \min\{t,|a_i|/(\lambda w_i)\}$ and
    $\alpha_i\gets(|a_i|-\lambda w_it)_+$.
  \EndIf
  \State If $\langle a,\chi\rangle<
  \langle a,\chi_{\rm best}\rangle$, set
  $\chi_{\rm best}\gets\chi$.
  \State $g\gets\lambda A/C-\sum_{i=1}^m\alpha_i$.
  \If{$g>0$}
    \State $t_+\gets t$.
  \ElsIf{$g<0$}
    \State $t_-\gets t$.
  \Else
    \State \Return $\chi$.
  \EndIf
\EndFor
\State \Return $\chi_{\rm best}$.
\EndProcedure
\end{algorithmic}
\end{algorithm}

The finite procedure above is useful only if it returns a feasible mixed-ball
point with a certified near-optimal linear objective.  The next lemma proves
that guarantee and records its scalar work and depth.
\begin{lemma}[Linear optimization over a mixed ball]
\label{lem:mixed_ball_linear_oracle}
Let $w>0$, $C,s>0$, and $a\in\R^m$.  For every
$\epsilon_{\rm g}\in(0,1/10)$, the procedure
\textnormal{\textsc{MixedBallLinearOracle}} (Algorithm~\ref{alg:mixed_ball_linear_oracle}) returns
$\chi$ with $N_w(\chi)\leq s$ and
\[
 \langle a,\chi\rangle
 \leq-(1-\epsilon_{\rm g})sN_w^*(a).
\]
If $0<\underline w\leq w_i\leq\overline w$ for every $i$, define
\[
 L_w:=\log(2+(1+C\sqrt{\overline w})
 (m+\sqrt{m/\underline w}/C)).
\]
The algorithm uses no permitted systems and has $O(mJ\log m)$ scalar work and
$O(J\log m)$ arithmetic depth, where
$J=O(L_w+\log(1/\epsilon_{\rm g}))$.
\end{lemma}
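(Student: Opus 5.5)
The plan is to reduce the mixed-ball problem to a one-dimensional convex search over the split of the budget $s$ between the two parts of $N_w$. Any $\chi$ with $\|\chi\|_\infty\leq t$ and $\|\chi\|_w\leq A:=(s-t)/C$ satisfies $N_w(\chi)\leq t+CA=s$. Conversely, every $\chi$ with $N_w(\chi)\leq s$ is of this form with $t=\|\chi\|_\infty$. So
\[
 \min_{N_w(\chi)\leq s}\langle a,\chi\rangle=\min_{t\in[0,s]}V(t),
 \qquad
 V(t):=\min\{\langle a,\chi\rangle:\ \|\chi\|_\infty\leq t,\ \|\chi\|_w\leq (s-t)/C\}.
\]
The set $\{(\chi,t):\|\chi\|_\infty\leq t,\ C\|\chi\|_w\leq s-t\}$ is jointly convex, so $V$ is convex. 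By definition of the dual norm, $\min_tV(t)=-sN_w^*(a)$.

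Two reductions come first. Normalizing $a\gets a/\|a\|_\infty$ is harmless because both sides of the claim are positively homogeneous in $a$, and the case $a=0$ is trivial. Feasibility of every returned point is then immediate: every candidate $\chi$ produced at an iterate $t$ obeys $\|\chi\|_\infty\leq t$ and $\|\chi\|_w\leq A$, either directly in the branch $\|y\|_w\leq A$ or through the clipping $\min\{t,|a_i|/(\lambda w_i)\}$ together with the choice of $\lambda$. Also, $\chi_{\rm best}=0$ is feasible.

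\emph{Step 1: the inner problem.} For fixed $t$, I would solve the inner problem exactly via KKT. If $y=-t\,\mathrm{sign}(a)$ already lies in the $w$-ball, it is optimal, since it minimizes over the box alone. Otherwise the $w$-ball constraint is tight with multiplier $\lambda>0$, and stationarity gives $\chi_i=-\mathrm{sign}(a_i)\min\{t,|a_i|/(\lambda w_i)\}$, with box multipliers $\alpha_i=(|a_i|-\lambda w_it)_+$. The map $\lambda\mapsto\|\chi(\lambda)\|_w$ is continuous and strictly decreasing on the relevant range. Hence there is a unique $\lambda$ with $\|\chi(\lambda)\|_w=A$, and the set of clipped coordinates is exactly $\{i:b_i\geq\lambda\}$. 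This is a prefix of the sorted $b$, and solving the tight equation on that prefix gives the displayed $\lambda_I$. Its consistency conditions single out that prefix, which shows the prefix in the algorithm exists and yields the optimizer.

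\emph{Step 2: the bisection test.} By envelope/Lagrangian sensitivity, with the ball constraint written as $\|\chi\|_w\leq A(t)$ and $A'(t)=-1/C$, the quantity $g=\lambda A/C-\sum_i\alpha_i$ is a subgradient of $V$ at $t$, up to a fixed positive factor. I expect getting this sign and normalization exactly right to be the main obstacle, including at degenerate points where $A=0$ or $\lambda=0$. I would check it by differentiating the Lagrangian $\langle a,\chi\rangle+\sum_i\alpha_i(|\chi_i|-t)+\tfrac{\lambda}{2}(\|\chi\|_w^2-A^2)$ in $t$, and, if needed, rescaling $\lambda$ so the formula matches. Given this, $g>0$ puts every minimizer in $[t_-,t]$, $g<0$ puts one in $[t,t_+]$, and $g=0$ certifies that $\chi$ is exactly optimal. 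So the bracket $[t_-,t_+]$ always contains a minimizer $t^\star$.

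\emph{Step 3: accuracy and cost.} $V$ is Lipschitz on $[0,s]$ with constant at most $\|a\|_1+C^{-1}\|W^{-1/2}a\|_2=G$. To see this, compare optimal points across $t$ by scaling the clipping level and the ball radius. After $J$ halvings, some evaluated midpoint lies within $s2^{-J}$ of $t^\star$, so
\[
 \langle a,\chi_{\rm best}\rangle\leq -sN_w^*(a)+Gs2^{-J}.
\]
The test vector $-\mathrm{sign}(a_{i^*})e_{i^*}s/(1+C\sqrt{w_{i^*}})$, with $|a_{i^*}|=1$, gives $sN_w^*(a)\geq H_{\rm lb}$. The choice of $J$ then makes the additive error at most $\epsilon_{\rm g}H_{\rm lb}/2\leq\epsilon_{\rm g}sN_w^*(a)$.

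For the cost, each iteration performs a sort, prefix sums and coordinatewise maps, which is $O(m\log m)$ work and $O(\log m)$ depth. The definition of $J$ together with
\[
 G\leq m+\sqrt{m/\underline w}/C,
 \qquad
 s/H_{\rm lb}\leq1+C\sqrt{\overline w}
\]
gives $J=O(L_w+\log(1/\epsilon_{\rm g}))$. No linear systems are solved.
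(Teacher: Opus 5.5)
Your proposal is correct and follows the paper's proof essentially step for step. The shared steps are the convex one-dimensional function of the split $t$, the exact KKT prefix solve at fixed $t$, bisection on the sign of $g=\lambda A/C-\sum_i\alpha_i$, the one-coordinate test vector giving $sN_w^*(a)\ge H_{\rm lb}$, and the same cost count. The only variation is in bounding the final gap: you use a global Lipschitz constant $G$ for $V$, which your scaling/clipping comparison does establish, whereas the paper applies the subgradient inequality at the last evaluated midpoint after bounding each computed subgradient via $\sum_i\alpha_i\le\|a\|_1$ and $\lambda A(t)\le\|W^{-1/2}a\|_2$. Your Lagrangian check in fact shows that $g$ is an exact subgradient, so no rescaling of $\lambda$ is needed.
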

\begin{proof}
If $a=0$, the claim is immediate.  Scaling $a$ to
$\|a\|_\infty=1$ changes neither the optimizer nor a relative objective
guarantee.  For $t\in[0,s]$, define $A(t)=(s-t)/C$ and let
$\varphi(t)$ be the minimum of $\langle a,x\rangle$ subject to
$|x_i|\leq t$ and $\|x\|_w\leq A(t)$.  This is the partial minimum of a
jointly convex problem, so $\varphi$ is convex.

We first verify that every operation in the pseudocode is explicit.  If
$y_i=-t\operatorname{sign}(a_i)$ lies in the $w$-ball, it is the exact
fixed-$t$ minimizer and the ball multiplier is $\lambda=0$.  Otherwise the
ball is active and the KKT equations~\cite{kuhntucker51} give
\[
 x_i=-\operatorname{sign}(a_i)
 \min\{t,|a_i|/(\lambda w_i)\}.
\]
For a proposed clipped set $I$, the equality $\|x\|_w=A(t)$ gives exactly
the displayed formula for $\lambda$.  Sorting
$b_i=|a_i|/(w_it)$ partitions the positive line into at most $m+1$
regions.  Scanning the prefixes finds a region whose formula has positive
denominator and satisfies its clipping inequalities; continuity and
monotonicity of $\|x(\lambda)\|_w$ give existence, and all resulting
vectors coincide at a tie.  Thus the scan returns the exact fixed-$t$
minimizer using $O(m\log m)$ scalar work and $O(\log m)$ depth.

At that KKT tuple define
\[
 \alpha_i=(|a_i|-\lambda w_it)_+.
\]
This is the multiplier of the active face with sign opposite to $a_i$;
the other face multiplier is zero.  Differentiating only the moving
constraints gives the exact subgradient
\[
 g(t)=\frac{\lambda A(t)}C-\sum_{i=1}^m\alpha_i\in\partial\varphi(t).
\]
There is no multiplier ambiguity at a clipping tie because then
$\alpha_i=0$.  Moreover $\sum_{i=1}^m\alpha_i\leq\|a\|_1$.  Coordinatewise,
$\lambda w_i|x_i|\leq|a_i|$, and hence
\[
 \lambda A(t)\leq\|W^{-1/2}a\|_2.
\]
Consequently every returned subgradient has magnitude at most
\[
 G:=\|a\|_1+C^{-1}\|W^{-1/2}a\|_2.
\]

The sign of an exact subgradient preserves an interval containing a minimizer
of the convex function $\varphi$.  After $J$ iterations that interval has
length at most $s2^{-J}$.  The midpoint evaluated at the last iteration is
an endpoint of the retained interval, so it is within this distance of a
minimizer.  The subgradient bound gives
\[
 \langle a,\chi_{\rm best}\rangle-\min_t\varphi(t)
 \leq Gs2^{-J}\leq\epsilon_{\rm g}H_{\rm lb}/2.
\]
If $|a_j|=1$, the one-coordinate vector with
$|x_j|=s/(1+C\sqrt{w_j})$ proves
$-\min_t\varphi(t)\geq H_{\rm lb}$.  This yields the stated relative
support guarantee.

Finally,
$G\leq m+\sqrt{m/\underline w}/C$, so the displayed definition of $L_w$
bounds $\log(2Gs/H_{\rm lb})$.  Each of the $J$ iterations performs one
sort and one prefix scan, for total $O(mJ\log m)$ scalar work and
$O(J\log m)$ arithmetic depth.  In the application,
$e^{-K}n/m\leq w_i\leq2e^K$ and $C^2=O(q)$, so $L_w=O(L_*)$.
\end{proof}

We apply the mixed-ball oracle to the softmax potential using only an
approximate observation of the residual.  The following one-step estimate
quantifies the resulting descent and the cost of the observation error.
\begin{lemma}[Observed greedy softmax step]
\label{lem:observed_softmax_step}
Let $N(z)=\|z\|_\infty+C\|z\|_w$,
$V=\{z:N(z)\leq s\}$, and
$\Phi(d)=2\sum_{i=1}^m\cosh(\mu d_i)$.  Suppose
$b\in V/8$, $s\leq E/100$, $\mu E=1/100$, and
$\|q_{\rm obs}-q\|_\infty\leq5E$.  Let $\chi$ be the output of
Lemma~\ref{lem:mixed_ball_linear_oracle} for radius
$(1+\epsilon_0)s$, gradient $\nabla\Phi(q_{\rm obs})$,
$\epsilon_0=1/10$, and a sufficiently small absolute
$\epsilon_{\rm g}$.  Then
\[
 \Phi(q-b+\chi)
 \leq\Phi(q)-\frac34sN^*(\nabla\Phi(q))+C_{\rm sm}\mu ms
\]
for an absolute constant $C_{\rm sm}$.
\end{lemma}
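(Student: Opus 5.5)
We plan a second-order Taylor expansion of $\Phi$ around $q$ along the step $\Delta:=\chi-b$, combined with a comparison of the gradients at $q_{\rm obs}$ and at $q$. The pieces are ordered as follows.

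\emph{Step size.} Since $\chi\in(1+\epsilon_0)V$ and $b\in V/8$, we have
\[
\|\Delta\|_\infty\leq N(\Delta)\leq(1.1+1/8)s\leq 1.3\,E/100,
\]
so $\mu\|\Delta\|_\infty\leq 1/5000$. By the mean value form,
\[
\Phi(q+\Delta)\leq\Phi(q)+\langle\nabla\Phi(q),\Delta\rangle+\tfrac12\sum_i\mu^2\cdot2\cosh(\mu\xi_i)\Delta_i^2,
\]
where $\xi_i$ lies on the segment. Because $\cosh(\mu\xi_i)\leq e^{1/5000}\cosh(\mu q_i)$, the quadratic term is at most $\mu^2\|\Delta\|_\infty^2\,e^{1/5000}\sum_i\cosh(\mu q_i)$.

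\emph{Linear term.} We split $\langle\nabla\Phi(q),\Delta\rangle$ into $-\langle\nabla\Phi(q),b\rangle$ and $\langle\nabla\Phi(q),\chi\rangle$. The first is at most $\tfrac18 sN^*(\nabla\Phi(q))$ by duality. For the second, write $a:=\nabla\Phi(q)$ and $a_{\rm obs}:=\nabla\Phi(q_{\rm obs})$. Coordinatewise, $a_{{\rm obs},i}=2\mu\sinh(\mu q_{{\rm obs},i})$ with $|\mu(q_{{\rm obs},i}-q_i)|\leq5\mu E=1/20$.

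The key scalar inequality is
\[
|\sinh(y+\eta)-\sinh(y)|\leq(e^{|\eta|}-1)\cosh y,
\]
which gives $|a_{{\rm obs},i}-a_i|\leq 2\mu\cdot0.052\cosh(\mu q_i)$. To convert this to a bound in terms of $|a_i|$, we use $\cosh y\leq|\sinh y|+1$. This yields
\[
|a_{{\rm obs},i}-a_i|\leq0.052\,|a_i|+0.104\,\mu.
\]
Hence $a_{\rm obs}=a+e$ with $|e_i|\leq0.052|a_i|+0.104\mu$.

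Lemma~\ref{lem:mixed_ball_linear_oracle} gives
\[
\langle a_{\rm obs},\chi\rangle\leq-(1-\epsilon_{\rm g})(1.1)s\,N^*(a_{\rm obs}).
\]
Then
\[
\langle a,\chi\rangle=\langle a_{\rm obs},\chi\rangle-\langle e,\chi\rangle,
\]
with $|\langle e,\chi\rangle|\leq\|e\|_1\|\chi\|_\infty$ and $\|\chi\|_\infty\leq1.1s$. We also need $N^*(a_{\rm obs})\geq N^*(a)-N^*(e)$. Since $N\geq\|\cdot\|_\infty$, we have $N^*(\cdot)\leq\|\cdot\|_1$, and $N^*(a)\leq\|a\|_1$ as well.

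\emph{Main obstacle.} The error $0.052\|a\|_1$ must be compared with $N^*(a)$, and $N^*(a)$ may be much smaller than $\|a\|_1$ when the $w$-ball part dominates. This is the step to redo carefully. The remedy is to avoid $\ell_1$ and instead use the multiplicative structure. Write $e_i=\theta_i a_i+\beta_i$ with $|\theta_i|\leq0.052$ and $|\beta_i|\leq0.104\mu$. Then $|\langle\theta\odot a,\chi\rangle|$ should be charged to $N^*(a)N(\chi)$, but a diagonal multiplier of size $\leq0.052$ preserves neither the mixed norm nor its dual exactly.

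\emph{A cleaner route.} Since both $\ell_\infty$ and $\|\cdot\|_w$ are monotone under coordinatewise contraction, so is $N$. Hence for $|\theta_i|\leq\theta$ we get $N(\theta\odot\chi)\leq\theta N(\chi)$, and therefore
\[
|\langle\theta\odot a,\chi\rangle|=|\langle a,\theta\odot\chi\rangle|\leq0.052\,N^*(a)\cdot1.1s.
\]
The same monotonicity gives $N^*(a_{\rm obs})\geq(1-0.052)N^*(a)-N^*(\beta)$. The additive part satisfies $N^*(\beta)\leq\|\beta\|_1\leq0.104\mu m$, and it contributes only $O(\mu ms)$.

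\emph{Collecting the linear term.} Combining the pieces,
\[
\langle a,\chi\rangle\leq-\bigl[(1-\epsilon_{\rm g})1.1(0.948)-0.057\bigr]sN^*(a)+O(\mu ms).
\]
The bracket is $\geq0.98$ for small $\epsilon_{\rm g}$. Subtracting the $1/8$ from $b$ leaves a coefficient of at least $0.85>3/4$, with a margin of about $0.1\,sN^*(a)$.

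\emph{Quadratic term.} Bound it by $2\mu^2(1.3s)^2\sum_i\cosh(\mu q_i)$. Using $\cosh\leq|\sinh|+1$, this becomes
\[
\mu^2 s^2\bigl(O(\|a\|_1/\mu)+O(m)\bigr).
\]
Since $\mu s\leq1/10^4$, the $m$ part is $O(\mu ms)$. The $\|a\|_1$ part, $O(\mu s^2\|a\|_1)$, again threatens a mismatch between $\|a\|_1$ and $N^*(a)$. Here we use the coordinate test vectors: $N(s\,e_j\operatorname{sign}a_j/(1+C\sqrt{w_j}))\leq s$.

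This is the genuinely delicate point. If it fails, the fallback is to bound the quadratic term directly as $\mu\|\Delta\|_\infty\cdot\langle|a|+2\mu\mathbf1,|\Delta|\rangle$. Then $\langle|a|,|\Delta|\rangle\leq N^*(a)N(\Delta)$ by the same monotonicity and sign symmetry of $N$, applied to the vector $|\Delta|\operatorname{sign}(a)$, which has the same norm. This gives a quadratic term of at most
\[
\tfrac{1}{5000}\cdot1.3\,sN^*(a)+O(\mu ms),
\]
which the $0.1\,sN^*(a)$ margin absorbs.
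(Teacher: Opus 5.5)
Your proposal is correct and follows essentially the paper's route. Both arguments expand $\Phi$ to second order at $q$, and both compare $\nabla\Phi(q_{\rm obs})$ with $\nabla\Phi(q)$ coordinatewise with a multiplicative-plus-additive error, moving that comparison to the support function of $V$ through its unconditional (absolute) structure. Both then bound the quadratic remainder using $\mu\phi\leq|\phi'|+2\mu$ together with $N(z\odot z)\leq\|z\|_\infty N(z)$; the latter is exactly your ``fallback'' estimate $\langle|a|,\Delta\odot\Delta\rangle\leq\|\Delta\|_\infty N^*(a)N(\Delta)$.

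Make that fallback your main argument for the quadratic term. The coordinate test vectors only give $N^*(a)\geq|a_j|/(1+C\sqrt{w_j})$, which cannot control the $\|a\|_1$ term.
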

\begin{proof}
The elementary inequalities for
$\phi'(t)=2\mu\sinh(\mu t)$ imply, whenever $|r-t|\leq5E$,
\[
 |\phi'(r)|\geq e^{-5\mu E}|\phi'(t)|-\mu,
 \qquad
 |\phi'(r)-\phi'(t)|
 \leq(e^{5\mu E}-1)|\phi'(t)|+\mu.
\]
Because $V$ is unconditional, these coordinatewise inequalities pass to its
support function, with an additive loss at most $\mu ms$.
The oracle guarantee and $b\in V/8$ therefore give
\[
 \langle\nabla\Phi(q),-b+\chi\rangle
 \leq-((1+\epsilon_0)(1-\epsilon_{\rm g})e^{-5\mu E}
             -\frac18-O(\mu E))
       sN^*(\nabla\Phi(q))+O(\mu ms).
\]
The coefficient in parentheses is larger than $4/5$ after fixing
$\epsilon_{\rm g}$ small.  Also
$N(-b+\chi)\leq(1/8+1+\epsilon_0)s$ and
$\|-b+\chi\|_\infty\leq(1/8+1+\epsilon_0)s$.  Taylor's theorem,
$\phi''=\mu^2\phi$, and
\[
 N(z\odot z)\leq\|z\|_\infty N(z),
 \qquad
 \mu\phi(t)\leq|\phi'(t)|+2\mu,
\]
bound the quadratic remainder by
$O(\mu s)sN^*(\nabla\Phi(q))+O(\mu^2ms^2)$.
Since $\mu s\leq10^{-4}$, this loss is smaller than the slack between
$4/5$ and $3/4$, proving the claim.
\end{proof}

\section{Residual chasing and retained-defect geometry}
\label{sec:residual_defect_geometry}

Section~\ref{subsec:residual} proves the stochastic chasing guarantee that
keeps the maintained weights close to their moving target despite prediction
error.  Section~\ref{subsec:large_radius_defect_geometry} then identifies the
Jacobian defect retained along a finite Newton segment and converts it into a
large-radius split predictor.  Finally,
Section~\ref{subsec:defect_body_projection} gives a computable projection that
turns the predictor and its defect certificate into a legal residual-game
move.

\subsection{Stochastic residual chasing}
\label{subsec:residual}

We combine the observed greedy step with predictable bias and martingale
fluctuations in the residual motion.  The next lemma is the tracking statement
that will be invoked by every later centering call.
\begin{lemma}[Stochastic residual chasing]
\label{lem:residual_chasing}
Let $s_k>0$ be predictable, let $\widehat u_k$ be a predicted ideal-weight
increment, and write $e_k:=u_k-\widehat u_k$,
$b_k:=\E[e_k\mid\mathcal F_{k-1}]$ and $Z_k:=e_k-b_k$.
For the $k$th update write
\[
 N_k(z):=\|z\|_\infty+C\|z\|_{w_{k-1}},
\]
and let $N_k^*$ be its dual norm.  Thus every mixed norm in the $k$th
hypotheses, movement set, and movement conclusion is evaluated at the
predictable pre-step weight $w_{k-1}$.  Define
$\beta:=1+C\sqrt{2\overline c_1}$ and assume
$$N_k(b_k)\leq\frac{s_k}{8},
\qquad
\|Z_k\|_\infty\leq\frac{s_k}{2}\ \text{almost surely},
\qquad
\E[N_k(Z_k)^2\mid\mathcal F_{k-1}]\leq\nu^2s_k^2,
\qquad \nu:=1/5.$$
Play the game of \cite{ls19} on the shifted pair $\bar x_k:=\log w_k-\sum_{j=1}^k\widehat u_j$, $\bar y_k:=\log g(x_k)-\sum_{j=1}^k\widehat u_j$, with movement sets $V_k:=\{v:N_k(v)\leq s_k\}$, enlargement $\epsilon_0:=1/10$, softmax parameter $\mu:=1/(100E)$, and observations of $\bar y_k$ carrying $\ell_\infty$ error at most $E$.  The player uses Lemma~\ref{lem:mixed_ball_linear_oracle} with a fixed sufficiently small $\epsilon_{\rm g}$; its constant relative support loss is included in the numerical slack below. If $s_k\leq E/100$ for every $k$ and
$$E\leq\frac{K}{400\,L_{\mathrm{ch}}},$$
let $A_{\rm ch}\geq3$ be a sufficiently large absolute constant, define
\[
 M_{\rm ch}:=A_{\rm ch}m\beta,
 \qquad d_0:=\log w_0-\log g(x_0),
 \qquad \Phi(d):=2\sum_{i=1}^m\cosh(\mu d_i),
\]
and assume
\[
 \Phi(d_0)\leq M_{\rm ch}.
\]
Choose $A_0\geq40A_{\rm ch}$ and require
$L_{\rm ch}\geq\log(A_0Nm\beta)$.  Then with probability at least
$19/20$ the stronger invariant
\[
 \|\log w_k-\log g(x_k)\|_\infty\leq K/4
\]
holds simultaneously for all $k\leq N$. If in addition
$$N_k(\widehat u_k)+1.1s_k
\leq(1-\tfrac1{c_k})\delta_k,$$
then, deterministically,
$$N_k(\log w_k-\log w_{k-1})
\leq(1-\tfrac1{c_k})\delta_k.$$
The same conclusions hold for an externally stopped process.  Namely, let
$\tau_{\rm ext}$ be such that the decision to execute step $k$, equivalently
$\mathbf1_{\{k<\tau_{\rm ext}\}}$, is measurable in the pre-noise
sigma-field $\mathcal F_{k-1}$, and freeze the process from
$\tau_{\rm ext}$ onward.  Then the tracking and movement conclusions hold
through $N\wedge(\tau_{\rm ext}-1)$; on $\{\tau_{\rm ext}>N\}$ they give
the full-horizon conclusions above.
\end{lemma}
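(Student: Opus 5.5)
The proof runs the softmax potential argument of \cite{ls19} on the shifted pair and turns it into a supermartingale with a bounded overshoot. Write $d_k:=\bar x_k-\bar y_k=\log w_k-\log g(x_k)$, which is unchanged by the shift. The player's update is $\bar x_k=\bar x_{k-1}+\chi_k$, where $\chi_k$ is the oracle output with radius $(1+\epsilon_0)s_k$. The shifted target moves by $\bar y_k-\bar y_{k-1}=u_k-\widehat u_k=e_k=b_k+Z_k$. Hence
\[
 d_k=d_{k-1}-b_k-Z_k+\chi_k.
\]
The plan is to first treat the predictable part $-b_k+\chi_k$ and then add the martingale noise $-Z_k$.

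\textbf{Deterministic step.} Apply Lemma~\ref{lem:observed_softmax_step} with $q=d_{k-1}$, $b=b_k\in V_k/8$, and $s=s_k\leq E/100$. The observation $q_{\rm obs}$ is formed from the exact $\log w_{k-1}$ and an $E$-accurate observation of $\bar y_{k-1}$, so its error is at most $E\leq5E$. This gives
\[
 \Phi(d_{k-1}-b_k+\chi_k)\leq\Phi(d_{k-1})-\tfrac34 s_kN_k^*(\nabla\Phi)+C_{\rm sm}\mu ms_k.
\]

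\textbf{Noise step.} Write $d':=d_{k-1}-b_k+\chi_k$ and expand $\Phi(d'-Z_k)$ to second order. Here $\|Z_k\|_\infty\leq s_k/2$ and $\mu s_k\leq10^{-4}$, so $\phi''$ varies by at most a constant factor along the segment. The linear term has conditional mean zero, because $\chi_k$, $b_k$ and $d_{k-1}$ are $\mathcal F_{k-1}$-measurable. For the quadratic term, $\phi''=\mu^2\phi$ together with $\mu\phi\leq|\phi'|+2\mu$ bounds it by
\[
 O(\mu)\,\E\bigl[\|Z_k\|_\infty N_k(Z_k)\mid\mathcal F_{k-1}\bigr]\,N_k^*(\nabla\Phi)+O(\mu^2 m s_k^2),
\]
and the first expectation is $O(\mu\nu s_k^2)$. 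Since $\mu s_k$ is tiny, the negative drift $\tfrac34 s_kN_k^*$ absorbs this. We also need to move from $\nabla\Phi(d_{k-1})$ to $\nabla\Phi(d')$; this costs $(e^{O(\mu s)}-1)$, again negligible.

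\textbf{Gradient lower bound.} Use $N_k^*(\nabla\Phi)\geq\|\nabla\Phi\|_1/\beta$, which holds because $N_k(z)\leq\beta\|z\|_\infty$ by $\|w_{k-1}\|_1\leq2\overline c_1$ on the tracking event. Combined with $\mu\Phi\leq\|\nabla\Phi\|_1+2\mu m$, this yields
\[
 \E[\Phi(d_k)\mid\mathcal F_{k-1}]\leq\Phi(d_{k-1})-\tfrac{\mu s_k}{2\beta}\Phi(d_{k-1})+O(\mu m s_k).
\]
Consequently $\Phi$ drifts down whenever $\Phi\gtrsim m\beta$. I expect the main obstacle to be the concentration step that converts this drift into a high-probability bound holding for all $k\leq N$. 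The plan is to stop the process the first time tracking fails or $\Phi$ exceeds $M_{\rm ch}$. On the stopped process, $\Phi(d_k)\cdot\mathbf 1\{\Phi(d_{k-1})\geq M_{\rm ch}\}$ is a supermartingale, and each step changes $\Phi$ by at most a factor $e^{O(\mu s_k)}$. Every excursion above $M_{\rm ch}$ starts below $2M_{\rm ch}$, so Ville's/optional stopping over at most $N$ excursion starts gives
\[
 \Pr\bigl[\exists k:\Phi(d_k)\geq A_0Nm\beta\bigr]\leq O(NA_{\rm ch}m\beta)/(A_0Nm\beta)\leq1/20,
\]
using $A_0\geq40A_{\rm ch}$. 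On the complement, $\mu\|d_k\|_\infty\leq\log\Phi\leq L_{\rm ch}$, so $\|d_k\|_\infty\leq100EL_{\rm ch}\leq K/4$ by the hypothesis on $E$.

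\textbf{Movement bound.} This part is deterministic. We have $\log w_k-\log w_{k-1}=\widehat u_k+\chi_k$, and by Lemma~\ref{lem:mixed_ball_linear_oracle} the oracle output satisfies $N_k(\chi_k)\leq(1+\epsilon_0)s_k=1.1s_k$. The triangle inequality with the extra hypothesis then gives the claim.

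\textbf{Externally stopped process.} Freezing at $\tau_{\rm ext}$ keeps every increment predictable-gated: the gate $\mathbf1_{\{k<\tau_{\rm ext}\}}$ is $\mathcal F_{k-1}$-measurable, so the supermartingale and optional-stopping argument applies verbatim to the frozen process, and both conclusions hold through $N\wedge(\tau_{\rm ext}-1)$.
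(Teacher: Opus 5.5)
Your noise step rests on a false premise. You assume $\chi_k$ is $\mathcal F_{k-1}$-measurable, so that the linear term $\langle\nabla\Phi(d'),Z_k\rangle$, with $d'=d_{k-1}-b_k+\chi_k$, has zero conditional mean. In the stated game, however, the player observes $\bar y_k$, not $\bar y_{k-1}$. The algorithm realizes this literally: $\chi$ is computed from $d_{\rm obs}=\log w+\widehat u-\log\widehat g^+$, which approximates $d_{k-1}-e_k=d_{k-1}-b_k-Z_k$. So $\chi_k$ is a function of the realized noise $Z_k$; it is built to push against the observed post-noise gradient. Hence $\E[\langle\nabla\Phi(d'),Z_k\rangle\mid\mathcal F_{k-1}]$ need not vanish, and your drift inequality is not established for the lemma as stated. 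This ordering issue is exactly what the paper's proof singles out: the greedy move is chosen after $Z_k$ is revealed.

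Your deterministic step does survive the correct reading. Relative to $d_{k-1}$, the observation error is at most $E+\|e_k\|_\infty\leq E+5s_k/8<5E$. Lemma~\ref{lem:observed_softmax_step} is pointwise, so it holds for every realization of $Z_k$. The repair is in the order of operations, which the paper arranges in three steps.
\begin{itemize}
\item It applies the centered second-order estimate at the measurable point $d_{k-1}$: $\E[\Phi(d_{k-1}-Z_k)\mid\mathcal F_{k-1}]\leq\Phi(d_{k-1})+\nu^2\mu s_k^2N_k^*(\nabla\Phi(d_{k-1}))+O(\mu^2ms_k^2)$.
\item It applies the observed softmax step pointwise at $q_k:=d_{k-1}-Z_k$, with observation error at most $E+s_k/8$. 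This gives $\Phi(d_k)\leq\Phi(q_k)-\frac34s_kN_k^*(\nabla\Phi(q_k))+C_{\rm sm}\mu ms_k$.
\item It uses the pointwise comparison $s_kN_k^*(\nabla\Phi(q_k))\geq e^{-\mu s_k/2}s_kN_k^*(\nabla\Phi(d_{k-1}))-\mu ms_k$ before taking conditional expectations.
\end{itemize}
Alternatively, you can keep your expansion around $d'$ and split its linear term. The piece $\langle\nabla\Phi(d_{k-1}-b_k),Z_k\rangle$ is genuinely centered. The remaining cross term $\langle\nabla\Phi(d')-\nabla\Phi(d_{k-1}-b_k),Z_k\rangle$ is at most $O(\mu)\|\chi_k\|_\infty N_k(Z_k)N_k^*(\nabla\Phi(d_{k-1}-b_k))+O(\mu^2ms_k^2)$, using $\phi''=\mu^2\phi$, $\mu\phi\leq|\phi'|+2\mu$, and $N_k(\chi_k\odot Z_k)\leq\|\chi_k\|_\infty N_k(Z_k)$. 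Its conditional expectation is $O(\mu s_k)\,s_kN_k^*+O(\mu^2ms_k^2)$, which is absorbed because $\mu s_k\leq10^{-4}$. With either fix, your excursion/Ville argument, the movement bound, and the external-stopping extension match the paper.
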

\begin{proof}
The shifted target moves by $\bar y_k-\bar y_{k-1}=e_k$ and the player by $\bar x_k-\bar x_{k-1}=\chi_k\in(1+\epsilon_0)V_k$, so $d_k:=\bar x_k-\bar y_k=\log w_k-\log g(x_k)$ satisfies $d_k=d_{k-1}-e_k+\chi_k$.
Define $\tau_K:=\inf\{k\geq1:\|d_k\|_\infty>K\}$ and include the
crossing update $k=\tau_K$ before freezing.  The external stop instead
freezes before step $\tau_{\rm ext}$.  Thus the activity indicator is
$\mathbf1_{\{k\leq\tau_K\}}\mathbf1_{\{k<\tau_{\rm ext}\}}$,
which is $\mathcal F_{k-1}$-measurable because the internal test uses only
$d_0,\ldots,d_{k-1}$.  For every active step, including the crossing
update, the pre-step tracking invariant gives
\[
 \|w_{k-1}\|_1\leq e^K\|g(x_{k-1})\|_1\leq2c_1.
\]
Thus, for the stopped process, each predictable $V_k$ is symmetric and
convex with $\ell_\infty$ radius at most $s_k\leq E/100$, and
$\|v\|_{w_{k-1}}\leq\|v\|_\infty\sqrt{\|w_{k-1}\|_1}$ gives
$N_k(v)\leq\beta\|v\|_\infty$, so $V_k$ contains an $\ell_\infty$ ball
of radius $s_k/\beta$.  Write
$\Phi(d):=2\sum_{i=1}^m\cosh(\mu d_i)$ and, for the current step,
abbreviate $N:=N_k$ and $N^*:=N_k^*$, so that the support function of
$V_k$ is $h_k(a)=s_kN^*(a)$.

\emph{The centered part.} The mixed norm is absolute, and coordinatewise
$\|z\odot z\|_\infty=\|z\|_\infty^2$ while
$\|z\odot z\|_{w_{k-1}}\leq
\|z\|_\infty\|z\|_{w_{k-1}}$.  Thus, with
$\alpha:=\|z\|_\infty$ and $\gamma:=C\|z\|_{w_{k-1}}$,
$$N(z\odot z)\leq\alpha^2+\alpha\gamma=\alpha(\alpha+\gamma)\leq N(z)^2 .$$
Let $x$ be $\mathcal F_{k-1}$-measurable and define
$\phi(t):=e^{\mu t}+e^{-\mu t}$, acting coordinatewise on vectors.

We can show
\[
\begin{aligned}
 \E[\Phi(x-Z_k)\mid\mathcal F_{k-1}]
 &\leq\Phi(x)+\tfrac12\mu^2
 \E[e^{\mu\|Z_k\|_\infty}
       \langle\phi(x),Z_k\odot Z_k\rangle\mid\mathcal F_{k-1}]\\
 &\leq\Phi(x)+\tfrac12\mu^2e^{\mu s_k/2}
 \E[\langle\phi(x),Z_k\odot Z_k\rangle\mid\mathcal F_{k-1}],
\end{aligned}
\]
where the first step follows from Taylor's theorem, $\phi''=\mu^2\phi$,
and $\E[Z_k\mid\mathcal F_{k-1}]=0$, and the second step follows from
$\|Z_k\|_\infty\leq s_k/2$ and the
$\mathcal F_{k-1}$-measurability of $s_k$.
Since $\cosh\leq|\sinh|+1$ we have $\mu\phi(t)\leq|\phi'(t)|+2\mu$, whence
$$\mu\langle\phi(x),z\odot z\rangle
\leq\langle|\nabla\Phi(x)|,z\odot z\rangle+2\mu\|z\odot z\|_1
\leq N^*(\nabla\Phi(x))\,N(z)^2+2\mu m\|z\|_\infty^2 ,$$
by H\"older for the pair $N,N^*$ and the displayed quadratic bound. With $\mu\|Z_k\|_\infty\leq\mu s_k/2\leq10^{-4}$ and $\E[N(Z_k)^2\mid\mathcal F_{k-1}]\leq\nu^2s_k^2$ this gives
$$\E[\Phi(x-Z_k)\mid\mathcal F_{k-1}]\leq\Phi(x)+\nu^2\mu s_k\,h_k(\nabla\Phi(x))+2\nu^2\mu^2ms_k^2 .$$
The key feature of the construction is that a mean-zero error enters the
potential only through its \emph{second} moment, and only against the same
support function $h_k$ that the player's move decreases.

\emph{The bias and the player.} We make the conditioning order explicit because the
player's greedy move is chosen after $Z_k$ is revealed.  Condition first on
$\mathcal F_{k-1}$ and then on a realization of $Z_k$, and define
$q_k:=d_{k-1}-Z_k$.  The observed residual used by the greedy move differs from $q_k$ by at
most $E+\|b_k\|_\infty\leq E+s_k/8<5E$.  Lemma~\ref{lem:observed_softmax_step},
applied with $b_k\in V_k/8$, gives
\[
 \Phi(q_k-b_k+\chi_k)
 \leq \Phi(q_k)-\frac34h_k(\nabla\Phi(q_k))+C_0\mu ms_k.
\]
This is the required deterministic one-step estimate; in particular, the
factor $1/8$ from the bias body is present in the descent coefficient.

It remains to compare the post-noise gradient in this display with the
pre-noise gradient in the centered Taylor estimate.  The
coordinate inequalities for $\phi'(t)=2\mu\sinh(\mu t)$ imply that whenever
$\|x-y\|_\infty\leq a\leq1/(5\mu)$ and $V_k$ is symmetric and contained in
an $\ell_\infty$ ball of radius $s_k$,
\[
 h_k(\nabla\Phi(x))
 \geq e^{-\mu a}h_k(\nabla\Phi(y))-\mu ms_k.
\]
Indeed, coordinatewise
$|\phi'(x_i)|\geq e^{-\mu a}|\phi'(y_i)|-\mu$, and symmetry of $V_k$
allows the maximizing vector to be chosen with the signs of the gradient;
summing the additive loss against that vector costs at most $\mu ms_k$.
Taking $x=q_k$, $y=d_{k-1}$ and
$a=\|Z_k\|_\infty\leq s_k/2$ therefore gives, for every realization of
$Z_k$,
\[
 h_k(\nabla\Phi(q_k))
 \geq e^{-\mu s_k/2}h_k(\nabla\Phi(d_{k-1}))-\mu ms_k.
\]
This is the comparison needed before averaging; no independence between the
greedy move and $Z_k$ is being asserted.

Take conditional expectations in the deterministic player inequality and
combine it with the centered estimate above.  Since $\mu s_k\leq10^{-4}$,
$2\nu^2\mu^2ms_k^2=O(\nu^2\mu ms_k)$, and the fixed choice
$\nu=1/5$ leaves ample slack below the $3/4$ descent coefficient, the result
is
\[
 \E[\Phi(d_k)\mid\mathcal F_{k-1}]
 \leq\Phi(d_{k-1})-\frac12h_k(\nabla\Phi(d_{k-1}))+O(\mu ms_k).
\]

\emph{From one step to all steps.} Since $V_k\supseteq\{v:\|v\|_\infty\leq s_k/\beta\}$ we have $h_k(a)\geq(s_k/\beta)\|a\|_1$, and $|\phi'(t)|\geq\mu(\phi(t)-2)$ gives $\|\nabla\Phi(d)\|_1\geq\mu(\Phi(d)-2m)$. Choose $A_{\rm ch}$ so that
$$\Phi(d_{k-1})\geq M_{\rm ch}
\qquad\Longrightarrow\qquad
\E[\Phi(d_k)\mid\mathcal F_{k-1}]\leq\Phi(d_{k-1}),$$
so $\Phi$ is a nonnegative supermartingale as long as it stays above
$M_{\rm ch}$.  By hypothesis $\Phi(d_0)\leq M_{\rm ch}$. One step moves $d$
by $\|e_k\|_\infty+\|\chi_k\|_\infty=O(s_k)$ in $\ell_\infty$, hence changes
$\Phi$ by a factor at most $e^{O(\mu s_k)}\leq2$, so every excursion above
$M_{\rm ch}$ begins at a value at most $2M_{\rm ch}$.

Define the excursion stopping times as follows.  Define
$\tau_0:=0$ and, successively,
\[
 \sigma_j:=\inf\{k>\tau_{j-1}:\Phi(d_k)>M_{\rm ch}\},\qquad
 \tau_j:=\inf\{k\geq\sigma_j:\Phi(d_k)\leq M_{\rm ch}\}\wedge(N+1),
\]
with the convention that an empty infimum is $N+1$.  Conditional on
$\mathcal F_{\sigma_j}$, the process
$\Phi(d_{(\sigma_j+r)\wedge\tau_j})$ is a nonnegative supermartingale:
before $\tau_j$ its state is above $M_{\rm ch}$.  The one-step drift
inequality applies to every active update, including $k=\tau_K$; after
freezing the potential is constant.  Stopping this process at $\tau_j$
therefore preserves the supermartingale property.
Moreover $\Phi(d_{\sigma_j})\leq2M_{\rm ch}$.  Doob's maximal inequality
therefore gives
\[
 \Pr[\max_{\sigma_j\leq k\leq\tau_j-1}\Phi(d_k)>40NM_{\rm ch}
      \mid\mathcal F_{\sigma_j}]
 \leq\frac1{20N}.
\]
There are at most $N$ nonempty excursions.  A conditional union bound shows
that
\[
 \Pr[\max_{0\leq k\leq N}\Phi(d_k)>40NM_{\rm ch}]\leq\frac1{20}.
\]
On the complementary event,
\[
 \|d_k\|_\infty
 \leq100E\log(40NA_{\rm ch}m\beta)
 \leq100E L_{\rm ch}
 \leq\frac K4.
\]
Thus $\tau_K$ cannot occur before $\tau_{\rm ext}$ or before the end of the
horizon.  Removing the tracking stop proves the externally stopped claim;
on $\{\tau_{\rm ext}>N\}$ it proves the original claim.

\emph{The movement.} This part is deterministic, independently of the
estimator outputs.  Since $N_k(\chi_k)\leq(1+\epsilon_0)s_k=1.1s_k$, the additional
hypothesis gives
$$N_k(\log w_k-\log w_{k-1})=N_k(\widehat u_k+\chi_k)
\leq(1-\frac1{c_k})\delta_k,$$
where the first step follows from the definition of the shifted player and the
update $\bar x_k-\bar x_{k-1}=\chi_k$, and the second step follows from the
triangle inequality, $N_k(\chi_k)\leq(1+\epsilon_0)s_k=1.1s_k$, and the
additional hypothesis of the lemma.
\end{proof}

Because the predictor is a Jacobian--vector product, we must control the
variation of the Jacobian along a centering step.  The resolvent
representation below provides the appropriate form.

\begin{lemma}[Resolvent form]
\label{lem:resolvent_form}
With $\Lambda:=\Sigma-P\circ P$ and $\mathcal L:=W^{-1}\Lambda$, the Jacobian from Lemma~\ref{lem:jacobian_defect}-\hyperref[item:jacobian_defect_a]{(a)} is
$$B=2(I+a\mathcal L)^{-1}\mathcal L=\frac2a(I-(I+a\mathcal L)^{-1}),$$
and $(I+a\mathcal L)^{-1}$ is entrywise nonnegative with all row sums equal to $1$, and is self-adjoint and positive definite for $\langle\cdot,\cdot\rangle_w$. In particular
$$\|(I+a\mathcal L)^{-1}\|_{\infty\to\infty}=1 \qquad\text{and}\qquad \|(I+a\mathcal L)^{-1}\|_{w\to w}\leq1 .$$
\end{lemma}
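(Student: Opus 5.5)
The first identity follows from Lemma~\ref{lem:regularized_weight_wellposed}. There $(W+a\Lambda)B=2\Lambda$, so
\[
B=(W+a\Lambda)^{-1}2\Lambda=(I+a\mathcal L)^{-1}W^{-1}\cdot 2\Lambda=2(I+a\mathcal L)^{-1}\mathcal L .
\]
The second form comes from the algebraic identity $(I+aX)^{-1}X=a^{-1}(I-(I+aX)^{-1})$, which holds for any $X$ with $I+aX$ invertible; this is already used in the proof of Lemma~\ref{lem:jacobian_defect}-\hyperref[item:jacobian_defect_b]{(b)}. Invertibility of $I+a\mathcal L=W^{-1}(W+a\Lambda)$ is the positive definiteness of $W+a\Lambda$ established in that lemma.

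\emph{Self-adjointness and positivity.} The operator $W(I+a\mathcal L)=W+a\Lambda$ is symmetric positive definite, because $\Lambda=\Sigma-P\circ P\succeq0$. Hence $I+a\mathcal L$ is self-adjoint and positive definite for $\langle\cdot,\cdot\rangle_w$, and so is its inverse. In the normalized coordinates $W^{1/2}(\cdot)W^{-1/2}$ the operator becomes $I+aL$ with $0\preceq L\preceq I$, as in the proof of Lemma~\ref{lem:jacobian_defect}. Its inverse therefore has eigenvalues in $[1/(1+a),1]$, which gives $\|(I+a\mathcal L)^{-1}\|_{w\to w}\leq1$.

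\emph{Row sums.} We have $\Lambda\mathbf 1=\sigma-(P\circ P)\mathbf1$, and $(P\circ P)\mathbf1=\sigma$ because $\sum_jP_{ij}^2=P_{ii}=\sigma_i$ for an orthogonal projection. So $\mathcal L\mathbf1=0$, hence $(I+a\mathcal L)\mathbf1=\mathbf1$, and the inverse also fixes $\mathbf1$.

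\emph{Entrywise nonnegativity.} This is the main step. Write $W+a\Lambda=\operatorname{Diag}(w+a\sigma)-aP\circ P$. Its diagonal is
\[
w_i+a\sigma_i-a\sigma_i^2>0,
\]
and its off-diagonal entries $-aP_{ij}^2$ are nonpositive, so it is a Z-matrix. It is also positive definite, hence a nonsingular M-matrix, and its inverse is entrywise nonnegative. Then $(I+a\mathcal L)^{-1}=(W+a\Lambda)^{-1}W$ is entrywise nonnegative. Alternatively, one can avoid citing M-matrix theory: strict diagonal dominance holds because
\[
\textstyle\sum_{j\neq i}aP_{ij}^2=a(\sigma_i-\sigma_i^2)<w_i+a\sigma_i-a\sigma_i^2 ,
\]
using $w_i>0$. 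This lets one expand $(W+a\Lambda)^{-1}$ in a convergent Neumann series of nonnegative terms, $D^{-1}\sum_k(aND^{-1})^k$, where $D=\operatorname{Diag}(w+a\sigma-a\sigma^2)$ and $N$ is the off-diagonal part of $a\,P\circ P$. The series converges because row sums of $D^{-1}N$ are below $1$, and that row-sum bound is exactly the dominance inequality.

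\emph{Conclusion.} A nonnegative matrix with unit row sums has $\infty\to\infty$ norm equal to $1$.
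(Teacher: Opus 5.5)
Your proof is correct and follows the paper's argument. You use the same derivation of $B$ from $(W+a\Lambda)B=2\Lambda$, the same $w$-self-adjointness from symmetry of $W+a\Lambda$ with $\Lambda\succeq0$, the same unit row sums from $\Lambda\mathbf 1=0$, and the same M-matrix argument for nonnegativity; the paper applies diagonal dominance to $I+a\mathcal L=W^{-1}(W+a\Lambda)$ directly, which is your argument row-scaled. One small slip in your optional Neumann-series aside: since $N$ already contains the factor $a$, the series should read $D^{-1}\sum_k(ND^{-1})^k$, and it converges because $ND^{-1}$ is similar to $D^{-1}N$.
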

\begin{proof}
Lemma~\ref{lem:regularized_weight_wellposed} justifies the derivative used
here, and the proof of Lemma~\ref{lem:jacobian_defect}-\hyperref[item:jacobian_defect_a]{(a)} gives
$B=(W+a\Lambda)^{-1}2\Lambda$.  Hence
$B=(I+aW^{-1}\Lambda)^{-1}W^{-1}2\Lambda
=2(I+a\mathcal L)^{-1}\mathcal L$, and the identity $(I+aX)^{-1}X=a^{-1}(I-(I+aX)^{-1})$ gives the second form. Now $\Lambda_{ii}=\sigma_i-\sigma_i^2$ and $\Lambda_{ij}=-P_{ij}^2$ for $j\neq i$, so $I+a\mathcal L$ has positive diagonal and nonpositive off-diagonal entries; and $\sum_{j=1}^mP_{ij}^2=P_{ii}=\sigma_i$ gives $\Lambda\mathbf 1=0$, so all its row sums are exactly $1$. It is therefore strictly diagonally dominant with nonpositive off-diagonals, hence a nonsingular $M$-matrix, so its inverse is entrywise nonnegative; and $(I+a\mathcal L)\mathbf 1=\mathbf 1$ gives $(I+a\mathcal L)^{-1}\mathbf 1=\mathbf 1$. A nonnegative matrix with unit row sums has $\ell_\infty$ operator norm $1$. Finally $W\mathcal L=\Lambda$ is symmetric, so $\mathcal L$ is $w$-self-adjoint, and $\langle y,\mathcal Ly\rangle_w=y^\top\Lambda y\geq0$ since $\Lambda\succeq0$; hence the $w$-spectrum of $I+a\mathcal L$ lies in $[1,\infty)$. Both properties hold for \emph{every} positive $w$, not only at the Lewis fixed point: $\Lambda\mathbf 1=0$ and the symmetry of $\Lambda$ are identities of the projection of $\mathbf W^{\vartheta/2}\mathrm{Diag}(c)A$ whatever $w$ is, because $\Sigma$ is by definition the diagonal of that projection. They therefore survive the substitution of an approximate weight vector for the exact one; the resulting stability estimate is quantified below.
\end{proof}

The resolvent representation converts changes of the Lewis-weight fixed point
into a perturbation problem for a stochastic $M$-matrix.  The next lemma uses
that structure to control the Jacobian simultaneously in both maintained norms.
\begin{lemma}[Jacobian stability]
\label{lem:jacobian_stability}
For two row scalings $c,c'>0$, let
$w=w_p^v(\operatorname{Diag}(c)A)$ and
$w'=w_p^v(\operatorname{Diag}(c')A)$ be the corresponding exact regularized
Lewis vectors, and let $B,B'$ be the Jacobians of their logarithms with
respect to the logarithmic row scalings.  Suppose
$\|\log(c'/c)\|_\infty\leq\rho$ and
$\|\log(w'/w)\|_\infty\leq\rho\leq1/16$. Then
$$\|B'-B\|_{w+\infty\to w+\infty}=O(\rho).$$
\end{lemma}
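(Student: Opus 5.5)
We will work with the resolvent form of Lemma~\ref{lem:resolvent_form}, $B=\frac2a(I-R)$ with $R:=(I+a\mathcal L)^{-1}$ and $\mathcal L=W^{-1}\Lambda$. Then $B'-B=\frac2a(R-R')$. The second resolvent identity gives
\[
 R-R'=a\,R(\mathcal L'-\mathcal L)R'.
\]
Hence $B'-B=2R(\mathcal L'-\mathcal L)R'$, and the factor $a$ cancels. Lemma~\ref{lem:resolvent_form} shows that $R$ and $R'$ are contractions in $\|\cdot\|_\infty$ and in $\|\cdot\|_{w}$, respectively $\|\cdot\|_{w'}$. Since $w'=e^{\pm\rho}w$, the $w'$- and $w$-norms agree up to $e^{\pm\rho/2}$. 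So $R'$ is bounded by $1+O(\rho)$ in the mixed norm $\|\cdot\|_{w+\infty}$, and so is $R$. It therefore suffices to show
\[
 \|\mathcal L'-\mathcal L\|_{w+\infty\to w+\infty}=O(\rho).
\]
This is slightly awkward, because $\mathcal L$ itself is only $O(1)$ in these norms. We must show that its variation is $O(\rho)$ and not merely $O(1)$.

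\textbf{Step 1: reduce to a diagonal rescaling.} Both $\mathcal L$ and $\mathcal L'$ are built from the projection of $\mathbf W^{\vartheta/2}\mathrm{Diag}(c)A$ and its primed analogue. These matrices differ by the positive diagonal factor $\Gamma:=(W'/W)^{\vartheta/2}\mathrm{Diag}(c'/c)$, with $\|\log\Gamma\|_\infty\leq(|\vartheta|/2+1)\rho=O(\rho)$, because $|\vartheta|=2/p-1\leq5/3$ for $p\geq3/4$. We then split
\[
 \mathcal L'-\mathcal L=(W'^{-1}-W^{-1})\Lambda'+W^{-1}(\Lambda'-\Lambda).
\]
The first term equals $(W W'^{-1}-I)\mathcal L'$, a diagonal matrix of size $O(\rho)$ times $\mathcal L'$. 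The operator $\mathcal L'$ is bounded in both norms: in $\ell_\infty$, $\|\mathcal L'\|_{\infty\to\infty}=2\max_i(\sigma'_i-\sigma_i'^2)/w'_i\leq2$ because $\sigma'\leq w'$; in the $w'$-norm, $0\preceq L'\preceq I$. Hence the first term is $O(\rho)$.

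\textbf{Step 2: bound $W^{-1}(\Lambda'-\Lambda)$.} We split it into the diagonal part $W^{-1}(\Sigma'-\Sigma)$ and the off-diagonal part $W^{-1}(P'\circ P'-P\circ P)$. The diagonal part is $O(\rho)$ in both norms by the leverage comparison $\sigma'_i=e^{\pm4\|\log\Gamma\|_\infty}\sigma_i$ in Lemma~\ref{lem:relative_drift}, together with $\sigma_i\leq w_i$.

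For the Hadamard part, write $P'\circ P'-P\circ P=(P'-P)\circ(P'+P)$.
\begin{itemize}
\item[(i)] In $\ell_\infty$, the $i$th absolute row sum divided by $w_i$ is at most $w_i^{-1}\|(P'-P)e_i\|_2\,\|(P'+P)e_i\|_2$ by Cauchy--Schwarz. Lemma~\ref{lem:relative_drift} bounds the first factor by $O(\rho)\sqrt{\sigma_i}$, and the second factor is $O(\sqrt{\sigma_i})$. The product is therefore $O(\rho\sigma_i/w_i)=O(\rho)$.
\item[(ii)] In the $w$-norm, we need $\|W^{-1/2}((P'-P)\circ(P'+P))W^{-1/2}\|_2=O(\rho)$. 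We use the Schur-product norm inequality: for a symmetric matrix $X$ and $Y\succeq0$, $\|X\circ Y\|\leq\|X\|\max_i Y_{ii}$. We apply it after symmetric rescaling and use the factorization $M\circ N=D_1^{-1}\bigl((D_1MD_2)\circ N\bigr)D_2^{-1}$ to move weights between factors. We then write $P'-P$ through the column bound of Lemma~\ref{lem:relative_drift}, namely $P'-P=\mathcal E$ with $\mathcal E e_i$ of norm $O(\rho\sqrt{\sigma_i})$. We also use the Gram form $(P\circ P')_{ij}=\langle P e_i\otimes P' e_j\ \ldots\rangle$.
\end{itemize}

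\textbf{Main obstacle.} Step (ii), the $w$-norm bound on the Hadamard perturbation, is the main difficulty. An entrywise bound there loses a factor of $\sqrt m$. The first approach to try is to write
\[
 P'\circ P'-P\circ P=(P'-P)\circ P'+P\circ(P'-P),
\]
and to use that, for the projections, $|u^\top(X\circ P)u|\leq\|X\|\max_iP_{ii}u_i^2$. Taking $u=W^{-1/2}x$ gives $\max_i\sigma_iu_i^2\leq\ldots$, which is not directly a norm of $x$. We therefore need the sharper inequality
\[
 |u^\top(X\circ P)u|\leq\|X\|\sum_i\sigma_iu_i^2\leq\|X\|\,\|x\|_2^2,
\]
which holds because $P\circ(uu^\top)$ has trace controlled by $\operatorname{diag}(P)$. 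Here $X=P'-P$ has operator norm $O(\rho)$, which follows from Lemma~\ref{lem:relative_drift}'s spectral estimate of the parenthesized matrix. Symmetrization of the two non-symmetric cross terms completes the $w$-norm bound.

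Combining the $\ell_\infty$ and $w$ bounds yields $O(\rho)$ in $\|\cdot\|_{w+\infty}$, because a matrix bounded by $O(\rho)$ in each component norm is bounded by $O(\rho)$ in their sum.
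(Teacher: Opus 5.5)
Your proof is correct. It matches the paper in the resolvent reduction and in the $\ell_\infty$ row-sum estimate (i), but takes a genuinely different route for the $w$-norm estimate.

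\textbf{How the paper does the $w$-norm.} It uses no spectral perturbation there. By reversibility, $w_i\mathcal L_{ij}=w_j\mathcal L_{ji}$, and likewise with primes. Hence the $w$-weighted column sums of $\Delta\mathcal L$ exceed its row sums by at most $(e^{2\rho}-1)\|\mathcal L'\|_{\infty\to\infty}$. The Schur test in $\ell_2(w)$ then gives $\|\Delta\mathcal L\|_{w\to w}\le\sqrt{\alpha_r\alpha_c}=O(\rho)$. So a single entrywise computation controls both norms, resting only on the columnwise relative drift of Lemma~\ref{lem:relative_drift}. The $W$ versus $W'$ normalization is handled inside that computation, and the same template is reused in Lemma~\ref{lem:certified_surrogate}.

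\textbf{How your route differs.} You split off the normalization change and the diagonal, then treat the Hadamard term spectrally, via a global projector bound and a Schur product inequality. This isolates the mechanism: operator perturbation of the projector times $\max_i\sigma_i/w_i$. The price is one extra estimate, $\|P'-P\|_{2\to2}=O(\rho)$. That estimate is true but is not what Lemma~\ref{lem:relative_drift} states, because its parenthesized matrix depends on $i$ through $\Gamma_{ii}$. Derive it directly from $P'=\Gamma B(B^\top\Gamma^2B)^{-1}B^\top\Gamma$ together with $\|\Gamma-I\|_{2\to2},\ \|(B^\top\Gamma^2B)^{-1}-I\|_{2\to2}=O(\rho)$.

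\textbf{Your ``main obstacle'' is simpler than you make it.} Note that $W^{-1/2}(X\circ P)W^{-1/2}=X\circ(W^{-1/2}PW^{-1/2})$. The positive semidefinite factor has diagonal $\sigma_i/w_i\le1$, or $\sigma_i'/w_i\le e^{\rho}$ for the $P'$ term. So Schur's inequality $\|X\circ Y\|\le\|X\|\max_iY_{ii}$ applies verbatim. Also, $(P'-P)\circ P'$ and $P\circ(P'-P)$ are already symmetric, so nothing needs symmetrizing.

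\textbf{Minor points.} In Step 1, $(W'^{-1}-W^{-1})\Lambda'=(I-W^{-1}W')\mathcal L'$, not $(WW'^{-1}-I)\mathcal L'$; this is harmless. The garbled fragment $\langle Pe_i\otimes P'e_j\ \ldots\rangle$ in (ii) should be removed.
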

\begin{proof}
By Lemma~\ref{lem:resolvent_form} and the resolvent identity,
$$B'-B=2(I+a\mathcal L')^{-1}(\mathcal L'-\mathcal L)(I+a\mathcal L)^{-1},$$
with no conjugation by powers of $W$. The two outer factors are contractions in $\ell_\infty$ and, up to a factor $e^{\rho}\leq2$ coming from $w'$ versus $w$, in $\|\cdot\|_w$. Hence $\|B'-B\|_{\infty\to\infty}\leq4\|\Delta\mathcal L\|_{\infty\to\infty}$ and $\|B'-B\|_{w\to w}\leq4\|\Delta\mathcal L\|_{w\to w}$, and since an operator bounded by $\alpha$ in both $\|\cdot\|_\infty$ and $\|\cdot\|_w$ is bounded by $\alpha$ in $\|\cdot\|_{w+\infty}$, it suffices to bound $\Delta\mathcal L:=\mathcal L'-\mathcal L$ in those two norms.

\emph{The $\ell_\infty$ norm.} The projection is that of $\mathbf W^{\vartheta/2}\mathrm{Diag}(c)A$, so the two hypotheses combine into a single drift of its rescaling,
$$\rho_*:=(1+\tfrac{|\vartheta|}{2})\rho\leq2\rho\leq\tfrac18,$$
using $|\vartheta|=\frac2p-1\leq2$ for $p\geq\frac23$; it is $\rho_*$, not $\rho$, with which Lemma~\ref{lem:relative_drift} must be applied. Row by row,
$$\sum_{j=1}^m|\Delta\mathcal L_{ij}|\leq\frac1{w'_i}\sum_{j=1}^m|\Lambda'_{ij}-\Lambda_{ij}|+|\frac{w_i}{w'_i}-1|\frac1{w_i}\sum_{j=1}^m|\Lambda_{ij}| .$$
The exact identity $\sum_{j=1}^m|\Lambda_{ij}|=2\sigma_i(1-\sigma_i)\leq2w_i$ makes the second term at most $3\rho$. For the first, Lemma~\ref{lem:relative_drift} gives $\|(P'-P)e_i\|_2\leq10\rho_*\sqrt{\sigma_i}$ and $\sigma'_i=e^{\pm4\rho_*}\sigma_i$, whence $\|P'e_i\|_2+\|Pe_i\|_2\leq(1+e^{2\rho_*})\sqrt{\sigma_i}\leq2.3\sqrt{\sigma_i}$ and
$$\sum_{j=1}^m|P'^2_{ij}-P^2_{ij}|\leq\|(P'-P)e_i\|_2(\|P'e_i\|_2+\|Pe_i\|_2)\leq23\rho_*\,\sigma_i\leq46\rho\,\sigma_i;$$
the diagonal contributes $|\sigma'_i-\sigma_i|+|\sigma_i'^2-\sigma_i^2|\leq(e^{4\rho_*}-1)(2+e^{4\rho_*})\sigma_i\leq38\rho\sigma_i$, using $\sigma_i\leq1$. Dividing by $w'_i\geq e^{-\rho}w_i\geq e^{-\rho}\sigma_i$ leaves at most $90\rho$, so $\|\Delta\mathcal L\|_{\infty\to\infty}\leq93\rho$.

\emph{The $w$ norm.} Here reversibility does all the work, and no spectral perturbation argument is needed. Since $W\mathcal L=\Lambda$ is symmetric, $w_i\mathcal L_{ij}=w_j\mathcal L_{ji}$, and likewise for the primed quantities. Hence
$$\frac{w_i}{w_j}\,\Delta\mathcal L_{ij}=\frac{w_iw'_j}{w'_iw_j}\,\mathcal L'_{ji}-\mathcal L_{ji}, \qquad \frac{w_iw'_j}{w'_iw_j}=e^{\pm2\rho},$$
so the weighted column sums of $\Delta\mathcal L$ are controlled by its row sums. Writing $\alpha_r:=\|\Delta\mathcal L\|_{\infty\to\infty}$ and $\alpha_c$ for that weighted column sum,
$$\alpha_c\leq\alpha_r+(e^{2\rho}-1)\|\mathcal L'\|_{\infty\to\infty}\leq93\rho+5\rho\leq98\rho,$$
using $\|\mathcal L\|_{\infty\to\infty}=\max_{i\in[m]}2\sigma_i(1-\sigma_i)/w_i\leq2$. The Schur test in $\ell_2(w)$ states that $\|X\|_{w\to w}^2$ is at most the product of the two, and follows from Cauchy--Schwarz applied to $\sum_{i=1}^mw_i(\sum_{j=1}^mX_{ij}d_j)^2$; it gives $\|\Delta\mathcal L\|_{w\to w}\leq\sqrt{93\cdot98}\,\rho\leq96\rho$. Collecting, $\|B'-B\|_{w+\infty\to w+\infty}\leq4\max\{93\rho,96\rho\}\leq384\rho$.
\end{proof}

The predictor is evaluated over an entire Newton segment rather than at one
endpoint.  The next lemma integrates the Jacobian stability estimate and
bounds the error of replacing the varying Jacobian by its initial value.
\begin{lemma}[Finite-segment Jacobian error]
\label{lem:finite_segment_jacobian}
Under the Newton-segment hypotheses of
Lemma~\ref{lem:sharper_parameters}, define
\[
 x_s:=x+s\Delta,\qquad
 \psi_s:=\frac{\d}{\d s}\log s(x_s),\qquad
 \psi:=\int_0^1\psi_s\,\d s,
\]
and let $B_s$ be the Jacobian of $\log g(x_s)$ with respect to the
logarithmic row scaling.  With $B:=B_0$ and
$u:=\log g(x_1)-\log g(x_0)$, one has
\[
 N_w(\psi)\leq(1+8R)c_\gamma\delta,
 \qquad
 N_w(u-B\psi)\leq a_1\delta^2
\]
for an absolute constant $a_1$.
\end{lemma}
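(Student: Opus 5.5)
The plan is to write $u$ as the integral of $B_s\psi_s$ along the segment. Then the first bound comes from comparing $\psi_s$ with $h$ coordinatewise, and the second from Lemma~\ref{lem:jacobian_stability}. Throughout, $h:=\sqrt{\Phi''(x)}\Delta$ as in the proof of Lemma~\ref{lem:sharper_parameters}. That proof already gives $N_w(h)\leq c_\gamma\delta$ via Lemma~\ref{lem:ls_path_facts}, $\|h\|_\infty\leq2R$, and the bound $\|\log s(x_s)-\log s(x)\|_\infty\leq4\delta$ uniformly in $s\in[0,1]$.

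\textbf{First bound.} Since $s_i(x)=\phi_i''(x_i)^{-1/2}$, we have
\[
\psi_{s,i}=-\tfrac12\phi_i'''(x_{s,i})\phi_i''(x_{s,i})^{-1}\Delta_i .
\]
Self-concordance gives $|\psi_{s,i}|\leq\sqrt{\phi_i''(x_{s,i})}\,|\Delta_i|$. Coordinatewise self-concordance (the same inequality used in Lemma~\ref{lem:sharper_parameters}) gives
\[
\sqrt{\phi_i''(x_{s,i})}\leq\sqrt{\phi_i''(x_i)}/(1-s|h_i|),
\]
hence $|\psi_{s,i}|\leq|h_i|/(1-2R)$. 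After integrating in $s$, $\psi$ is dominated coordinatewise by $|h|/(1-2R)$. The mixed norm $N_w$ is absolute (monotone in $|\cdot|$), so
\[
N_w(\psi)\leq(1-2R)^{-1}c_\gamma\delta\leq(1+8R)c_\gamma\delta ,
\]
using $R\leq1/160$.

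\textbf{Second bound.} By Lemma~\ref{lem:regularized_weight_wellposed}, $s\mapsto\log g(x_s)$ is smooth, and the chain rule gives $\tfrac{\d}{\d s}\log g(x_s)=B_s\psi_s$. Hence
\[
u-B\psi=\int_0^1(B_s-B_0)\psi_s\,\d s .
\]
Apply Lemma~\ref{lem:jacobian_stability} with $c=s(x)$, $c'=s(x_s)$. The row-scaling drift is at most $4\delta$, and the target-stability part of Lemma~\ref{lem:linf_contraction} gives weight drift at most $16\delta$. So $\rho:=16\delta\leq16r_0/c_k\leq1/16$, and therefore
\[
\|B_s-B_0\|_{w_0+\infty\to w_0+\infty}\leq384\rho=O(\delta),
\]
where $w_0:=g(x)$ is the exact Lewis vector at $x$. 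For each $s$,
\[
N_{w_0}((B_s-B_0)\psi_s)\leq O(\delta)\,N_{w_0}(\psi_s)\leq O(\delta)\,(1-2R)^{-1}N_{w_0}(h),
\]
using the coordinatewise domination again. Integrating over $s\in[0,1]$ then gives $O(\delta)\,N_{w_0}(h)$.

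\textbf{Main obstacle: change of norm.} The bounds above are in the exact-weight norm $N_{w_0}$, but the statement uses the maintained norm $N_w$. Two conversions are needed:
\begin{itemize}
\item Lemma~\ref{lem:jacobian_stability} is stated in the norm of the exact Lewis vector $w_0=g(x)$, not the maintained $w$.
\item $N_w(h)\leq c_\gamma\delta$ is in the maintained norm.
\end{itemize}
Since $\|\log w-\log g(x)\|_\infty\leq K\leq1/4$, the two weighted norms differ by at most a factor $e^{K/2}\leq2$. Both conversions therefore cost only absolute constants, giving $N_w(u-B\psi)\leq a_1\delta^2$ with $a_1$ absorbing $384\cdot16$, the factor $c_\gamma\leq2$, and these comparison factors.

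A secondary check is that the integrand is measurable and smooth enough for the fundamental theorem of calculus. This holds because $x_s$ stays in the domain for all $s\in[0,1]$, since $\|h\|_\infty<1$.
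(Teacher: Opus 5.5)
Your proposal is correct and follows essentially the same route as the paper. Coordinatewise self-concordance gives $|\psi_{s,i}|\leq|h_i|/(1-s|h_i|)$, which yields the first bound because $N_w$ is an absolute norm. The chain rule then writes $u-B_0\psi=\int_0^1(B_s-B_0)\psi_s\,\d s$, and Lemma~\ref{lem:jacobian_stability} together with the $e^{K/2}$ comparison between the $g(x)$- and $w$-mixed norms gives the $O(\delta^2)$ bound. The only cosmetic difference is where the norm change happens: the paper transfers the operator bound on $B_s-B_0$ to the maintained norm (and quotes drifts $3\delta$ and $12\delta$), whereas you keep that bound in the $g(x)$ norm and convert $h$ and the final vector instead, at the same absolute cost.
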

\begin{proof}
Keep the notation $h=\sqrt{\Phi''(x)}\Delta$ from the proof of
Lemma~\ref{lem:sharper_parameters}.  Coordinatewise self-concordance and
Lemma~\ref{lem:ls_path_facts} give, for every $s\in[0,1]$,
\[
 |\psi_{s,i}|\leq\frac{|h_i|}{1-s|h_i|},
 \qquad
 N_w(\psi_s)\leq\frac{N_w(h)}{1-\|h\|_\infty}
 \leq(1+8R)c_\gamma\delta.
\]
Integration proves the first claim and also
$\|\log s(x_s)-\log s(x)\|_\infty\leq3\delta$ after
decreasing $r_0$.  Target stability then gives
$\|\log g(x_s)-\log g(x)\|_\infty\leq12\delta$.
Apply Lemma~\ref{lem:jacobian_stability} to the endpoints $0$ and $s$.
The exact $g(x)$-mixed norm and the maintained $w$-mixed norm differ by at
most the absolute factor $e^{K/2}$, so, uniformly in $s$,
\[
 \|B_s-B_0\|_{w+\infty\to w+\infty}\leq A_J\delta
\]
for an absolute $A_J$.  The differentiability conclusion of
Lemma~\ref{lem:regularized_weight_wellposed} gives
\[
 u=\int_0^1B_s\psi_s\,\d s.
\]
Therefore
\[
 N_w(u-B_0\psi)
 \leq\int_0^1
 \|B_s-B_0\|_{w+\infty\to w+\infty}N_w(\psi_s)\,\d s
 \leq A_J(1+8R)c_\gamma\delta^2
 \leq a_1\delta^2,
\]
where the first step follows from the integral representations of $u$ and
$\psi$, the triangle inequality, and the definition of the operator norm,
the second step follows from the uniform bounds on $B_s-B_0$ and $\psi_s$
above, and the third step follows from $R\leq r_0/c_k$ and
$c_\gamma\leq1+1/(128c_k)$ by choosing a sufficiently large absolute
constant $a_1$.
\end{proof}
\subsection{Large-radius defect geometry}
\label{sec:defect_adaptive}
\label{subsec:large_radius_refinement}
\label{subsec:large_radius_defect_geometry}

This section combines the sharper geometry and stochastic residual game above. The objective is to use the
negative term discarded in Lemma~\ref{lem:robust_consistency} to pay for the
part of the Jacobian--vector product that is costly to estimate.  Throughout
this section, $N(z):=\|z\|_\infty+C\|z\|_w$, where $w$ is the maintained weight
at the beginning of the centering step.  All conditional expectations are
with respect to the algorithmic filtration immediately before the fresh
sketches named in the statement are drawn.

Let $g$ be the exact regularized Lewis vector at the beginning of a centering
step and let $B$ be the Jacobian of $\log g$ with respect to the logarithmic row
scaling.  For a direction $h$, define
\[
 r:=\|h\|_g,\qquad b:=Bh,\qquad s:=\|b\|_g,\qquad
 d(h)^2:=p^2r^2-s^2.
\]
Lemma~\ref{lem:jacobian_defect}-\hyperref[item:jacobian_defect_a]{(a)} makes $d(h)$ real.

The basic contraction estimate discards the slack measured by $d(h)$, but the
predictor must retain that slack to pay for its approximation error.  The next
lemma makes the negative defect term explicit.
\begin{lemma}[Retained Jacobian defect]
\label{lem:retained_jacobian_defect}
Suppose $\|\log w-\log g\|_\infty\leq H\leq1/4$.  Then
\[
 N(Bh)
 \leq p(1+e^{H/2}(\tfrac{2}{pC}+L_H)^2)N(h)
 -\frac{C^2}{4pe^{H/2}}\frac{d(h)^2}{N(h)},
 \]
where $L_H=e^{3H/2}-1$.  The last quotient is interpreted as zero when
$h=0$.
\end{lemma}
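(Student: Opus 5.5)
The plan is to rerun the proof of Lemma~\ref{lem:robust_consistency} with $\widetilde w:=w$, but without discarding the quadratic term. Write $r=\|h\|_g$, $s=\|Bh\|_g$, $d=d(h)$. Lemma~\ref{lem:jacobian_defect}-\hyperref[item:jacobian_defect_a]{(a)} gives $\|Bh-ph\|_g\leq d$, and Lemma~\ref{lem:jacobian_defect}-\hyperref[item:jacobian_defect_b]{(b)} gives $\|Bh\|_\infty\leq p\|h\|_\infty+\frac2pd$. Both lemmas are stated in the exact metric $g$, which is why $d(h)$ is defined with $\|\cdot\|_g$. The Lipschitz estimate for $F(y)=\|y\|_w-\|y\|_g$ then yields
\[
 N(Bh)-pN(h)\leq(\tfrac2p+CL_H)d+C(s-pr),
\]
exactly as in that proof.

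The key change is how the concave term $C(s-pr)$ is handled. Parametrize $d=pru$ and $s=pr\sqrt{1-u^2}$ with $u\in[0,1]$, so $s-pr\leq-pru^2/2$. Instead of maximizing $(\frac2p+CL_H)u-\frac C2u^2$ outright, which spends the whole negative term, split the quadratic into two halves $\frac C4u^2+\frac C4u^2$. Use one half with Young's inequality:
\[
 (\tfrac2p+CL_H)u-\tfrac C4u^2\leq\frac{(\frac2p+CL_H)^2}{C}.
\]
This is twice the constant in Lemma~\ref{lem:robust_consistency}, which accounts for the missing factor $1/2$ in the statement's coefficient $e^{H/2}(\cdot)^2$. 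The other half, $-\frac{C}{4}pru^2=-\frac{C}{4}\frac{d^2}{pr}$, is kept as the retained defect.

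It remains to convert both terms into the stated form using $r$ versus $N(h)$. For the positive term, use $N(h)\geq C\|h\|_w\geq Ce^{-H/2}r$, so $pr\frac{(\frac2p+CL_H)^2}{C}\leq p e^{H/2}(\frac2{pC}+L_H)^2N(h)$. For the negative term, an upper bound $r\leq\mathrm{const}\cdot N(h)/C$ is needed, and this is the delicate step. Since $\|h\|_g\leq e^{H/2}\|h\|_w\leq e^{H/2}N(h)/C$, we get
\[
 -\frac{C}{4}\frac{d^2}{pr}\leq-\frac{C^2}{4pe^{H/2}}\frac{d^2}{N(h)},
\]
which matches the statement exactly. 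For $h=0$ both sides vanish, consistent with the stated convention.

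The main obstacle is the bookkeeping of the $g$ versus $w$ norms in the two conversions. The positive term needs $r\lesssim N(h)/C$ in one direction, handled with a factor $e^{H/2}$, and the negative term needs the same inequality $r\leq e^{H/2}N(h)/C$. The task is to confirm that the Lipschitz constant $L_H$ and the half-split produce precisely the constants claimed, with no hidden loss. If the constants come out slightly off, the Young split ratio can be adjusted, but the $1/4$ in the statement indicates that the even split is the intended choice.
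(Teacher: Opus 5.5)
Your proposal is correct and is essentially the paper's argument: the paper also starts from $N(Bh)-pN(h)\leq A_Hd+C(s-pr)$ with $A_H=\frac2p+CL_H$. It uses $s-pr=-d^2/(pr+s)\leq-d^2/(2pr)$ (your $-pru^2/2$), splits off half of this via Young's inequality $A_Hd\leq A_H^2pr/C+Cd^2/(4pr)$, and converts both terms with $r\leq e^{H/2}N(h)/C$. Your parametrization $d=pru$ is only a cosmetic difference, and the constants come out exactly as stated with the even split.
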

\begin{proof}
The proof of Lemma~\ref{lem:robust_consistency}, before its last maximization,
gives
\[
 N(Bh)-pN(h)\leq A_Hd+C(s-pr),
 \qquad A_H:=\frac2p+CL_H,
 \]
where $d=d(h)$.  If $r>0$, then
\[
 s-pr=-\frac{d^2}{pr+s}\leq-\frac{d^2}{2pr}.
\]
Young's inequality gives
\[
 A_Hd\leq\frac{A_H^2pr}{C}+\frac{Cd^2}{4pr},
\]
and hence
\[
 N(Bh)\leq pN(h)+\frac{A_H^2pr}{C}-\frac{Cd^2}{4pr}.
\]
Since $N(h)\geq C\|h\|_w\geq Ce^{-H/2}r$, we have
$pr\leq pe^{H/2}N(h)/C$.  Applying this bound to both terms in the last
display proves the claim.  The case $r=0$ is immediate.
\end{proof}

We enlarge the centrality radius while retaining logarithmic cost per
step.  Keep
\[
 E=\frac{K}{A_E L_{\rm ch}}=\Theta(q^{-1/2}\ell^{-1})
\]
for a sufficiently large absolute $A_E$, and set
\[
 R:=c_RCE=\Theta(\ell^{-1})
\]
for a sufficiently small absolute $c_R>0$.  These choices give
\[
 R=O(1/c_k),\qquad E=O(1/c_k),\qquad R^2=O(E).
\]
The last relation follows from $C^2E=O(\sqrt q/\ell)=O(1)$.

The next lemma extends the retained-movement estimate to a centrality radius
$R$ that may be larger than the score accuracy $E$.  It uses only the three
scale inequalities above, not the stronger relation $R=\Theta(E)$.

\begin{lemma}[Large-radius retained movement]
\label{lem:large_radius_retained_movement}
There are absolute constants $r_0,\gamma>0$ such that, with the parameters of
Lemma~\ref{lem:sharper_parameters}, the following holds.  Suppose
$x$ is feasible and lies in the barrier domain, $w>0$,
$\|\log w-\log g(x)\|_\infty\leq K$,
$E\leq r_0/c_k$, $R\leq r_0/c_k$, and
$\delta:=\delta_t(x,w)\leq R$.  Let $x^+$ be the projected Newton step
at fixed $t,w$.  For
\[
 \psi=\log s(x^+)-\log s(x),\qquad
 u=\log g(x^+)-\log g(x),\qquad d=d(\psi),
\]
one has
\[
 N(u)\leq(1-\frac{4}{3c_k})\delta
 -\gamma\frac{C^2d^2}{\delta},
 \qquad d^2\leq\frac{5\delta^2}{C^2}.
\]
When $\delta=0$, the quotient $d^2/\delta$ is defined to be zero.
\end{lemma}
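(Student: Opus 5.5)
We split $u$ into a linear part and a remainder, writing $u=B\psi+(u-B\psi)$. Here $B$ is the Jacobian at the start of the segment and $\psi$ is the integrated row-log movement, which equals $\log s(x^+)-\log s(x)$ because $\psi_s$ is an exact derivative. Lemma~\ref{lem:finite_segment_jacobian} gives $N(\psi)\leq(1+8R)c_\gamma\delta$ and $N(u-B\psi)\leq a_1\delta^2$.

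For the linear part we apply Lemma~\ref{lem:retained_jacobian_defect} with $h=\psi$ and $H=K$. The computation in Lemma~\ref{lem:sharper_parameters} bounds the coefficient $p(1+e^{H/2}(2/(pC)+L_H)^2)$; the factor $e^{H/2}$ is doubled relative to that lemma, but the numerical slack still gives at most $p(1+1/(4c_k))\leq1-7/(4c_k)$. This yields
\[
 N(B\psi)\leq(1-\tfrac{7}{4c_k})N(\psi)-\tfrac{C^2}{4pe^{K/2}}\tfrac{d^2}{N(\psi)}.
\]
We then substitute the bound on $N(\psi)$ in both terms. In the positive term, $(1+8R)c_\gamma\leq1+O(r_0/c_k)+1/(128c_k)$, so $(1-\tfrac7{4c_k})(1+8R)c_\gamma\delta\leq(1-\tfrac{5}{3c_k})\delta$ once $r_0$ is small. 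In the negative term, $N(\psi)\leq2\delta$ makes the subtracted quantity at least $\gamma_0C^2d^2/\delta$ with $\gamma_0=1/(8e^{1/8})$. Finally, $a_1\delta^2\leq a_1R\delta\leq a_1r_0\delta/c_k\leq\delta/(3c_k)$ after shrinking $r_0$. Adding the pieces gives $N(u)\leq(1-\tfrac4{3c_k})\delta-\gamma C^2d^2/\delta$. The case $\delta=0$ is trivial: then $h=0$, so $x^+=x$ and $\psi=u=d=0$.

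For the bound on $d^2$ we use Lemma~\ref{lem:jacobian_defect}(a): $d^2\leq p^2\|\psi\|_g^2\leq e^{K}\|\psi\|_w^2\leq e^K N(\psi)^2/C^2$. With $N(\psi)\leq(1+8R)c_\gamma\delta$ this gives roughly $(1+o(1))\delta^2/C^2\leq5\delta^2/C^2$. The norm exchange between $g$ and $w$ uses $\|\log w-\log g\|_\infty\leq K$.

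The main obstacle is compatibility of metrics. Lemma~\ref{lem:retained_jacobian_defect} and Lemma~\ref{lem:jacobian_defect} are stated at the exact weights $g(x)$, while $N$ uses the maintained $w$. The hypothesis here is only $\|\log w-\log g\|_\infty\leq K$ at the start point, whereas Lemma~\ref{lem:finite_segment_jacobian} needs the segment band established in Lemma~\ref{lem:sharper_parameters}. I would first recheck that band argument under $\delta\leq R$ alone; the hypothesis $E\leq r_0/c_k$ plays no role beyond matching the parameter regime. I would also verify that the constants (the factor $e^{H/2}$ and $H\leq2K$ along the segment) still leave the stated $4/(3c_k)$ margin. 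If the margin is tight, I would reduce $r_0$ and accept a smaller absolute $\gamma$.
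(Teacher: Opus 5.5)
Your proposal is correct and follows the paper's proof essentially step for step. You split $u=B\psi+(u-B\psi)$ and apply Lemma~\ref{lem:retained_jacobian_defect} at the base point to $h=\psi$, using the doubled numerical estimate from Lemma~\ref{lem:sharper_parameters} to get the coefficient $1-\tfrac{7}{4c_k}$; you then substitute the two bounds of Lemma~\ref{lem:finite_segment_jacobian}, using $N(\psi)\leq2\delta$ in the negative term, and bound $d^2$ by metric equivalence, exactly as the paper does. The obstacle you flag is not a real one: the hypotheses here are exactly the Newton-segment hypotheses of Lemma~\ref{lem:sharper_parameters}, which already establishes the $2K$ band needed by Lemma~\ref{lem:finite_segment_jacobian}, and the case $\psi=0$, $\delta>0$ is covered automatically because the quotient is interpreted as zero.
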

\begin{proof}
If $\delta=0$, the projected residual vanishes, so $x^+=x$ and
$\psi=u=d=0$.  If $N(\psi)=0$, then $s(x^+)=s(x)$, so again
$u=d=0$.  Thus both claims hold in these cases.  Assume henceforth
$\delta>0$ and $N(\psi)>0$.
Lemma~\ref{lem:sharper_parameters} keeps the entire Newton segment in the
band $H\leq2K$.  Its numerical estimate gives
\[
 e^{H/2}(\tfrac{2}{pC}+L_H)^2<\frac1{4c_k}.
\]
Since $p=1-2/c_k$, Lemma~\ref{lem:retained_jacobian_defect}, applied directly
at the base point to $h=\psi$, therefore gives
\[
 N(B\psi)\leq(1-\frac{7}{4c_k})N(\psi)
 -\gamma_1\frac{C^2d^2}{N(\psi)}
\]
for an absolute $\gamma_1>0$.  This derivation uses only the segment band and
the parameter inequalities, not any separate small-radius specialization.
Lemma~\ref{lem:finite_segment_jacobian} gives
\[
 N(\psi)\leq(1+8R)c_\gamma\delta,
 \qquad
 N(u-B\psi)\leq a_1\delta^2
\]
for an absolute $a_1$.  Since
$c_\gamma\leq1+1/(128c_k)$ and $R,\delta\leq r_0/c_k$, the total positive
error in substituting these two displays is at most
\[
 (1/(128c_k)+8r_0/c_k+O(r_0^2/c_k^2))\delta
 +a_1r_0\delta/c_k.
\]
Choose $r_0$ so that this is at most $\delta/(6c_k)$.  Also
$N(\psi)\leq2\delta$, after the same choice, so the negative term is at most
$-(\gamma_1/2)C^2d^2/\delta$.  Decreasing its absolute coefficient gives
the first claim.  Finally metric equivalence gives
\[
 C\|\psi\|_g\leq e^{K}N(\psi)\leq2e^K\delta,
\]
and therefore
$d^2\leq p^2\|\psi\|_g^2\leq5\delta^2/C^2$.
\end{proof}

The retained defect is algorithmically useful only if the Jacobian action can
be estimated at a cost proportional to that defect.  The following lemma
separates the weighted and coordinatewise pieces needed for this purpose.
\begin{lemma}[Large-radius defect split predictor]
\label{lem:large_radius_split_predictor}
Under the hypotheses of Lemma~\ref{lem:large_radius_retained_movement}, define
$\Delta:=\overline\delta_t(x,w)$.  There is a coarse sigma-field
$\mathcal G_k$ and estimates
$p^\infty,p^w$ such that, except with conditional probability $\zeta$,
\[
 \|p^\infty-u\|_\infty
 \leq\epsilon_1(\overline d+\frac{\Delta}{c_k}),
\]
and, conditionally on $\mathcal G_k$,
\[
 \E[\|p^w-u\|_w^2\mid\mathcal G_k]
 \leq\epsilon_1^2(\frac{\overline d^2}{c_k}
 +\frac{\Delta^2}{C^2c_k^2}).
\]
The first estimate and the certificate cost $O(\log(m/\zeta))$ systems,
the second costs $O(c_k)$ systems, and $\epsilon_1$ may be made an
arbitrarily small absolute constant.
Algorithm~\ref{alg:coarse_defect_data} implements these estimates with the
displayed failure budget; all block sizes and copy counts are fixed before
their randomness is drawn.
\end{lemma}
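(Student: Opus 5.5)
We will prove the lemma by splitting the target $u$ into an exactly computable diagonal part, a stochastically estimated off-diagonal part, and a small second-order remainder. By Lemma~\ref{lem:finite_segment_jacobian}, $N(u-B\psi)\leq a_1\delta^2\leq a_1R\delta$. Since $\delta\leq\Delta$ by Eq.~\eqref{eq:computable_centrality} and $R\leq r_0/c_k$, this is at most $\epsilon_1\Delta/(2c_k)$ after shrinking $r_0$, so it suffices to estimate $B\psi$. Here $\psi$ is computable from $x^+$, which costs one permitted system.

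Next we use the resolvent form of Lemma~\ref{lem:resolvent_form}, together with the decomposition from the proof of Lemma~\ref{lem:jacobian_defect}(b):
\[
 B=W^{-1/2}\bigl(D'+K\bigr)W^{1/2},\qquad D=2(I+aS)^{-1}S,\qquad K=-2(I+aS)^{-1}Q(I+aL)^{-1}.
\]
The diagonal part $D\psi$ requires only the leverage scores $\sigma_i$. These are supplied by the hybrid score provider to relative accuracy $E$, so this part contributes an $\ell_\infty$ and $w$ error of order $E\,N(\psi)=O(E\delta)$, which is $o(\Delta/c_k)$ because $E=O(1/c_k)$.

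The remaining off-diagonal term $W^{-1/2}Kx$ is controlled by Lemma~\ref{lem:jacobian_defect}(b). Its $\ell_\infty$ size is at most $\frac2p d$, and its $w$-norm is bounded through $KM^\dagger K\preceq\frac4{p^2}(I+aS)^{-1}Q(I+aS)^{-1}$ by $O(d)$. The $\ell_\infty$ estimate $p^\infty$ will therefore be formed as follows. Take $O(\log(m/\zeta))$ Johnson--Lindenstrauss sketches of the projection applied to the vector $y=(I+aL)^{-1}W^{1/2}\psi$. The factor $(I+aL)^{-1}$ is applied via a truncated Neumann/M-matrix series, which is contractive by Lemma~\ref{lem:resolvent_form}. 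This gives each coordinate $(Qy)_i$ to additive error $\epsilon_1\sqrt{Q_{ii}\,y^\top Qy}$. Using $Q_{ii}\leq w_i$ and $y^\top Qy=O(d^2)$, the coordinatewise error after the $W^{-1/2}$ rescaling is $O(\epsilon_1 d)$, and a union bound over $m$ coordinates gives failure probability $\zeta$.

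The same sketches produce the one-sided defect certificate $\overline d\geq d$ (Lemma~\ref{lem:defect_certificate}). The certificate comes from estimating $p^2\|\psi\|_g^2-\|B\psi\|_g^2$ with an additive floor of order $\Delta/(Cc_k)$. That floor is what makes the term $\Delta/c_k$ appear in both displayed bounds.

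For the weighted estimate $p^w$ we use an unbiased two-projection estimator,
\[
 (P\circ P)y\approx\frac1k\sum_{j=1}^{k}(Pg_j)\odot(Pg_j')\odot\ldots,
\]
with $k=\Theta(c_k)$ independent Gaussian pairs, where $g_j,g_j'$ are independent of each other and of $\mathcal G_k$. Conditioning on the coarse sigma-field $\mathcal G_k$, which contains $x,w,\psi$, the scores, and $\overline d$, the estimator is conditionally unbiased. Its variance in $\|\cdot\|_w^2$ is $O(y^\top Qy/k)=O(\overline d^2/c_k)$, as claimed in the overview, and the outer resolvent factor $(I+aS)^{-1}$ is a $w$-contraction. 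Adding the squared deterministic errors from the second-order remainder and the diagonal part, both of order $(\Delta/(Cc_k))^2$ in the $w$-norm, yields the stated conditional second-moment bound. Each pair costs $O(1)$ permitted systems, giving the $O(c_k)$ cost.

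The main obstacle is the variance computation. We need $\E\|W^{-1/2}(\widehat{Qy}-Qy)\|_w^2\lesssim y^\top Qy$ rather than $\|y\|_2^2$, and this requires the Hadamard structure: a fourth-moment calculation showing that the variance is at most a constant times $\sum_{ij}P_{ij}^2y_iy_j$-type terms, bounded via $Q\preceq I-L$. A secondary difficulty is that $(I+aL)^{-1}$ must be applied approximately. We would truncate its series after $O(c_k)$ terms using $a=O(1)$ and $\|aL\|\leq a<2$ relative to $I+aL$, and check that the truncation bias fits inside the $\epsilon_1\Delta/c_k$ budget.
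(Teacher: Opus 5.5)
Your proposal has a genuine gap in how you obtain the off-diagonal part $Kx$, that is, the resolvent vector $y=(I+aL)^{-1}x$ with $x=W^{1/2}\psi$.

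\textbf{The gap.} You treat $y$ as available through a ``truncated Neumann/M-matrix series'' and then analyze only the variance of the final multiplication by $Q$. That last step is the easy part; it is Lemma~\ref{lem:defect_energy_estimator}.

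The difficulty is that $L=S-Q$ contains $Q=W^{-1/2}(P\circ P)W^{-1/2}$. Its action cannot be computed exactly within the budget, since computing $(P\circ P)t$ exactly already includes computing all leverage scores (take $t=\mathbf 1$). It can only be estimated, for instance by the two-projection polarization estimator. So every term of your series injects fresh noise. You must show that this noise, and its propagation through the iteration, is of size $d$ rather than of size $\|x\|_2=\|\psi\|_w$, which can be of order $\delta/C$. Otherwise neither $\epsilon_1\overline d$ in $\ell_\infty$ with $O(\log(m/\zeta))$ systems nor $\epsilon_1^2\overline d^2/c_k$ in $\|\cdot\|_w^2$ with $O(c_k)$ systems is reachable.

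The series as you describe it also fails on its own terms. Since $a=2/p-1>1$ and $\|L\|$ may be arbitrarily close to $1$, the plain series $\sum_j(-aL)^j$ need not converge. Separately, truncating after $O(c_k)$ stochastic terms would cost $O(c_k\log(m/\zeta))$ systems for the coordinate estimate, contradicting the claimed $O(\log(m/\zeta))$. Your sample formula is also off: with independent $g_j,g_j'$ the product has mean zero. The polarization estimator must use the same Gaussian in both projections.

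\textbf{The missing idea.} This is the diagonal splitting behind Lemma~\ref{lem:defect_fixed_point}:
\begin{itemize}
\item Write $I+aL=(I+aS)-aQ$, and set $A_0=(I+aS)^{-1}$, $z_*=A_0^{-1/2}y$, $H=aA_0^{1/2}QA_0^{1/2}$. Then $z_*=A_0^{1/2}x+Hz_*$ and $Kx=-\frac2aA_0^{1/2}Hz_*$.
\item Because $Q\preceq S\preceq I$, you get $0\preceq H\preceq\frac a{1+a}I$. The fixed-point chain therefore contracts at an absolute rate, and only $H$ is applied stochastically, with conditional variance $\frac{2a}{s}z^\top Hz$.
\item The exact iterates $(I-H^j)z_*$ have $H$-energy at most $\mathcal D^2=z_*^\top Hz_*=a\,y^\top Qy\leq ad^2/p^2$.
\item The recurrence $\epsilon_{j+1}\leq(\rho^2+4a\rho/s_j)\epsilon_j+4a\mathcal D^2/s_j$ propagates this defect scaling through the whole chain.
\end{itemize}
This yields an absolute-depth chain for the coordinate estimate, aggregated as a median of $O(\log(m/\zeta))$ block means. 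For the weighted estimate it yields an $O(\log c_k)$-round chain with geometrically growing sample sizes totalling $O(c_k)$, drawn only after $\mathcal G_k$ is fixed.

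\textbf{A secondary issue.} The exact $g$, $P$ and $B$ are not computable, so the whole operator, not just its diagonal, must be replaced by the certified surrogate of Lemma~\ref{lem:certified_surrogate}. Its $O(E)$ error in both norms, combined with the finite-segment remainder, gives $N(J^+\psi-u)\leq a_2(E\delta+\delta^2)\leq2a_2r_0\Delta/c_k$.

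That deterministic error is where the $\Delta/c_k$ terms come from, not the certificate floor. The floor is $a_3\Delta^2/(C^2c_k)$ on $\overline d^{\,2}$, not $\Delta/(Cc_k)$. It is there to guarantee $d_+\leq\overline d$ for the surrogate defect $d_+$, which is the quantity that actually controls the chain's error.
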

\begin{proof}
Construct the certified surrogate of Lemma~\ref{lem:certified_surrogate}
at the fixed query.  Algorithm~\ref{alg:coarse_defect_data}
uses the explicit failure split and copy counts stated there.  Its Jacobian has the form
$W^{-1/2}(D+K)W^{1/2}$.  Add the exactly computable diagonal term $D$ to
two independent estimators of $K$ from
Lemma~\ref{lem:defect_fixed_point}: a coordinate chain and, drawn only
after the coarse sigma-field is fixed, a weighted chain.  This estimator
construction and the certificate of Lemma~\ref{lem:defect_certificate}
are algebraic statements about the fixed surrogate and have no
small-radius hypothesis.

Write $J^+:=(W^+)^{-1/2}(D^++K^+)(W^+)^{1/2}$ for the surrogate
Jacobian and define
\[
 e_{\rm det}:=J^+\psi-u.
\]
This is precisely the sum of the surrogate-operator perturbation and the
finite-segment remainder.  Lemmas~\ref{lem:certified_surrogate} and
\ref{lem:finite_segment_jacobian}, together with the constant equivalence of
$W^+$, $g$, and the maintained $w$, give
\[
 N(e_{\rm det})\leq a_2(E\delta+\delta^2)
\]
for an absolute $a_2$.  Since $E,R\leq r_0/c_k$,
\[
 N(e_{\rm det})\leq2a_2r_0\delta/c_k.
\]
Choose $r_0\leq\epsilon_1/(8a_2)$.  Since
$\delta\leq\Delta$, this gives
\[
 \|e_{\rm det}\|_\infty
 \leq\frac{\epsilon_1}{4}\frac{\Delta}{c_k},
 \qquad
 \|e_{\rm det}\|_w
 \leq\frac{\epsilon_1}{4C}\frac{\Delta}{c_k}.
\]

Draw the coordinate chain and the certificate first.  If
$\xi^\infty:=(W^+)^{-1/2}(\widehat{Kx}_\infty-K^+x^+)$ denotes the
physical-coordinate stochastic error, then the normalized coordinate
conclusion of Lemma~\ref{lem:defect_fixed_point} is exactly
\[
 \|\xi^\infty\|_\infty\leq\epsilon_\infty d_+.
\]
The explicit construction in Lemma~\ref{lem:defect_certificate} gives
$d_+^2\leq\overline d^2$.  Choose its absolute block size so that
$\epsilon_\infty\leq\epsilon_1/2$.  Hence, on the coarse event,
\[
 \|p^\infty-u\|_\infty
 \leq\|\xi^\infty\|_\infty+\|e_{\rm det}\|_\infty
 \leq\frac{\epsilon_1}{2}\overline d
      +\frac{\epsilon_1}{4}\frac{\Delta}{c_k},
\]
where the first step follows from
$p^\infty-u=\xi^\infty+e_{\rm det}$ and the triangle inequality, and the
second step follows from
$\|\xi^\infty\|_\infty\leq\epsilon_\infty d_+$,
$d_+\leq\overline d$, $\epsilon_\infty\leq\epsilon_1/2$, and the
deterministic bound on $\|e_{\rm det}\|_\infty$ above.
This is stronger than the first displayed conclusion.

Let $\mathcal G_k$ be generated by the past, the fixed surrogate-score
sketch, the coordinate chain, and the certificate.  Draw the weighted chain
only afterward.  Set
$\xi^w:=(W^+)^{-1/2}(\widehat{Kx}_w-K^+x^+)$.

Conditional on
$\mathcal G_k$, we can show
\[
 \E[\|\xi^w\|_w^2\mid\mathcal G_k]
 \leq c_{\rm eq}\frac{a_4c_w^2d_+^2}{c_k}
 \leq\frac{\epsilon_1^2}{8}\frac{\overline d^2}{c_k},
\]
where the first step follows from the conditional Euclidean guarantee of
Lemma~\ref{lem:defect_fixed_point} and the norm equivalence between $W^+$
and $w$, and the second step follows from $d_+^2\leq\overline d^2$ and the
choice $c_{\rm eq}a_4c_w^2\leq\epsilon_1^2/8$ of the absolute accuracy
multiplier $c_w$ in Algorithm~\ref{alg:coarse_defect_data}.
The conditional statement is legitimate because $S_w$ and every query
operator are $\mathcal G_k$-measurable before this independent chain is
drawn.

We can show
\[
 \E[\|p^w-u\|_w^2\mid\mathcal G_k]
 \leq\frac{\epsilon_1^2}{4}\frac{\overline d^2}{c_k}
 +\frac{\epsilon_1^2}{8}\frac{\Delta^2}{C^2c_k^2}
 \leq\epsilon_1^2(\frac{\overline d^2}{c_k}
 +\frac{\Delta^2}{C^2c_k^2}),
\]
where the first step follows from
$p^w-u=\xi^w+e_{\rm det}$, Young's inequality in the form
$(a+b)^2\leq2a^2+2b^2$, the conditional bound on $\xi^w$, and the
deterministic bound on $\|e_{\rm det}\|_w$ above, and the second step follows
from $1/4\leq1$ and $1/8\leq1$.  The coordinate
chain and certificate use $O(\log(m/\zeta))$ systems, and the independent
weighted chain uses $O(c_k)$ systems.  This proves every assertion directly
at the large radius.
\end{proof}

\subsection{Projection onto the defect body}
\label{subsec:defect_body_projection}

The key point is to use the computable proxy
$\Delta_k:=\overline\delta_{t_k}(x_k,w_k)$ and to measure the residual with
a radius linear in the certified defect:
\[
 s_k:=\eta_0(\overline d_k+\frac{\Delta_k}{c_k}),
\]
where $\eta_0>0$ is a sufficiently small absolute constant.  Set
\[
 r_k:=(1-\frac{59}{48c_k})\Delta_k
 -B\frac{C^2\overline d_k^2}{\Delta_k}.
\]
The branch $\Delta_k=0$ is handled before these quantities are formed.

For a fixed $t$ below, write
$\Call{FixedSlice}{t,w,\overline p,l,h,r,C}=(x,g)$ for the exact
active-set calculation in the proof of
Lemma~\ref{lem:computable_defect_projection}: it minimizes the weighted
quadratic on the slice
$|x_i|\leq t$, $l_i\leq x_i\leq h_i$, and
$\|x\|_w\leq(r-t)/C$, and returns the displayed KKT subgradient $g$.
At ties it uses the strict moving-face indicators
$\mathbf1_{\{l_i<x_i=t<h_i\}}$ and
$\mathbf1_{\{l_i<x_i=-t<h_i\}}$.
To make the auxiliary routine explicit, define
$I_i(t):=[\max\{l_i,-t\},\min\{h_i,t\}]$ and $A(t):=(r-t)/C$.
If $y_i:=\operatorname{clip}(\overline p_i,I_i(t))$ satisfies
$\|y\|_w\leq A(t)$, set $x:=y$ and $\lambda:=0$.
Otherwise, sort the values at which
$\overline p_i/(1+\lambda)$ meets an endpoint of $I_i(t)$.
On a region with free set $F$ and clipped endpoint values $b_i$, set
\[
 1+\lambda:=
 (\frac{\sum_{\substack{i=1\\i\in F}}^m w_i\overline p_i^2}
 {A(t)^2-\sum_{\substack{i=1\\i\notin F}}^m w_ib_i^2})^{1/2},
 \qquad
 x_i:=\begin{cases}
 \overline p_i/(1+\lambda),&i\in F,\\
 b_i,&i\notin F.
 \end{cases}
\]
The scan selects the region satisfying its clipping inequalities.
For either branch, define
\[
 \alpha_i^+:=w_i(\overline p_i-(1+\lambda)t)
 \mathbf1_{\{l_i<x_i=t<h_i\}},
 \qquad
 \alpha_i^-:=w_i(-\overline p_i-(1+\lambda)t)
 \mathbf1_{\{l_i<x_i=-t<h_i\}},
\]
and return
$g:=\lambda A(t)/C-\sum_{i=1}^m(\alpha_i^++\alpha_i^-)$.

\begin{algorithm}[!ht]
\caption{Approximate projection onto the defect body}
\label{alg:defect_projection}
\begin{algorithmic}[1]
\Procedure{DefectProjection}{$w,C,r,s,p^\infty,p^w$}
\Comment{Lemma~\ref{lem:computable_defect_projection}}
\State $\overline p\gets\min\{1,r/(C\|p^w\|_w)\}p^w$, with multiplier
$1$ when $p^w=0$.
\State $l_i\gets p_i^\infty-s/64$, $h_i\gets p_i^\infty+s/64$, and
$z_i\gets\operatorname{clip}(0,[l_i,h_i])$ for every $i$.
\If{$\|z\|_\infty+C\|z\|_w>r$}
  \State \Return \textnormal{\textsc{Fail}}.
\EndIf
\State $\tau_{\rm P}\gets\epsilon_{\rm P}s/C$,
$W_1\gets\sum_{i=1}^mw_i$, and
$\Lambda_{\rm P}\gets32(r/(C\tau_{\rm P}))^2$.
\State $G_{\rm P}\gets\Lambda_{\rm P}r/C^2+
\sqrt{W_1}\|\overline p\|_w+(1+\Lambda_{\rm P})rW_1$.
\State $J\gets\lceil\log_2(2+8G_{\rm P}r/\tau_{\rm P}^2)\rceil$,
$t_-\gets\|z\|_\infty$, $t_+\gets r-C\|z\|_w$, and
$t_{\rm edge}\gets r-C\sqrt{\|z\|_w^2+\tau_{\rm P}^2/16}$.
\If{$t_{\rm edge}\leq t_-$}
  \State \Return $z$.
\EndIf
\State $(x_{\rm edge},g_{\rm edge})\gets
\Call{FixedSlice}{t_{\rm edge},w,\overline p,l,h,r,C}$.
\If{$g_{\rm edge}<0$}
  \State \Return $z$.
\ElsIf{$g_{\rm edge}=0$}
  \State \Return $x_{\rm edge}$.
\EndIf
\State $t_+\gets t_{\rm edge}$ and $v_{\rm best}\gets x_{\rm edge}$.
\For{$j=1,\ldots,J$}
  \State $t\gets(t_-+t_+)/2$ and
  $(x,g)\gets\Call{FixedSlice}{t,w,\overline p,l,h,r,C}$.
  \State Retain $x$ if it improves
  $\frac12\|x-\overline p\|_w^2$.
  \If{$g>0$}
    \State $t_+\gets t$.
  \ElsIf{$g<0$}
    \State $t_-\gets t$.
  \Else
    \State \Return $x$.
  \EndIf
\EndFor
\State \Return $v_{\rm best}$.
\EndProcedure
\end{algorithmic}
\end{algorithm}

The split predictor produces two approximate descriptions of the same motion,
which must be reconciled without an implicit continuous optimization oracle.
The next lemma certifies the finite projection procedure above.
\begin{lemma}[Computable defect projection]
\label{lem:computable_defect_projection}
Let $w>0$, $C,r,s>0$, and $p^\infty,p^w\in\R^m$.  Define
\[
 \mathcal C(p^\infty;r,s)
 :=\{v:\|v\|_\infty+C\|v\|_w\leq r,\
          \|v-p^\infty\|_\infty\leq s/64\}.
\]
Fix an absolute $\epsilon_{\rm P}>0$.  If this set is nonempty,
Algorithm~\ref{alg:defect_projection} returns a feasible $\widetilde v$ such that
\[
 \|\widetilde v-\Pi_{\mathcal C}^w(\overline p)\|_w
 \leq\tau_{\rm P}:=\epsilon_{\rm P}s/C,
 \qquad
 \overline p:=\min\{1,r/(C\|p^w\|_w)\}p^w,
\]
where $\Pi_{\mathcal C}^w$ is exact metric projection in $\|\cdot\|_w$.
When $p^w=0$, the minimum factor is interpreted as $1$.
If there is a scale $\Delta>0$ such that $r\leq\Delta$ and
$s\geq\eta_0\Delta/c_k$, the routine uses no linear systems.  In the
tracked-weight regime used below, where
$\sum_{i=1}^mw_i\leq2e^Kn$, $C^2=O(q)$, and $q\leq L_*$, it has
\[
 O(mJ\log m)\ \text{scalar work},
 \qquad O(J\log m)\ \text{arithmetic depth},
 \qquad J=O(L_*+\log(2+c_k/(\eta_0\epsilon_{\rm P}))).
\]
It also detects an empty set, in which case the calling algorithm returns
\textnormal{\textsc{Fail}}.
\end{lemma}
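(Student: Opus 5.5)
The approach is to treat the projection as a one-dimensional convex problem in the split parameter $t$, exactly as in Lemma~\ref{lem:mixed_ball_linear_oracle}. For $t\in[0,r]$ let $\mathcal C_t$ be the slice $\{x:|x_i|\leq t,\ l_i\leq x_i\leq h_i,\ \|x\|_w\leq A(t)\}$ with $A(t)=(r-t)/C$. Then $\mathcal C=\bigcup_t\mathcal C_t$, and the value function $\varphi(t):=\min_{x\in\mathcal C_t}\frac12\|x-\overline p\|_w^2$ is convex, being the partial minimum of a jointly convex problem. Nonemptiness of $\mathcal C$ is equivalent to nonemptiness at the coordinatewise smallest point $z=\operatorname{clip}(0,[l,h])$, since $z$ simultaneously minimizes $\|\cdot\|_\infty$ and $\|\cdot\|_w$ over the box. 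Hence the initial test correctly detects emptiness, and otherwise the feasible $t$-range is exactly $[\|z\|_\infty,r-C\|z\|_w]$.

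First I will verify \textsc{FixedSlice}. For fixed $t$ the slice problem is a weighted projection onto a box intersected with a $w$-ball, so its KKT conditions read $x_i=\operatorname{clip}(\overline p_i/(1+\lambda),I_i(t))$. The multiplier $\lambda$ is determined by the active-ball equation on each sorted region, and continuity and monotonicity of $\|x(\lambda)\|_w$ give existence of the region, as in Lemma~\ref{lem:mixed_ball_linear_oracle}. An envelope-theorem computation, differentiating only the constraints that move with $t$ (the $t$-faces and the ball radius), shows that the returned $g$ lies in $\partial\varphi(t)$; the strict indicators handle ties where a $t$-face coincides with an $l_i$ or $h_i$ face, which do not move. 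The cost is one sort and one scan, $O(m\log m)$ work and $O(\log m)$ depth.

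Next comes the bisection. The edge point $t_{\rm edge}$ is placed so that the ball radius at $t_{\rm edge}$ exceeds $\|z\|_w$ by $\tau_{\rm P}/4$, which keeps $\lambda$ bounded there: I expect $\lambda\leq\Lambda_{\rm P}$ by comparing the active ball with $z$. Consequently every subgradient on $[t_-,t_{\rm edge}]$ is at most $G_{\rm P}$ in magnitude. If $g_{\rm edge}<0$ the minimizer lies beyond $t_{\rm edge}$, in a thin sliver of the set whose $w$-diameter is $O(\tau_{\rm P})$ around $z$, and returning $z$ is within $\tau_{\rm P}$. Otherwise bisection brackets a minimizer $t^*$ within $r2^{-J}$, so $\varphi(t_{\rm best})-\varphi^*\leq G_{\rm P}r2^{-J}\leq\tau_{\rm P}^2/8$. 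Strong convexity of $\frac12\|\cdot-\overline p\|_w^2$ on the convex set $\mathcal C$ then converts this value gap into $\|\widetilde v-\Pi\|_w^2\leq2(\varphi(t)-\varphi^*)\leq\tau_{\rm P}^2$. Every iterate is feasible by construction.

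The main obstacle is the multiplier bound near the right edge $t_+=r-C\|z\|_w$, where $\lambda$ may blow up and the subgradient bound fails. This is precisely why $t_{\rm edge}$ is introduced, and I would establish the sliver-diameter claim by explicit geometry of the $w$-ball intersected with the box. The complexity claims are then direct. No linear systems are used. Under the stated regime, $W_1=O(n)$, $r\leq\Delta$, $s\geq\eta_0\Delta/c_k$, and $C^2=O(q)$, so $\log(G_{\rm P}r/\tau_{\rm P}^2)=O(L_*+\log(c_k/(\eta_0\epsilon_{\rm P})))$, which gives the bound on $J$ and hence the stated work and depth.
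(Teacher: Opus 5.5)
Your proposal is correct and follows essentially the same route as the paper. It reduces the problem to the convex value function $\varphi(t)$ over the slices, tests emptiness at $z=\operatorname{clip}(0,[l,h])$, computes \textsc{FixedSlice} by an exact active-set scan whose moving-face multipliers give a subgradient, uses the edge cutoff with clipping nonexpansiveness around $z$ to get $1+\lambda\leq\Lambda_{\rm P}$, and finishes with exact-subgradient bisection plus the projection variational inequality. Two small precisions. First, $t_{\rm edge}$ is defined by $A(t_{\rm edge})^2-\|z\|_w^2=\tau_{\rm P}^2/16$, a squared slack rather than a linear excess of $\tau_{\rm P}/4$. Second, the sliver bound you defer is the one-line inequality $\|x-z\|_w^2\leq\|x\|_w^2-\|z\|_w^2$ for $x$ in the box, which follows from $x_iz_i\geq z_i^2$.
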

\begin{proof}
Write $l_i=p_i^\infty-s/64$, $h_i=p_i^\infty+s/64$, and
$z_i=\operatorname{clip}(0,[l_i,h_i])$.  The vector $z$ has the smallest
absolute value in every coordinate among all vectors in the box.  Hence the
set is nonempty exactly when
\[
 \|z\|_\infty+C\|z\|_w\leq r.
\]
For
$t\in[\|z\|_\infty,r-C\|z\|_w]$, define
\[
 I_i(t):=[\max\{l_i,-t\},\min\{h_i,t\}],
 \qquad A(t):=(r-t)/C,
\]
and let $\varphi(t)$ be the minimum of
$\frac12\|x-\overline p\|_w^2$ over
$x_i\in I_i(t)$ and $\|x\|_w\leq A(t)$.  This is the partial minimum of a
jointly convex problem, so $\varphi$ is convex.

We first make the fixed-$t$ computation explicit.  If
$y_i=\operatorname{clip}(\overline p_i,I_i(t))$ satisfies
$\|y\|_w\leq A(t)$, it is the exact minimizer and $\lambda=0$.
Otherwise the ball is active and
\[
 x_i(\lambda,t)
 =\operatorname{clip}(\overline p_i/(1+\lambda),I_i(t)).
\]
The at most $2m$ nonnegative values at which
$\overline p_i/(1+\lambda)$ meets an interval endpoint partition
$\lambda\geq0$ into regions.  In a region, the clipped coordinates are
fixed endpoints $b_i$ and the others form a free set $F$; the equation
$\|x\|_w=A(t)$ has exactly the formula displayed before the algorithm.
Scanning the sorted regions finds the one satisfying its clipping
inequalities.  This is an exact active-set computation, so its running time
does not depend on the magnitude of $\lambda$.

At its KKT tuple, assign a multiplier at a tie between a moving face and a
fixed box face to the fixed face.  Thus a moving upper face is charged only
when $l_i<x_i=t<h_i$, and a moving lower face only when
$l_i<x_i=-t<h_i$.  The two formulas for $\alpha_i^\pm$ displayed before
the algorithm are then
the exact multipliers of the moving faces, including every degenerate tie,
and
\[
 g(t)=\frac{\lambda A(t)}C-\sum_{i=1}^m(\alpha_i^++\alpha_i^-)
 \in\partial\varphi(t).
\]
Thus every non-endpoint branch uses an exact subgradient; there are no
undefined multiplier intervals or approximate lower certificates.

It remains to justify the quantitative edge and iteration rules.  For
any $x$ in the original coordinate box,
$x_i z_i\geq z_i^2$, and hence
\[
 \|x-z\|_w^2\leq\|x\|_w^2-\|z\|_w^2.
\]
Consequently, if
$A(t)^2-\|z\|_w^2\leq\tau_{\rm P}^2/16$, every feasible point at every
parameter $u\geq t$ is within $\tau_{\rm P}/4$ of $z$.

The algorithm uses this fact without mixing two incompatible certificates.
If $t_{\rm edge}\leq t_-$, the preceding estimate applies to the whole
parameter interval and returning $z$ is valid.  Otherwise compute the exact
subgradient $g_{\rm edge}$ of the convex function $\varphi$ at
$t_{\rm edge}$.  If $g_{\rm edge}<0$, every minimizer of $\varphi$ lies to
the right of $t_{\rm edge}$ and is therefore within $\tau_{\rm P}/4$ of
$z$; if $g_{\rm edge}=0$, the fixed-slice point is the exact projection.
Only when $g_{\rm edge}>0$ does the algorithm retain the left interval.
That interval contains a minimizer and every subsequent cut is certified by
an exact subgradient.  In particular, a point certified only by the right
edge is never later overwritten by an objective comparison.

On every other branch, nonexpansiveness of coordinate clipping gives
\[
 \|x(\lambda,t)-z\|_w
 \leq\|\overline p\|_w/(1+\lambda).
\]
Since $\|\overline p\|_w\leq r/C$ and $A(t)\leq r/C$,
\[
 A(t)^2-\|z\|_w^2
 \leq2A(t)\|x(\lambda,t)-z\|_w
 \leq\frac{2(r/C)^2}{1+\lambda}.
\]
On the retained interval the reverse edge inequality therefore implies
$1+\lambda\leq\Lambda_{\rm P}$.  With
$W_1=\sum_{i=1}^mw_i$, the explicit moving-face formulas give
\[
 \sum_{i=1}^m(\alpha_i^++\alpha_i^-)
 \leq\sqrt{W_1}\|\overline p\|_w+(1+\lambda)tW_1.
\]
Since $t\leq r$, every exact subgradient used outside the endpoint branch has
magnitude at most $G_{\rm P}$.

Exact-subgradient bisection on the left interval preserves an interval
containing a minimizer.  After $J$ iterations the retained interval has length
at most $r2^{-J}$, and the best evaluated exact fixed-$t$ point has objective
gap at most $2G_{\rm P}r2^{-J}\leq\tau_{\rm P}^2/4$.
The projection variational inequality then gives
$\|\widetilde v-\Pi_{\mathcal C}^w(\overline p)\|_w\leq\tau_{\rm P}$.
Every returned point is feasible by construction.

Each outer iteration sorts at most $2m$ breakpoints and scans them once.
Moreover $r/s\leq c_k/\eta_0$ under the stated scale hypothesis,
$\tau_{\rm P}=\epsilon_{\rm P}s/C$, and in the application
$\sum_{i=1}^mw_i\leq2e^Kn$, $C^2=O(q)$, and $q\leq L_*$.  Taking logarithms in the
explicit formula for $J$ gives the stated bound.  Hence the scalar work and
depth are $O(mJ\log m)$ and $O(J\log m)$, respectively.  These costs are
charged by the global scalar
counter introduced below; they are not
attributed to one linear-system solve.
\end{proof}

We apply the computable projection to the two predictor estimates and the
defect certificate.  The following lemma verifies that the resulting motion
fits the residual game while preserving the centrality budget.
\begin{lemma}[Defect projection]
\label{lem:linear_defect_projection}
There are choices of the absolute constants in this construction for which,
on the coarse event of Lemma~\ref{lem:large_radius_split_predictor}, with
$\delta_k:=\delta_{t_k}(x_k,w_k)$ and
$\Delta_k:=\overline\delta_{t_k}(x_k,w_k)$,
\[
 r_k\geq N(u_k),\qquad
 r_k+1.1s_k\leq(1-\frac1{c_k})\delta_k,\qquad
 s_k\leq E/100.
\]
Let
\[
 \mathcal C_k:=\{v:N(v)\leq r_k,\
 \|v-p^\infty\|_\infty\leq s_k/64\}
\]
and let $\widehat u_k$ be the feasible output of
Lemma~\ref{lem:computable_defect_projection}, applied with scale $\Delta_k$
(indeed, $r_k\leq\Delta_k$ and $s_k\geq\eta_0\Delta_k/c_k$), with
$\tau_{\rm P}=\epsilon_{\rm P}s_k/C$.  Then, with
\[
 e_k:=u_k-\widehat u_k,\qquad
 b_k:=\E[e_k\mid\mathcal G_k],\qquad Z_k:=e_k-b_k,
\]
one has
\[
 N(b_k)\leq s_k/8,\qquad
 \|Z_k\|_\infty\leq s_k/2,\qquad
 \E[N(Z_k)^2\mid\mathcal G_k]\leq\nu^2s_k^2,
\]
with $\nu=1/5$, as required by Lemma~\ref{lem:residual_chasing}.
\end{lemma}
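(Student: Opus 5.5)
The plan has two parts: first the three scalar inequalities, then the bias and fluctuation bounds for the projected predictor.

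\textbf{Scalar inequalities.} For $r_k\geq N(u_k)$, start from Lemma~\ref{lem:large_radius_retained_movement}:
\[
 N(u_k)\leq(1-\tfrac{4}{3c_k})\delta_k-\gamma C^2d^2/\delta_k .
\]
Then transfer it to the computable quantities.
\begin{itemize}
\item Use Eq.~\eqref{eq:computable_centrality} with $c_\gamma\leq1+1/(128c_k)$ to get $\delta_k\leq\Delta_k\leq(1+1/(128c_k))\delta_k$.
\item The gap between $4/3$ and $59/48$ is $5/48$. It absorbs the $1/(128c_k)$ loss from replacing $\delta_k$ by $\Delta_k$.
\item The certificate of Lemma~\ref{lem:defect_certificate} is one-sided and has an additive floor. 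I expect it to read $\overline d^2\leq d^2+O(\Delta^2/(C^2c_k))$-type. Then $-\gamma C^2d^2/\delta_k\leq-B C^2\overline d_k^2/\Delta_k$ plus a floor term of order $\Delta_k/c_k$, once $B\leq\gamma/2$. The floor term is again absorbed by the $5/48$ slack, after fixing the certificate's floor constant.
\end{itemize}
For $r_k+1.1s_k\leq(1-1/c_k)\delta_k$, write
\[
 1.1s_k=1.1\eta_0\overline d_k+1.1\eta_0\Delta_k/c_k .
\]
Young's inequality gives
\[
 1.1\eta_0\overline d_k\leq B C^2\overline d_k^2/\Delta_k+\tfrac{(1.1\eta_0)^2\Delta_k}{4BC^2}.
\]
Since $C^2\geq c_k$, the second term is $O(\eta_0^2/B)\Delta_k/c_k$. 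With $\eta_0$ small relative to $B$, this and the $1.1\eta_0\Delta_k/c_k$ term fit inside the remaining $(59/48-1)/c_k$ margin, after converting $\Delta_k$ back to $\delta_k$. This Young step is the mechanism announced in Section~\ref{subsec:how_improve}.

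For $s_k\leq E/100$, combine three bounds:
\begin{itemize}
\item $\Delta_k/c_k\leq2R/c_k$ and $R/c_k=c_RCE/c_k=O(c_R E/\sqrt{c_k})$;
\item $\overline d_k=O(\delta_k/C)$, from the certificate together with $d^2\leq5\delta^2/C^2$;
\item $\overline d_k=O(R/C)=O(c_RE)$.
\end{itemize}
Choosing $c_R,\eta_0$ small then gives $s_k\leq E/100$.

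\textbf{Bias and fluctuation.} On the coarse event, the true motion $u_k$ lies in $\mathcal C_k$. Its norm bound is the first step above. The box bound $\|u_k-p^\infty\|_\infty\leq\epsilon_1(\overline d+\Delta/c_k)\leq s_k/64$ holds once $\epsilon_1\leq\eta_0/64$.

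Since $\mathcal C_k$ is $\mathcal G_k$-measurable, and $\overline p$ is the radial truncation of $p^w$ to the $w$-ball of radius $r_k/C$, which contains $u_k$, we get:
\begin{itemize}
\item nonexpansiveness of both the truncation and $\Pi^w_{\mathcal C_k}$ gives $\|\widehat u_k-u_k\|_w\leq\|p^w-u_k\|_w+\tau_{\rm P}$;
\item both $\widehat u_k$ and $u_k$ lie in the box, so $\|e_k\|_\infty\leq s_k/32$ deterministically.
\end{itemize}
Hence $\|b_k\|_\infty,\|Z_k\|_\infty\leq s_k/16$, which settles the $\ell_\infty$ parts. For the $w$-parts, apply Jensen and Lemma~\ref{lem:large_radius_split_predictor}:
\[
 \E[\|e_k\|_w^2\mid\mathcal G_k]\leq2\epsilon_1^2(\overline d^2/c_k+\Delta^2/(C^2c_k^2))+2\tau_{\rm P}^2 .
\]
Multiplying by $C^2$, and using $C^2/c_k=O(1)$ and $\tau_{\rm P}=\epsilon_{\rm P}s_k/C$, this is $O((\epsilon_1^2/\eta_0^2+\epsilon_{\rm P}^2)s_k^2)$. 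The bias $C\|b_k\|_w$ and the second moment of $C\|Z_k\|_w$ are therefore at most a small multiple of $s_k$ and $s_k^2$. Combining with the $\ell_\infty$ parts yields $N(b_k)\leq s_k/8$ and $\E[N(Z_k)^2\mid\mathcal G_k]\leq(1/25)s_k^2$.

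\textbf{Main obstacle.} I expect the hardest step to be the first inequality, $r_k\geq N(u_k)$. It depends on the exact form of the one-sided defect certificate: an upper bound on $\overline d$ in terms of $d$ plus a floor. It also requires the constants $B,\eta_0,\epsilon_1,\epsilon_{\rm P}$ to be chosen in the right order:
\[
 B \text{ from } \gamma,\quad \text{then } \eta_0 \text{ from } B,\quad \text{then } \epsilon_1,\epsilon_{\rm P} \text{ from } \eta_0,
\]
with $r_0$ and $c_R$ last. I would fix that order explicitly first and check that no circular dependence arises.
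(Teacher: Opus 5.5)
Your proposal is correct and follows essentially the same route as the paper's proof. The paper also uses the certificate plus the $5/48$ gap for $r_k\geq N(u_k)$, Young's inequality against $BC^2\overline d_k^2/\Delta_k$ followed by $\Delta_k\leq\overline c_\gamma\delta_k$ for the movement bound, and nonexpansiveness of the two metric projections together with Jensen and Minkowski for the bias and fluctuation bounds. Two small corrections to constants: the actual certificate (Lemma~\ref{lem:defect_certificate}) reads $d_k^2\leq\overline d_k^2\leq A_d(d_k^2+\Delta_k^2/(C^2c_k))$ with a large absolute $A_d$, so you need $BA_d\leq\min\{1/12,\gamma/2\}$, not merely $B\leq\gamma/2$; and no $1/(128c_k)$ loss occurs in the first inequality, because the retained-movement bound is increasing in $\delta$, so $\delta_k\leq\Delta_k$ alone suffices there.
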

\begin{proof}
The one-sided certificate gives
\[
 d_k^2\leq\overline d_k^2
 \leq A_d(d_k^2+\frac{\Delta_k^2}{C^2c_k}).
\]
Choose $B$ so that
$BA_d\leq\min\{1/12,\gamma/2\}$. The certificate then gives
\[
 B\frac{C^2\overline d_k^2}{\Delta_k}
 \leq\frac{\gamma}{2}\frac{C^2d_k^2}{\Delta_k}
 +\frac{\Delta_k}{12c_k}.
\]
Lemma~\ref{lem:large_radius_retained_movement}, together with
$\delta_k\leq\Delta_k$, gives
\[
 N(u_k)\leq(1-\frac{4}{3c_k})\Delta_k
 -\gamma\frac{C^2d_k^2}{\Delta_k},
\]
so the definition with coefficient $59/48$ gives $r_k\geq N(u_k)$.

Define $a_0:=1.1\eta_0$.  Young's inequality gives
\[
 a_0\overline d_k
 \leq B\frac{C^2\overline d_k^2}{\Delta_k}
 +\frac{a_0^2}{4BC^2}\Delta_k.
\]
Since $C^2=128\overline c_sc_k\geq128c_k$, choosing $\eta_0$ so that
\[
 a_0+\frac{a_0^2}{512B}\leq\frac16
\]
yields
\[
 r_k+1.1s_k\leq(1-\frac{17}{16c_k})\Delta_k.
\]
The explicit bound
$\overline c_\gamma=1+1/(128c_k)$ gives
\[
 (1-\frac{17}{16c_k})\overline c_\gamma
 \leq1-\frac1{c_k}.
\]
Therefore the stronger, directly testable inequality
\[
 r_k+1.1s_k\leq(1-\frac1{c_k})
 \frac{\Delta_k}{\overline c_\gamma}
 \leq(1-\frac1{c_k})\delta_k
\]
holds, because Eq.~\eqref{eq:computable_centrality} gives
$\Delta_k\leq c_\gamma\delta_k\leq\overline c_\gamma\delta_k$.
Thus the body is nonempty and the literal failure test in
Algorithm~\ref{alg:fresh_predicted_center} is not triggered.  The certificate and
$d_k^2\leq5\delta_k^2/C^2\leq5\Delta_k^2/C^2$ also imply
$\overline d_k=O(\Delta_k/C)$.  Since
$\Delta_k\leq c_\gamma R=c_R c_\gamma CE$ and $C/c_k=O(1)$,
\[
 s_k\leq\eta_0c_RE(O(1)+C/c_k).
\]
Decreasing $c_R\eta_0$ proves $s_k\leq E/100$.

Next choose the coordinate sample constant so that
\[
 \|p^\infty-u_k\|_\infty
 \leq\epsilon_1(\overline d_k+\Delta_k/c_k)\leq s_k/64.
\]
Thus $u_k\in\mathcal C_k$.  Let $\overline p^w$ be the radial preprocessing
of Lemma~\ref{lem:computable_defect_projection}, and let $\pi_k$ be its exact
metric projection onto $\mathcal C_k$.  Because the radial ball contains
$u_k$, the radial projection and the projection variational inequality give
\[
 \|\pi_k-u_k\|_{w_k}
 \leq\|\overline p^w-u_k\|_{w_k}
 \leq\|p^w-u_k\|_{w_k}.
\]
The approximation guarantee and feasibility give
\[
 \|\widehat u_k-\pi_k\|_{w_k}\leq\epsilon_{\rm P}s_k/C,
 \qquad
 \|\widehat u_k-u_k\|_\infty\leq s_k/32.
\]
Moreover $C^2/c_k\leq256$ and
$s_k^2\geq\eta_0^2(\overline d_k^2+\delta_k^2/c_k^2)$, so
Lemma~\ref{lem:large_radius_split_predictor} gives
\[
 \E[C^2\|p^w-u_k\|_{w_k}^2\mid\mathcal G_k]
 \leq256\epsilon_1^2s_k^2/\eta_0^2.
\]
Define
\[
 \alpha_{\rm P}:=16\epsilon_1/\eta_0+\epsilon_{\rm P}
\]
and choose the two absolute accuracy constants so that
$\alpha_{\rm P}\leq1/16$.  Minkowski's inequality and the preceding displays
give
\[
 C(\E[\|e_k\|_{w_k}^2\mid\mathcal G_k])^{1/2}
 \leq\alpha_{\rm P}s_k.
\]
Conditional Jensen and the coordinate bound imply
\[
 N(b_k)\leq s_k/32+\alpha_{\rm P}s_k<s_k/8,
 \qquad \|Z_k\|_\infty\leq s_k/16.
\]
Finally conditional centering can only decrease the second moment, so
\[
 \E[N(Z_k)^2\mid\mathcal G_k]
 \leq2s_k^2/16^2+2\alpha_{\rm P}^2s_k^2
 \leq s_k^2/64<s_k^2/25.
\]
This proves the claims with $\nu=1/5$.
\end{proof}

\section{Initialization, path composition, and defect certification}
\label{sec:initialization_path_certification}

Section~\ref{subsec:warm_start_initialization} constructs the predicted warm
start and the fresh fixed-point initialization needed at restarts.
Section~\ref{subsec:path_composition_outer_reduction} composes centering and
path-parameter updates and converts the terminal barrier state into an LP
solution.  Section~\ref{subsec:defect_estimation_certification} develops the
adaptive residual game and the randomized certificates used to measure the
retained Jacobian defect.

\subsection{Warm starts and fixed-point initialization}
\label{subsec:warm_start_initialization}

The first leverage-score query after a large-radius step has drift $O(R)$, so
recomputing it to accuracy $E$ would cost $(R/E)^2$.  The next lemma avoids
that loss by advancing the auxiliary weight with the projected predictor.

\begin{lemma}[Large-radius warm start and query drift]
\label{lem:large_radius_warm_start}
Suppose the maintained auxiliary weight is $E/2$-accurate before a centering
step and advance its logarithm by the projected increment $\widehat u_k$ of
Lemma~\ref{lem:linear_defect_projection}.  Then the advanced vector is within
$E$ of the new regularized Lewis vector.  An absolute number of rounds from
Lemma~\ref{lem:linf_contraction} restores $E/2$ accuracy.  The first query
transition has logarithmic row drift $O(R)$, and every subsequent query in
that refinement has drift $O(E)$.
\end{lemma}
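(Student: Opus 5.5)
The plan is to write the auxiliary weight before the step as $\widetilde w_{k-1}$ with $\|\log\widetilde w_{k-1}-\log g(x_{k-1})\|_\infty\leq E/2$, and to set $\log\widetilde w':=\log\widetilde w_{k-1}+\widehat u_k$. The triangle inequality then gives
\[
 \|\log\widetilde w'-\log g(x_k)\|_\infty
 \leq\frac E2+\|\widehat u_k-u_k\|_\infty ,
\]
where $u_k=\log g(x_k)-\log g(x_{k-1})$. The proof of Lemma~\ref{lem:linear_defect_projection} already shows, on the coarse event, that $\|\widehat u_k-u_k\|_\infty\leq s_k/32$: feasibility of $\widehat u_k$ in $\mathcal C_k$ gives $\|\widehat u_k-p^\infty\|_\infty\leq s_k/64$, and $u_k\in\mathcal C_k$ gives the same for $u_k$. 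Since $s_k\leq E/100$, the advanced vector is within $E/2+E/3200\leq E$ of the new regularized Lewis vector. This settles the first claim deterministically on the coarse event, and it needs no bound on the size of $u_k$ itself; this matters because $N(u_k)$ may be of order $R\gg E$.

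For the refinement I will invoke the final part of Lemma~\ref{lem:linf_contraction}, with $A'=A_{x_k}$, $p=1-1/(4q)\geq3/4=:p_0$ and starting error $E$. Provided each round's score estimate yields logarithmic error $\delta\leq pE/8$, which is the accuracy requirement passed to the hybrid score provider, after $t_0=O(1)$ rounds the error is at most $E/2$. Since $p_0$ is absolute, $t_0$ is an absolute number of rounds.

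For the drift claims, recall that a score query for the fixed-point map at weight $w$ concerns the matrix $\mathbf W^{\vartheta/2}\operatorname{Diag}(s(x))A$, so its row-log vector is $\tfrac\vartheta2\log w+\log s(x)$ with $|\vartheta|\leq2$. Between the last query of the previous step and the first query at $x_k$, the row scaling moves by $\psi=\log s(x_k)-\log s(x_{k-1})$. The proof of Lemma~\ref{lem:finite_segment_jacobian} gives $\|\psi\|_\infty\leq3\delta_k=O(R)$. The weight moves from the refined $\widetilde w_{k-1}$ to $\widetilde w'$, that is, by $\widehat u_k$, and $\|\widehat u_k\|_\infty\leq N(\widehat u_k)\leq r_k\leq\Delta_k\leq c_\gamma R$. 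The first transition therefore has drift $O(R)$. Within the refinement the row scaling is fixed, and consecutive iterates $w^{(t)},w^{(t+1)}$ are each within $E$ of $g(x_k)$, because the contraction keeps the errors below $\varrho^tE+E/4\leq E$. Each later transition therefore moves by at most $|\vartheta|\cdot2E/2=O(E)$.

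I expect the main obstacle to be the bookkeeping that makes the first step legitimate. The bound $\|\widehat u_k-u_k\|_\infty\leq s_k/32$ holds only on the coarse event of Lemma~\ref{lem:large_radius_split_predictor}, and the $E/2$ accuracy of the incoming auxiliary weight must refer to the same exact target $g(x_{k-1})$ that defines $u_k$, rather than to the surrogate of Lemma~\ref{lem:certified_surrogate}. I would state the lemma on the coarse event, with failure probability charged to the budget $\zeta$, and check that the query endpoints are fixed before the transition sketches are drawn, so that the score provider's accuracy guarantee applies to each round.
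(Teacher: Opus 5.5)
Your proposal is correct and is essentially the paper's proof. Both arguments bound the advanced vector's error by $E/2+s_k/32\leq E/2+E/3200<E$, use the final part of Lemma~\ref{lem:linf_contraction} for an absolute number of rounds, get the $O(R)$ first drift from $\|\psi\|_\infty=O(R)$ and $\|\widehat u_k\|_\infty\leq r_k\leq\Delta_k=O(R)$, and get the $O(E)$ later drifts because all refinement iterates lie within $O(E)$ of the same target at a fixed barrier point. One imprecision is harmless: the last actual previous query has companion $z^{(J-1)}$ rather than the returned vector, and in Algorithm~\ref{alg:fresh_predicted_center} a surrogate query intervenes, but each of these shifts the first transition by only $O(E)=O(R)$.
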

\begin{proof}
The target moves by $u_k$, while
$\|u_k-\widehat u_k\|_\infty\leq s_k/32\leq E/3200$.
Thus the advanced vector has error at most $E/2+E/3200<E$. The final part of
Lemma~\ref{lem:linf_contraction} restores $E/2$ accuracy in an absolute number
of rounds. Across the centering step the barrier scaling changes by
$e^{\pm O(R)}$, and $\|\widehat u_k\|_\infty=O(R)$, proving the first drift
bound.  During refinement both the current and next auxiliary vectors are
$O(E)$ from the same target, while the barrier point is fixed, proving the
second.
\end{proof}

Fix absolute constants $A_{\rm q}\geq1$ and $c_{\rm anc}>0$, with
$A_{\rm q}$ large enough to dominate the companion-error constants in
Lemmas~\ref{lem:large_radius_warm_start} and
\ref{lem:certified_surrogate}, and then $c_{\rm anc}$ sufficiently small
for every score consumer below.  The fixed-point routine uses either fresh
score sketches or exact score diagonals.  Its final argument is a mode flag
$\mathsf{auto}$ or $\mathsf{exact}$; an omitted flag means
$\mathsf{auto}$.  Exact mode forces every score query onto the deterministic
exact branch, even when the Gaussian row count would be smaller.  Its input
also includes a certified initial logarithmic error, which makes the
cold-start and predicted warm-start costs explicit.
The outer solver uses the hybrid score routine of
Algorithm~\ref{alg:candidate_hybrid_score} throughout.  Its fresh-state
handoff stores the companion of the actual final query, as made explicit
below.

\begin{algorithm}[!ht]
\caption{Fixed-point computation with hybrid scores}
\label{alg:fresh_fixed_point}
\begin{algorithmic}[1]
\Procedure{FixedPoint}{$x,z^{(0)},D,\epsilon_{\rm fp},\zeta,L_{\rm conf},
\mathcal S,\mathsf{mode}$}
\Comment{Lemma~\ref{lem:floor_homotopy_initialization}}
\State $D_x\gets\Phi''(x)^{-1/2}$.
\State $(J,(k_j)_{j=0}^{J-1})\gets
\Call{FixedPointSchedule}{D,\epsilon_{\rm fp},\zeta}$.
\Comment{Algorithm~\ref{alg:fixed_point_schedule}}
\If{$\mathcal S=\varnothing$}
  \State $S_{\rm sketch}\gets\sum_{j=0}^{J-1}k_j$.
  \State If $\mathsf{mode}=\mathsf{exact}$, use exact score diagonals;
  otherwise use Gaussian mode if $S_{\rm sketch}\leq nJ$ and exact score
  diagonals if not.
\EndIf
\For{$j=0,\ldots,J-1$}
  \State $D_j\gets\operatorname{Diag}(z^{(j)})^{\vartheta/2}D_x$
  and $M_j\gets D_jA$.
  \If{$\mathcal S=\varnothing$}
    \If{the global mode is Gaussian}
      \State Draw a fresh $G_j\in\R^{k_j\times m}$ with independent
      $N(0,1/k_j)$ entries after $D_j$ is fixed.
      \State $\widetilde\sigma_i^{(j)}
      \gets\|G_jP(D_j)e_i\|_2^2$ for every $i\in[m]$.
    \Else
      \State $\widetilde\sigma^{(j)}\gets\sigma(D_j)$ exactly.
    \EndIf
  \Else
    \State $(\widetilde\sigma^{(j)},\mathcal S)
    \gets\Call{HybridScore}{D_j,z^{(j)},L_{\rm conf},\mathcal S}$.
    \Comment{Algorithm~\ref{alg:candidate_hybrid_score}}
  \EndIf
  \State $\widetilde\tau^{(j)}\gets\widetilde\sigma^{(j)}+v$.
  \For{$i\in[m]$}
    \State $z_i^{(j+1)}\gets(z_i^{(j)})^{1-p/2}
    (\widetilde\tau_i^{(j)})^{p/2}$.
  \EndFor
\EndFor
\If{$\mathcal S=\varnothing$}
  \State $\mathcal S\gets
  (D_{J-1},z^{(J-1)},\widetilde\sigma^{(J-1)},0)$.
  \Comment{Lemma~\ref{lem:candidate_raw_initialization}}
\EndIf
\State \Return $(z^{(J)},\mathcal S)$.
\EndProcedure
\end{algorithmic}
\end{algorithm}

Algorithm~\ref{alg:fixed_point_schedule} fixes the round count and sample sizes before any
score samples are drawn.

\begin{algorithm}[!ht]
\caption{Deterministic schedule for fixed-point computation}
\label{alg:fixed_point_schedule}
\begin{algorithmic}[1]
\Procedure{FixedPointSchedule}{$D,\epsilon_{\rm fp},\zeta$}
\State $\varrho\gets1-p/2$, $e_0\gets D$, and $j\gets0$.
\While{$e_j>\epsilon_{\rm fp}$ or $j=0$}
  \If{$e_j\geq1$}
    \State $\delta_j^{\rm map}\gets(1-\varrho)/8$.
  \Else
    \State $\delta_j^{\rm map}\gets(1-\varrho)e_j/4$.
  \EndIf
  \State $e_{j+1}\gets\varrho e_j+\delta_j^{\rm map}$ and $j\gets j+1$.
\EndWhile
\State $J\gets j$.
\For{$j=0,\ldots,J-1$}
  \State $\theta_j\gets\min\{1/8,\delta_j^{\rm map}/(2p)\}$.
  \If{$j=J-1$}
    \State $\theta_j\gets\min\{\theta_j,c_{\rm anc}E/16\}$.
  \EndIf
  \State $k_j\gets
  \lceil C_{\rm fp}\theta_j^{-2}\log(2mJ/\zeta)\rceil$.
\EndFor
\State \Return $(J,(k_j)_{j=0}^{J-1})$.
\EndProcedure
\end{algorithmic}
\end{algorithm}

The warm-start argument handles continuation between nearby queries, but a
new epoch still needs an absolute fixed-point construction.  The next lemma
proves both modes of \textsc{FixedPoint} and its initialization cost.
\begin{lemma}[Fixed-point computation and initialization]
\label{lem:floor_homotopy_initialization}
Suppose
$\|\log z^{(0)}-\log g(x)\|_\infty\leq D$ and
$0<\epsilon_{\rm fp}\leq\min\{D,1/4\}$.  In fresh mode assume additionally
$\epsilon_{\rm fp}\leq E/4$.  If $\mathcal S=\varnothing$, then with
probability at least $1-\zeta$,
Algorithm~\ref{alg:fresh_fixed_point} returns $(z^{(J)},\mathcal S)$ with
$\|\log z^{(J)}-\log g(x)\|_\infty\leq\epsilon_{\rm fp}$.
The returned hybrid score state is
$\mathcal S=(D_{J-1},z^{(J-1)},\widetilde\sigma^{(J-1)},0)$ and satisfies
the raw-relative guarantee of
Lemma~\ref{lem:candidate_raw_initialization}; in particular, the companion
stored with $D_{J-1}$ is $z^{(J-1)}$, not $z^{(J)}$.
Here
$J=O(\log(2+D/\epsilon_{\rm fp}))$, the permitted-system cost is
\[
 O(\min\{(J+\epsilon_{\rm fp}^{-2})
 \log(m(1+J)/\zeta),\,nJ\}),
\]
and the linear-system depth is $O(J)$.  In hybrid maintained mode, if
$D\leq E$, $\epsilon_{\rm fp}=E/2$, and every queried score obeys
\[
 |\widetilde\sigma_i^{(j)}-\sigma_i(D_j)|
 \leq c_{\rm anc}E\sigma_i(D_j),
\]
then $J=O(1)$, the same output guarantee holds, and the query sequence
contributes $O(E)$ effective drift and $O(1)$ transitions to the score
process.
In $\mathsf{exact}$ mode the fresh-mode conclusions hold deterministically,
the exact branch costs $O(nJ)$ permitted systems, and the score vector stored
in the returned state is exact.
\end{lemma}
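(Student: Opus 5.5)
The plan is to reduce every mode to the inexact-contraction recurrence of Lemma~\ref{lem:linf_contraction}, with the deterministic schedule of Algorithm~\ref{alg:fixed_point_schedule} serving as the envelope. Write $\varrho=1-p/2$ and $u_j:=\|\log z^{(j)}-\log g(x)\|_\infty$. The first claim of Lemma~\ref{lem:linf_contraction} gives $u_{j+1}\leq\varrho u_j+\delta_j$, where $\delta_j$ is the logarithmic error of the $j$-th map evaluation. The update $z^{(j+1)}_i=(z^{(j)}_i)^{1-p/2}(\widetilde\tau^{(j)}_i)^{p/2}$ has error $\delta_j\leq\frac p2\max_i|\log(\widetilde\tau^{(j)}_i/\tau_i)|$. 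Because $v_i>0$ lies inside $\tau$, a relative score error $\theta_j$ in $\sigma_i$ yields $|\log(\widetilde\tau_i/\tau_i)|\leq2\theta_j$ once $\theta_j\leq1/8$. Hence $\delta_j\leq p\theta_j\leq\delta^{\rm map}_j/2$ by the choice of $\theta_j$. By induction on $j$, if $u_j\leq e_j$ then $u_{j+1}\leq\varrho e_j+\delta^{\rm map}_j=e_{j+1}$. Since $u_0\leq D=e_0$ and the schedule halts with $e_J\leq\epsilon_{\rm fp}$, the output guarantee follows on the good event.

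For the round count, the schedule has two phases. While $e_j\geq1$, $e_{j+1}=\varrho e_j+(1-\varrho)/8$ decreases geometrically toward $1/8$, which takes $O(\log(2+D))$ rounds with constant $\varrho\leq5/8$, since $p\geq3/4$. Below $1$, $e_{j+1}=(\varrho+(1-\varrho)/4)e_j$ is a constant-factor contraction, so reaching $\epsilon_{\rm fp}$ takes $O(\log(2/\epsilon_{\rm fp}))$ further rounds. Together these give $J=O(\log(2+D/\epsilon_{\rm fp}))$, because $\epsilon_{\rm fp}\leq D$.

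For Gaussian mode, each fresh sketch $G_j$ is drawn after $D_j$ is fixed. Johnson--Lindenstrauss with $k_j=\lceil C_{\rm fp}\theta_j^{-2}\log(2mJ/\zeta)\rceil$ then gives relative accuracy $\theta_j$ for all $i$ except with probability $\zeta/(2J)$, and a union bound over the $J$ rounds covers all of them. The last round uses the tighter $\theta_{J-1}\leq c_{\rm anc}E/16$. This supplies the raw-relative guarantee of Lemma~\ref{lem:candidate_raw_initialization} for the stored state $(D_{J-1},z^{(J-1)},\widetilde\sigma^{(J-1)},0)$, and the companion is $z^{(J-1)}$ because that is the weight used to form $D_{J-1}$.

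For the cost, I sum $k_j$ and compute each $\|G_jP(D_j)e_i\|^2$ via $k_j$ solves. While $e_j\geq\epsilon_{\rm fp}$ the $\theta_j$ decay geometrically backward, so $\sum_j\theta_j^{-2}=O(J+\epsilon_{\rm fp}^{-2})$, and the final term contributes $O(E^{-2})=O(\epsilon_{\rm fp}^{-2})$ because $\epsilon_{\rm fp}\leq E/4$. The $\min$ with $nJ$ comes from the mode switch: exact diagonals cost $n$ solves per round, deterministically. Depth is $O(J)$ because the solves within a round are parallel. In exact mode every $\delta_j=0$, so the conclusions are deterministic.

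The main obstacle is the hybrid maintained mode. With $D\leq E$ and $\epsilon_{\rm fp}=E/2$, the schedule gives $J=O(1)$. The hypothesis $c_{\rm anc}E$-relative accuracy together with $c_{\rm anc}$ small makes $\delta_j\leq\delta^{\rm map}_j/2$ as before. For the drift, I would bound consecutive row-log displacements. Since $D_j=\operatorname{Diag}(z^{(j)})^{\vartheta/2}D_x$ with $x$ fixed, the displacement is $\frac{|\vartheta|}{2}\|\log z^{(j+1)}-\log z^{(j)}\|_\infty\leq|\vartheta|(u_j+u_{j+1})=O(E)$. This bounds $\rho_\infty$, hence the effective drift $\eta$, by $O(E)$ for each of the $O(1)$ transitions. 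The point to check carefully is that the hybrid provider's accuracy hypothesis is applied per query, with the endpoints fixed before the sketch, so that no union-bound logarithm beyond $L_{\rm conf}$ enters.
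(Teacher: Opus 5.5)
Your proposal follows the paper's proof essentially step for step. It uses the deterministic envelope $e_j$ of Algorithm~\ref{alg:fixed_point_schedule}, driven by the inexact contraction of Lemma~\ref{lem:linf_contraction}. It then applies per-round chi-square concentration for sketches drawn after $D_j$ is fixed, with a union bound over the adaptive rounds. The sample counts form a geometric series dominated by the last fine round, and the $c_{\rm anc}E/16$ clamp there gives the raw-relative handoff with companion $z^{(J-1)}$. The $\min$ with $nJ$ comes from the mode switch. In hybrid mode the drift is $O(E)$ because all companions are $O(E)$-close to the same fixed point.

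One small slip is in the round count. When $D<1$ there is no coarse phase: the fine phase starts at $e_0=D$ and lasts $O(1+\log(D/\epsilon_{\rm fp}))$ rounds, not $O(\log(2/\epsilon_{\rm fp}))$. With your looser count, the inequality $\epsilon_{\rm fp}\leq D$ does not give $J=O(\log(2+D/\epsilon_{\rm fp}))$ when $D\ll1$ (e.g.\ $D=2\epsilon_{\rm fp}$). You do still obtain $J=O(1)$ correctly in the hybrid case.
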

\begin{proof}
Let $e_j$ be a deterministic upper bound on the error before round $j$, with
$e_0=D$, and define $\varrho=1-p/2<1$.  Algorithm~\ref{alg:fixed_point_schedule} makes the schedule
literal.  In the coarse regime $e_j\geq1$, it requests map error
$(1-\varrho)/8$.  In the fine regime it requests map error
\[
 \delta_j^{\rm map}=\frac{1-\varrho}{4}e_j.
\]
Lemma~\ref{lem:linf_contraction} then gives
\[
 e_{j+1}\leq\varrho e_j+\delta_j^{\rm map}
 \leq(\varrho+\frac{1-\varrho}{4})e_j
\]
in the fine regime, while the coarse regime also contracts after changing an
absolute constant.  The first index $J$ with $e_J\leq\epsilon_{\rm fp}$ is
therefore deterministic and satisfies
$J=O(\log(2+D/\epsilon_{\rm fp}))$.
The last pre-update bound is
$e_{J-1}\leq\epsilon_{\rm fp}/
(\varrho+(1-\varrho)/4)=O(\epsilon_{\rm fp})$ in the fine regime.
Thus, in fresh mode, $e_{J-1}\leq A_{\rm q}E$ for an absolute $A_{\rm q}$.

At round $j$, condition on all preceding iterates and use the fresh matrix
$G_j$.  For every nonzero score,
\[
 \frac{\widetilde\sigma_i^{(j)}}{\sigma_i(D_j)}
 \stackrel{d}{=}\frac{\chi_{k_j}^2}{k_j}.
\]
For $0<\theta_j\leq1$, the standard chi-square bound and the definition of
$k_j$ give, after fixing one sufficiently large absolute $C_{\rm fp}$,
\[
 \Pr[\exists i:\ |\widetilde\sigma_i^{(j)}-\sigma_i(D_j)|
 >\theta_j\sigma_i(D_j)\mid D_j]\leq\frac\zeta J.
\]
Zero-score coordinates are returned exactly.  A tower argument and a union
bound over the adaptive rounds show that all these events hold with
probability at least $1-\zeta$.  Since the floor is nonnegative, relative
accuracy for the leverage scores gives the same or better additive accuracy
for the regularized score.  The algorithm's definition of $\theta_j$ gives
\[
 |\log(\widetilde\sigma_i^{(j)}+v_i)
       -\log(\sigma_i(D_j)+v_i)|
 \leq2\theta_j\leq\delta_j^{\rm map}/p.
\]
After multiplication by $p/2$, the logarithmic fixed-point-map error is at
most $\delta_j^{\rm map}/2$, and hence is certainly within the recurrence
budget above.  The explicit row count satisfies
\[
 k_j=O((\delta_j^{\rm map})^{-2}\log(2mJ/\zeta)),
\]
with the final
raw-relative restriction changing only its absolute constant.  The requested
accuracies decrease geometrically in the fine regime, so those squared
sample counts form a geometric series dominated by the final fine round.
Each coarse round uses only
$O(\log(2mJ/\zeta))$ rows, but there can be $J$ such rounds.  Thus the
Gaussian branch uses
\[
 O((J+\epsilon_{\rm fp}^{-2})
 \log(2m(1+J)/\zeta))
\]
systems.  Alternatively,
compute the complete score vector in every round using $O(n)$ systems, for a
total of $O(nJ)$.  Concretely, the exact branch solves the
$n$ unit right-hand sides of $A^\top D_j^2A$, forms
$D_jA(A^\top D_j^2A)^{-1}$ by $n$ sparse matrix products, and takes the
rowwise inner products with $D_jA$.  It therefore uses $O(n)$ permitted
systems and $O(n\nnz(A)+mn)$ scalar operations per round, which are charged
to the global scalar counter.
Algorithm~\ref{alg:fresh_fixed_point} selects the cheaper branch, and all
systems inside one round are parallel.  In the last round
$\theta_{J-1}\leq c_{\rm anc}E/16$.  The same chi-square event used for that
round therefore gives the raw-relative handoff, with no additional failure
event or asymptotic cost, and the algorithm stores its actual query companion
$z^{(J-1)}$.

In hybrid maintained mode, the displayed raw-relative guarantee makes every
inexact-map error at most $cE$ for an arbitrarily small absolute $c$.
Starting from $D\leq E$, the same contraction gives error $E/2$ after an
absolute number of rounds.  Consecutive companions are all $O(E)$-accurate
to the same fixed point, so their row scalings have total effective drift
$O(E)$.
\end{proof}

\subsection{Path composition and outer reduction}
\label{subsec:path_composition_outer_reduction}

The preceding ingredients now supply one certified centering move and one
controlled change of the path parameter.  The next lemma composes them and
shows that all centrality and weight-tracking invariants persist.
\begin{lemma}[Centrality and path-update composition]
\label{lem:centrality_path_composition}
Suppose
\[
 \delta:=\delta_t(x,w)\leq R\leq\frac1{160c_k},
 \qquad
 \|\log w-\log g(x)\|_\infty\leq K.
\]
Let $x^+$ be the Lee--Sidford Newton step at fixed $t,w$, and suppose
\[
 z:=\|\log w^+-\log w\|_{w+\infty}
 \leq(1-\tfrac1{c_k})\delta,
 \qquad
 \|\log w^+-\log g(x^+)\|_\infty\leq K.
\]
Then
\[
 \delta_t(x^+,w^+)\leq(1-\tfrac1{4c_k})\delta.
\]
Moreover, define
\[
 \beta:=1+C\sqrt{2\overline c_1},
 \qquad \alpha:=\frac{R}{16c_k\beta}.
\]
For every $\xi$ with $|\xi|\leq\alpha$ and $1+\xi>0$,
\[
 \delta_{(1+\xi)t}(x^+,w^+)\leq R.
\]
\end{lemma}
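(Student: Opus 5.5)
The first claim is a chain of three facts from Lemma~\ref{lem:ls_path_facts}: Newton contraction at fixed $(t,w)$, the weight-change estimate, and the $t$-change estimate.

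\emph{Centering.} Since $\delta\leq R\leq1/(160c_k)\leq1/10$ and the tracking hypothesis $\|\log w-\log g(x)\|_\infty\leq K$ places $w$ in the band $\frac45g(x)\leq w\leq\frac54g(x)$ (because $K\leq1/128$), the Newton contraction gives $\delta_t(x^+,w)\leq4\delta^2$. The weight change has
\[
 z\leq(1-\tfrac1{c_k})\delta\leq1/10,
\]
so the weight-change estimate yields
\[
 \delta_t(x^+,w^+)\leq(1+4z)(4\delta^2+z)\leq(1+4\delta)\bigl(4\delta^2+(1-\tfrac1{c_k})\delta\bigr).
\]
Expanding gives $(1-\frac1{c_k})\delta+O(\delta^2)$, with the quadratic coefficient bounded by an absolute constant (about $4+4+16\delta$). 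Using $\delta\leq1/(160c_k)$, the quadratic term is at most $\frac{c}{160c_k}\delta$ with $c\approx8.1$, which is below $\frac{3}{4c_k}\delta$. This gives $(1-\frac1{4c_k})\delta$. One subtlety is that the weight-change estimate is stated for $N_w$, evaluated at the old weight $w$. This is exactly the norm in which $z$ is measured, so the estimate applies directly.

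\emph{Path update.} The $t$-change estimate of Lemma~\ref{lem:ls_path_facts} must be applied at $(x^+,w^+)$. It holds for any norm $N_{w^+}$ and any positive weight, so
\[
 \delta_{(1+\xi)t}(x^+,w^+)\leq(1+|\xi|)\delta_t(x^+,w^+)+|\xi|\bigl(1+C\sqrt{\|w^+\|_1}\bigr).
\]
To bound $\|w^+\|_1$, use the second tracking hypothesis $\|\log w^+-\log g(x^+)\|_\infty\leq K$ together with Lemma~\ref{lem:sensitivity}, which gives $\|g\|_1\leq2n=\overline c_1$. Then $\|w^+\|_1\leq e^K\overline c_1\leq2\overline c_1$, and the additive term is at most $|\xi|\beta$.

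Now insert $|\xi|\leq\alpha=R/(16c_k\beta)$ and $\delta_t(x^+,w^+)\leq(1-\frac1{4c_k})R$. The result is at most
\[
 (1+\alpha)(1-\tfrac1{4c_k})R+\tfrac{R}{16c_k}.
\]
Since $\alpha\leq R\leq1/(160c_k)$, the factor $(1+\alpha)$ costs at most $\frac{R}{160c_k}$. The total is therefore $R\bigl(1-\frac1{4c_k}+\frac1{16c_k}+\frac1{160c_k}\bigr)\leq R$.

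The main obstacle is bookkeeping rather than anything conceptual. First, the weight-change estimate uses the old-weight mixed norm and needs $z\leq1/10$; both hold here. Second, the $t$-change bound needs the weight band at $x^+$, which the hypothesis $\|\log w^+-\log g(x^+)\|_\infty\leq K$ supplies. I would double-check the constant in the centering step, where the $4\delta^2$ Newton term and the $4z$ multiplier must fit inside the $\frac{3}{4c_k}$ slack; the assumption $R\leq1/(160c_k)$ is what makes this work.
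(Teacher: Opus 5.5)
Your proof is correct and follows essentially the same route as the paper. You combine Newton contraction with the weight-change estimate (Lemmas 15 and 17 of Lee--Sidford) for the centering bound, then apply the signed $t$-change estimate with $\|w^+\|_1\leq e^K c_1\leq 2c_1$ for the path update. The only differences are in constant bookkeeping: the paper bounds $(1+4\delta)(1-\tfrac1{c_k}+4\delta)\leq(1+\tfrac1{40c_k})(1-\tfrac{39}{40c_k})$ and absorbs $\alpha R\leq\alpha\beta$ via $R\leq\beta$, whereas you expand the product and use $\alpha\leq R\leq 1/(160c_k)$.
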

\begin{proof}
The parameter choice gives $K\leq1/128$, hence
$4g(x)/5\leq w\leq5g(x)/4$.  Lemma 15 of \cite{ls19} gives
$\delta_t(x^+,w)\leq4\delta^2$.  Since $z\leq\delta\leq1/10$,
Lemma 17 of \cite{ls19} gives
\[
 \delta_t(x^+,w^+)
 \leq(1+4z)(4\delta^2+z)
 \leq\delta(1+4\delta)(1-\tfrac1{c_k}+4\delta).
\]
Using $\delta\leq1/(160c_k)$,
\[
 (1+4\delta)(1-\tfrac1{c_k}+4\delta)
 \leq(1+\tfrac1{40c_k})(1-\tfrac{39}{40c_k})
 \leq1-\tfrac1{4c_k}.
\]

The tracking invariant and the size bound on the ideal weight give
$\|w^+\|_1\leq e^Kc_1\leq2c_1$.  The proof of Lemma 14 of \cite{ls19},
also for $\xi<0$, gives
\[
 \delta_{(1+\xi)t}(x^+,w^+)
 \leq(1+|\xi|)\delta_t(x^+,w^+)+|\xi|\beta.
\]
Indeed, evaluate the new residual at the old minimizing multiplier scaled by
$1+\xi$.  Since $R\leq\beta$ and $|\xi|\leq\alpha$,
\[
 \delta_{(1+\xi)t}(x^+,w^+)
 \leq R-\frac{R}{4c_k}+2\alpha\beta
 \leq R-\frac{R}{8c_k}
 \leq R.
\]
\end{proof}

To terminate the outer reduction, maintained centrality must imply proximity
to the exact minimizer of the fixed-weight barrier objective.  The next lemma
provides this conversion at the final accuracy scale.
\begin{lemma}[Fixed-weight terminal distance]
\label{lem:fixed_weight_terminal_distance}
Fix a positive weight vector $w$ and a path parameter $t$.  Suppose
$\|\log w-\log g(x)\|_\infty\leq K\leq1/128$ and define
$\eta:=\delta_t(x,w)$ for the maintained mixed norm.  If $\eta\leq2^{-20}\ell^{-3}$, then the
fixed-weight barrier objective has a unique minimizer $x_t(w)$ and
\[
 \|\sqrt{\phi''(x_t(w))}(x-x_t(w))\|_\infty\leq8\eta,
 \qquad
 N_w(\sqrt{\phi''(x_t(w))}(x-x_t(w)))
 \leq8c_\gamma\eta.
\]
Moreover the corresponding target displacement satisfies
\[
 N_w(\log g(x)-\log g(x_t(w)))
 \leq32c_\gamma\eta.
\]
\end{lemma}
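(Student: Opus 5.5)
The plan is to run the fixed-weight Newton method from $x$ and show that it converges quadratically to a point with zero centrality. That limit is then the minimizer $x_t(w)$, and the total displacement is controlled by a geometric series. Uniqueness is immediate: for fixed $w>0$ the objective $f_t(\cdot,w)$ is strictly convex on the affine set, since each $\phi_i$ is strictly convex. The distance estimates will come from summing the Newton steps.

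Set $x_0:=x$ and $x_{j+1}:=x_j+h_t(x_j,w)$. The weight $w$ is frozen, but the tracking hypothesis $4g(x_j)/5\le w\le5g(x_j)/4$ must be re-verified at every iterate. To do this, I will first bound the cumulative displacement and then use self-concordance together with the target-stability part of Lemma~\ref{lem:linf_contraction}.

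Write $\eta_j:=\delta_t(x_j,w)$. Lemma~\ref{lem:ls_path_facts} gives $\eta_{j+1}\le4\eta_j^2$ and $N_w(\sqrt{\Phi''(x_j)}(x_{j+1}-x_j))\le c_\gamma\eta_j$. Because $\eta_0\le2^{-20}\ell^{-3}$, this yields $\eta_j\le\eta(4\eta)^{2^j-1}$, so $\sum_j\eta_j\le(1+8\eta)\eta$.

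The step norms above are measured at the local scaling $\sqrt{\phi''(x_j)}$, and I will transport them to the fixed scaling at the final point. Coordinatewise self-concordance gives $|\log s_i(x_j)-\log s_i(x_0)|\le\sum_{k<j}\frac{\|h_k\|_\infty}{1-\|h_k\|_\infty}\le 2c_\gamma\eta(1+O(\eta))$. Thus all local scalings agree up to $e^{\pm O(\eta)}$, and $\|\log g(x_j)-\log g(x)\|_\infty\le 4\cdot O(\eta)\le K$. This keeps every iterate inside the $K\le1/128$ tracking band, so Lemma~\ref{lem:ls_path_facts} applies at each step by induction. The iterates remain in the barrier domain because each step has $\|h_k\|_\infty<1$ in local units.

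Since the increments are summable, $x_j$ converges to a limit $x_\infty$, and $\delta_t(x_\infty,w)=0$ by continuity. Zero centrality is exactly the first-order optimality condition on the affine set, so $x_\infty=x_t(w)$.

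The distance bound follows by summing increments measured at the scaling of $x_t(w)$. Each increment costs at most a factor $e^{O(\eta)}$ from the change of scaling, so
\[
N_w(\sqrt{\phi''(x_t(w))}(x-x_t(w)))\le e^{O(\eta)}c_\gamma(1+8\eta)\eta\le 8c_\gamma\eta.
\]
The $\ell_\infty$ claim follows from $\|\cdot\|_\infty\le N_w$ and $c_\gamma\le2$; the slack constant $8$ absorbs everything.

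For the target displacement, I will join $x$ to $x_t(w)$ by the straight segment. Along it, $\log s$ moves with derivative bounded coordinatewise by $|h_i|/(1-|h_i|)$, where $h=\sqrt{\phi''(x_t(w))}(x-x_t(w))$. I will then apply the Jacobian representation from Lemma~\ref{lem:regularized_weight_wellposed}, $\d\log g=B\,\d\log s$.

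The Jacobian contracts the mixed norm: Lemma~\ref{lem:jacobian_defect}(c) combined with Lemma~\ref{lem:robust_consistency} in the maintained norm, with $H\le2K$, gives $N(Bh)\le N(h)$. Integrating along the segment gives $N_w(\log g(x)-\log g(x_t(w)))\le 2\cdot 8c_\gamma\eta\cdot(1+O(\eta))\le32c_\gamma\eta$. The factor $2$ is a conservative allowance for the metric comparison $e^{K}$ between the $g$-norm and the $w$-norm and for the change of scaling.

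The main obstacle is this last step. The Jacobian norm bound in Lemma~\ref{lem:sharper_parameters} was proved for Newton segments starting from a centered point with a specific tracking radius, so it has to be re-established for the fixed-weight segment here. The quickest route is to apply Lemma~\ref{lem:robust_consistency} directly with $\widetilde w=w$ and $H\le 2K$ at each point of the segment. Its contraction factor is $p(1+1/(8c_k))<1$, which is even stronger than needed.
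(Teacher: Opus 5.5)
Your proposal is correct and follows essentially the same route as the paper. Both arguments use fixed-weight Newton iteration with $\eta_{j+1}\le4\eta_j^2$, re-verify the tracking band inductively via coordinatewise self-concordance and target stability, identify the interior limit (the uniform $e^{\pm O(\eta)}$ scaling bound keeps $\phi''$ bounded) as the unique minimizer $x_t(w)$, sum the transported step norms, and bound the target Jacobian pointwise by the robust-consistency calculation in the maintained norm. The one minor difference is the integration path for the target bound: you integrate $\mathrm{d}\log g=B\,\mathrm{d}\log s$ along the straight segment from $x_t(w)$ to $x$, whereas the paper integrates along the Newton polyline. This is equally valid, since that segment lies within $O(\eta)$ local units of $x_t(w)$ and so stays in the $2K$ band.
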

\begin{proof}
Starting from $x^{(0)}=x$, run exact projected Newton steps for the
fixed objective with weight $w$, and write
$\eta_j:=\delta_t(x^{(j)},w)$.  The Newton and normalized-step
parts of Lemma~\ref{lem:ls_path_facts} give
\[
 \eta_{j+1}\leq4\eta_j^2,
 \qquad
 \|\sqrt{\phi''(x^{(j)})}(x^{(j+1)}-x^{(j)})\|_\infty
 \leq2\eta_j.
\]
Since $\eta\leq2^{-20}$, induction gives
$\eta_j\leq2^{-j}\eta$ and hence
$\sum_{j=0}^{\infty}\eta_j\leq2\eta$.

We justify both applicability and convergence, rather than assuming an
infinite Newton sequence exists.  Suppose the estimates hold through step
$j$.  Coordinatewise self-concordant Hessian comparison along the preceding
steps gives
\[
 \|\sqrt{\phi''(x)}
      (x^{(j)}-x)\|_\infty
 \leq2e^{4\eta}\sum_{r=0}^{j-1}\eta_r
 \leq4e^{4\eta}\eta<\frac12.
\]
Thus every finite iterate remains in the same compact interior Dikin
neighborhood of $x$.  The same comparison gives
$\|\log s(x^{(j)})-\log s(x)\|_\infty
=O(\sum_{r=0}^{j-1}\eta_r)$, so target stability in
Lemma~\ref{lem:linf_contraction} yields
\[
 \|\log g(x^{(j)})-\log g(x)\|_\infty
 \leq32\eta<K.
\]
Consequently
$\|\log w-\log g(x^{(j)})\|_\infty<K+1/16<\log(5/4)$,
and the hypotheses of Lemma~\ref{lem:ls_path_facts} hold at the next
iterate.  This closes the induction.

The sum of the step lengths in the fixed norm
$\|\sqrt{\phi''(x)}\,\cdot\|_\infty$ is finite, so
$(x^{(j)})$ is Cauchy.  Let its limit be $x_*$.  The strict Dikin bound above
places $x_*$ in the interior, and continuity there together with
$\eta_j\to0$ shows that the projected gradient of the fixed-weight
objective vanishes at $x_*$.  Its Hessian is the positive diagonal matrix
$\operatorname{Diag}(w\odot\phi''(x_*))$ restricted to the affine feasible
space, and is positive definite on every nonzero feasible direction.
Therefore the objective is strictly convex on that space, $x_*$ is its
unique minimizer, and $x_*=x_t(w)$.

Finally the total normalized displacement is below $1/2$, so
self-concordant comparison between every intermediate Hessian and the final
one costs less than a factor two.  Summing the steps gives
\[
 \|\sqrt{\phi''(x_t(w))}(x-x_t(w))\|_\infty
 \leq4\sum_{j=0}^{\infty}\eta_j\leq8\eta.
\]
The normalized-step conclusion of Lemma~\ref{lem:ls_path_facts} also gives
$N_w(\sqrt{\phi''(x^{(j)})}(x^{(j+1)}-x^{(j)}))
\leq c_\gamma\eta_j$.  The same Hessian transport and summation, with the
preceding slack in the constant, gives the second displayed bound.
On the entire Newton sequence the preceding argument places the maintained
weight in the $2K$ tracking band.  The robust-consistency calculation in the
proof of Lemma~\ref{lem:sharper_parameters}, applied pointwise on each
Newton segment, bounds the target Jacobian by one in the maintained mixed
norm.  Integrating it, transporting the norm by the total logarithmic weight
and slack drift, and summing $\sum_{j=0}^{\infty}\eta_j\leq2\eta$ gives the last display
(with the stated factor $32$ leaving room for both transports).
\end{proof}

The terminal-distance estimate lets us translate the final barrier state into
an approximate solution of the original linear program.  The following lemma
performs this outer reduction and specifies the output guarantee.
\begin{lemma}[Outer reduction and output]
\label{lem:outer_reduction}
Define
\[
 C_{\rm LS}:=24\sqrt{\overline c_s}\,c_k,
 \qquad
 \Lambda_{\rm out}:=C_{\rm LS}/C
 =\frac{3}{\sqrt2}\sqrt{c_k}\geq1.
\]
After all radius constants, including $c_R,A_E,A_L$, have been fixed, choose
\begin{equation}\label{eq:switch_constant_choice}
 0<c_{\rm sw}\leq
 \min\{2^{-22},c_R(\log 4)^2/(32A_EA_L)\}
\end{equation}
and define
\[
 \eta_{\rm sw}=c_{\rm sw}L_*^{-3},
 \qquad
 \eta_{\rm out}=\min\{c_{\rm sw}L_*^{-3},\epsilon/(32mU^2)\},
\]
\[
 \tau_{\rm sw}=\eta_{\rm sw}/\Lambda_{\rm out},
 \qquad
 \tau_{\rm out}=\eta_{\rm out}/\Lambda_{\rm out},
\]
and, for a sufficiently large absolute $A_{\rm sw}$,
\[
 t_{\rm sw}=(A_{\rm sw}\Lambda_{\rm out}m^{3/2}U^2L_*^4)^{-1},
 \qquad t_{\rm out}=6m/\epsilon.
\]
Whenever the two calls in Algorithm~\ref{alg:lp_solve_fresh} satisfy their
stated invariants, its output is feasible and obeys
$c^\top x\leq\mathrm{OPT}+\epsilon$.
\end{lemma}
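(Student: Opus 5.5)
The argument follows the two-phase structure of Algorithm~\ref{alg:lp_solve_fresh}. The first call runs the path on an auxiliary cost from the given interior point, with $t$ decreasing to $t_{\rm sw}$. The second call switches to the true cost $c$ and increases $t$ up to $t_{\rm out}$. We then verify two things: the switch preserves centrality, and the terminal state yields an $\epsilon$-optimal feasible point.

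\emph{Step 1 (the switch).} At the end of the first call the invariant gives $\delta_{t_{\rm sw}}^{c_{\rm aux}}(x,w)\le\tau_{\rm sw}$ in the maintained mixed norm, together with $\|\log w-\log g(x)\|_\infty\le K$. The plan is to replace $t\,c_{\rm aux}$ by $t\,c$ and bound the change in the residual $\frac{t(c-c_{\rm aux})}{w\odot\sqrt{\phi''(x)}}$ in $N_w$. For this we use:
\begin{itemize}
\item $\sqrt{\phi_i''}\ge 1/U$;
\item $w_i\ge e^{-K}n/m$;
\item $\|c\|_\infty,\|c_{\rm aux}\|_\infty\lesssim U$.
\end{itemize}
Together these give an $\ell_\infty$ bound of order $t_{\rm sw}mU^2$, and a $w$-norm bound that is larger by a factor $\sqrt{\|w\|_1}\,$, hence of order $t_{\rm sw}m^{3/2}U^2$ after the factor $C$. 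This is why $t_{\rm sw}$ carries the factor $m^{3/2}U^2L_*^4\Lambda_{\rm out}$. Choosing $A_{\rm sw}$ large then makes the new centrality at most $\tau_{\rm sw}+\eta_{\rm sw}/\Lambda_{\rm out}\lesssim\eta_{\rm sw}\le c_{\rm sw}L_*^{-3}$, which is within the radius $R=\Theta(\ell^{-1})$ demanded by the second call. The constraint $c_{\rm sw}\le c_R(\log4)^2/(32A_EA_L)$ should be exactly what makes $\eta_{\rm sw}\le R$ once $L_*=\ell\ge\log 4$ is inserted. I expect this to be a routine comparison of constants.

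\emph{Step 2 (terminal output).} The second call ends with $t\ge t_{\rm out}=6m/\epsilon$ and $\delta_t(x,w)\le\tau_{\rm out}$. Since $\Lambda_{\rm out}\ge1$, this gives $\eta:=\delta_t\le\eta_{\rm out}\le c_{\rm sw}L_*^{-3}\le2^{-20}\ell^{-3}$, so Lemma~\ref{lem:fixed_weight_terminal_distance} applies. It produces the minimizer $x_t(w)$ and the bound $\|\sqrt{\phi''(x_t(w))}(x-x_t(w))\|_\infty\le8\eta$. By the duality-gap part of Lemma~\ref{lem:ls_path_facts},
\[
c^\top x_t(w)-\mathrm{OPT}\le\|w\|_1/t\le e^K\cdot2n\cdot\epsilon/(6m)\le\epsilon/2 .
\]
We then bound
\[
|c^\top(x-x_t(w))|\le\|c\|_1\max_i|x_i-x_t(w)_i|\le mU\cdot U\cdot8\eta\le\epsilon/4,
\]
using $\phi_i''\ge U^{-2}$ and $\eta\le\epsilon/(32mU^2)$.

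\emph{Feasibility.} The affine constraint $A^\top x=b$ is preserved because every step is a projected Newton step. The box constraints hold because $x$ lies in the barrier domain, since $8\eta<1$ places it in the Dikin ellipsoid of $x_t(w)$.

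The main obstacle is Step 1. There one has to state precisely what Algorithm~\ref{alg:lp_solve_fresh} does at the switch: whether it recenters, and whether centrality is measured with the $C_{\rm LS}$ norm, which explains the factor $\Lambda_{\rm out}$. The key check is that the $m^{3/2}$ from the weighted norm, together with the lower bound on $w$, is absorbed by $t_{\rm sw}$. Beyond that, everything else is a direct invocation of the cited lemmas.
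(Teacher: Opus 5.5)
Your proposal is correct and follows essentially the paper's route. The switch is handled by reusing the old affine multiplier and bounding $t_{\rm sw}\|(c-d)/(w\odot\sqrt{\phi''(x)})\|$ via $w_i\geq e^{-K}n/m$ and $\sqrt{\phi_i''}\geq1/U$, and the output bound comes from Lemma~\ref{lem:fixed_weight_terminal_distance} plus the weighted duality gap, split as $\epsilon/2+\epsilon/4$.

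The only difference is cosmetic. The paper carries out the switch estimate in the Lee--Sidford norm with coefficient $C_{\rm LS}$, which is where $\Lambda_{\rm out}$ enters, and then uses that this norm dominates the maintained one; you stay in the maintained norm throughout, which works equally well. Note also that the auxiliary cost is $d=-w_0\odot\phi'(x_0)$, which both justifies $\|d\|_\infty\lesssim U$ and makes the first call start at zero centrality.
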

\begin{proof}
For every vector $v$, the mixed norm with coefficient $C_{\rm LS}$ satisfies
\[
 \|v\|_\infty+C_{\rm LS}\|v\|_w
 \leq\Lambda_{\rm out}
       (\|v\|_\infty+C\|v\|_w).
\]
Applying this inequality to every candidate in the minimization defining
centrality gives
$\delta_t^{\rm LS}(x,w)\leq
\Lambda_{\rm out}\delta_t(x,w)$.  Thus the stopping thresholds
$\tau_{\rm sw}$ and $\tau_{\rm out}$ certify the corresponding
Lee--Sidford centrality bounds $\eta_{\rm sw}$ and $\eta_{\rm out}$.

The artificial cost $d=-w\odot\phi'(x_0)$ makes $x_0$ exactly central at
$t=1$.  The first path-following call therefore starts with zero centrality;
the zero-centrality branch of Algorithm~\ref{alg:fresh_path_follow} first
changes $t$ and avoids the vacuous ratio $d^2/\delta$ at $\delta=0$.  The
signed path-change estimate used in
Lemma~\ref{lem:centrality_path_composition} gives centrality at most
$\alpha\beta<R$ after this exceptional first update.

We write a superscript for the cost vector and use the Lee--Sidford norm only
in this paragraph.  At the end of the first call,
\[
 \delta_{t_{\rm sw}}^{d,{\rm LS}}(x,w)
 \leq\Lambda_{\rm out}\tau_{\rm sw}=\eta_{\rm sw}.
\]
The global tracking invariant gives
$e^{-K}n/m\leq w_i\leq2e^K\leq3$ at the switch.  At initialization,
$w_{0,i}\leq3$, so $d=-w_0\odot\phi'(x_0)$ and the barrier bounds in
Section~\ref{sec:intro} imply
$\|c-d\|_\infty\leq A_{\rm cost}U$ for an absolute $A_{\rm cost}$.

We derive the switch estimate directly.  Let $\eta_d$ minimize
the Lee--Sidford mixed norm for cost $d$ at the current $(x,w)$.  Reusing
$\eta_d$ for cost $c$ and applying the triangle inequality gives
\[
 \delta_{t_{\rm sw}}^{c,{\rm LS}}(x,w)
 \leq\delta_{t_{\rm sw}}^{d,{\rm LS}}(x,w)
 +t_{\rm sw}
 \|\frac{c-d}{w\odot\sqrt{\phi''(x)}}\|_{w+\infty,{\rm LS}}.
\]
The lower bounds $w_i\geq e^{-K}n/m$ and
$\sqrt{\phi_i''(x_i)}\geq1/U$, together with
$C_{\rm LS}=24\sqrt{\overline c_s}c_k=O(L_*)$, imply
\[
 \|\frac{c-d}{w\odot\sqrt{\phi''(x)}}\|_{w+\infty,{\rm LS}}
 \leq A_0m^{3/2}L_*U^2
\]
for an absolute $A_0$.  Hence, by the definition of $t_{\rm sw}$,
\[
 \delta_{t_{\rm sw}}^{c,{\rm LS}}(x,w)
 \leq\eta_{\rm sw}
 +\frac{A_0}{A_{\rm sw}\Lambda_{\rm out}L_*^3}
 \leq2\eta_{\rm sw}
\]
when $A_{\rm sw}$ is sufficiently large relative to $A_0/c_{\rm sw}$.
The Lee--Sidford norm dominates the maintained norm.  Since
$CK=1/8$, $L_{\rm ch}=A_LL_*$, and $L_*\geq\log 4$, the choice in
Eq.~\eqref{eq:switch_constant_choice} gives
$2\eta_{\rm sw}\leq R/2<R$, so the second call is admissible.  The tracking invariant, auxiliary-weight accuracy, and stochastic
potential are unchanged at the switch because $x,w,\widehat g$ do not change
and the ideal weight depends on $x$, not on the cost vector.  We regard the two
calls as consecutive segments of one stopped residual game, so no
initial-potential hypothesis is restarted at the switch.

At the end, Algorithm~\ref{alg:fresh_path_follow} gives
$\delta_{t_{\rm out}}^c(x,w)\leq\tau_{\rm out}\leq\eta_{\rm out}$.
Let $x_{t_{\rm out}}(w)$ be the exact minimizer for the final $t,w$.
Lemma~\ref{lem:fixed_weight_terminal_distance} gives
\[
 \|\sqrt{\phi''(x_{t_{\rm out}}(w))}
 (x-x_{t_{\rm out}}(w))\|_\infty\leq8\eta_{\rm out}.
\]
Consequently $\|x-x_{t_{\rm out}}(w)\|_\infty\leq8U\eta_{\rm out}$.
Also $\|w\|_1\leq e^K\|g(x)\|_1\leq3n\leq3m$.  The weighted duality-gap
lemma and $\|c\|_1\leq mU$ now give
\[
 c^\top x-\mathrm{OPT}
 \leq\frac{3m}{t_{\rm out}}+8mU^2\eta_{\rm out}
 \leq\frac34\epsilon.
\]
Every Newton direction lies in $\ker(A^\top)$ and remains in the barrier
domain, so feasibility is preserved.  The remaining quarter of the error
budget absorbs absolute-constant slack in the terminal thresholds.
\end{proof}

\subsection{Defect estimation and certification}
\label{subsec:defect_estimation_certification}

The defect-adaptive construction chooses its residual radius from coarse data
revealed before the fresh randomness of the step.  The next lemma records the
measurable-radius extension of the residual game needed for that ordering.
\begin{lemma}[Adaptive stochastic residual game]
\label{lem:adaptive_residual_game}
Lemma~\ref{lem:residual_chasing} remains valid when $s_k$ is any
$\mathcal G_k$-measurable positive radius satisfying $s_k\leq E/100$, provided
its three bias and fluctuation hypotheses hold conditionally on
$\mathcal G_k$ and the movement bound is replaced by
\[
 N_k(\widehat u_k)+1.1s_k\leq(1-\tfrac1{c_k})\delta_k,
\]
where $N_k(z)=\|z\|_\infty+C\|z\|_{w_{k-1}}$ is the predictable
pre-step norm from that lemma.
Assume also the same initial-potential condition
$\Phi(d_0)\leq M_{\rm ch}$.
\end{lemma}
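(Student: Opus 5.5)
The plan is to re-run the proof of Lemma~\ref{lem:residual_chasing} line by line, with the filtration $\mathcal F_{k-1}$ replaced by an interleaved filtration
\[
 \mathcal F_{k-1}\subseteq\mathcal G_k\subseteq\mathcal F_k .
\]
Here $\mathcal G_k$ contains the past, the coarse data, and the radius $s_k$. The fresh weighted chain, hence $Z_k$, is revealed only after $\mathcal G_k$. The first check is that every place the original proof uses predictability of $s_k$ needs only $\mathcal G_k$-measurability. These places are the movement set $V_k=\{v:N_k(v)\leq s_k\}$, its support function $h_k=s_kN_k^*$, the bias body $V_k/8$, and the bound $\|Z_k\|_\infty\leq s_k/2$. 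The norm $N_k$ is still built from $w_{k-1}$, which is $\mathcal F_{k-1}\subseteq\mathcal G_k$ measurable, and the activity indicator involving $\tau_K$ and $\tau_{\rm ext}$ is $\mathcal F_{k-1}$-measurable. So all of these objects are fixed before $Z_k$ is drawn.

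Next I will redo the one-step estimate conditionally on $\mathcal G_k$. The centered Taylor bound needs
\[
 \E[Z_k\mid\mathcal G_k]=0
 \quad\text{and}\quad
 \E[N_k(Z_k)^2\mid\mathcal G_k]\leq\nu^2s_k^2 .
\]
Both hold by the assumed conditional hypotheses, with $b_k$ now defined as the $\mathcal G_k$-conditional mean. The deterministic player inequality (Lemma~\ref{lem:observed_softmax_step}) holds for every realization of $Z_k$. The same is true of the post-noise versus pre-noise gradient comparison, which uses $\|Z_k\|_\infty\leq s_k/2$ and $s_k\leq E/100$. Combining these as before gives
\[
 \E[\Phi(d_k)\mid\mathcal G_k]
 \leq\Phi(d_{k-1})-\tfrac12 h_k(\nabla\Phi(d_{k-1}))+O(\mu m s_k).
\]
Using $h_k(a)\geq(s_k/\beta)\|a\|_1$, whenever $\Phi(d_{k-1})\geq M_{\rm ch}$ the right-hand side is at most $\Phi(d_{k-1})$. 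The threshold $M_{\rm ch}$ is chosen uniformly in $s_k$, because both the descent term and the error scale linearly in $s_k$. Taking $\E[\cdot\mid\mathcal F_{k-1}]$ by the tower property then recovers the original supermartingale inequality, with the random radius absorbed.

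The excursion and Doob argument then applies verbatim. An excursion starts at a value at most $2M_{\rm ch}$, since one step moves $d$ by $O(s_k)\leq O(E)$ in $\ell_\infty$, uniformly over realizations. The stopping times $\sigma_j,\tau_j$ are defined from $\Phi(d_k)$ exactly as before, and the bound on the number of excursions is still $N$. This gives tracking $\|d_k\|_\infty\leq K/4$ with probability $19/20$. The movement conclusion is deterministic: $N_k(\chi_k)\leq1.1s_k$ holds pathwise, so the replaced hypothesis together with the triangle inequality gives
\[
 N_k(\log w_k-\log w_{k-1})\leq(1-\tfrac1{c_k})\delta_k .
\]
External stopping is handled as before, because the stopping indicator is $\mathcal F_{k-1}\subseteq\mathcal G_k$ measurable.

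The main obstacle is the conditioning order. The greedy move $\chi_k$ depends on $Z_k$, and $s_k$ is random. I must ensure that no step uses independence of $s_k$ from $Z_k$; only measurability with respect to the earlier $\sigma$-field $\mathcal G_k$ may be used. I would make this explicit by first conditioning on $\mathcal G_k$, then on $Z_k$, exactly as in the original proof. I would also verify that every constant, including $A_{\rm ch}$, the $O(\mu m s_k)$ terms, and $e^{\mu s_k/2}$, depends on $s_k$ only through the uniform bound $s_k\leq E/100$.
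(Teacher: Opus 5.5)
Your proposal is correct and follows the paper's argument. With $\mathcal G_k$ playing the role of the pre-noise field, the radius enters only through $V_k$, $h_k=s_kN_k^*$ and $s_k\leq E/100$, so the one-step drift inequality, the Doob excursion argument, and the deterministic movement paragraph all carry over unchanged.

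The paper also uses this identification to license $\mathcal G_k$-measurable external stops. In the application the conditional hypotheses hold only on the coarse event, so it freezes the process just before the first coarse failure. You only allow $\mathcal F_{k-1}$-measurable stops. Your argument nevertheless covers the stronger case at no cost: a frozen step leaves $\Phi$ unchanged, so the $\mathcal G_k$-conditional drift bound holds on both branches before you apply the tower property.
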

\begin{proof}
Identify the pre-noise sigma-field of Lemma~\ref{lem:residual_chasing} with
$\mathcal G_k$ and condition on it before drawing the fresh weighted
sketches.  In
the proof of Lemma~\ref{lem:residual_chasing}, the movement set enters only
through
\[
 V_k=\{v:N_k(v)\leq s_k\},
 \qquad h_k(a)=s_kN_k^*(a),
\]
and through $s_k\leq E/100$.  Every one-step potential inequality is
therefore unchanged.  The coarse event is $\mathcal G_k$-measurable before
the independent weighted predictor and greedy move are drawn.  Hence the
analysis may freeze immediately before the first coarse failure using the
external stop permitted by Lemma~\ref{lem:residual_chasing}.  On the event
that no coarse failure occurs, this stopped process agrees with the
implemented process through all path steps.  The calls receive summable
conditional failure budgets, so a union bound places the external stopping
time after all path steps with the claimed constant probability.  If the
projection body is empty, the algorithm returns
\textnormal{\textsc{Fail}}.  Every other outcome outside the coarse event is
counted as failure, and no claim is made about subsequent iterates.
Doob's excursion argument is also unchanged because it already permits a
different symmetric convex $V_k$ at each step.  Finally the deterministic
movement paragraph uses only the displayed replacement inequality.  These
observations prove the claim.
\end{proof}

The predicted warm start also survives.  Indeed,
$\|u_k-\widehat u_k\|_\infty\leq s_k/32\leq E/3200$, so advancing the old
$E/2$-accurate auxiliary vector by $\widehat u_k$ leaves it strictly within
$E$ of the new target.  Lemma~\ref{lem:linf_contraction} restores accuracy
$E/2$ in an absolute number of rounds.

In this section all matrices are the certified surrogate of
Lemma~\ref{lem:certified_surrogate}; superscripts are omitted.  Recall
\[
 T=2(I+aL)^{-1}L,
 \qquad D=2(I+aS)^{-1}S,
 \qquad K:=T-D.
\]
Define $A_0:=(I+aS)^{-1}$ and, for an input $x$, define
\[
 y:=(I+aL)^{-1}x,
 \qquad z_*:=A_0^{-1/2}y,
 \qquad H:=aA_0^{1/2}QA_0^{1/2}.
\]
The resolvent identity gives
\[
 z_*=A_0^{1/2}x+Hz_*,
 \qquad
 Kx=-\frac2aA_0^{1/2}Hz_*.
\]
Since $Q\preceq S$,
\[
 0\preceq H\preceq\rho I,
 \qquad \rho:=\frac{a}{1+a}<1.
\]
For the present $p$, $\rho$ is bounded away from one by an absolute constant.

To exploit the retained defect, we need randomized estimators whose variance
shrinks with that defect rather than with the full Newton motion.  The next
lemma constructs separate estimators for the weighted and coordinatewise norms.
\begin{lemma}[Defect-scaled stochastic fixed point]
\label{lem:defect_fixed_point}
Let $x=W^{1/2}h$ and
$d^2:=p^2\|x\|_2^2-\|Tx\|_2^2$.  There are two estimators
$\widehat{Kx}_w$ and $\widehat{Kx}_\infty$ with the following properties.
The first uses $O(c_k)$ systems in $O(\log c_k)$ sequential rounds and obeys
\[
 \E[\|\widehat{Kx}_w-Kx\|_2^2]\leq\frac{a_4d^2}{c_k}.
\]
For every $\zeta\in(0,1)$, the second uses
$O(\log(m/\zeta))$ systems in an absolute number of sequential rounds and,
with probability at least $1-\zeta$, obeys
\[
 \max_{i\in[m]}\frac{|(\widehat{Kx}_\infty-Kx)_i|}{\sqrt{W_{ii}}}
 \leq\epsilon_\infty d,
\]
where the absolute constant $\epsilon_\infty>0$ may be made arbitrarily
small by increasing the absolute sample constant and the absolute truncation
depth $T=T(\epsilon_\infty)$.
\end{lemma}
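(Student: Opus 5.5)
The plan is to realize both estimators as truncated Neumann series for the fixed-point equation $z_*=A_0^{1/2}x+Hz_*$. Every application of $H=aA_0^{1/2}QA_0^{1/2}$ will be replaced by an unbiased sketch of the Hadamard operator $P\circ P$. The whole argument is organized around one identity: the variance of a sketched application of $Q$ to a vector $v$ is controlled by $v^\top Qv$, and this quadratic form is in turn controlled by the defect $d^2$.

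\textbf{Step 1: the defect dominates $y^\top Qy$.} Let $y=(I+aL)^{-1}x$. The factorization of $M=p^2I-T^2$ from the proof of Lemma~\ref{lem:jacobian_defect}-\hyperref[item:jacobian_defect_b]{(b)} gives
\[
 d^2=x^\top Mx=p^2\,y^\top(I-L)\bigl(I+\tfrac{4-p}{p}L\bigr)y\geq p^2\,y^\top(I-L)y\geq p^2\,y^\top Qy .
\]
The last inequality is $Q\preceq I-L$. Because $A_0$ is diagonal with spectrum in $[1/(1+a),1]$, and $a=O(1)$, this yields $z_*^\top Hz_*=O(d^2)$. Since $H\preceq\rho I$, the same bound holds for every iterate $H^jA_0^{1/2}x$ in the Neumann expansion $z_*=\sum_{j\ge0}H^jA_0^{1/2}x$, as well as for every partial sum. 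Indeed, $H^{1/2}$ commutes with $H$, and the partial sums are obtained from $z_*$ by applying $I-H^{J}$, which has norm at most one.

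\textbf{Step 2: the sketch primitive.} Note that $((P\circ P)v)_i=e_i^\top P\,\mathrm{Diag}(v)\,Pe_i$. For a Gaussian $G$ with $k$ rows and entries of variance $1/k$, the estimator $e_i^\top P\,\mathrm{Diag}(v)G^\top GPe_i$ is unbiased. Computing $GP$ costs $k$ permitted systems. I will bound the variance of the $w$-normalized output in $\|\cdot\|_2$ by $O(v^\top Qv/k)$, using the identity $\sum_j P_{ij}^2=\sigma_i\le w_i$.

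\textbf{Step 3: the weighted chain.} Truncate the series at depth $J=O(\log c_k)$. Since $\rho<1$ is bounded away from one, the tail $H^{J}z_*$ satisfies $\|H^{J/2}\cdot H^{(J+1)/2}z_*\|^2\le\rho^{J}z_*^\top Hz_*$, which is $O(d^2/c_k)$. Run the recursion $\widehat z^{(j+1)}=A_0^{1/2}x+\widehat H_j\widehat z^{(j)}$ with an independent sketch in each round.
\begin{itemize}
\item Decompose the error into martingale increments. Each increment is propagated by a contraction, since $\|H\|\le\rho$.
\item By Step 2, the increment of round $j$ has conditional variance $O(\widehat z^{(j)\top}Q\widehat z^{(j)}/k_j)$. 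I will bound this quadratic form by $O(d^2)$ plus the accumulated error, which is self-consistent once $k_j$ is large enough.
\end{itemize}
Allocating $k_j$ so that later rounds, whose errors are damped by fewer powers of $\rho$, receive more samples gives total variance $O(d^2/c_k)$ with $\sum_j k_j=O(c_k)$. Finally, $Kx=-\tfrac2aA_0^{1/2}Hz_*$ is obtained by one more sketched application of $H$.

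\textbf{Step 4: the coordinate chain, and the main obstacle.} For $\widehat{Kx}_\infty$, I would use an absolute truncation depth $T(\epsilon_\infty)$ and sketches with $O(\log(m/\zeta))$ rows in each round, followed by a union bound over $i$ and the rounds. Controlling these errors coordinatewise by $\sqrt{w_i}\,d$, rather than only in $\ell_2$, is where I expect the difficulty.

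\emph{Sketch error.} For the $i$-th coordinate, Cauchy--Schwarz gives
\[
 |e_i^\top P\,\mathrm{Diag}(v)Pe_i|\le\|Pe_i\|_2\,\|\mathrm{Diag}(v)^{1/2}Pe_i\|_2\cdot(\ldots),
\]
and concentration of the Gaussian bilinear form then bounds the error by $\epsilon\sqrt{\sigma_i}\,\bigl(\sum_jP_{ij}^2v_j^2\bigr)^{1/2}$. The intended route is to relate this last factor to $(v^\top Qv)^{1/2}$ through the diagonal bound $e_i^\top KM^\dagger Ke_i\le\tfrac4{p^2}w_i$ of Lemma~\ref{lem:jacobian_defect}-\hyperref[item:jacobian_defect_b]{(b)}, applied to the exact operator $K$.

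\emph{Truncation error.} The tail $A_0^{1/2}H^{T+1}z_*$ is also to be handled by this Cauchy--Schwarz argument in the $M$-seminorm, now combined with the geometric factor $\rho^{T}$. The aim is a $\sqrt{w_i}\cdot O(\rho^{T/2})\,d$ bound, which can be made smaller than any prescribed $\epsilon_\infty$ by choosing $T$. If this direct argument fails, my fallback is to prove that $\|\cdot\|_{\infty}/\sqrt{w}$ contracts under $H$. This would follow from the entrywise structure of $Q$, namely $\sum_jQ_{ij}\sqrt{w_j}\le\sqrt{w_i}$ up to constants, by Schur-type row sums.
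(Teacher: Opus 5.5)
Your skeleton matches the paper's: the stochastic fixed point for $z_*=A_0^{1/2}x+Hz_*$, the bound $z_*^\top Hz_*=a\,y^\top Qy\le ad^2/p^2$ via $Q\preceq I-L$, the energy bound on all deterministic iterates, geometrically allocated samples over $O(\log c_k)$ rounds, and a final fresh multiplication.

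\textbf{The Step~2 primitive.} The genuine gap is the estimator in Step~2, on which every later variance bound rests. For a single Gaussian row $g$, your estimator is $(g^\top\mathrm{Diag}(v)Pe_i)(g^\top Pe_i)$, and its variance is
\[
 \sigma_i\sum_{j=1}^mP_{ij}^2v_j^2+\bigl((P\circ P)v\bigr)_i^2 .
\]
The first term is a diagonal quadratic form in $v$. It is controlled neither by $v^\top(P\circ P)v$ nor by $d^2$. For example, let $P=uu^\top$ with $u_1^2=u_2^2\approx\frac12$ and all other $u_j$ small and nonzero, let $W_{jj}=u_j^2+\frac1m$, and let $x\propto e_1-e_2$. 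Then $Qx=0$ and $Kx=0$. Moreover $x$ is an eigenvector of $L$ with eigenvalue $S_{11}=1-\Theta(1/m)$, so $d^2=\Theta(\|x\|_2^2/m)$. Yet your $W^{-1/2}$-normalized estimator of $Qx$ has variance $\Theta(\|x\|_2^2)$ per sample.

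The row-sum identity $\sum_jP_{ij}^2=\sigma_i\le w_i$ cannot repair this. For input $y$, it only bounds the normalized first term by $\sum_jy_j^2S_{jj}$. Since $d^2\ge p^2\sum_jy_j^2(1-S_{jj})$, this controls the term only up to the factor $\max_j(1-S_{jj})^{-1}$, which can be of order $m/n$.

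The missing idea is to sandwich the diagonal between two projections, using $(Pr)_i\,(P\,\mathrm{Diag}(t)\,Pr)_i$ with $t=W^{-1/2}y$ (Lemma~\ref{lem:defect_energy_estimator}). Write $P=VV^\top$ and $Z:=V^\top\mathrm{Diag}(t)V$. Then $\|Z\|_F^2=t^\top(P\circ P)t=y^\top Qy$. The coordinate variance bound follows from $\|ZV^\top e_i\|_2^2\le\sigma_i\|Z\|_F^2$ and $\sigma_i\le W_{ii}$. The total bound follows from $\tr[ZH_VZ]\le\tr[Z^2]$, with $H_V:=V^\top\mathrm{Diag}(\sigma_i/W_{ii})V\preceq I$, together with $\|Qy\|_2^2\le y^\top Qy$.

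\textbf{The coordinate chain in Step~4.} Step~4 inherits the same defect. Beyond it, the proposal leaves open how noise from earlier rounds reaches a single normalized coordinate. A per-round union bound over $i$ does not address this, because the whole error vector of round $j$ enters coordinate $i$ after further applications of $H$. The diagonal bound $e_i^\top KM^\dagger Ke_i\le\frac4{p^2}w_i$ concerns the exact operator $K$, not the sketch errors of intermediate iterates. The row-sum inequality behind your fallback is true, since $\sum_jQ_{ij}\sqrt{w_j}=\sigma_i/\sqrt{w_i}\le\sqrt{w_i}$. However, it only propagates coordinate bounds of the input, and $|x_i|/\sqrt{w_i}=|h_i|$ is not $O(d)$.

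What works is the positive-semidefinite Cauchy--Schwarz inequality
\[
 |(Qu)_i|^2\le Q_{ii}\,u^\top Qu\le W_{ii}\,u^\top Qu ,
\]
which converts energy into the normalized coordinate scale. Apply it in three places:
\begin{itemize}
\item With $u=A_0^{1/2}e$ for the propagated chain error $e$. Its expected energy $e^\top He/a$ is $O(d^2)$ by the Euclidean recurrence.
\item With $u=A_0^{1/2}H^rz_*$ for the truncation bias. This gives the bound $\rho^r\sqrt{z_*^\top Hz_*/a}$.
\item For the last multiplication, use the sandwiched estimator's coordinate variance.
\end{itemize}
By the law of total variance, these combine to give $\Var[Y_i/\sqrt{W_{ii}}]=O(d^2)$ for one constant-depth chain $Y$ with constant sample sizes. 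Chebyshev's inequality on block means of $O(\epsilon_\infty^{-2})$ independent copies, followed by a coordinatewise median over $O(\log(m/\zeta))$ blocks, then yields the simultaneous bound.
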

\begin{proof}
The factorization in the proof of Lemma~\ref{lem:jacobian_defect}-\hyperref[item:jacobian_defect_b]{(b)} gives
\[
 d^2=p^2y^\top(I-L)(I+\tfrac{4-p}{p}L)y.
\]
Since $Q\preceq I-L$,
\[
 \mathcal D^2:=z_*^\top Hz_*=a y^\top Qy\leq\frac{a}{p^2}d^2.
\]

To multiply a vector $z$ by $H$, apply
Lemma~\ref{lem:defect_energy_estimator} to $A_0^{1/2}z$ and multiply the
result by $aA_0^{1/2}$.  With $s$ independent samples this gives an unbiased
estimate $\widehat H_s(z)$ satisfying
\[
 \E[\|\widehat H_s(z)-Hz\|_2^2\mid z]
 \leq\frac{2a}{s}z^\top Hz.
\]
Run
\[
 \widehat z_{j+1}=A_0^{1/2}x+\widehat H_{s_j}(\widehat z_j),
 \qquad \widehat z_0=0,
\]
with fresh sketches.  If $\bar z_j$ is the deterministic iterate and
$\epsilon_j:=\E[\|\widehat z_j-\bar z_j\|_2^2]$, conditional centering and
$H\preceq\rho I$ give
\[
 \epsilon_{j+1}
 \leq(\rho^2+\tfrac{4a\rho}{s_j})\epsilon_j
 +\frac{4a\mathcal D^2}{s_j}.
\]
Choose an absolute $s_0$ so that the coefficient is at most a fixed
$\lambda<1$ whenever $s_j\geq s_0$.  For a target $\eta$, take
\[
 s_j:=\max\{s_0,\lceil c\eta^{-2}\lambda^{(T-1-j)/2}\rceil\}.
\]
Then
\[
 \sum_{j=0}^{T-1}s_j=O(\eta^{-2}+T),
 \qquad
 \sum_{j=0}^{T-1}\frac{\lambda^{T-1-j}}{s_j}=O(\eta^2).
\]
The deterministic tail is bounded by
$\|H^{T+1}z_*\|_2^2\leq\rho^{2T+1}\mathcal D^2$.
Taking $T=O(\log(1/\eta))$, followed by one fresh multiplication, proves
\[
 \E[\|\widehat{Kx}-Kx\|_2^2]=O(\eta^2d^2).
\]
The fresh multiplication uses $\Theta(\eta^{-2})$ samples; its conditional
variance is $O(\eta^2\mathcal D^2)$, while the propagated error of
$\widehat z_T$ is $O(\eta^2\mathcal D^2)$.  Thus it does not change either
the accuracy or the total $O(\eta^{-2}+T)$ sample count.
The choice $\eta=\Theta(c_k^{-1/2})$ proves the first assertion.

For the coordinate assertion, take $T=T(\epsilon_\infty)$ to be an absolute
constant and run independent copies of the whole stochastic chain, using an
absolute number of samples in each multiplication.  Let $Y$ be the output of
one copy, including its final fresh multiplication.  The recurrence above,
with constant sample sizes, gives
\[
 \E[\|\widehat z_j-\bar z_j\|_2^2]=O(\mathcal D^2)
 \quad\hbox{and}\quad
 \E[\widehat z_j^\top H\widehat z_j]=O(\mathcal D^2)
\]
for every $j\leq T$.  The coordinate variance conclusion of
Lemma~\ref{lem:defect_energy_estimator}, applied conditionally at the last
multiplication and then averaged, therefore gives
\[
 Y-Kx=
 \underbrace{Y-\E[Y\mid\widehat z_T]}_{\text{last multiplication}}
 +\underbrace{\E[Y\mid\widehat z_T]-\E[Y]}_{\text{propagated chain noise}}
 +\underbrace{\E[Y]-Kx}_{\text{truncation bias}}.
\]
For the first term, the conditional coordinate-variance conclusion of
Lemma~\ref{lem:defect_energy_estimator} and
$\E[\widehat z_T^\top H\widehat z_T]=O(\mathcal D^2)$ give
\[
 \E[\Var[Y_i/\sqrt{W_{ii}}\mid\widehat z_T]]
 \leq c_5\mathcal D^2\leq c_5'a^{-1}d^2.
\]
For the second term, define
$e:=\widehat z_T-\E[\widehat z_T]$.  Positive-semidefinite
Cauchy--Schwarz gives
\[
 \frac{|(Q A_0^{1/2}e)_i|^2}{W_{ii}}
 \leq e^\top A_0^{1/2}QA_0^{1/2}e.
\]
Its expectation is $O(\mathcal D^2)=O(d^2)$ by the Euclidean
second-moment recurrence.  The law of total variance therefore proves the
fully normalized bound
\[
 \Var[\frac{Y_i}{\sqrt{W_{ii}}}]
 \leq a_5d^2
 \qquad\hbox{for every }i,
\]
where deterministic truncation bias is absent from the variance and is
handled next.
Here the deterministic iterates cause no loss: since
$\bar z_j=(I-H^j)z_*$, one has
$\bar z_j^\top H\bar z_j\leq\mathcal D^2$.

It remains to bound the truncation bias in the same coordinate scale.  For
every $r\geq0$, Cauchy--Schwarz for the positive semidefinite matrix $Q$
and $Q_{ii}\leq W_{ii}$ give
\[
 \frac{|(Q A_0^{1/2}H^rz_*)_i|}{\sqrt{W_{ii}}}
 \leq\sqrt{z_*^\top H^{2r+1}z_*/a}
 \leq\rho^r\sqrt{\mathcal D^2/a}.
\]
Thus the bias in $Y_i/\sqrt{W_{ii}}$ is at most
$a_6\rho^Td\leq\epsilon_\infty d/4$ after increasing the absolute constant
$T$.  Form independent block means, each averaging
$O(\epsilon_\infty^{-2})$ copies of $Y$.  Chebyshev's inequality makes one
block mean accurate to $\epsilon_\infty d/2$ with a fixed probability above
$1/2$.  The coordinatewise median of $O(\log(m/\zeta))$ block means,
followed by a union bound, proves the simultaneous coordinate estimate.
All copies are parallel and $T$ is absolute, proving both the work and depth
claims.
\end{proof}

The mean-square estimators above must be amplified to a finite, simultaneous
high-probability guarantee without using a nonconstructive selector.  The next
lemma gives the required medoid aggregation rule.
\begin{lemma}[Finite medoid aggregation]
\label{lem:finite_medoid_aggregation}
Let $Y^{(1)},\ldots,Y^{(B)}\in\R^m$ be independent and suppose
\[
 \Pr[\|Y^{(j)}-y\|_2\leq r]\geq3/4
 \qquad\text{for every }j.
\]
Take an odd
$B\geq c_{\rm med}\log(3/\zeta)$.  For each $j$, let $R_j$ be the
distance from $Y^{(j)}$ to its $(B+1)/2$-th closest candidate, and return
$Y^{(j_*)}$ for an index minimizing $R_j$.  Except with probability
$\zeta/3$, the returned candidate is within $3r$ of $y$.  The rule uses
$O(mB^2)$ exact real-RAM scalar operations and
$O(\log m+\log B)$ arithmetic depth.
\end{lemma}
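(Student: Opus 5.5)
The plan is the standard median-of-means geometric argument, made finite by the medoid rule. Call index $j$ \emph{good} if $\|Y^{(j)}-y\|_2\leq r$. The indicators of goodness are independent Bernoulli variables with mean at least $3/4$. Hoeffding's inequality then shows that, with $B\geq c_{\rm med}\log(3/\zeta)$ for a suitable absolute $c_{\rm med}$, the number of good indices strictly exceeds $B/2$ (indeed is at least $5B/8$) except with probability $\zeta/3$. All remaining steps are deterministic on this event.

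On the event, fix any good index $g$. Every good candidate lies within $2r$ of $Y^{(g)}$ by the triangle inequality through $y$. Since there are at least $(B+1)/2$ good candidates (counting $Y^{(g)}$ itself, at distance $0$), the $(B+1)/2$-th closest candidate to $Y^{(g)}$ is at distance at most $2r$, so $R_g\leq2r$. Hence the minimizing index satisfies $R_{j_*}\leq2r$. The ball of radius $R_{j_*}$ around $Y^{(j_*)}$ contains at least $(B+1)/2$ candidates. A pigeonhole argument shows that this set and the set of good indices, each of size more than $B/2$, share some index $i$. Then
\[
 \|Y^{(j_*)}-y\|_2\leq\|Y^{(j_*)}-Y^{(i)}\|_2+\|Y^{(i)}-y\|_2\leq2r+r=3r.
\]
One should be careful that ``closest candidate'' includes the candidate itself with distance zero, and that ties are broken arbitrarily. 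Neither affects the counting, because the ball of radius $R_j$ contains at least $(B+1)/2$ candidates regardless.

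For the cost, the $B^2$ pairwise distances are computed in parallel. Each uses $O(m)$ operations and a summation tree of depth $O(\log m)$, giving $O(mB^2)$ work. Each $R_j$ is an order statistic of $B$ numbers, obtainable by sorting with a parallel sorting network in $O(\log B)$ depth; sorting costs $O(B\log B)$ per row, which $mB^2$ absorbs. The final argmin adds another $O(\log B)$ depth. Together these give $O(\log m+\log B)$ depth.

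The only delicate point is the constant in the concentration step. Requiring more than $B/2$ good indices with failure probability $\zeta/3$ is a deviation of $1/4$ from the mean $3/4$. Hoeffding bounds its probability by $\exp(-B/8)$, so $c_{\rm med}=8$ suffices.
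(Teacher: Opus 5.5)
Your proposal is correct and follows essentially the same route as the paper: concentration to get a majority of good candidates, then $R_{j_*}\leq R_g\leq 2r$ from any good index $g$, then a pigeonhole intersection of the selected ball with the good set and the triangle inequality. The differences are cosmetic. You use the threshold ``more than $B/2$'' together with the convention that $Y^{(j)}$ counts itself, whereas the paper's $2B/3$ margin leaves room for either convention. Your parenthetical ``at least $5B/8$'' would need a larger $c_{\rm med}$ than $8$, but you never use it.
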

\begin{proof}
A Chernoff bound and the choice of $c_{\rm med}$ show that, except with
probability $\zeta/3$, at least $2B/3$ candidates lie in the radius-$r$
ball about $y$.  On this event, every good candidate has more than $B/2$
candidates within distance $2r$, so the selected median radius is at most
$2r$.  The selected ball contains more candidates than there are bad ones
and therefore contains a good candidate.  The triangle inequality gives
distance at most $2r+r=3r$ from the selected candidate to $y$.
All pairwise squared distances are parallel reductions, after which each
row of distances is selected and the minimum median radius is chosen.
\end{proof}

The predictor also needs a one-sided numerical certificate for the retained
defect.  The next lemma constructs such a certificate using the same coarse
chain and only logarithmically many additional systems.

\begin{lemma}[Defect certificate]
\label{lem:defect_certificate}
For every $\zeta\in(0,1)$ and every algorithmically supplied
$\Delta\geq\delta$, the coarse chain in
Lemma~\ref{lem:defect_fixed_point}, followed by
$O(\log(1/\zeta))$ additional systems, outputs a scalar $\overline d^2$ such
that, with probability at least $1-\zeta$,
\[
 d_{\rm exact}^2\leq\overline d^2
 \leq A_d(d_{\rm exact}^2+\frac{\Delta^2}{C^2c_k}),
\]
where $d_{\rm exact}$ is the defect in the active large-radius
Lemma~\ref{lem:large_radius_retained_movement} and $A_d$ is absolute.  Throughout
the algorithms and all later lemmas, $\overline d$ denotes the
nonnegative square root $\sqrt{\overline d^2}$.
\end{lemma}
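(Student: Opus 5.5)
The certificate will come from an estimate of the defect energy $\mathcal D^2=z_*^\top Hz_*=a\,y^\top Qy$. The proof of Lemma~\ref{lem:defect_fixed_point} already ties this quantity to $d^2$ in both directions. The factorization there gives $\mathcal D^2\leq a d^2/p^2$. In the other direction, $d^2=p^2y^\top(I-L)(I+\frac{4-p}{p}L)y$, and since $I-L=I-S+Q$, this splits into a $Q$-part, which is comparable to $\mathcal D^2$ up to absolute factors, and an $(I-S)$-part. The latter is diagonal and is computable exactly from the surrogate scores once $y$ is known to the accuracy of the coarse chain.

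My plan is therefore to certify
\[
\overline d^2:=A_1\bigl(\widehat{\mathcal D}^2+\widehat{\mathcal E}\bigr)+A_2\frac{\Delta^2}{C^2c_k}.
\]
Here $\widehat{\mathcal D}^2$ estimates $\mathcal D^2$, and $\widehat{\mathcal E}$ is the diagonal remainder evaluated at the chain iterate, with $\widehat y=A_0^{1/2}\widehat z_T$. The additive floor absorbs three sources of error:
\begin{itemize}
\item the surrogate-versus-exact discrepancy, which the proof of Lemma~\ref{lem:large_radius_split_predictor} bounds by $O(E\delta+\delta^2)$ in $N$-norm, hence by $O(\Delta/(C\sqrt{c_k}))$ in $\|\cdot\|_w$ after using $E,R\leq r_0/c_k$;
\item the difference between the finite-segment movement $\psi$ and its base-point linearization;
\item the truncation bias $\rho^T$ of the chain.
\end{itemize}

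\textbf{Step 1: an unbiased estimate of the quadratic form.} Reuse the coarse chain output $\widehat z_T$, and estimate $\widehat z_T^\top H\widehat z_T$ with $O(1)$ fresh independent samples from Lemma~\ref{lem:defect_energy_estimator}. Average an absolute number of copies into blocks, and take the median of $O(\log(1/\zeta))$ block means; this is where the additional $O(\log(1/\zeta))$ systems are spent. The relative variance bound $2a/s\cdot z^\top Hz$ shows that each block mean lies within a factor of $2$ of its target with probability at least $3/4$. The target itself is $\widehat z_T^\top H\widehat z_T$, which differs from $\mathcal D^2$ by the propagated chain error. By the second-moment recurrence in Lemma~\ref{lem:defect_fixed_point}, that error is $O(\mathcal D^2)$ in expectation. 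This gives an upper bound in high probability after Markov's inequality and the median, but it does not give a lower bound.

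\textbf{Step 2: the one-sided direction (main obstacle).} Constant-factor upward bias is harmless, but the certificate must never underestimate. A chain realization with $\widehat z_T^\top H\widehat z_T\ll\mathcal D^2$ is not excluded by second-moment bounds. I would handle this by working with the quantity that is actually needed, namely $d^2$, and showing that it is bounded below by an \emph{exactly evaluable} function of the chain together with additive error. Specifically, $\bar z_T$ is within $\rho^T$ of $z_*$, and $z_*=A_0^{1/2}x+Hz_*$. This gives $\mathcal D^2\leq 2\|A_0^{1/2}x\|_H^2+O(\rho^{2T})$ terms. I would then bound $z_*^\top Hz_*$ through the fixed-point identity $z_*^\top Hz_*=z_*^\top(z_*-A_0^{1/2}x)$. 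Both factors are estimable from the median of chain copies, which concentrates in $\|\cdot\|_2$ by Lemma~\ref{lem:finite_medoid_aggregation}. Taking the medoid of $O(\log(1/\zeta))$ copies gives a $\widehat z$ with $\|\widehat z-z_*\|_2\leq 3r$, where $r=O(\mathcal D)$ can be made $\epsilon\mathcal D$ by averaging. The plug-in value then differs from $\mathcal D^2$ by at most a small multiple of $\mathcal D^2$ plus a small multiple of $\|x\|_2\mathcal D$. The cross term is dangerous because $\|x\|_2\sim\delta/C$ may dominate $d$. I would split it by Young's inequality into $\frac12\mathcal D^2+O(\epsilon^2\|x\|_2^2)$, and the second part is at most $O(\Delta^2/(C^2c_k))$ once $\epsilon^2=O(1/c_k)$. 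This choice of $\epsilon$ is consistent with the $O(c_k)$ budget only if it reuses the weighted chain, so I expect this accounting to be the delicate point. Adding an absolute constant times this error and the floor then gives $\overline d^2\geq d_{\rm exact}^2$ on the medoid event.

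\textbf{Step 3: the upper bound.} The upper bound $\overline d^2\leq A_d(d_{\rm exact}^2+\Delta^2/(C^2c_k))$ follows from $\mathcal D^2\leq ad^2/p^2$ with $a,p$ absolute, from $\widehat{\mathcal E}=O(d^2)$ up to the same floor, and from the surrogate transfer above. A union bound over the failure events completes the probability-$(1-\zeta)$ claim.
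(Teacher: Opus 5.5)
There is a genuine gap in Step 2, which is exactly where the one-sided inequality $d_{\rm exact}^2\le\overline d^2$ has to be proved. When you plug a medoid $\widehat z$ into the bilinear identity $z_*^\top Hz_*=z_*^\top(z_*-A_0^{1/2}x)$, the error is $e^\top(2z_*-A_0^{1/2}x)+\|e\|_2^2$ with $e:=\widehat z-z_*$. Here $\|z_*\|_2\le\|x\|_2/(1-\rho)$ is of the size of the whole motion, not of the defect. As you note, absorbing this cross term forces $\epsilon^2=O(1/c_k)$, i.e.\ $\Theta(c_k)$ samples in each of the $O(\log(1/\zeta))$ medoid copies. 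That exceeds the stated budget of the constant-depth coarse chain plus $O(\log(1/\zeta))$ systems.

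Reusing the weighted chain does not rescue this, for two reasons. First, the certificate, and hence the radius $s_k=\eta_0(\overline d_k+\Delta_k/c_k)$, must be measurable in the coarse sigma-field $\mathcal G_k$. Lemma~\ref{lem:adaptive_residual_game} requires this, and the weighted chain is drawn only after $\mathcal G_k$ is fixed so that its conditional second-moment bound in Lemmas~\ref{lem:large_radius_split_predictor} and~\ref{lem:linear_defect_projection} holds. Second, the weighted chain is a single draw with only an $L^2$ guarantee, so it cannot supply a probability-$(1-\zeta)$ event.

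Step 1 has a related slip. The bound $\frac{2a}{s}z^\top Hz$ controls the $\ell_2$ error of the \emph{vector} $\widehat H_s(z)$. It gives no relative concentration for a scalar estimate of $z^\top Hz$. What you need is the chaos bound $\Var[\|v\|_2^2]=2\tr[Z^4]\le2(y^\top Qy)^2$ for $v=P\operatorname{Diag}(W^{-1/2}y)Pr$, in the notation of the proof of Lemma~\ref{lem:defect_energy_estimator}.

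The missing idea is that no bilinear identity is needed. Keep both pieces of $e_0:=y^\top(I-L)y=y^\top(I-S)y+y^\top Qy$ as positive semidefinite forms and use $I-S\preceq I-L$ and $Q\preceq I-L$. For $M\in\{I-S,Q\}$ these give $\|y\|_M\le\sqrt{e_0}$ and $\|\cdot\|_M\le\|\cdot\|_2$. So any $\widetilde y$ with $\|\widetilde y-y\|_2\le3\epsilon_0\sqrt{e_0}$ changes each form by $O(\epsilon_0)e_0$: a relative error at an absolute $\epsilon_0$, with no $\|x\|_2$ term. Such a $\widetilde y$ is obtained, with probability $1-\zeta/3$, as the finite medoid (Lemma~\ref{lem:finite_medoid_aggregation}) of $O(\log(1/\zeta))$ absolute-size chain runs, because the chain error is $O(\mathcal D)$ and $\mathcal D^2=ay^\top Qy\le ae_0$.

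From there the argument closes as follows.
\begin{itemize}
\item Compute the diagonal form exactly, and take a median of $O(\log(1/\zeta))$ block means of $\|v\|_2^2$ at $\widetilde y$. This gives $\frac34e_0\le\widetilde e_0\le\frac54e_0$.
\item The inflation $\overline e_0:=\frac43\widetilde e_0$ makes the estimate one-sided.
\item The sandwich $p^2e_0\le d_+^2\le4pe_0$ converts it to the surrogate defect.
\item The additive floor is needed only for $|d_+^2-d_{\rm exact}^2|\le a_3\Delta^2/(C^2c_k)$ from Lemma~\ref{lem:certified_surrogate}.
\end{itemize}
The truncation bias is already relative to $\mathcal D$ and needs no floor. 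The bound $N(e_{\rm det})=O(E\delta+\delta^2)$ you cite controls $J^+\psi-u$, not defect values.

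In fact your Step 1 plan would already work if you evaluated both forms at the medoid instead of at a single chain output. This is because $\|\widehat z-z_*\|_H\le\sqrt\rho\,\|\widehat z-z_*\|_2$ is small relative to $\mathcal D$. The detour through the bilinear identity is what created the obstruction.
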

\begin{proof}
For the surrogate, let $y=(I+aL)^{-1}x$ and define
$e_0:=y^\top(I-L)y$.  The factorization above and $0\preceq L\preceq I$
give
\[
 p^2e_0\leq d^2\leq4pe_0.
\]
Also
\[
 e_0=y^\top(I-S)y+y^\top Qy.
\]
The first term is diagonal.  For the second, draw a fresh Gaussian $r$,
independently of all sketches used to construct the approximation to $y$, define
$u=Pr$ and $v=P\operatorname{Diag}(W^{-1/2}y)u$.  In the notation of the
proof of Lemma~\ref{lem:defect_energy_estimator}, $v=VZg$, and hence
\[
 \E[\|v\|_2^2]=\tr[Z^2]=y^\top Qy,
 \qquad
 \Var[\|v\|_2^2]=2\tr[Z^4]
 \leq2(y^\top Qy)^2.
\]
Using failure budget $\zeta/3$, a median of
$B_Q=2\lceil c_Q\log(6/\zeta)\rceil+1$ constant-size block means gives a
constant-factor estimate with $O(\log(1/\zeta))$ samples.

This approximation does not require prior knowledge of $e_0$.  Run the
fixed-point chain for an absolute number of
rounds with a sufficiently large absolute sample constant.  The recurrence
in Lemma~\ref{lem:defect_fixed_point} and
$\mathcal D^2\leq ae_0$ show that one run produces $\widetilde y$ with
\[
 \Pr[\|\widetilde y-y\|_2\leq\epsilon_0\sqrt{e_0}]\geq3/4.
\]
Take the odd integer
$B=2\lceil c_{\rm med}\log(3/\zeta)\rceil+1$ and apply the finite medoid rule
of Lemma~\ref{lem:finite_medoid_aggregation} to $B$ independent runs.
Except with probability $\zeta/3$, the selected candidate satisfies the
same bound with $3\epsilon_0$ in place of $\epsilon_0$.  This finite
construction does not use $e_0$ and costs
$O(m\log^2(1/\zeta))$ scalar work.

Both $I-S$ and $Q$ are bounded above by $I-L$.  Consequently replacing $y$
by $\widetilde y$ changes each quadratic form by at most
$O(\epsilon_0)e_0$: use Cauchy--Schwarz in the corresponding positive
semidefinite seminorm and $\|\widetilde y-y\|_2^2=O(\epsilon_0^2e_0)$.
Compute the diagonal form exactly and estimate the $Q$-form by the preceding
median of block means, using samples independent of $\widetilde y$.  Denote
the resulting nonnegative scalar by $\widetilde e_0$.  The seminorm
comparison in the previous paragraph shows that, for a sufficiently small
fixed $\epsilon_0$ and the stated block sizes, the joint medoid and scalar
median event implies
\[
 \frac34e_0\leq\widetilde e_0\leq\frac54e_0.
\]
Define $\overline e_0:=(4/3)\widetilde e_0$.  This is the required
one-sided inflation and it gives, without an unspecified confidence factor,
\[
 e_0\leq\overline e_0\leq\frac53e_0\leq2e_0.
\]
Define $\overline d_+^2:=4p\overline e_0$.  The inequalities in the first
display of this proof then give explicitly
\[
 d_+^2\leq\overline d_+^2
 \leq\frac8p d_+^2=:a_7d_+^2.
\]
Finally output
$\overline d^2:=2\overline d_+^2+2F$, where
$F:=a_3\Delta^2/(C^2c_k)$.  Lemma~\ref{lem:certified_surrogate} gives
$|d_+^2-d_{\rm exact}^2|\leq F$.  Therefore the required one-sided direction
is explicit:
\[
 d_{\rm exact}^2\leq d_+^2+F
 \leq\overline d_+^2+F\leq\overline d^2.
\]
Conversely,
\[
 \overline d^2
 \leq2a_7d_+^2+2F
 \leq2a_7d_{\rm exact}^2+(2a_7+2)F
 \leq A_d(d_{\rm exact}^2+\frac{\Delta^2}{C^2c_k})
\]
with $A_d:=(2a_7+2)\max\{1,a_3\}$.  This proves both certificate
inequalities with their correct directions.
The medoid and scalar block-median events each use budget $\zeta/3$; their
union is below $\zeta$, and every batch size is fixed before its samples are
drawn.
\end{proof}

\begingroup
\setlength{\intextsep}{6pt plus 2pt minus 2pt}
\begin{algorithm}[!ht]
\caption{Coarse defect certificate and split predictor}
\label{alg:coarse_defect_data}
\begin{algorithmic}[1]
\Procedure{CoarseDefectData}{$W^+,S^+,P^+,\psi,\Delta,\zeta$}
\Comment{Lemmas~\ref{lem:defect_fixed_point} and~\ref{lem:defect_certificate}}
\State Fix the absolute constants $c_\infty,c_{\rm med},c_Q,c_w,c_{\rm fp},c_T,s_0$
and $\lambda<1$ as in those lemmas.
\State $\zeta_\infty\gets\zeta/2$,
$\zeta_{\rm cert}\gets\zeta/2$,
$D^+\gets2(I+aS^+)^{-1}S^+$, and $x\gets(W^+)^{1/2}\psi$.
\State $B_\infty\gets
2\lceil c_\infty\log(4m/\zeta_\infty)\rceil+1$ and
$s_\infty\gets\lceil c_\infty\epsilon_1^{-2}\rceil$.
\State Form $B_\infty$ independent block means, each from $s_\infty$
independent constant-depth stochastic fixed-point chains, and set
$\widehat{Kx}_\infty$ to their coordinatewise median.
\State $B_{\rm med}\gets
2\lceil c_{\rm med}\log(6/\zeta_{\rm cert})\rceil+1$.
\State Form $B_{\rm med}$ independent constant-depth candidates for
$y=(I+aL^+)^{-1}x$ and aggregate them by the finite medoid rule of
Lemma~\ref{lem:finite_medoid_aggregation}.
\State $B_Q\gets
2\lceil c_Q\log(6/\zeta_{\rm cert})\rceil+1$; estimate the $Q^+$-form
from $B_Q$ independent constant-size polarization block means, take their
scalar median, add the exact diagonal form, and apply the one-sided inflation
of Lemma~\ref{lem:defect_certificate} to obtain $\overline d^{\,2}$.
\State $p^\infty\gets(W^+)^{-1/2}
(D^+x+\widehat{Kx}_\infty)$ and
$\overline d\gets\sqrt{\overline d^{\,2}}$.
\State $\mathcal G\gets\sigma(\text{past and all random variables drawn above})$.
\State After $\mathcal G$ is fixed, set
$\eta_w\gets c_w/\sqrt{c_k}$ and
$T_w\gets\lceil c_T\log(2/\eta_w)\rceil$.
\State Draw an independent weighted stochastic fixed-point chain as in
Lemma~\ref{lem:defect_fixed_point}.
\State At round $j$, use
$s_j\gets\max\{s_0,\lceil c_{\rm fp}\eta_w^{-2}
\lambda^{(T_w-1-j)/2}\rceil\}$ for $j=0,\ldots,T_w-1$.
\State $p^w\gets(W^+)^{-1/2}(D^+x+\widehat{Kx}_w)$.
\State \Return $(\overline d^{\,2},p^\infty,p^w)$.
\EndProcedure
\end{algorithmic}
\end{algorithm}
\endgroup

\section{Certified prediction and path-following assembly}
\label{sec:certified_prediction_assembly}

Section~\ref{subsec:certified_predictor_construction} builds the certified
surrogate and the defect-adaptive predictor from observable quantities.
Section~\ref{subsec:score_transport_restart} supplies the fresh Gaussian
increment primitive for adaptive score queries.
Section~\ref{subsec:within_step_hybrid_filtration} fixes the information
ordering within one centering step and verifies its conditional estimates.
Finally, Section~\ref{subsec:path_following_budgets} assembles the centering
calls into checkpointed path phases with explicit resource caps.

\subsection{Certified predictor construction}
\label{subsec:certified_predictor_construction}
The estimator must use algorithmically available quantities.  The next
lemma records a positive-semidefinite surrogate that preserves the defect up
to the accuracy already present in the weight oracle.

\begin{lemma}[Certified surrogate]
\label{lem:certified_surrogate}
Assume the hypotheses of
Lemma~\ref{lem:large_radius_retained_movement}.  In particular,
$\delta:=\delta_t(x,w)\leq R$, the tracking band holds along its Newton
segment, and
\[
 \psi:=\log s(x^+)-\log s(x),\qquad
 d^2:=p^2\|\psi\|_g^2-\|B\psi\|_g^2,
 \qquad N(\psi)\leq2\delta.
\]
Suppose $\widehat g$ satisfies
$\|\log\widehat g-\log g(x)\|_\infty\leq E/2$.  Construct the diagonal $W^+$ and the positive query scaling
$D^+_{\rm qry}$ deterministically as in the proof, and define
$P^+:=P(D^+_{\rm qry})$ as an implicit projection operator.  Suppose a score
provider, queried only after these objects are fixed, returns
$\widetilde\sigma$ satisfying
\[
 |\widetilde\sigma_i-\sigma_i(P^+)|
 \leq c_0EW^+_{ii}
 \qquad\text{for every }i\in[m].
\]
A fresh Gaussian sketch is one admissible provider: conditionally on the
fixed query it succeeds with probability at least $1-\zeta$ using
$O(E^{-2}\log(m/\zeta))$ permitted systems in one parallel round.
From this provider output one can define diagonal $S^+$ and symmetric
operators $Q^+,L^+$ such that, on the simultaneous event,
\[
 0\preceq Q^+\preceq S^+\preceq I,
 \qquad L^+:=S^+-Q^+,
 \qquad 0\preceq L^+\preceq I.
\]
Define
\[
 T^+:=2(I+aL^+)^{-1}L^+,
 \qquad x^+:=(W^+)^{1/2}\psi,
 \qquad d_+^2:=p^2\|x^+\|_2^2-\|T^+x^+\|_2^2.
\]
Then
\[
 |d_+^2-d^2|\leq a_2E\frac{\delta^2}{C^2}
 \leq a_3\frac{\delta^2}{C^2c_k}
\]
for absolute constants $a_2,a_3$.  The associated diagonal part
$D^+:=2(I+aS^+)^{-1}S^+$ and the operator
$(W^+)^{-1/2}T^+(W^+)^{1/2}$ differ from their exact counterparts by
$O(E)$ in both the $\ell_\infty$ operator norm and the Euclidean operator
norm after conjugation to the exact $g(x)$-normalized coordinates.  The
latter is equivalently the operator norm induced by $\|\cdot\|_{g(x)}$ on
physical coordinates.
\end{lemma}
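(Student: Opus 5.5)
The approach is to build the surrogate directly from the auxiliary weight $\widehat g$. Then every comparison with the exact objects $g=g(x)$, $S,Q,L,T$ becomes a multiplicative perturbation of size $O(E)$ in the row scaling. The defect difference is bounded through the factorization $p^2I-T^2=p^2(I-L)(I+\tfrac{4-p}{p}L)(I+aL)^{-2}$ from the proof of Lemma~\ref{lem:jacobian_defect}-(b).

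\textbf{Construction.} Set $W^+:=\operatorname{Diag}(\widehat g)$ and $D^+_{\rm qry}:=(W^+)^{\vartheta/2}\operatorname{Diag}(s(x))$, both deterministic once $\widehat g,x$ are fixed. Given $\widetilde\sigma$, clip it to $[0,W^+_{ii}]$ and set $S^+:=(W^+)^{-1}\operatorname{Diag}(\widetilde\sigma)$. For the off-diagonal part, take $Q^+:=(W^+)^{-1/2}(P^+\circ P^+)(W^+)^{-1/2}$, scaled by the diagonal factor $\operatorname{Diag}(\widetilde\sigma/\sigma(P^+))^{1/2}$ on both sides. Equivalently, rescale $P^+$ rowwise so that its row-sum identity $\sum_jP_{ij}^2=\sigma_i$ holds with $\widetilde\sigma$.
\begin{itemize}
\item $Q^+\succeq0$, since $P^+\circ P^+$ is a Schur product of PSD matrices.
\item $Q^+\preceq S^+$, by diagonal dominance, i.e.\ the $M$-matrix structure of $\operatorname{Diag}(\sigma)-P\circ P$ used in Lemma~\ref{lem:resolvent_form}, which survives the conjugation.
\item $S^+\preceq I$, by the clipping.
\end{itemize}
Hence $0\preceq L^+\preceq I$.

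\textbf{Provider.} Conditionally on the fixed query, a Gaussian sketch with $k=O(E^{-2}\log(m/\zeta))$ rows gives the relative bound $|\widetilde\sigma_i-\sigma_i|\leq c_0E\sigma_i$, by the chi-square argument in Lemma~\ref{lem:floor_homotopy_initialization} and a union bound. Since $\sigma_i\leq W^+_{ii}$ up to $e^{O(E)}$, this implies the stated additive bound.

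\textbf{Operator closeness.} The exact objects correspond to scaling $D_{\rm qry}=G^{\vartheta/2}\operatorname{Diag}(s(x))$, and $\|\log D^+_{\rm qry}-\log D_{\rm qry}\|_\infty\leq E$. Lemma~\ref{lem:relative_drift} therefore gives relative $O(E)$ control of projector columns and leverage scores. Repeating the two-norm argument of Lemma~\ref{lem:jacobian_stability} — row sums for $\ell_\infty$, reversibility plus the Schur test for the $g$-norm, and the resolvent identity for $T^+-T$ — yields $O(E)$ bounds on $D^+-D$ and on $(W^+)^{-1/2}T^+(W^+)^{1/2}-W^{-1/2}TW^{1/2}$ in both norms. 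The sampling error $c_0E$ adds only another $O(E)$ diagonal perturbation.

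\textbf{Defect comparison (main obstacle).} A naive bound gives $|d_+^2-d^2|=O(E)\|\psi\|_g^2=O(E\delta^2/C^2)$, using $C\|\psi\|_g\leq2e^K\delta$ from Lemma~\ref{lem:large_radius_retained_movement}. That bound is already the claimed first inequality, so no cancellation beyond linear perturbation is needed. Concretely, I write
\[
d_+^2-d^2=p^2(\|x^+\|_2^2-\|x\|_2^2)-(\|T^+x^+\|_2^2-\|Tx\|_2^2).
\]
Each difference of squares factors as $(\text{difference})\times(\text{sum})$, with the difference $O(E)\|\psi\|_g$ and the sum $O(\|\psi\|_g)$. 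The second inequality then follows from $E=O(1/c_k)$, which holds by the parameter choices $E=\Theta(q^{-1/2}\ell^{-1})$ and $c_k=8q\leq8\ell$.

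The delicate points are the PSD ordering $Q^+\preceq S^+$ after rescaling, and keeping the row-sum structure needed for the $\ell_\infty$ bound. If the rescaling breaks $Q^+\preceq S^+$, I would first try instead defining $S^+$ as $\widetilde\sigma$ floored by $\sigma(P^+)$-consistent row sums computed implicitly.
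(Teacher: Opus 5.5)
Your proposal has a genuine gap, and it is in the definition of $Q^+$. Write $\pi_j:=\widetilde\sigma_j/\sigma_j(P^+)$, $\Pi:=\operatorname{Diag}(\pi)$ and $Q:=(W^+)^{-1/2}(P^+\circ P^+)(W^+)^{-1/2}$, so your surrogate is $Q^+=\Pi^{1/2}Q\Pi^{1/2}$. Congruence does give $Q^+\preceq S^+$, but two things go wrong.

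First, $\Pi$ contains the exact scores $\sigma(P^+)$. So $Q^+$ is not determined by the provider output and implicit $P^+$ access. In particular it is not the operator sampled by the two-$P^+$ polarization estimator of Lemma~\ref{lem:defect_energy_estimator}. That estimator is unbiased for the unscaled $Q$, and it is the only way the downstream chains access $Q^+,L^+,T^+$. Your ``equivalently, rescale rowwise'' is also not equivalent, since $\sqrt{\pi_i}\sum_j\sqrt{\pi_j}(P^+_{ij})^2\neq\pi_i\sigma_i(P^+)$ in general.

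Second, and decisively, the interface is only additive. On floor-dominated rows with $\sigma_j(P^+)\ll W^+_{jj}$, the ratio $\pi_j$ is not $1+O(E)$, so the score error enters off the diagonal. Your claim that it ``adds only another $O(E)$ diagonal perturbation'' is therefore false for this construction. The best general estimate is
\[
 \|(\Pi^{1/2}-I)Q^{1/2}\|_{2\to2}^2
 \leq\max_j\frac{(\sqrt{\widetilde\sigma_j}-\sqrt{\sigma_j(P^+)})^2}{W^+_{jj}}
 \leq c_0E,
 \qquad
 \|Q^+-Q\|_{2\to2}=O\bigl(\|(\Pi^{1/2}-I)Q^{1/2}\|_{2\to2}\bigr)=O(\sqrt{c_0E}),
\]
and this order is attained. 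Take $\widehat g=g$ and a column space containing $(e_i+|S|^{-1/2}\mathbf 1_S)/\sqrt2$ with $|S|=m/2$, with the other $n-1$ directions supported off $\{i\}\cup S$. Let the provider add $c_0EW^+_{jj}$ on $S$. Then:
\begin{itemize}
\item $\pi_j=1+c_0E(n+1)$ on $S$;
\item the block $Q^+_{iS}-Q_{iS}$ has norm $(\sqrt{\pi_j}-1)/(2\sqrt{n+1})=\Theta(\sqrt{c_0E})$ once $n\gtrsim1/(c_0E)$, which is only polylogarithmic;
\item the $i$-th row sum of $(W^+)^{-1}(\Pi^{1/2}(P^+\circ P^+)\Pi^{1/2}-P^+\circ P^+)$ is of order $\sqrt{c_0En}$.
\end{itemize}
So neither $O(E)$ operator bound holds. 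Your difference-of-squares comparison then only yields $|d_+^2-d^2|=O(\sqrt{c_0E})\|\psi\|_g^2$. Since $\sqrt E\asymp q^{-1/4}\ell^{-1/2}$ is not $O(1/c_k)$ when $q\asymp\ell$, the second inequality is lost as well. For a relative-error provider, such as the Gaussian sketch, $\pi_j=1+O(E)$ and your estimates would go through, but the statement must cover the additive interface.

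The paper never rescales $Q^+$. It inflates $W^+:=e^{c_+E}\operatorname{Diag}(\widehat g)$, which changes the query only by a global scalar. For a large absolute $c_+$, Lemma~\ref{lem:relative_drift} and $\sigma_i(P)\le g_i$ then give $\sigma_i(P^+)\le W^+_{ii}$. It rounds the provider output \emph{up} by its known error, $\sigma_i^+:=\min\{W^+_{ii},\widetilde\sigma_i+c_0EW^+_{ii}\}$, so that $0\le\sigma_i^+-\sigma_i(P^+)\le2c_0EW^+_{ii}$, and keeps $Q^+:=Q$ unscaled. Then $Q^+\preceq(W^+)^{-1/2}\operatorname{Diag}(\sigma(P^+))(W^+)^{-1/2}\preceq S^+\preceq I$ follows from $\operatorname{Diag}(\sigma(P^+))\succeq P^+\circ P^+$. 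The score error is genuinely diagonal, of normalized size $2c_0E$, and $Q^+$ is exactly what polarization samples.

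Your fallback of flooring $S^+$ by $\sigma(P^+)$ points in this direction, but it must be done without knowing $\sigma(P^+)$. The upward rounding achieves that, and the inflation keeps the cap $W^+_{ii}$ from cutting below $\sigma_i(P^+)$; your uninflated $W^+=\operatorname{Diag}(\widehat g)$ does not guarantee this. With that construction, the rest of your outline is the paper's argument: row sums and the weighted Schur test as in Lemma~\ref{lem:jacobian_stability}, the resolvent identity, and the difference-of-squares defect bound. The resolvent step needs one addition: a Neumann-series argument, because $(W^+)^{-1}(\operatorname{Diag}(\sigma^+)-P^+\circ P^+)$ no longer annihilates $\mathbf 1$.
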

\begin{proof}
Let $G:=\operatorname{Diag}(g)$ and let $P$ be the exact projection.  Choose
a sufficiently large absolute $c_+$ and define
$W^+:=e^{c_+E}\operatorname{Diag}(\widehat g)$.  A global scalar does not
change a projection, so define
$D^+_{\rm qry}:=(W^+)^{\vartheta/2}\operatorname{Diag}(s(x))$ and let
$P^+:=P(D^+_{\rm qry})$.  The relative row scaling
between the matrices defining $P$ and $P^+$ is $e^{\pm O(E)}$.  Hence
Lemma~\ref{lem:relative_drift}, the fixed-point identity
$\sigma_i(P)\leq g_i$, and the choice of $c_+$ give
$\sigma_i(P^+)\leq W^+_{ii}$.

If the fresh provider is selected, condition on the fixed $W^+$ and $P^+$.
Standard simultaneous Gaussian leverage-score concentration gives relative
error $O(E)$ using the stated number of systems.  Since
$\sigma_i(P^+)\leq W^+_{ii}$, this gives the required additive guarantee.
For any provider satisfying the displayed additive score interface, define
\[
 \sigma_i^+:=\min\{W^+_{ii},\widetilde\sigma_i+c_0EW^+_{ii}\}.
\]
Then
\[
 0\leq\sigma_i^+-\sigma_i(P^+)\leq2c_0EW^+_{ii}.
\]
With
\[
 S^+:=(W^+)^{-1/2}\operatorname{Diag}(\sigma^+)(W^+)^{-1/2},
 \qquad Q^+:=(W^+)^{-1/2}(P^+\circ P^+)(W^+)^{-1/2},
\]
the standard identity
$\operatorname{Diag}(\sigma(P^+))-P^+\circ P^+\succeq0$ proves the displayed
semidefinite relations.

The implementation stores only the diagonal $S^+$ and an operator handle for
$P^+$.  The matrices $Q^+,L^+,T^+$ are analysis notation and are never
materialized: every required sample from $Q^+$ uses the two-$P^+$
polarization routine, and the stochastic fixed-point routines access
$L^+$ only through those projection applications.  One application of
$P^+$ uses one permitted system and $O(\nnz(A))$ scalar work.

We record the perturbation bounds explicitly.
Write $W^+=R_gG$, where $R_g$ is diagonal and
$\|R_g-I\|_{2\to2}=O(E)$.  Define
\[
 \mathcal L:=G^{-1}(\operatorname{Diag}(\sigma(P))-P\circ P),
 \quad
 \mathcal L^+:=(W^+)^{-1}
 (\operatorname{Diag}(\sigma^+)-P^+\circ P^+),
\]
and distinguish these row-normalized operators from the symmetric ones
\[
 L:=G^{1/2}\mathcal L G^{-1/2},
 \qquad
 L^+:=(W^+)^{1/2}\mathcal L^+(W^+)^{-1/2}.
\]
The row-sum calculation in Lemma~\ref{lem:jacobian_stability}, with row drift
$O(E)$, bounds the contribution of $P^+-P$ and $W^+-G$ by $O(E)$.
The remaining score excess has row norm at most $2c_0E$.  Thus
\[
 \|\mathcal L^+-\mathcal L\|_{\infty\to\infty}=O(E).
\]
This row estimate also gives the normalized spectral estimate without any
dependence on the smallest weight.  Both $G\mathcal L$ and
$W^+\mathcal L^+$ are symmetric.  Therefore, exactly as in the weighted
column-sum paragraph of Lemma~\ref{lem:jacobian_stability}, the weighted
column sum of $\mathcal L^+-\mathcal L$ is at most its row sum plus
\[
 (e^{O(E)}-1)\|\mathcal L^+\|_{\infty\to\infty}=O(E).
\]
Here $\|\mathcal L^+\|_{\infty\to\infty}\leq2$, because
$\sigma_i(P^+)\leq\sigma_i^+\leq W^+_{ii}$.  The Schur test in
$\ell_2(G)$ now gives
\[
 \|G^{1/2}(\mathcal L^+-\mathcal L)G^{-1/2}\|_{2\to2}=O(E).
\]
Conjugating the first term by $R_g^{1/2}=I+O(E)$ proves
\[
 \|L^+-L\|_{2\to2}=O(E).
\]
The score perturbation and $W^+=R_gG$ give
$\|S^+-S\|_{2\to2}=O(E)$ directly; hence
$\|Q^+-Q\|_{2\to2}=O(E)$ as well.

Although $\mathcal L^+\mathbf1$ need not vanish, this does not affect the argument.
The exact inverse $(I+a\mathcal L)^{-1}$ has infinity norm one, and the
resolvent identity shows
$\|(I+a\mathcal L^+)^{-1}\|_{\infty\to\infty}\leq2$ once the absolute
accuracy constant is small.  A second use of the resolvent identity gives
\[
 \|(W^+)^{-1/2}T^+(W^+)^{1/2}
   -G^{-1/2}TG^{1/2}\|_{\infty\to\infty}=O(E).
\]
The spectral resolvent identity and the preceding $2$-norm bounds give the
corresponding weighted Euclidean estimate.  The same statements for $D^+$
follow directly from its diagonal formula.

Finally, $x^+=R_g^{1/2}x$, so
$\|x^+-x\|_2=O(E)\|x\|_2$.  Since $0\preceq T,T^+\preceq pI$, the preceding
spectral bound yields
\[
 |d_+^2-d^2|=O(E)\|x\|_2^2
 =O(E\delta^2/C^2),
\]
where the first step follows from the preceding spectral bound and metric equivalence, and the second step follows from $C\|\psi\|_g\leq e^{K/2}N(\psi)\leq2e^{K/2}\delta$, using the tracking bound $g_i\leq e^Kw_i$, the definition of $N$, and $K\leq1/128$.
Finally $E=\Theta(q^{-1/2}\ell^{-1})=O(1/c_k)$ because $q\leq\ell$.
\end{proof}

The certified surrogate still requires an observable estimate of its quadratic
defect energy.  The following Gaussian construction supplies an unbiased
estimator with the variance bound needed by the certification step.
\begin{lemma}[Defect-energy estimator]
\label{lem:defect_energy_estimator}
Let $P=P^\top=P^2$, let $W>0$ be diagonal, assume
$P_{ii}\leq W_{ii}$ for every $i$, and define
$Q:=W^{-1/2}(P\circ P)W^{-1/2}$.  For $y\in\R^m$, set
$t:=W^{-1/2}y$.  Draw $r\sim N(0,I_m)$ and define
\[
 u_r:=Pr,
 \qquad v_r:=P\operatorname{Diag}(t)u_r,
 \qquad \Xi(y;r):=W^{-1/2}(u_r\odot v_r).
\]
Then
\[
 \E[\Xi(y;r)]=Qy,
 \qquad
 \E[\|\Xi(y;r)-Qy\|_2^2]\leq2y^\top Qy,
\]
and, coordinatewise,
\[
 \Var[\frac{\Xi_i(y;r)}{\sqrt{W_{ii}}}]
 \leq2y^\top Qy.
\]
Each sample uses two applications of $P$ and hence two permitted linear-system
solves.
\end{lemma}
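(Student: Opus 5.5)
The plan is a direct Gaussian moment computation. Fix $y$ and write $T:=\operatorname{Diag}(t)$, so that $u_r=Pr$ and $v_r=PTPr$. Both are linear in the single Gaussian vector $r$, hence jointly Gaussian with mean zero.

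\emph{Unbiasedness.} Use $\E[rr^\top]=I$ and $P^2=P$ to get
\[
\E[u_rv_r^\top]=\E[Prr^\top PTP]=PTP.
\]
Its diagonal is
\[
\E[(u_r)_i(v_r)_i]=(PTP)_{ii}=\sum_jP_{ij}^2t_j=((P\circ P)t)_i .
\]
Multiplying by $W^{-1/2}$ and substituting $t=W^{-1/2}y$ gives
\[
\E[\Xi]=W^{-1/2}(P\circ P)W^{-1/2}y=Qy .
\]

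\emph{Coordinate variance via Isserlis.} For jointly Gaussian mean-zero scalars $a=(u_r)_i$ and $b=(v_r)_i$,
\[
\Var[ab]=\E[a^2]\E[b^2]+(\E[ab])^2\leq2\,\E[a^2]\E[b^2],
\]
where the inequality is Cauchy--Schwarz. The two factors are
\[
\E[a^2]=P_{ii},\qquad \E[b^2]=(PTPTP)_{ii}=\|PTPe_i\|_2^2 .
\]
This gives
\[
\Var[\Xi_i]\leq\frac{2P_{ii}}{W_{ii}}\|PTPe_i\|_2^2\leq2\|PTPe_i\|_2^2,
\]
using $P_{ii}\leq W_{ii}$. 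For the normalized coordinate bound we need the extra factor $1/W_{ii}$, so I keep $P_{ii}/W_{ii}^2$ instead. Since $P_{ii}\leq W_{ii}$, it suffices to show
\[
\frac{P_{ii}}{W_{ii}}\cdot\frac{\|PTPe_i\|_2^2}{W_{ii}}\leq y^\top Qy .
\]
The tool is Cauchy--Schwarz in the seminorm $z\mapsto z^\top PTPTPz$. Write $\|PTPe_i\|_2^2=e_i^\top PTPTPe_i$. First,
\[
\|PTPe_i\|_2\leq\|TPe_i\|_2=\Big(\sum_jt_j^2P_{ij}^2\Big)^{1/2}.
\]
Then
\[
\sum_jt_j^2P_{ij}^2\leq\max_j\frac{P_{ij}^2}{W_{jj}}\cdot\sum_j w_jt_j^2\,\cdots
\]
but this route does not close directly. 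The cleaner route is the sum form
\[
\sum_i\frac{\|PTPe_i\|^2_2}{W_{ii}}\,\frac{P_{ii}}{W_{ii}}\leq\sum_i\|PTPe_i\|_2^2=\tr(PTPTP).
\]
This is exactly the total-variance bound, addressed next. For the coordinate statement I would instead use
\[
\|TPe_i\|_2^2=\sum_jy_j^2P_{ij}^2/W_{jj}
\]
and compare with $P_{ii}$ through the matrix $Q$ and its Gram structure. This step — getting a per-coordinate bound by $y^\top Qy$ rather than by a $\max$ or a trace — is where I expect the main obstacle. The fallback is to prove $\Var[\Xi_i/\sqrt{W_{ii}}]\leq\E\|\Xi-Qy\|_2^2$ trivially from the total bound, since a single coordinate's variance is at most the sum of all coordinate variances. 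That reduces the coordinate claim to the Euclidean one, with only the harmless factor $1/W_{ii}$ handled via $W_{ii}\geq P_{ii}$ inside the Isserlis product.

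\emph{Euclidean variance.} By Isserlis summed over $i$,
\[
\E\|\Xi-Qy\|_2^2=\sum_i\frac{1}{W_{ii}}\Var[a_ib_i]\leq2\sum_i\frac{P_{ii}}{W_{ii}}(PTPTP)_{ii}\leq2\tr(PTPTP).
\]
The last step again uses $P_{ii}\leq W_{ii}$. Now expand the trace:
\[
\tr(PTPTP)=\tr(TPTP)=\sum_{i,j}t_iP_{ij}^2t_j=t^\top(P\circ P)t=y^\top Qy .
\]
This gives $\E\|\Xi-Qy\|_2^2\leq2y^\top Qy$. The coordinate bound then follows from the fallback above, since each coordinate variance, normalized by $W_{ii}^{-1}$ as arranged, is dominated termwise by this sum.

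\emph{Cost.} The cost statement is immediate: computing $u_r$ and then $v_r$ takes two applications of $P=P(\mathbf D)$. Each application is one solve in $A^\top\mathbf D^2A$ plus sparse products, so each sample uses two permitted systems.
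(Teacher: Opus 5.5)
Your unbiasedness computation, your Euclidean bound and the cost count are correct. For the Euclidean part you bound both Isserlis terms by $2\E[a_i^2]\E[b_i^2]$, use $P_{ii}\le W_{ii}$, and then use $\tr(PTPTP)=t^\top(P\circ P)t=y^\top Qy$. This is slightly leaner than the paper's argument, which handles the two Isserlis terms separately and needs $0\preceq Q\preceq I$ for the second.

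The gap is the coordinatewise bound. Your fallback $\Var[\Xi_i/\sqrt{W_{ii}}]\le\E\|\Xi-Qy\|_2^2$ is false. The $i$-th summand of the total is $\Var[\Xi_i]=\Var[a_ib_i]/W_{ii}$, while the normalized quantity is $\Var[a_ib_i]/W_{ii}^2$, which is larger by $1/W_{ii}$. Since $W_{ii}$ may be as small as $P_{ii}$, hence much smaller than $1$, this factor is not harmless and is not absorbed ``inside the Isserlis product''.

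A concrete counterexample: take $m=2$, $P=vv^\top$ with $v=(2^{-1/2},2^{-1/2})$, $W=\operatorname{Diag}(1/2,10)$ and $y=e_1$. Write $g=v^\top r$ and $z=\sum_jv_j^2t_j$, so that $\Xi_i=W_{ii}^{-1/2}v_i^2zg^2$. Then $\Var[\Xi_1/\sqrt{W_{11}}]=2z^2=2y^\top Qy$, whereas $\E\|\Xi-Qy\|_2^2=1.05z^2$.

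The missing idea is to extract the factor $P_{ii}$ from $\E[b_i^2]$ rather than bounding $\E[b_i^2]$ by the full trace. Since $PTPe_i=(PTP)(Pe_i)$,
\[
 \|PTPe_i\|_2^2\leq\|PTP\|_{2\to2}^2\|Pe_i\|_2^2
 \leq\|PTP\|_F^2\,P_{ii}=P_{ii}\,y^\top Qy.
\]
Here $\|PTP\|_F^2=\tr(PTPTP)=y^\top Qy$ is the identity you already proved. Inserting this into your Isserlis bound gives
\[
 \Var[\frac{\Xi_i}{\sqrt{W_{ii}}}]\leq\frac{2P_{ii}\|PTPe_i\|_2^2}{W_{ii}^2}
 \leq2(\frac{P_{ii}}{W_{ii}})^2y^\top Qy\leq2y^\top Qy .
\]
This is exactly the paper's step $\|Zv_i\|_2^2\leq\sigma_i\|Z\|_F^2$ in the factorization $P=VV^\top$, $Z=V^\top\operatorname{Diag}(t)V$, for which $\|PTPe_i\|_2=\|Zv_i\|_2$.
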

\begin{proof}
Write $P=VV^\top$, where $V$ has orthonormal columns, and denote the $i$-th
row of $V$ by $v_i^\top$.  Define
$Z:=V^\top\operatorname{Diag}(t)V$.  Since $V^\top r\sim N(0,I_n)$, Gaussian
polarization gives
\[
 \E[(Pr)_i(P\operatorname{Diag}(t)Pr)_i]
 =v_i^\top Zv_i=\sum_{j=1}^mP_{ij}^2t_j.
\]
This proves unbiasedness.  Moreover
\[
 \Var[\Xi_i]
 =\frac{\sigma_i\|Zv_i\|_2^2+(v_i^\top Zv_i)^2}{W_{ii}}.
\]
Now
\[
 \|Z\|_F^2=t^\top(P\circ P)t=y^\top Qy,
\]
$\|Zv_i\|_2^2\leq\sigma_i\|Z\|_F^2$, and
$\sigma_i\leq W_{ii}$.  These facts prove the coordinate estimate.  For the
total variance, define
$H_V:=V^\top\operatorname{Diag}(\sigma_i/W_{ii})V\preceq I$.  Summing the
first variance term gives
$\tr[ZH_VZ]\leq\tr[Z^2]=y^\top Qy$.
The sum of the second terms is $\|Qy\|_2^2\leq y^\top Qy$, because
$0\preceq Q\preceq I$.  Adding proves the claim.
\end{proof}

\subsection{Gaussian increment primitive}
\label{subsec:score_transport_restart}

We replace per-query score sketches by fresh shared-sketch increments
whose endpoints are fixed before their randomness is drawn.  The following
lemma proves the conditional martingale and variance bounds needed for an
adaptive query sequence; write
$L_{\rm conf}:=\log(40mM_{\rm ep})$, where $M_{\rm ep}$ bounds the number of
randomized calls in one path epoch.

\begin{lemma}[Adaptive shared-sketch increment]
\label{lem:adaptive_increment}
Let $D_-$, $D_+$ be measurable with respect to a sigma-field $\mathcal F$,
with $\|\log(D_+D_-^{-1})\|_\infty\leq\rho\leq1/8$, and let
$a_i\geq\max\{\sigma_i(D_-),\sigma_i(D_+)\}$ be $\mathcal F$-measurable.
Draw, after conditioning on $\mathcal F$, a fresh Gaussian sketch
$S\in\R^{s\times m}$ with independent $N(0,1/s)$ entries and set
$\widehat\Delta_i:=\|SP(D_+)e_i\|_2^2-\|SP(D_-)e_i\|_2^2$ and
$\Delta_i:=\sigma_i(D_+)-\sigma_i(D_-)$.  Then
$\E[\widehat\Delta_i-\Delta_i\mid\mathcal F]=0$ and there are absolute
$c_2,C_2>0$ with
\[
 \E[e^{\lambda(\widehat\Delta_i-\Delta_i)}\mid\mathcal F]
 \leq\exp(C_2\lambda^2\rho^2a_i^2/s)
 \qquad\hbox{whenever }|\lambda|\leq c_2s/(\rho a_i).
\]
Each sample of $S$ costs two permitted solves.
\end{lemma}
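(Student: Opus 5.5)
Write $p_\pm:=P(D_\pm)e_i$, so that $\widehat\Delta_i-\Delta_i=p_+^\top(S^\top S-I)p_+-p_-^\top(S^\top S-I)p_-$.  By polarization this equals $\alpha^\top(S^\top S-I)\beta$ with $\alpha:=p_+-p_-$ and $\beta:=p_++p_-$.  Unbiasedness is immediate, because $\E[S^\top S]=I$ and $D_\pm$, $a_i$ are $\mathcal F$-measurable while $S$ is drawn afterward.  Lemma~\ref{lem:relative_drift}, applied with $\mathbf D=D_+$ and $\mathbf D'=D_-$, gives $\|\alpha\|_2\leq10\rho\sqrt{\sigma_i(D_-)}\leq10\rho\sqrt{a_i}$.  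We also have $\|\beta\|_2\leq\sqrt{\sigma_i(D_+)}+\sqrt{\sigma_i(D_-)}\leq2\sqrt{a_i}$, since $\|P(D)e_i\|_2^2=\sigma_i(D)$.  Hence $\|\alpha\|_2\|\beta\|_2\leq20\rho a_i$.

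For the moment generating function I would write $S$ as $s$ independent rows $g_k/\sqrt s$ with $g_k\sim N(0,I_m)$.  Then
\[
\alpha^\top(S^\top S-I)\beta=\frac1s\sum_{k=1}^s\bigl((g_k^\top\alpha)(g_k^\top\beta)-\alpha^\top\beta\bigr),
\]
a sum of $s$ independent centered terms.  Each term has the form $XY-\E XY$ with $(X,Y)$ jointly Gaussian, $\Var X=\|\alpha\|_2^2$ and $\Var Y=\|\beta\|_2^2$.  Writing $XY=\frac14((X+Y)^2-(X-Y)^2)$ shows it is a difference of two scaled centered chi-square variables, so it is sub-exponential with parameter $O(\|\alpha\|_2\|\beta\|_2)$.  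The standard bound is $\E e^{\mu(XY-\E XY)}\leq\exp(C\mu^2\|\alpha\|_2^2\|\beta\|_2^2)$ for $|\mu|\leq c/(\|\alpha\|_2\|\beta\|_2)$.  It can be checked directly from the two-by-two Gaussian determinant formula $\E e^{\mu XY}=\det(I-\mu M)^{-1/2}$, where the eigenvalues of $M$ are $O(\|\alpha\|\|\beta\|)$.

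Tensorizing over the $s$ rows with $\mu=\lambda/s$ gives
\[
\E[e^{\lambda(\widehat\Delta_i-\Delta_i)}\mid\mathcal F]\leq\exp\bigl(C\lambda^2(20\rho a_i)^2/s\bigr)
\]
whenever $|\lambda|/s\leq c/(20\rho a_i)$.  This is the claim with $C_2=400C$ and $c_2=c/20$.  The degenerate case $\alpha=0$ or $\beta=0$ is trivial, because then the increment vanishes identically.

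The cost statement follows because $SP(D_\pm)$ is obtained by applying the projection to the $s$ rows of $S$, and each application is one solve in $A^\top D_\pm^2A$.  Per sample of $S$ this is two solves, one for each endpoint.  The only real obstacle is the sub-exponential constant for the product of correlated Gaussians.  I would handle it through the determinant formula, which avoids any independence assumption between $X$ and $Y$.  The key structural point is that $\rho$ enters only through $\|\alpha\|_2$, via the relative-drift lemma.
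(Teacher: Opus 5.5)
Your proof is correct and follows the paper's argument: the polarization $\|Su\|_2^2-\|Sv\|_2^2=\langle S(u-v),S(u+v)\rangle$, the relative-drift bound on $\|u-v\|_2$, a sub-exponential bound for each row's product of jointly Gaussian variables, and tensorization over the $s$ independent rows. Your determinant-formula check of the per-row constant, and your bound $\|u+v\|_2\leq2\sqrt{a_i}$ (slightly sharper than the paper's $3\sqrt{a_i}$), only make explicit what the paper calls the standard Bernstein computation.
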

\begin{proof}
Define $u:=P(D_+)e_i$ and $v:=P(D_-)e_i$.  By polarization,
$\|Su\|_2^2-\|Sv\|_2^2=\langle S(u-v),S(u+v)\rangle$, a sum of $s$
independent products of jointly Gaussian variables with mean
$\langle u-v,u+v\rangle=\|u\|_2^2-\|v\|_2^2=\Delta_i$; each summand is
sub-exponential with scale $O(\|u-v\|_2\|u+v\|_2/s)$, so the sum has the
displayed conditional moment generating function by the standard Bernstein
computation, where the first step uses that $S$ is independent of
$\mathcal F$ and the second uses Lemma~\ref{lem:relative_drift}:
$\|u-v\|_2\leq10\rho\sqrt{\sigma_i(D_-)}\leq10\rho\sqrt{a_i}$ and
$\|u+v\|_2\leq3\sqrt{a_i}$.  Applying a row of $S$ through $P(D_\pm)$ is one
solve each.
\end{proof}

\subsection{Within-step hybrid filtration}
\label{subsec:within_step_hybrid_filtration}
\paragraph{Within-step filtration.}
For each centering call $k$, let $\mathcal F_{k,0}$ contain the entire past,
the current primal, maintained weight, auxiliary weight, and score state,
before the surrogate-score response is requested.  The surrogate query
matrices are $\mathcal F_{k,0}$-measurable and their score randomness is
drawn afterward.  Let $\mathcal F_{k,1}$ contain that response together with
the subsequently drawn coordinate predictor and defect certificate, and define
$\mathcal G_k:=\mathcal F_{k,1}$.  Conditional on $\mathcal G_k$, draw the
independent weighted predictor and let $\mathcal F_{k,2}$ include its output.
After its endpoint is thereby fixed, draw the fixed-point score responses and
let $\mathcal F_{k,3}$ include the returned auxiliary weight.  The greedy
correction is a deterministic function of $\mathcal F_{k,3}$, and the
resulting state generates $\mathcal F_{k+1,0}$.

Thus the conditional bias and second-moment statements in
Lemmas~\ref{lem:large_radius_split_predictor} and
\ref{lem:linear_defect_projection} are conditioned exactly on
$\mathcal G_k$, while each score sketch is conditionally fresh given the
sigma-field that fixes its query endpoints.  Fixed-point responses drawn
after $\mathcal F_{k,2}$ need not be independent of the weighted predictor;
only the raw-relative-or-exact guarantee of
Lemma~\ref{lem:candidate_raw_score_coupling} is used.  This filtration is also
the one used by the adaptive residual game, with its pre-noise sigma-field
identified as $\mathcal G_k$.

We record the two routines that assemble the preceding ingredients into a
centering step and then a path-following phase.  The active cost vector is an
explicit argument, so the two phases of the outer reduction are algorithmically
distinct.

\begin{algorithm}[!ht]
\caption{One hybrid-score predicted centering step}
\label{alg:fresh_predicted_center}
\begin{algorithmic}[1]
\Procedure{PredictedCenter}{$x,w,\widehat g,t,c_{\rm path},\zeta,L_{\rm conf},
\mathcal S$}
\Comment{Lemma~\ref{lem:fresh_predicted_center_correctness}}
\State $x^+\gets$ the Lee--Sidford projected Newton step at fixed $t,w$
and cost $c_{\rm path}$.
\If{$A^\top x^+\neq b$ or $x^+\notin\Omega^\circ$}
  \State \Return \textnormal{\textsc{Fail}}.
\EndIf
\State $\psi\gets\log s(x^+)-\log s(x)$ and
$\Delta\gets\overline\delta_t^{c_{\rm path}}(x,w)$ from the projected
Newton residual.
\State $(W^+,D^+_{\rm qry})\gets$ the deterministic query of
Lemma~\ref{lem:certified_surrogate} at the pre-Newton base point $x$, and
let $P^+$ denote its implicit projection operator.
\State $(\widetilde\sigma^+,\mathcal S)
\gets\Call{HybridScore}
{D^+_{\rm qry},\operatorname{diag}(W^+),L_{\rm conf},\mathcal S}$.
\Comment{Algorithm~\ref{alg:candidate_hybrid_score}}
\State $S^+\gets$ the certified diagonal from
$\widetilde\sigma^+$; record only implicit operator access to $P^+$ as in
Lemma~\ref{lem:certified_surrogate}.
\State $(\overline d^{\,2},p^\infty,p^w)
\gets\Call{CoarseDefectData}{W^+,S^+,P^+,\psi,\Delta,\zeta}$.
\Comment{Algorithm~\ref{alg:coarse_defect_data}}
\State $\overline d\gets\sqrt{\overline d^{\,2}}$.
\State $s\gets\eta_0(\overline d+\Delta/c_k)$ and
$r\gets(1-59/(48c_k))\Delta-BC^2\overline d^2/\Delta$.
\State $\mathcal C\gets\{u:N(u)\leq r,\
\|u-p^\infty\|_\infty\leq s/64\}$.
\If{$\mathcal C$ is empty or
$r+1.1s>(1-1/c_k)\Delta/\overline c_\gamma$}
  \State \Return \textnormal{\textsc{Fail}}.
\EndIf
\State $\widehat u\gets\Call{DefectProjection}{w,C,r,s,p^\infty,p^w}$.
\Comment{Algorithm~\ref{alg:defect_projection}}
\State $z\gets\exp(\log\widehat g+\widehat u)$ and
$(\widehat g^+,\mathcal S)
\gets\Call{FixedPoint}{x^+,z,E,E/2,\zeta,L_{\rm conf},\mathcal S,
\mathsf{auto}}$.
\Comment{Algorithm~\ref{alg:fresh_fixed_point}}
\State $d_{\rm obs}\gets\log w+\widehat u-\log\widehat g^+$.
\State $\chi\gets\Call{MixedBallLinearOracle}
{w,C,1.1s,\nabla\Phi(d_{\rm obs}),\epsilon_{\rm g}}$.
\Comment{Algorithm~\ref{alg:mixed_ball_linear_oracle}}
\State $w^+\gets\exp(\log w+\widehat u+\chi)$.
\State \Return $(x^+,w^+,\widehat g^+,\mathcal S)$.
\EndProcedure
\end{algorithmic}
\end{algorithm}

With the surrogate, defect certificate, and fresh score increments available,
we can state a complete predicted centering call.  The next lemma verifies the
pseudocode above and accounts for every stopping event and resource charge.
\begin{lemma}[Hybrid-score predicted centering step]
\label{lem:fresh_predicted_center_correctness}
Suppose
\[
 \begin{aligned}
 0<\delta:=\delta_t^{c_{\rm path}}(x,w)&\leq R,
 &\Delta&:=\overline\delta_t^{c_{\rm path}}(x,w),\\
 \|\log w-\log g(x)\|_\infty&\leq K,
 &\|\log\widehat g-\log g(x)\|_\infty&\leq E/2.
 \end{aligned}
\]
and let $\zeta\in(0,1/10)$.  Suppose the raw-relative initialization event
holds, and use the coupled process of
Lemma~\ref{lem:candidate_raw_score_coupling} up to the existing auxiliary
stop.  Lemma~\ref{lem:candidate_score_interface} supplies both score
consumers without conditioning on future score success: a response before
the first violation is raw-relative, and every replacement response is
exact.  Given that past, the simultaneous coarse good event has conditional
failure probability at most $\zeta$.  On that event,
Algorithm~\ref{alg:fresh_predicted_center} does not return
\textnormal{\textsc{Fail}} and returns $(x^+,w^+,\widehat g^+,\mathcal S)$ such that
\[
 \|\log\widehat g^+-\log g(x^+)\|_\infty\leq E/2,
 \qquad
 \|\log w^+-\log w\|_{w+\infty}
 \leq(1-\tfrac1{c_k})\delta_t(x,w).
\]
Conditionally on the coarse sigma-field, its residual error satisfies the
three bias and fluctuation hypotheses of
Lemma~\ref{lem:adaptive_residual_game}.  It contributes $O(E)$ hybrid
effective drift and $O(1)$ transitions to the score process.  Outside that process, one call
uses $O(L_*+q)$ permitted systems and $O(\log q)$ linear-system depth.
\end{lemma}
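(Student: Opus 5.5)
The proof follows Algorithm~\ref{alg:fresh_predicted_center} line by line, invoking the lemma that certifies each line and tracking which sigma-field each random object is measurable in.

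\emph{Newton step and base quantities.} Since $\delta\leq R\leq r_0/c_k$ and the tracking band holds, Lemma~\ref{lem:ls_path_facts} places $x^+$ in the barrier domain, and $A^\top x^+=b$ because the projected direction lies in $\ker A^\top$. Hence the first \textsc{Fail} test is never triggered. Lemma~\ref{lem:sharper_parameters} keeps the whole segment in the $2K$ band, so the hypotheses of Lemma~\ref{lem:large_radius_retained_movement} hold; Lemma~\ref{lem:finite_segment_jacobian} gives $N(\psi)\leq2\delta$; and Eq.~\eqref{eq:computable_centrality} gives $\delta\leq\Delta\leq\overline c_\gamma\delta$.

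\emph{Surrogate query and coarse data.} The query $(W^+,D^+_{\rm qry})$ is built deterministically from $\widehat g$ and $x$, so it is $\mathcal F_{k,0}$-measurable. By Lemma~\ref{lem:candidate_score_interface}, the \textsc{HybridScore} response is either raw-relative with accuracy $c_{\rm anc}E$ or exact. Because $\sigma_i(P^+)\leq W^+_{ii}$, either case implies the additive interface of Lemma~\ref{lem:certified_surrogate}, which then gives the PSD surrogate and $|d_+^2-d^2|\leq a_3\delta^2/(C^2c_k)$. Next, \textsc{CoarseDefectData} applies Lemmas~\ref{lem:defect_fixed_point}, \ref{lem:finite_medoid_aggregation} and \ref{lem:defect_certificate} with failure budgets $\zeta/2+\zeta/2$; this defines the coarse good event with conditional failure probability at most $\zeta$. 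On that event Lemma~\ref{lem:large_radius_split_predictor} holds with $\mathcal G_k=\mathcal F_{k,1}$, and the weighted chain is drawn afterward.

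\emph{Projection, residual hypotheses, and movement.} Lemma~\ref{lem:linear_defect_projection} gives $r_k\geq N(u_k)$, the testable inequality $r_k+1.1s_k\leq(1-1/c_k)\Delta/\overline c_\gamma$, and $u_k\in\mathcal C_k$. Therefore $\mathcal C$ is nonempty, the second \textsc{Fail} test is not triggered, and Lemma~\ref{lem:computable_defect_projection} returns $\widehat u$. The same lemma supplies the three conditional bias and fluctuation hypotheses with $\nu=1/5$ and $s_k\leq E/100$.

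For the auxiliary weight, Lemma~\ref{lem:large_radius_warm_start} shows that $z=\widehat g\,e^{\widehat u}$ is $E$-accurate for $g(x^+)$. \textsc{FixedPoint} in hybrid mode, with $D=E$ and $\epsilon_{\rm fp}=E/2$, then restores $E/2$ accuracy in $J=O(1)$ rounds by Lemma~\ref{lem:floor_homotopy_initialization}. That call contributes $O(E)$ effective drift and $O(1)$ transitions. The query-drift part of Lemma~\ref{lem:large_radius_warm_start} accounts for the first $O(R)$ transition as one transition; its drift is controlled by the score-interface lemmas. The movement bound $N(\widehat u+\chi)\leq r_k+1.1s_k\leq(1-1/c_k)\delta$ follows from feasibility of $\widehat u$ (so $N(\widehat u)\leq r_k$), the oracle bound $N(\chi)\leq1.1s$ from Lemma~\ref{lem:mixed_ball_linear_oracle}, and the triangle inequality.

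\emph{Cost.} The surrogate score costs are charged to the score process. The coordinate chain and certificate use $O(\log(m/\zeta))=O(L_*)$ systems at constant depth. The weighted chain uses $O(c_k)=O(q)$ systems in $O(\log c_k)=O(\log q)$ rounds. The Newton step and the residual each use $O(1)$ systems. This totals $O(L_*+q)$ systems and $O(\log q)$ depth.

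\textbf{Main obstacle.} The delicate step is the filtration bookkeeping. The score responses drawn after $\mathcal F_{k,2}$ may correlate with the weighted predictor, yet the residual hypotheses must be conditioned on $\mathcal G_k$. The resolution is that only the raw-relative-or-exact deterministic guarantee of Lemma~\ref{lem:candidate_raw_score_coupling} is used after $\mathcal G_k$, so the conditional statements rely solely on the fresh weighted chain.
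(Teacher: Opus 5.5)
Your proposal is correct and follows the paper's proof essentially line by line. It uses the same ingredients: measurability of the surrogate query; Lemmas~\ref{lem:large_radius_split_predictor}, \ref{lem:linear_defect_projection} and~\ref{lem:computable_defect_projection} for nonemptiness, the testable proxy inequality and the three conditional hypotheses; the warm start plus hybrid-mode \textsc{FixedPoint} for the auxiliary weight; and the triangle inequality for the movement bound. The paper adds two points you leave implicit. First, it verifies $d_{\rm obs}-q_k=\log g(x^+)-\log\widehat g^+-b_k$, so the greedy oracle's input is within $E/2+s/8<E$ of the game's residual and the algorithm's correction really is the analyzed player. Second, it attributes the $O(E)$ drift directly to Lemma~\ref{lem:candidate_step_effective_drift}, whose weighted branch is what turns the $O(R)$ principal transition into $O(E)$ effective drift; your vague appeal to the ``score-interface lemmas'' hides this.
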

\begin{proof}
Lemma~\ref{lem:certified_surrogate} makes the query matrices measurable before
their fresh sketches are drawn.  Lemmas~\ref{lem:large_radius_split_predictor}
and~\ref{lem:linear_defect_projection}, including the computable projection of
Lemma~\ref{lem:computable_defect_projection}, imply that $\mathcal C$ is
nonempty and prove the literal proxy inequality tested by the algorithm,
\[
 r+1.1s\leq(1-\tfrac1{c_k})\Delta/\overline c_\gamma
 \leq(1-\tfrac1{c_k})\delta.
\]
They also give the conditional bias and fluctuation bounds required by
Lemma~\ref{lem:adaptive_residual_game}.  To connect the actual observation
to that lemma, write
\[
 d_{k-1}:=\log w-\log g(x),\qquad
 e_k:=u-\widehat u=b_k+Z_k,\qquad q_k:=d_{k-1}-Z_k.
\]
The line defining $d_{\rm obs}$ in the algorithm then gives the exact identity
\[
 d_{\rm obs}-q_k
 =\log g(x^+)-\log\widehat g^+-b_k.
\]
Since $\|\log\widehat g^+-\log g(x^+)\|_\infty\leq E/2$ and
$N(b_k)\leq s/8$, its infinity norm is at most
$E/2+s/8<E$.  Thus the observed-softmax hypothesis and the greedy correction
in Lemma~\ref{lem:adaptive_residual_game} apply to the value actually passed
by Algorithm~\ref{alg:fresh_predicted_center}.  Finally,
$N(\widehat u)\leq r$ and $N(\chi)\leq1.1s$, so the displayed movement
bound follows from the triangle inequality.

Lemma~\ref{lem:large_radius_warm_start} gives
$\|\log z-\log g(x^+)\|_\infty\leq E$.  Thus
Algorithm~\ref{alg:fresh_fixed_point}, invoked in hybrid maintained mode with $D=E$
and desired accuracy $E/2$, uses an absolute number of rounds and proves the
auxiliary-weight claim.  Lemma~\ref{lem:candidate_step_effective_drift}
accounts for all score transitions and their effective drift globally.  The coordinate predictor,
certificate, and weighted predictor cost $O(L_*+q)$ systems outside that
process, and the weighted predictor has $O(\log q)$ sequential rounds.
This proves the cost and depth claims.
\end{proof}

\subsection{Path following and resource budgets}
\label{subsec:path_following_budgets}

\begin{algorithm}[!ht]
\caption{Hybrid-score path-following phase}
\label{alg:fresh_path_follow}
\begin{algorithmic}[1]
\Procedure{PathFollow}{$x,w,\widehat g,c_{\rm path},t_0,t_1,\tau,\zeta_{\rm step},
L_{\rm conf},\mathcal S$}
\Comment{Lemma~\ref{lem:fresh_path_follow_correctness}}
\State $\beta\gets1+C\sqrt{2\overline c_1}$, $\alpha\gets R/(16c_k\beta)$, and
$t\gets t_0$.
\State $M_{\rm path}\gets\lceil A_{\rm path}(c_k\beta|\log(t_1/t_0)|/R
+c_k\log(R/\tau)+1)\rceil$, including terminal-centering calls,
$J\gets\lceil8c_k\log(R/\tau)\rceil$, and $j_{\rm path}\gets0$.
\While{$t\neq t_1$}
  \State $j_{\rm path}\gets j_{\rm path}+1$.
  \If{$j_{\rm path}>M_{\rm path}-J$}
    \State \Return \textnormal{\textsc{Fail}}.
  \EndIf
  \State $\Delta\gets\overline\delta_t^{c_{\rm path}}(x,w)$ from one
  projected-residual solve.
  \If{$\Delta=0$}
    \State $t\gets\operatorname{median}\{(1-\alpha)t,t_1,(1+\alpha)t\}$.
  \Else
    \State $Y\gets\Call{PredictedCenter}
    {x,w,\widehat g,t,c_{\rm path},\zeta_{\rm step},L_{\rm conf},\mathcal S}$.
    \Comment{Algorithm~\ref{alg:fresh_predicted_center}}
    \If{$Y=\textnormal{\textsc{Fail}}$}
      \State \Return \textnormal{\textsc{Fail}}.
    \EndIf
    \State $(x,w,\widehat g,\mathcal S)\gets Y$.
    \State $t\gets\operatorname{median}\{(1-\alpha)t,t_1,(1+\alpha)t\}$.
  \EndIf
\EndWhile
\For{$j=1,\ldots,J$}
  \State $\Delta\gets\overline\delta_{t_1}^{c_{\rm path}}(x,w)$ from one
  projected-residual solve.
  \If{$\Delta>0$}
    \State $Y\gets\Call{PredictedCenter}
    {x,w,\widehat g,t_1,c_{\rm path},\zeta_{\rm step},L_{\rm conf},\mathcal S}$.
    \Comment{Algorithm~\ref{alg:fresh_predicted_center}}
    \If{$Y=\textnormal{\textsc{Fail}}$}
      \State \Return \textnormal{\textsc{Fail}}.
    \EndIf
    \State $(x,w,\widehat g,\mathcal S)\gets Y$.
  \EndIf
\EndFor
\State \Return $(x,w,\widehat g,\mathcal S)$.
\EndProcedure
\end{algorithmic}
\end{algorithm}

The centering call is iterated while the path parameter moves toward
its target.  The following lemma proves that the phase routine preserves all
invariants and gives its total number of calls.
\begin{lemma}[Fresh path-following phase]
\label{lem:fresh_path_follow_correctness}
Suppose $t_0,t_1>0$, $0<\tau\leq R$, and the input satisfies
\[
 \delta_{t_0}^{c_{\rm path}}(x,w)\leq R,
 \qquad
 \|\log w-\log g(x)\|_\infty\leq K,
 \qquad
 \|\log\widehat g-\log g(x)\|_\infty\leq E/2,
\]
together with the initial-potential hypothesis of
Lemma~\ref{lem:adaptive_residual_game}; for a segment immediately continuing
a preceding call inside the same attempt, it is enough that both segments
use the same stopped residual game.  Assume that the executed prefix belongs
to an epoch of at most $B_{\rm ep}$ calls, with
$L_{\rm ch}\geq\log(A_0B_{\rm ep}m\beta)$ and
$\zeta_{\rm step}\leq(100B_{\rm ep})^{-1}$.  On the raw-relative initialization
event, analyze the coupled hybrid score process of
Lemma~\ref{lem:candidate_raw_score_coupling}.  Let $\mathcal E_{\rm sc}$ be
the event that no exact score replacement occurs before completion of this
phase, and let $\mathcal E_{\rm coarse}$ be the intersection of the
per-call coarse events and $\mathcal E_{\rm ch}$ the stopped tracking event
of Lemma~\ref{lem:adaptive_residual_game}, both defined without conditioning
on future score success.  On $\mathcal E_{\rm coarse}\cap
\mathcal E_{\rm ch}\cap\mathcal E_{\rm sc}$,
Algorithm~\ref{alg:fresh_path_follow} does not return
\textnormal{\textsc{Fail}} and returns an
iterate at $t_1$ with
\[
 \delta_{t_1}(x,w)\leq\tau,
\]
while preserving the tracking and auxiliary-weight invariants.  It makes
\[
 O(\frac{c_k\beta}{R}|\log(t_1/t_0)|
   +c_k\log(R/\tau)+1)
\]
calls to Algorithm~\ref{alg:fresh_predicted_center}.
\end{lemma}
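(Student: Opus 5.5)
We argue by induction over the iterations of the while loop and then over the terminal loop, carrying three invariants at the start of each iteration: $\delta_t^{c_{\rm path}}(x,w)\leq R$, $\|\log w-\log g(x)\|_\infty\leq K$ (in fact $K/4$ on $\mathcal E_{\rm ch}$), and $\|\log\widehat g-\log g(x)\|_\infty\leq E/2$. At the start these are the hypotheses. For the inductive step there are two branches. If $\Delta=0$, then Eq.~\eqref{eq:computable_centrality} gives $\delta_t=0$. The $t$-change estimate of Lemma~\ref{lem:ls_path_facts}, with $|\xi|\leq\alpha$ and $\|w\|_1\leq2c_1\leq2\overline c_1$, then gives $\delta_{t'}\leq\alpha\beta<R$. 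The weights and $\widehat g$ are untouched, so the other invariants persist trivially. If $\Delta>0$, the hypotheses of Lemma~\ref{lem:fresh_predicted_center_correctness} hold. On $\mathcal E_{\rm coarse}\cap\mathcal E_{\rm sc}$ the call does not fail and returns $\widehat g^+$ with accuracy $E/2$, together with a weight move of size $\|\log w^+-\log w\|_{w+\infty}\leq(1-1/c_k)\delta$. Its residual errors satisfy the conditional hypotheses of Lemma~\ref{lem:adaptive_residual_game}. The epoch bounds $L_{\rm ch}\geq\log(A_0B_{\rm ep}m\beta)$ and $\zeta_{\rm step}\leq(100B_{\rm ep})^{-1}$ are exactly what make that lemma applicable to the whole executed prefix, viewed as one stopped game. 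Hence on $\mathcal E_{\rm ch}$ the tracking bound $K$ holds after every call. Lemma~\ref{lem:centrality_path_composition} then gives $\delta_t(x^+,w^+)\leq(1-1/(4c_k))\delta$ and $\delta_{(1+\xi)t}(x^+,w^+)\leq R$ for the clamped update $|\xi|\leq\alpha$. The clamping by the median only shortens steps, so the same estimate applies when the step lands exactly on $t_1$.

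Next we count the while loop. Each iteration with $t\neq t_1$ after the update multiplies $t$ by $1\pm\alpha$ toward $t_1$. So after at most $\lceil|\log(t_1/t_0)|/\log(1+\alpha)\rceil+1=O(c_k\beta|\log(t_1/t_0)|/R+1)$ iterations we reach $t=t_1$. Iterations with $\Delta=0$ need no separate accounting, since each of them also advances $t$. Choosing $A_{\rm path}$ large makes this count at most $M_{\rm path}-J$, so the counter test never fires on the good event.

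For the terminal loop at the fixed parameter $t_1$ we start from $\delta\leq R$. Each call with $\Delta>0$ contracts $\delta$ by the factor $1-1/(4c_k)$, again by Lemma~\ref{lem:centrality_path_composition}, and a call with $\Delta=0$ already has $\delta=0$, which is then preserved. After $J=\lceil8c_k\log(R/\tau)\rceil$ iterations we get $\delta_{t_1}\leq R e^{-J/(4c_k)}\leq R(\tau/R)^2\leq\tau$, since $\tau\leq R$. The total number of calls is at most $M_{\rm path}$, which is the displayed bound.

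I expect the main obstacle to be the bookkeeping of the probabilistic events, not the deterministic composition. The invariants must be derived on $\mathcal E_{\rm coarse}\cap\mathcal E_{\rm ch}\cap\mathcal E_{\rm sc}$ without conditioning on future successes. The plan is to define the stopped process that freezes at the first coarse failure, first exact replacement, or tracking violation. Each stopping decision is measurable before the relevant fresh noise, so the external-stop clause of Lemma~\ref{lem:residual_chasing} applies to it. On the intersection event the stopped and implemented processes coincide, which transfers every conclusion to the actual algorithm.
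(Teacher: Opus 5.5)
Your proposal is correct and follows essentially the same route as the paper: on the good events, Lemma~\ref{lem:fresh_predicted_center_correctness} supplies the hypotheses of Lemma~\ref{lem:centrality_path_composition}, the zero-proxy branch is handled by the signed $t$-change estimate (centrality at most $\alpha\beta\leq R$), each full median update moves $|\log t|$ by $\Omega(\alpha)$, the $J$ terminal calls contract centrality from $R$ to $\tau$, and $A_{\rm path}$ is taken large enough that the counter test never fires. Your extra freeze at the first exact score replacement is not needed, since the lemma already works in the exact-continuation coupled process of Lemma~\ref{lem:candidate_raw_score_coupling}, which is built precisely to avoid stopping at score events; on $\mathcal E_{\rm coarse}\cap\mathcal E_{\rm ch}\cap\mathcal E_{\rm sc}$ the freeze never triggers, so it does no harm.
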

\begin{proof}
On the intersection of the coarse, chasing, and score good events,
Lemma~\ref{lem:fresh_predicted_center_correctness}
supplies the movement and tracking hypotheses of
Lemma~\ref{lem:centrality_path_composition}.  Thus every nonzero-centrality
iteration contracts at fixed $t$ and permits the signed median update of
magnitude at most $\alpha$.  If centrality is zero, the proof of the signed
path-update part of Lemma~\ref{lem:centrality_path_composition}, without its
contraction term, gives centrality at most $\alpha\beta\leq R$ after the same
median update.  A full update changes $|\log t|$ by $\Omega(\alpha)$, and the
last median update reaches $t_1$ exactly.  This proves the first term in the
call count.

At $t_1$, each nonvacuous centering call contracts centrality by
$1-1/(4c_k)$.  Therefore the chosen value of $J$ reduces centrality from at
most $R$ to at most $\tau$.  This contraction remains valid even when
$\tau<E$: the hypotheses of
Lemmas~\ref{lem:large_radius_retained_movement},
\ref{lem:linear_defect_projection}, and~\ref{lem:adaptive_residual_game}
impose no lower bound on centrality relative to $E$, and
Lemma~\ref{lem:linear_defect_projection} keeps the residual radius below
the fixed observation scale $E$.  Lemma~\ref{lem:fresh_predicted_center_correctness}
also preserves the auxiliary invariant, while the stochastic game preserves
tracking simultaneously.  Taking $A_{\rm path}$ sufficiently large makes the
a priori call bound dominate both loops.  Every zero test in the pseudocode
is therefore a test of the computable projected-residual proxy, not of the
mixed-norm minimization defining exact centrality.  The pseudocode explicitly
charges one projected-residual solve for that test.  There is at most one such
solve per loop iteration, so these solves are absorbed by the displayed call
count and the $O(L_*+q)$ non-score work per iteration.  This proves the lemma.
\end{proof}

A checkpoint can interrupt terminal centering, so the stopping rule must be
compatible with normalization.  The next lemma specifies how to restart
the first terminal loop and execute the final fixed-weight tail only once.

\begin{lemma}[Checkpoint-safe terminal completion]
\label{lem:checkpoint_terminal_completion}
In the checkpointed two-phase execution, restart the first phase's terminal
counter whenever a checkpoint normalization occurs inside that loop.
Choose $B_{\rm ep}>4J_{\rm sw}$, where
$J_{\rm sw}=\lceil8c_k\log(R/\tau_{\rm sw})\rceil$.
Omit the second phase's terminal loop from the stochastic stream.
After that stream returns its full state at $t_{\rm out}$, run
\textnormal{\textsc{FixedWeightTail}} once: keep $w,t$ fixed and take
projected Newton steps until
$\overline\delta_{t_{\rm out}}^c(x,w)\leq\tau_{\rm out}$.
Test feasibility and interiority before each barrier evaluation and announce
the deterministic cap
$J_{\rm tail}:=\lceil A_{\rm tail}(1+\log(2+\log(R/\tau_{\rm out})))\rceil$;
return \textnormal{\textsc{Fail}} if the cap is reached without success.
On the good tracking event these conventions preserve the outer reduction.
The successful stream gains at most $J_{\rm sw}=O(c_k\log\ell)$
instructions, and the once-only tail uses $O(\kappa+\ell)$ systems.
\end{lemma}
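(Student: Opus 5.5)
The plan is to treat the three claims separately: the restarted first terminal loop, the fixed-weight tail, and the two cost bounds.

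\emph{Restarted first terminal loop.} Lemma~\ref{lem:centrality_path_composition} says that each nonvacuous centering call at fixed $t_{\rm sw}$ contracts centrality by the factor $1-1/(4c_k)$. A checkpoint normalization (Lemma~\ref{lem:epoch_normalization}) returns a state satisfying the input invariants of Lemma~\ref{lem:fresh_path_follow_correctness} at the same $t_{\rm sw}$, namely centrality at most $R$, tracking within $K$, and auxiliary accuracy $E/2$. Restarting the counter at that point therefore recreates exactly the situation analysed in the terminal-loop paragraph of that lemma. After $J_{\rm sw}$ uninterrupted calls, centrality is at most $\tau_{\rm sw}$.

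Progress is guaranteed because $B_{\rm ep}>4J_{\rm sw}$. A restarted loop of length $J_{\rm sw}$ begun after a checkpoint fits inside the next epoch, so at most one restart can affect a successful stream. That stream thus gains at most $J_{\rm sw}$ extra instructions. Finally, $J_{\rm sw}=\lceil 8c_k\log(R/\tau_{\rm sw})\rceil$ with $R=\Theta(\ell^{-1})$ and $\tau_{\rm sw}=c_{\rm sw}L_*^{-3}/\Lambda_{\rm out}$, so $\log(R/\tau_{\rm sw})=O(\log\ell)$ and $J_{\rm sw}=O(c_k\log\ell)$.

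\emph{Fixed-weight tail.} On the good tracking event, the second stream ends at $t_{\rm out}$ with $\delta\le R$ and $w$ tracking $g(x)$ within $K$; this is the proof of Lemma~\ref{lem:fresh_path_follow_correctness} with its terminal loop deleted. Because the tail never changes $w$, the tracking invariant cannot be preserved through the game. Instead I would reuse the induction in the proof of Lemma~\ref{lem:fixed_weight_terminal_distance}. There the total normalized displacement stays below $1/2$ and the target moves by $O(\sum_j\eta_j)$, which keeps $w$ inside the $2K$ band required by Lemma~\ref{lem:ls_path_facts}.

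This is the main obstacle. Lemma~\ref{lem:ls_path_facts} only gives $\eta_{j+1}\le4\eta_j^2$ once $\eta\le1/10$, whereas the starting radius is $R=\Theta(1/\ell)$. Quadratic convergence therefore takes over immediately, since $4R<1$, but the displacement sum must be controlled from $R$ rather than from $2^{-20}\ell^{-3}$. I would argue that $\sum_j\eta_j\le2R$ and that the target drift $O(R)$ is at most $K$ after shrinking $c_R$, using $R=O(1/c_k)$ and $K=\Theta(c_k^{-1/2})$. If this direct route fails, a fallback is to first run $O(1)$ ordinary steps to reach the threshold of Lemma~\ref{lem:fixed_weight_terminal_distance}.

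The explicit feasibility and interiority tests are never triggered on this event, because every Newton direction lies in $\ker A^\top$ and remains in the Dikin neighborhood. The proxy satisfies $\overline\delta\le\overline c_\gamma\delta$ by Eq.~\eqref{eq:computable_centrality}, so the test $\overline\delta\le\tau_{\rm out}$ fires once $\delta\le\tau_{\rm out}/\overline c_\gamma$. Quadratic convergence reaches this after $O(\log\log(R/\tau_{\rm out}))$ steps, which is below $J_{\rm tail}$ for a large enough $A_{\rm tail}$. At success, $\delta\le\overline\delta\le\tau_{\rm out}$, which is exactly the terminal hypothesis used in Lemma~\ref{lem:outer_reduction}, so the outer reduction is preserved.

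\emph{Tail cost.} Each tail step uses one Newton solve and one proxy solve. Since $\log(R/\tau_{\rm out})=O(\ell+\kappa)$, where $\kappa$ absorbs $\log(mU/\epsilon)$, the tail uses $O(\log\log(\ell+\kappa))\le O(\kappa+\ell)$ systems.
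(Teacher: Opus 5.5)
Your proposal is correct and follows the paper's proof closely. The counter reset is justified by the per-call $1-1/(4c_k)$ contraction restarting from a normalized state of centrality at most $R$, with $B_{\rm ep}>4J_{\rm sw}$ ensuring at most one successful reset. The tail reruns the fixed-weight Newton induction of Lemma~\ref{lem:fixed_weight_terminal_distance} from radius $R$, absorbing the $O(R)$ target drift into the tracking band by the radius choice, and quadratic convergence gives the $\log\log$ cap. The only cosmetic difference is that the paper runs the quadratic recursion on the proxy, $\overline\delta_{j+1}\le 8\overline\delta_j^2$, whereas you run it on $\delta$ and convert via Eq.~\eqref{eq:computable_centrality} at the end.
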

\begin{proof}
The first terminal loop contracts centrality by $1-1/(4c_k)$ at each
nonvacuous call.  Normalization need not preserve an already smaller
residual, so its earlier counter cannot be reused to certify consecutive
contractions.  Resetting the counter restores this implication.  After
such an accepted checkpoint the entire loop fits before the next checkpoint,
since $J_{\rm sw}<B_{\rm ep}$.  There is therefore at most one successful
reset.  The switch threshold is independent of $U,\epsilon$, giving the
stated inverse-polylogarithmic terminal length.

The final tail starts with centrality at most $R$ and tracking error at
most $K/4$.  The fixed-weight Newton estimate and the proxy comparison give
$\overline\delta_{j+1}\leq8\overline\delta_j^2$.
For sufficiently small $R$, the total normalized movement is $O(R)$;
target stability changes $\log g$ by at most $O(R)\leq K/4$ after
the final radius choice.  Induction therefore preserves the tracking band,
interiority, and quadratic contraction.  The displayed cap follows by
iterating the quadratic bound.  At its endpoint the terminal-distance and
duality-gap argument of Lemma~\ref{lem:outer_reduction} applies.
This deterministic tail makes no score or predictor calls and is outside
the retry loop, so its charge is paid only once.
\end{proof}

\begin{algorithm}[!ht]
\caption{Deterministic normalization at a path-epoch boundary}
\label{alg:epoch_normalize}
\begin{algorithmic}[1]
\Procedure{EpochNormalize}{$x,w,t,c_{\rm path}$}
\Comment{Lemma~\ref{lem:epoch_normalization}}
\State Using exact score diagonals, run fixed-weight projected Newton steps
until the computable proxy satisfies
$\overline\delta_t^{c_{\rm path}}(x,w)\leq\epsilon_{\rm rec}$.
\State $(\widetilde g,\mathcal S_{\rm tmp})\gets
\Call{FixedPoint}{x,\mathbf1,q,\tau_{\rm norm},1/2,L_{\rm conf},
\varnothing,\mathsf{exact}}$.
\While{$e^{\mu\tau_{\rm norm}}
\Phi(\log w-\log\widetilde g)>M_{\rm ch}/2$}
  \State $\widetilde d\gets\log w-\log\widetilde g$ and
  $\chi\gets\Call{MixedBallLinearOracle}
  {w,C,s_{\rm norm},\nabla\Phi(\widetilde d),\epsilon_{\rm norm}}$.
  \State $w\gets w\odot e^\chi$ and run fixed-weight projected Newton
  steps until
  $\overline\delta_t^{c_{\rm path}}(x,w)\leq\epsilon_{\rm rec}$ at the
  same $t,c_{\rm path}$.
  \State $(\widetilde g,\mathcal S_{\rm tmp})\gets
  \Call{FixedPoint}{x,\mathbf1,q,\tau_{\rm norm},1/2,L_{\rm conf},
  \varnothing,\mathsf{exact}}$.
\EndWhile
\State $(\widehat g,\mathcal S)\gets
\Call{FixedPoint}{x,\mathbf1,q,E/8,1/2,L_{\rm conf},
\varnothing,\mathsf{exact}}$; retain the exact score vector of its actual
final query in $\mathcal S$.
\State \Return $(x,w,\widehat g,\mathcal S)$.
\EndProcedure
\end{algorithmic}
\end{algorithm}

Checkpointing requires all numerical scales to remain polynomially bounded
between exact restarts.  The next lemma shows that the normalization routine
achieves this without changing the asymptotic path length.
\begin{lemma}[Deterministic epoch normalization]
\label{lem:epoch_normalization}
There are absolute constants $c_{\rm norm}>0$, $D_{\rm norm}\geq1$, and an
integer $d_{\rm norm}\geq1$ with the following property, uniformly in every
sufficiently large $A_L$.  Define
\[
\begin{aligned}
 s_{\rm norm}&:=c_{\rm norm}E/c_k,
 &\epsilon_{\rm norm}&:=1/(16c_k),\\
 \tau_{\rm norm}&:=\frac{s_{\rm norm}}{2^{14}c_k\beta},
 &\epsilon_{\rm rec}&:=\min\{2^{-22}\ell^{-3},
                 \tau_{\rm norm}/2^{10}\}.
\end{aligned}
\]
Suppose that $x$ is feasible and interior,
\[
 \delta_t^{c_{\rm path}}(x,w)\leq R,
 \qquad
 \|\log w-\log g(x)\|_\infty\leq K/2.
\]
Then Algorithm~\ref{alg:epoch_normalize} is deterministic, preserves
feasibility and the path parameter, and returns a state satisfying
\[
 \delta_t^{c_{\rm path}}(x,w)\leq\epsilon_{\rm rec},\qquad
 \Phi(\log w-\log g(x))\leq M_{\rm ch}/2,\qquad
 \|\log w-\log g(x)\|_\infty\leq K/4,
\]
together with the stated auxiliary-weight and exact raw-score base
invariants.  Each of its permitted-system, sequential-round, and exact
scalar-operation counts is at most
\begin{equation}\label{eq:normalization_uniform_cost}
 D_{\rm norm}(1+A_L)^{d_{\rm norm}}(4m)^{d_{\rm norm}}.
\end{equation}
The constants $D_{\rm norm},d_{\rm norm}$ do not depend on $A_L$.
The joint choice in Lemma~\ref{lem:candidate_hybrid_constant_hierarchy}
makes Eq.~\eqref{eq:normalization_uniform_cost} at most
$P_{\rm norm}:=(4m)^{A_{\rm norm}}$.
\end{lemma}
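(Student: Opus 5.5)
We verify the three phases of Algorithm~\ref{alg:epoch_normalize} in order: recentering, the greedy potential-reduction loop, and the final exact fixed point. Then we count costs. Determinism needs no argument, since every score query runs in $\mathsf{exact}$ mode and the mixed-ball oracle and projected Newton steps are deterministic. Feasibility and the value of $t$ are preserved because every Newton direction lies in $\ker(A^\top)$ and $t,c_{\rm path}$ are never changed.

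\textbf{Recentering.} Starting from $\delta\leq R$ and tracking error $K/2$, we run fixed-weight Newton steps. By Lemma~\ref{lem:ls_path_facts} and the proxy equivalence Eq.~\eqref{eq:computable_centrality}, they contract quadratically, $\overline\delta_{j+1}\leq8\overline\delta_j^2$, exactly as in Lemma~\ref{lem:checkpoint_terminal_completion}. The total normalized movement is $O(R)$, so target stability (Lemma~\ref{lem:linf_contraction}) moves $\log g$ by $O(R)\leq K/8$. Hence $\|\log w-\log g\|_\infty\leq5K/8$ persists, and we reach $\epsilon_{\rm rec}$ in $O(\log\log(R/\epsilon_{\rm rec}))$ steps. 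The exact \textsc{FixedPoint} call, started from $\mathbf 1$ with $D=q$, is covered by Lemma~\ref{lem:floor_homotopy_initialization}: $g_i\in[n/m,2]$ gives $\|\log g\|_\infty\leq q$. It returns $\widetilde g$ with error $\tau_{\rm norm}$ using $O(n\log(q/\tau_{\rm norm}))$ systems.

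\textbf{Greedy loop (main obstacle).} We must show the loop terminates after polynomially many rounds and that each round keeps the invariants. Each round moves $w$ by $\chi$ with $N_w(\chi)\leq s_{\rm norm}$, and then recenters. Since $\epsilon_{\rm rec}\leq2^{-22}\ell^{-3}$, Lemma~\ref{lem:fixed_weight_terminal_distance} shows the recentered $x$ moves $g$ by only $O(\epsilon_{\rm rec}+s_{\rm norm})$ in mixed norm. Indeed, the weight change affects $\delta$ by at most $(1+4z)(\epsilon_{\rm rec}+z)$ with $z\leq s_{\rm norm}$ (Lemma~\ref{lem:ls_path_facts}), and recentering then costs target displacement $32c_\gamma$ times that. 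This is the step I expect to be delicate: that displacement must be a small fraction of $s_{\rm norm}$. Otherwise the target could chase the player, and the descent from Lemma~\ref{lem:observed_softmax_step} would be eaten. I would first try charging it as a bias $b\in V/8$. This needs the post-step centrality times $32c_\gamma$ to be at most $s_{\rm norm}/8$, which the choice $\tau_{\rm norm},\epsilon_{\rm rec}\ll s_{\rm norm}/c_k$ is designed to give, possibly after shrinking $c_{\rm norm}$. With the observation error $\tau_{\rm norm}\leq5E$, Lemma~\ref{lem:observed_softmax_step} yields
\[
 \Phi^+\leq\Phi-\tfrac34 s_{\rm norm}N^*(\nabla\Phi)+C_{\rm sm}\mu ms_{\rm norm}.
\]
Using $N^*(\nabla\Phi)\geq\|\nabla\Phi\|_1/\beta\geq\mu(\Phi-2m)/\beta$ as in Lemma~\ref{lem:residual_chasing}, while $\Phi>M_{\rm ch}/(2e^{\mu\tau_{\rm norm}})$ and $A_{\rm ch}$ is large, each round decreases $\Phi$ by a factor $1-\Omega(\mu s_{\rm norm}/\beta)$. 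The initial value is at most $2me^{\mu K/2}$, which is polynomial in $m$ because $\mu K=\Theta(A_EL_{\rm ch})=\Theta(A_EA_L\ell)$. So the number of rounds is $O(\beta\mu^{-1}s_{\rm norm}^{-1}\cdot\mu K)$, which is polynomial in $m$ with degree linear in $A_L$. To avoid dependence of the degree on $A_L$ I would instead bound it by $(1+A_L)^{O(1)}\mathrm{poly}(m)$, using $\log\Phi_0=O(A_L\ell)$ only as a multiplicative factor. On exit, $\|\log\widetilde g-\log g\|_\infty\leq\tau_{\rm norm}$ converts the tested inequality into $\Phi(\log w-\log g)\leq M_{\rm ch}/2$. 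That in turn gives $\|\log w-\log g\|_\infty\leq\mu^{-1}\log M_{\rm ch}\leq K/4$ by the choice of $E$.

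\textbf{Final call and cost.} The last exact \textsc{FixedPoint} gives $\widehat g$ within $E/8$ and an exact raw-score state, by the exact mode of Lemma~\ref{lem:floor_homotopy_initialization}. Every round costs $O(n\log(m/\tau_{\rm norm}))$ systems plus $O(n\,\nnz(A)+mn)$ scalar work, and $\log(1/\tau_{\rm norm})=O(\ell)$. Multiplying by the polynomial round count gives Eq.~\eqref{eq:normalization_uniform_cost} with $D_{\rm norm},d_{\rm norm}$ independent of $A_L$.
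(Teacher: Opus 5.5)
Your greedy-loop step has a genuine gap: the target's response to the weight change cannot be charged as a bias $b\in V/8$. Right after $w\gets w\odot e^{\chi}$, the fixed-weight centrality is $\Theta(N_w(\chi))$, not $O(\epsilon_{\rm rec})$. So the estimate you quote (displacement at most $32c_\gamma$ times the post-step centrality) allows a target move of about $32c_\gamma s_{\rm norm}$, which is larger than the player's own move. Shrinking $c_{\rm norm}$ does not help, since it scales both sides equally.

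The small tolerances $\epsilon_{\rm rec}$ and $\tau_{\rm norm}$ only control the finite-recentering and observation errors relative to the \emph{exact} fixed-weight center. Those really are perturbations, and the paper charges them as $64c_\gamma\epsilon_{\rm rec}$ and $O(\mu\tau_{\rm norm})$. The exact-center response $u:=\log g(x(w\odot e^{\chi}))-\log g(x(w))$ is different: nothing makes it small relative to $\chi$. Consequently Lemma~\ref{lem:observed_softmax_step} does not apply, and your per-round factor $1-\Omega(\mu s_{\rm norm}/\beta)$ is unjustified. (Separately, Lemma~\ref{lem:fixed_weight_terminal_distance} needs centrality at most $2^{-20}\ell^{-3}$, which a centrality of order $s_{\rm norm}$ need not satisfy.)

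The missing idea is a consistency bound for this response, giving a contraction rather than smallness. Differentiate the fixed-weight stationarity condition along $w_\theta=w\odot e^{\theta\chi}$. The normalized response is $h_\theta=-P_{x_\theta,w_\theta}(\chi\odot a(x_\theta))$ with $a_i=\phi_i'/\sqrt{\phi_i''}$ and $|a_i|\leq1$. Lemma~\ref{lem:pythagorean_projector} gives $N(h_\theta)\leq c_\gamma N(\chi)$. The operator bound of Lemma~\ref{lem:sharper_parameters}, applied at an exact center as a degenerate segment, then gives after integration $N_w(u)\leq(1-\frac{3}{2c_k})N_w(\chi)$.

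The net residual move $\chi-u$ therefore captures only a $\Theta(1/c_k)$ fraction of the support function: $\langle\nabla\Phi,\chi-u\rangle\leq-\frac{3s_{\rm norm}}{4c_k}N_w^*(\nabla\Phi)$ plus perturbation terms. This explains two parameter choices in the statement. The oracle accuracy is $\epsilon_{\rm norm}=1/(16c_k)$ rather than an absolute constant, because a constant relative loss would erase the $3/(2c_k)$ margin. The radius is $s_{\rm norm}=c_{\rm norm}E/c_k$, because the Taylor remainder $O(\mu s_{\rm norm})s_{\rm norm}N^*$ must stay below $s_{\rm norm}N^*/c_k$. The per-round factor becomes $1-\Omega(1/(c_k^2\beta))$, which still gives $O(c_k^2\beta(1+A_L)\ell)$ rounds.

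You also need a first-exit argument to keep $\|\log w-\log g(x)\|_\infty<K$ along every interpolation. The Jacobian bound is valid only inside the tracking band, while the band itself is derived from the potential descent, so assuming it directly is circular. The rest of your outline is fine: the initial recentering, the exit conversion to $K/4$, the final exact fixed point, and the polynomial cost count.
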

\begin{proof}
We first analyze exact fixed-weight centers; the finite stopping tolerance is
inserted at the end.  For a positive weight vector $\widetilde w$, let
$x(\widetilde w)$ be the minimizer of the fixed-weight barrier objective at
$t,c_{\rm path}$.  Its existence and the convergence of projected Newton
iteration follow as in Lemma~\ref{lem:fixed_weight_terminal_distance}; from
an input of centrality at most $R$ the total normalized displacement is
$O(R)$.  Target stability and the choice of $c_R$ therefore show that the
initial recentering changes $\log g$ by at most $K/16$.  In particular the
recentered state remains strictly inside the tracking band.

Fix a recentered state and a direction $\chi$ with
$N_w(\chi)\leq s_{\rm norm}$.  Along
$w_\theta=w\odot e^{\theta\chi}$ and
$x_\theta=x(w_\theta)$, differentiate the fixed-weight stationarity
condition.  With
\[
 a_i(x):=\frac{\phi_i'(x_i)}{\sqrt{\phi_i''(x_i)}},
 \qquad |a_i(x)|\leq1,
\]
the normalized central response is
\[
 h_\theta:=\sqrt{\Phi''(x_\theta)}\,\dot x_\theta
 =-P_{x_\theta,w_\theta}
       (\chi\odot a(x_\theta)).
\]
Lemma~\ref{lem:pythagorean_projector}, applied along this segment, gives
$N_{w_\theta}(h_\theta)\leq c_\gamma N_{w_\theta}(\chi)$.
At an exact center the fixed-weight centrality is zero, so
Lemma~\ref{lem:sharper_parameters} applies pointwise as a degenerate Newton
segment whenever the strict tracking band holds.  On any such prefix it
gives
\[
 N_{w_\theta}(\tfrac{\d}{\d\theta}\log g(x_\theta))
 \leq(1-\tfrac{15}{8c_k})c_\gamma
       N_{w_\theta}(\chi).
\]
Define
$\lambda:=(1-15/(8c_k))c_\gamma$.  Since the weighted component of
$N_{w_\theta}$ differs from that of $N_w$ by at most
$e^{s_{\rm norm}/2}$, integration gives, for
\[
 u:=\log g(x(w\odot e^\chi))-\log g(x(w))
\]
the bound $N_w(u)\leq\lambda e^{s_{\rm norm}}N_w(\chi)$.
Because $c_\gamma\leq1+1/(128c_k)$, decreasing $c_{\rm norm}$ makes
$\lambda e^{s_{\rm norm}}\leq1-3/(2c_k)$, and hence
\begin{equation}\label{eq:epoch_response_contraction}
 N_w(u)\leq(1-\tfrac{3}{2c_k})N_w(\chi).
\end{equation}
Thus the exact centered update changes
$d:=\log w-\log g(x)$ by $d^+=d+\chi-u$.

Let $\widetilde d=\log w-\log\widetilde g$, so that
$\|\widetilde d-d\|_\infty\leq\tau_{\rm norm}$.  The support guarantee of
Algorithm~\ref{alg:mixed_ball_linear_oracle},
Eq.~\eqref{eq:epoch_response_contraction}, duality, and the coordinate estimate
\[
 |\phi'(\widetilde d_i)-\phi'(d_i)|
 \leq(e^{\mu\tau_{\rm norm}}-1)
       (|\phi'(d_i)|+2\mu)
\]
show, for $a=\nabla\Phi(d)$, that evaluating the oracle at the certified
residual changes the exact support calculation by at most
\[
 O(\mu\tau_{\rm norm})s_{\rm norm}N_w^*(a)
 +O(\mu^2m\tau_{\rm norm}s_{\rm norm}).
\]
To pass from exact centers to the finite stopping rule, let $x^\circ(w)$
denote the exact fixed-weight center.  The unconditional lower side of
Eq.~\eqref{eq:computable_centrality} gives
$\delta\leq\overline\delta\leq\epsilon_{\rm rec}$.  The strengthened
fixed-weight distance bound of
Lemma~\ref{lem:fixed_weight_terminal_distance}, self-concordant transport,
and the target-Jacobian estimate give, at each endpoint,
\[
 N_w(\log g(x)-\log g(x^\circ(w)))
 \leq32c_\gamma\epsilon_{\rm rec}.
\]
Consequently
\[
 N_w(
 [\log g(x^+)-\log g(x)]-
 [\log g(x^\circ(w e^\chi))-\log g(x^\circ(w))]
 )
 \leq64c_\gamma\epsilon_{\rm rec}.
\]
Its contribution to the support calculation is at most
$64c_\gamma\epsilon_{\rm rec}N_w^*(a)$ and is absorbed because
$\epsilon_{\rm rec}\leq\tau_{\rm norm}/2^{10}$.  Hence
\[
 \langle a,\chi-u\rangle
 \leq-\frac{3s_{\rm norm}}{4c_k}N_w^*(a)
       +O(\mu^2m\tau_{\rm norm}s_{\rm norm}).
\]
Taylor's theorem, $\phi''=\mu^2\phi$, and the same mixed-norm quadratic
estimate used in Lemma~\ref{lem:residual_chasing} give
\[
 \Phi(d^+)\leq\Phi(d)
 -\frac{s_{\rm norm}}{2c_k}N_w^*(a)
 +C_{\rm norm}\mu^2m s_{\rm norm}^2.
\]
The gradient-dependent Taylor remainder is absorbed by the preceding linear
term because
\[
 \mu s_{\rm norm}=c_{\rm norm}/(100c_k).
\]
This is the reason for using the smaller normalization radius.  The additive
term $C_{\rm norm}\mu^2m s_{\rm norm}^2$ is absorbed only above the threshold
fixed next.  Moreover
\[
 N_w^*(a)\geq\frac1\beta\|a\|_1,
 \qquad
 \|a\|_1\geq\mu(\Phi(d)-2m).
\]
The choice
$\tau_{\rm norm}=s_{\rm norm}/(2^{14}c_k\beta)$ absorbs the certified-gradient
and finite-centering perturbations as well.  Taking $A_{\rm ch}$ large after
the absolute Taylor constants are fixed, every iteration with
$\Phi(d)>M_{\rm ch}/4$, where $M_{\rm ch}=A_{\rm ch}m\beta$, therefore
decreases $\Phi$ by a factor
$1-\Omega((c_k^2\beta)^{-1})$.  Consequently at most
\[
 O(c_k^2\beta\log(2+\Phi(d_{\rm start})/M_{\rm ch}))
\]
normalization iterations are executed.  If the outward loop test fails,
then
$\Phi(d)\leq e^{\mu\tau_{\rm norm}}\Phi(\widetilde d)\leq M_{\rm ch}/2$.
If it has not yet failed, the true potential is larger than
$e^{-2\mu\tau_{\rm norm}}M_{\rm ch}/2>M_{\rm ch}/4$, so the descent estimate
continues to apply.

At a certified epoch endpoint the starting tracking error is at most $K/2$.
The initial fixed-weight recentering moves the target by $O(R)\leq K/16$,
so
\[
 \log\Phi(d_{\rm start})
 \leq\log(2m)+9\mu K/16
 =\log(2m)+9A_EA_L\ell/1600,
\]
where the first step follows from
$\Phi(d)\leq2m e^{\mu\|d\|_\infty}$ and
$\|d_{\rm start}\|_\infty\leq9K/16$, and the second step follows from
$\mu=1/(100E)$ and $E=K/(A_EA_L\ell)$.
Thus the number of normalization iterations is
$O(c_k^2\beta(1+A_L)\ell)$, with an implicit constant independent of
$A_L$.  We close the tracking hypothesis used above by a first-exit
argument.  Suppose that the exact-center
interpolation has a first point at which $\|d\|_\infty=K$.  On the preceding
prefix the pointwise invocation of Lemma~\ref{lem:sharper_parameters} is
valid, so Eq.~\eqref{eq:epoch_response_contraction} and the potential descent
derived from it hold at every completed update on that prefix.  Hence
$\Phi$ is nonincreasing there.  More quantitatively, the initial margin and
$\|d\|_\infty\leq\mu^{-1}\log\Phi(d)$ give, after increasing $A_E$,
$\|d\|_\infty\leq5K/8$ at every completed exact-center update.  During one
interpolation,
\[
 \|d_\theta-d_0\|_\infty
 \leq N_w(\theta\chi)
      +N_w(\log g(x_\theta)-\log g(x_0))
 \leq(2+o(1))s_{\rm norm}<K/8.
\]
The two finite recentering errors contribute at most
$64c_\gamma\epsilon_{\rm rec}<K/16$.  The intervening fixed-weight Newton
path has no larger total target displacement: apply the Newton-series
estimate in Lemma~\ref{lem:fixed_weight_terminal_distance} to each of its
prefixes.  Thus the putative first exit is still strictly below $K$, a
contradiction.  This justifies the operator estimate on the entire
normalization path without circularity.  Finally the stopping
threshold, rather than the starting potential, gives
$\|d\|_\infty\leq K/4$, because $\mu^{-1}=100E$,
$L_{\rm ch}=A_L\ell$, and $E=K/(A_EL_{\rm ch})$.

It remains to justify finite execution.  After changing the weight by
$s_{\rm norm}<1/10$, the weight-response inequality of
Lemma~\ref{lem:ls_path_facts} gives fixed-weight centrality
$O(s_{\rm norm})$.  Projected Newton iteration reduces this quadratically,
so $O(\log\log(2+s_{\rm norm}/\epsilon_{\rm rec}))$ systems suffice to
reach $\epsilon_{\rm rec}$.  The exact-score branch of
Algorithm~\ref{alg:fresh_fixed_point} computes each required certified
regularized Lewis vector in
$O(n\log(2+q/\epsilon_{\rm rec}))$ permitted systems.  To keep the
dependence on $A_L$ explicit, the tolerance definitions give
\[
 \epsilon_{\rm rec}^{-1}
 \leq2^{22}\ell^3+
 \frac{2^{31}A_E}{c_{\rm norm}}A_L\ell c_k^{5/2}\beta.
\]
Here $n\leq m$, $c_k\leq8\ell$, and
$\beta=1+32\sqrt{c_kn}$, so the reciprocal tolerances and the number
of iterations are bounded by fixed polynomials in $m$ and $1+A_L$.
The initial recentering, each subsequent recentering, and each exact
fixed-point computation have a number of rounds polynomial in the
logarithms of these reciprocal tolerances.  The mixed-ball oracle has
the same property, since the tracking band gives
$n/(2m)\leq w_i\leq3$.  Each round uses polynomially many scalar
operations in $m$ and at most polynomially many permitted systems.
Multiplying these bounds proves Eq.~\eqref{eq:normalization_uniform_cost}
with $D_{\rm norm},d_{\rm norm}$ independent of $A_L$.  The finite
recentering errors obey $\epsilon_{\rm rec}\leq\tau_{\rm norm}/2^{10}$
by definition, not by a later choice of a work exponent.
The final exact-score response supplies a
zero-error raw relative base, so it also satisfies every score-state
convention.  All Newton directions lie in $\ker(A^\top)$ and all iterates
remain in the barrier domain, proving feasibility and the remaining claims.
\end{proof}

The preceding lemmas introduced several absolute constants whose choices must
be mutually compatible and independent of the input.  The next lemma fixes
the structural subset without circular dependence.
\begin{lemma}[Compatible structural constants]
\label{lem:constant_hierarchy}
All structural absolute constants used by
Algorithms~\ref{alg:defect_projection}--\ref{alg:fresh_path_follow} can be
fixed once, independently of $m,n,U,\epsilon$.  The remaining scale,
hybrid-provider, outer-switch, and resource-cap constants are then fixed,
without changing these structural choices, in
Lemma~\ref{lem:candidate_hybrid_constant_hierarchy}.
\end{lemma}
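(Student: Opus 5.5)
The plan is to exhibit an explicit order of choice and verify that each constraint imposed in Algorithms~\ref{alg:defect_projection}--\ref{alg:fresh_path_follow} involves only constants chosen earlier in that order. I would first collect every structural constraint from the lemmas proved so far into a dependency list. The list contains $r_0$ (Lemmas~\ref{lem:sharper_parameters}, \ref{lem:finite_segment_jacobian}, \ref{lem:large_radius_retained_movement} and \ref{lem:large_radius_split_predictor}), $\gamma,\gamma_1,a_1,a_2,a_3,A_d$, the estimator constants $a_4,a_5,a_6,\epsilon_\infty,T,s_0,\lambda,c_w,c_\infty,c_{\rm med},c_Q$, the projection constants $B,\eta_0,\epsilon_1,\epsilon_{\rm P}$, and the chasing constants $\epsilon_0,\epsilon_{\rm g},\nu,A_{\rm ch},c_R$. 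For each entry I would record which other constants the inequality involves. The point to check is that every inequality is of the form ``$X$ sufficiently small or large given previously fixed $Y$'', with no inequality in which a constant constrains itself through a loop.

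The order I would commit to is as follows. Fix first the analytic constants that depend on nothing tunable: $a_1,A_J$ from Lemma~\ref{lem:jacobian_stability}; $a_2,a_3$ from Lemma~\ref{lem:certified_surrogate}, with $c_+$ and $c_0$ absolute; and $\gamma_1$, hence $\gamma$, from Lemma~\ref{lem:retained_jacobian_defect}. Then fix $A_d$ from Lemma~\ref{lem:defect_certificate}, after fixing $\epsilon_0$ and $c_{\rm med},c_Q$ there. Next choose $B$ with $BA_d\le\min\{1/12,\gamma/2\}$, and then $\eta_0$ from $a_0+a_0^2/(512B)\le 1/6$. Then choose $\epsilon_1,\epsilon_{\rm P}$ with $16\epsilon_1/\eta_0+\epsilon_{\rm P}\le1/16$ and with $\epsilon_1\le\eta_0/64$, as the coordinate requirement in Lemma~\ref{lem:linear_defect_projection} needs. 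From $\epsilon_1$ derive $\epsilon_\infty\le\epsilon_1/2$, hence $T(\epsilon_\infty)$ and $c_\infty$, and $c_w$ with $c_{\rm eq}a_4c_w^2\le\epsilon_1^2/8$. Only after this choose $r_0$ as the minimum of the thresholds, namely $16R\le K$, the $\delta/(6c_k)$ absorption in Lemma~\ref{lem:large_radius_retained_movement}, and $r_0\le\epsilon_1/(8a_2)$. Finally decrease $c_R$ so that $c_R\eta_0$ gives $s_k\le E/100$ and $R\le r_0/c_k$. I would also verify that $c_{\rm eq}$ and $a_4$ do not depend on $r_0$.

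The main obstacle is the apparent cycle through $r_0$. The constant $r_0$ enters the predictor error bound $N(e_{\rm det})\le 2a_2r_0\delta/c_k$, which must be at most $\epsilon_1$-small, while $\epsilon_1$ is itself constrained by $\eta_0$ and $B$. Meanwhile $a_1,a_2$ are derived under the hypothesis $R\le r_0/c_k$. I would resolve this by checking that $a_1,a_2,\gamma$ are valid for every $r_0\le1/160$, with their proofs only using $r_0$ below a fixed absolute ceiling. Then $r_0$ may be decreased last without invalidating them; decreasing $r_0$ only shrinks errors, so all earlier monotone inequalities persist.

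I would conclude by observing that none of these choices references $m,n,U,\epsilon$. Quantities such as $q,\ell,c_k,C,K$ enter only through the scale-free ratios already bounded, for example $C^2/c_k\le256$, $C/c_k=O(1)$ and $C^2E=O(1)$. The scale constants $A_E$ and $A_L$, and the resource caps, appear only in the inequalities deferred to Lemma~\ref{lem:candidate_hybrid_constant_hierarchy}, so fixing them later cannot alter the structural choices.
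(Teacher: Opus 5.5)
Your proposal is correct and follows essentially the paper's route. Both fix the analytic and certificate constants ($a_2,a_3,A_d,\gamma$), then $B$, then $\eta_0$, then $\epsilon_1,\epsilon_{\rm P}$ together with the accuracy multipliers that depend on $\epsilon_1$, and only then decrease $r_0$, which works because the earlier constants remain valid for every $r_0$ below a fixed absolute ceiling. The remaining differences are bookkeeping: the paper also fixes $A_{\rm q}$ after $r_0$, which you omit, and it treats $c_R$ as a scale constant that Lemma~\ref{lem:candidate_hybrid_constant_hierarchy} decreases later; your earlier choice of $c_R$ is harmless because every constraint on $c_R$ is a monotone upper bound.
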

\begin{proof}
First fix the numerical constants in the
Lee--Sidford facts, the choices $\epsilon_0=1/10$ and $\nu=1/5$, and the
structural definitions of $p,c_k,C,K$.  Next fix the constant-depth
stochastic-chain, medoid, and scalar-median parameters; this fixes
$a_2,a_3,a_4,a_7,A_d$ and the absolute equivalence constants.  Choose
$B>0$ so that $BA_d\leq\min\{1/12,\gamma/2\}$, then choose $\eta_0>0$ so
that Eq.~\eqref{eq:constant_eta_condition} below holds.  Choose
$\epsilon_1,\epsilon_{\rm P},\epsilon_{\rm g},c_0>0$ in that order, small
enough that
\begin{equation}\label{eq:constant_eta_condition}
 1.1\eta_0+\frac{(1.1\eta_0)^2}{512B}\leq\frac16,
 \qquad 16\epsilon_1/\eta_0+\epsilon_{\rm P}\leq1/16.
\end{equation}
and so that the fixed support-oracle and surrogate perturbation inequalities
hold.

After those choices, decrease $r_0$ to satisfy every finite-segment estimate,
including $r_0\leq\epsilon_1/(8a_2)$, and fix $A_{\rm q}$ large enough to
dominate the deterministic companion-error constants.  Every remaining
requirement encountered above is now a finite monotone upper bound on
$c_R,E,R,c_{\rm anc}$ or a lower bound on a sample, path, horizon, or budget
constant.  Lemma~\ref{lem:candidate_hybrid_constant_hierarchy} resolves
those bounds in an order that never changes a previously frozen radius.
\end{proof}

\section{Score maintenance and certified checkpoints}
\label{sec:score_maintenance_checkpoints}

Section~\ref{subsec:exact_diagonal_self_transport} derives the exact
coordinatewise transport identity and bounds the nonlocal endpoint error.
Section~\ref{subsec:hybrid_maintenance_stopping} combines transported and
ordinary transitions with block stopping to maintain all scores uniformly.
Finally, Section~\ref{subsec:hybrid_path_compatibility} verifies the companion
bands, initialization, and coupling needed by the path algorithm.
The final resource caps are assembled in Lemma~\ref{lem:four_log_ledger}.

Lemma~\ref{lem:ls_path_facts} records the Lee--Sidford Newton contraction,
normalized movement, response to a weight change, signed path-parameter
change, and weighted duality-gap estimate used below.  The outer reduction
also uses their interval-barrier derivative bounds, which are built into the
definition of $U$ in Section~\ref{sec:intro}; the cost-switch estimate and the
terminal-distance conversion are derived explicitly in
Lemma~\ref{lem:outer_reduction}.  Every other ingredient used for the solve
bound---well-posed regularized weights, the computable centrality proxy,
linear-defect projection, defect-adaptive predictor, observed softmax step,
hybrid relative score maintenance, residual chasing, and unconditional
initialization---is proved in the preceding sections and below.  The hybrid
provider transports the own-row response exactly and sketches only the
nonlocal residual.  Every random sketch is drawn after its query endpoints
are fixed.

Algorithms~\ref{alg:fresh_fixed_point}--\ref{alg:fresh_path_follow} implement
fixed-point computation, one hybrid-score centering step, and one
path phase.  The final block applies the two-phase outer reduction.
In the budgeted execution, every descendant call is made through the same
resource wrapper.  Inside Algorithm~\ref{alg:checkpointed_path}, a descendant
\textnormal{\textsc{Fail}} or an attempt-local-cap failure is caught and
turns the current attempt into a rejection.  A shared-global-cap failure is
not caught: it propagates immediately to Algorithm~\ref{alg:lp_solve_fresh}.
Outside this retry boundary every descendant failure likewise propagates
before the enclosing procedure executes its next line, and tuple assignment
is performed only on a successful return.

Regard the two path loops, the first terminal-centering loop, and the
cost-vector switch as one resumable instruction stream $\mathfrak P$.
Use Lemma~\ref{lem:checkpoint_terminal_completion}: the first terminal
counter restarts after normalization, and the final deterministic tail is
outside this stream and the retry loop.
Its state records the active phase, $t$, both deterministic loop counters,
and $(x,w,\widehat g,\mathcal S)$.  One \emph{atomic path instruction} is a
complete iteration of either loop of
Algorithm~\ref{alg:fresh_path_follow}: it includes the projected-residual
test, the optional centering call, and, in the path loop, the ensuing median
update of $t$.  The cost-vector switch is atomic as well.  Thus a checkpoint
never splits a centering call from the path update licensed by it.  We count
every atomic loop iteration, including a zero-centrality iteration, against
the epoch and global path caps.  Enlarging the absolute constant $A_N$
absorbs this equivalent count.  Pausing and resuming only at these boundaries
preserves each complete path update.  The terminal-counter reset and
once-only final tail follow Lemma~\ref{lem:checkpoint_terminal_completion}.

For $1\leq b\leq B_{\rm ep}$ define
$P_{\rm norm}:=(4m)^{A_{\rm norm}}$ and fix a sufficiently large absolute
$A_{\rm loc}$.  Every attempt uses the deterministic local caps
\begin{equation}\label{eq:attempt_resource_caps}
\begin{aligned}
 M_{\rm att}(b)&:=A_{\rm loc}
  (b+q\ell+1+\mathbf1_{\{b=B_{\rm ep}\}}P_{\rm norm}),\\
 S_{\rm att}(b)&:=A_{\rm loc}(
  b(E^{-1}\ell+\ell+q+1)+E^{-2}\ell
  +\mathbf1_{\{b=B_{\rm ep}\}}P_{\rm norm}),\\
 D_{\rm att}(b)&:=A_{\rm loc}(
  b(1+\log q)+1
  +\mathbf1_{\{b=B_{\rm ep}\}}P_{\rm norm}),\\
 W_{\rm att}(b)&:=A_{\rm loc}(\nnz(A)+m)
  (S_{\rm att}(b)+b\ell^2
  +\mathbf1_{\{b=B_{\rm ep}\}}P_{\rm norm}).
\end{aligned}
\end{equation}
The first cap counts score queries and exact-score queries; the other three
count permitted systems, sequential system rounds, and scalar work.  A
routine announces its full charge before work or randomness, and an attempt
is rejected before exceeding any local cap.

\begin{algorithm}[!ht]
\caption{Checkpointed execution of the two path phases}
\label{alg:checkpointed_path}
\small
\begin{algorithmic}[1]
\Procedure{CheckpointedPath}{$\mathfrak P,N_{\rm cap}$}
\Comment{Lemma~\ref{lem:checkpointed_path}}
\State $B_{\rm ep}\gets(4m)^{A_{\rm ep}}$,
$M_{\rm ep}\gets A_{\rm call}(B_{\rm ep}+q\ell+1)$,
$L_{\rm conf}\gets\log(40mM_{\rm ep})$, and
$\zeta_{\rm step}\gets(100B_{\rm ep})^{-1}$,
$\epsilon_{\rm chk}\gets s_{\rm norm}/(2^{14}c_k\beta)$.
\State $J_{\rm ep}\gets\lceil N_{\rm cap}/B_{\rm ep}\rceil+1$,
$A_{\rm att}\gets4J_{\rm ep}$, $a\gets0$, and $r\gets0$.
\While{$\mathfrak P$ is not complete and $a<A_{\rm att}$}
  \State $b\gets\min\{B_{\rm ep},N_{\rm cap}-r\}$; if $b=0$, return
  \textnormal{\textsc{Fail}}.
  \State Announce the four attempt-local caps in
  Eq.~\eqref{eq:attempt_resource_caps}.
  \State Charge every descendant operation to its local and shared global
  counters.
  \State Save the mathematical path and score state $Q$ of $\mathfrak P$;
  set $j\gets0$, $a\gets a+1$, $\mathrm{reject}\gets\textnormal{false}$,
  start fresh score and residual-game epochs, and begin a fresh attempt.
  Resource counters are never rolled back.
  \While{$j<b$ and $\mathfrak P$ is not complete and
  $\mathrm{reject}=\textnormal{false}$}
    \State Execute the next atomic instruction of $\mathfrak P$, using
    $\zeta_{\rm step}$ in Algorithm~\ref{alg:fresh_predicted_center}.  On a
    descendant or attempt-local \textnormal{\textsc{Fail}}, set
    $\mathrm{reject}\gets\textnormal{true}$; on a shared-global-cap
    \textnormal{\textsc{Fail}}, return \textnormal{\textsc{Fail}}.
    \If{the instruction completed} \State $j\gets j+1$. \EndIf
  \EndWhile
  \If{$\mathrm{reject}=\textnormal{true}$}
    \State Restore $Q$ and \textbf{continue} to the next attempt.
  \EndIf
  \If{$\mathfrak P$ is complete} \State \Return its full mathematical state. \EndIf
  \If{$r+j=N_{\rm cap}$}
    \State Restore $Q$ and \textbf{continue} to the next attempt.
  \EndIf
  \If{$w\not>0$, the point violates $A^\top x=b$, is not interior, or has a
  path/phase descriptor inconsistent with $\mathfrak P$}
    \State Restore $Q$ and \textbf{continue} to the next attempt.
  \EndIf
  \State $(g_{\rm chk},\mathcal S_{\rm chk})\gets
  \Call{FixedPoint}{x,\mathbf1,q,\epsilon_{\rm chk},1/2,L_{\rm conf},
  \varnothing,\mathsf{exact}}$, and compute
  $\overline\delta_t^{c_{\rm path}}(x,w)$.
  \If{$\overline\delta_t^{c_{\rm path}}(x,w)>R-R/(32c_k)$ or
  $\|\log w-\log g_{\rm chk}\|_\infty+\epsilon_{\rm chk}>K/2$}
    \State Restore $Q$ and \textbf{continue} to the next attempt.
  \Else
    \State $(x,w,\widehat g,\mathcal S)
    \gets\Call{EpochNormalize}{x,w,t,c_{\rm path}}$.
    \State $(g_{\rm chk},\mathcal S_{\rm chk})\gets
    \Call{FixedPoint}{x,\mathbf1,q,\epsilon_{\rm chk},1/2,L_{\rm conf},
    \varnothing,\mathsf{exact}}$ and certify
    $\overline\delta_t^{c_{\rm path}}(x,w)\leq R/4$,
    $\Phi(\log w-\log g_{\rm chk})\leq
      e^{-\mu\epsilon_{\rm chk}}M_{\rm ch}$, and
    $\|\log\widehat g-\log g_{\rm chk}\|_\infty\leq E/4$;
    on failure restore $Q$ and \textbf{continue}; otherwise accept this
    epoch and start the next attempt from the normalized state.
    \State $r\gets r+j$; if currently inside the first terminal loop,
    restart that terminal counter at zero.
  \EndIf
\EndWhile
\State \Return \textnormal{\textsc{Fail}}.
\EndProcedure
\end{algorithmic}
\end{algorithm}

After normalization and the structural constants are fixed, the path can be
partitioned into bounded attempts with deterministic caps.  The next lemma
assembles these attempts and controls their total failure probability.
\begin{lemma}[Checkpointed polynomial epochs]
\label{lem:checkpointed_path}
Use the joint constant choice of
Lemma~\ref{lem:candidate_hybrid_constant_hierarchy}, with
$A_{\rm ep}>A_{\rm norm}+10$, and define
$B_{\rm ep}=(4m)^{A_{\rm ep}}$.  Run all stochastic routines inside one
attempt with
\[
 L_{\rm ch}=A_L\ell,\quad
 M_{\rm ep}=A_{\rm call}(B_{\rm ep}+q\ell+1),\quad
 L_{\rm conf}=\log(40mM_{\rm ep}),\quad
 \zeta_{\rm step}=(100B_{\rm ep})^{-1}.
\]
Suppose an attempted epoch starts either on the raw-relative initialization
event or from an accepted output of Lemma~\ref{lem:epoch_normalization}.
Conditional on its entire starting state, a full $B_{\rm ep}$-instruction
attempt that does not finish the stream is accepted with probability at
least $4/5$; if it finishes the stream, the same good event implies a correct
output.  Algorithm~\ref{alg:checkpointed_path},
capped at $4J_{\rm ep}$ attempts where
$J_{\rm ep}=\lceil N_{\rm cap}/B_{\rm ep}\rceil+1$, halts on every outcome
and completes the two path phases with constant probability.  Its total
systems, system depth, and scalar work are within an absolute constant of
the uncheckpointed bounds with $L_*=\ell$.
\end{lemma}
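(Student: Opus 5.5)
The plan is to prove Lemma~\ref{lem:checkpointed_path} in three parts: an acceptance bound for a single attempt, a halting-and-success bound for the retry loop, and a resource accounting that charges checkpoints against the epochs they close.

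\emph{Single attempt.} Fix the starting state of an attempt and condition on it. The attempt starts either on the raw-relative initialization event or from an accepted output of Lemma~\ref{lem:epoch_normalization}. In both cases the hypotheses of Lemma~\ref{lem:fresh_path_follow_correctness} hold at the start: centrality at most $R$, tracking error at most $K/4$, auxiliary accuracy $E/2$ (in fact $E/4$ after certification), and the initial potential at most $M_{\rm ch}$ (the certified bound $\Phi\leq e^{-\mu\epsilon_{\rm chk}}M_{\rm ch}$ transfers to the true $g$). The epoch has at most $B_{\rm ep}$ instructions, so the choices $L_{\rm ch}=A_L\ell\geq\log(A_0B_{\rm ep}m\beta)$, valid since $A_{\rm ep}$ is absolute, and $\zeta_{\rm step}=(100B_{\rm ep})^{-1}$ make the three failure events small:
\begin{itemize}
\item the union of per-call coarse failures $\mathcal E_{\rm coarse}^c$ has probability at most $1/100$;
\item the chasing failure $\mathcal E_{\rm ch}^c$ has probability at most $1/20$ by Lemma~\ref{lem:adaptive_residual_game};
\item the score-replacement event $\mathcal E_{\rm sc}^c$ is bounded by the hybrid maintenance lemma with $L_{\rm conf}=\log(40mM_{\rm ep})$, giving probability at most about $1/40$.
\end{itemize}
On the intersection, the local caps in Eq.~\eqref{eq:attempt_resource_caps} are not exceeded, because each instruction's announced charge is bounded by the per-call cost of Lemma~\ref{lem:fresh_predicted_center_correctness} and the score cost formula. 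The final normalization fits inside the indicator term $P_{\rm norm}$.

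We then check the checkpoint tests on this good event. The tracking invariant is $K/4$, so with $\epsilon_{\rm chk}$ small the test $\|\log w-\log g_{\rm chk}\|+\epsilon_{\rm chk}\leq K/2$ passes. Centrality is at most $R-R/(8c_k)$ after each median update (Lemma~\ref{lem:centrality_path_composition}), and $\overline\delta\leq\overline c_\gamma\delta$, so the test $\overline\delta\leq R-R/(32c_k)$ passes. Lemma~\ref{lem:epoch_normalization} is deterministic, and its outputs with $\epsilon_{\rm rec}\ll R$ pass the post-certification. Hence acceptance has probability at least $4/5$. If instead the stream finishes, Lemmas~\ref{lem:checkpoint_terminal_completion} and~\ref{lem:outer_reduction} give correctness on the same event.

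\emph{Retry loop.} Halting is immediate: every inner loop is capped by $b$ and the local caps, and there are at most $4J_{\rm ep}$ attempts. Rejections restore $Q$, and acceptance probability is at least $4/5$ conditional on the whole past, so the count of accepted epochs dominates a Binomial$(4J_{\rm ep},4/5)$. The stream needs at most $J_{\rm ep}$ accepted epochs, since $N_{\rm cap}$ bounds the total instructions and terminal counter restarts happen at most once. A Chernoff bound gives failure probability $e^{-\Omega(J_{\rm ep})}$, which is at most a small constant. Because the conditional bound holds for every starting state, no independence between epochs is needed beyond this supermartingale-style domination.

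\emph{Resources.} Rejected attempts still cost resources, and their cost is bounded by the deterministic local caps. The expected number of attempts is $O(J_{\rm ep})$, and the overhead must be made worst-case on the constant-probability event, so we apply the Chernoff bound to the rejection count as well. Summing $S_{\rm att}(b)$ over attempts, the per-instruction terms give the uncheckpointed total times a constant. The additive terms $E^{-2}\ell+q\ell$ and $P_{\rm norm}$ are charged once per epoch, and they are dominated by the $B_{\rm ep}=(4m)^{A_{\rm ep}}$ instructions of a full epoch because $A_{\rm ep}>A_{\rm norm}+10$. Only the final partial epoch is not dominated this way; it has $b<B_{\rm ep}$, so its indicator term vanishes, and its $E^{-2}\ell$ term is already in the uncheckpointed bound. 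Confidence logarithms $L_{\rm conf}=O(\log m)=O(\ell)$ give $L_*=\ell$.

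The main obstacle is this last charging. Rejected full-length attempts each pay $B_{\rm ep}$ instructions, and these must be bounded by a constant multiple of useful work. The approach is to show that the number of rejected attempts is at most a constant times the number of accepted full epochs plus $O(1)$, which is where the shared global cap and the Chernoff bound on rejections are needed.
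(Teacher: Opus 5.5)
Your single-attempt analysis (the $1/100$, $1/20$, $1/40$ failure budgets, the $R-R/(32c_k)$ and $K/2$ checkpoint tests, and recertification after normalization) follows the paper's proof, as does the Bernoulli domination over attempts. The gap is in the resource part.

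You name as the main obstacle showing that the number of rejected attempts is at most a constant times the number of accepted full epochs plus $O(1)$. You propose to get this from a Chernoff bound on rejections together with the shared global cap. That inequality is false on unfavorable outcomes: all $4J_{\rm ep}$ attempts may reject. A Chernoff version controls resources only on a high-probability event. The lemma, and the ``halts on every outcome within'' bound of Theorem~\ref{thm:log_four_formal} that rests on it, need the bound on every outcome and for Algorithm~\ref{alg:checkpointed_path} itself. The shared caps of Algorithm~\ref{alg:lp_solve_fresh} lie outside this routine, and their adequacy on good trajectories is justified by exactly the count you are trying to establish here.

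No probability is needed. As you already note, there are deterministically at most $4J_{\rm ep}$ attempts. Every announced quota also satisfies $b_a\le\min\{B_{\rm ep},N_{\rm cap}\}$. Hence on every outcome
\[
\sum_a b_a\le4(\lceil N_{\rm cap}/B_{\rm ep}\rceil+1)\min\{B_{\rm ep},N_{\rm cap}\}\le12N_{\rm cap},
\]
and the number of attempts is $O(N_{\rm cap}/B_{\rm ep}+1)$. The factor $4$ in the attempt cap is itself the constant overhead, rejected work included. Summing Eq.~\eqref{eq:attempt_resource_caps} then gives
\[
O\bigl(N_{\rm cap}(E^{-1}\ell+\ell+q)+(N_{\rm cap}/B_{\rm ep}+1)E^{-2}\ell+(N_{\rm cap}/B_{\rm ep})P_{\rm norm}\bigr)
\]
systems, which is absorbed using $B_{\rm ep}\ge E^{-2}\ell$ and $P_{\rm norm}<B_{\rm ep}$. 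The same summation handles rounds and scalar work.

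This count also corrects your claim that the final partial segment's $E^{-2}\ell$ term is paid once. That segment may be retried up to $O(J_{\rm ep})$ times, and it is only the total $(N_{\rm cap}/B_{\rm ep}+1)E^{-2}\ell$ that is absorbed.

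Finally, a bad attempt on the final segment can complete the stream and return an uncertified output instead of being rejected. Success should therefore be bounded as in the paper, not by the binomial count of ``acceptances'' alone. The first $3J_{\rm ep}$ attempts supply the at most $J_{\rm ep}-1$ needed checkpoints, and the final segment's good event then has conditional probability at least $4/5$.
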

\begin{proof}
Use Lemma~\ref{lem:checkpoint_terminal_completion} for terminal execution.
Correctness of a returned full state below means correctness of its output
after the once-only deterministic tail.  Its cost is absorbed by the
displayed bounds.  Enlarge $A_N$ to include the possible short terminal
restart and choose $B_{\rm ep}>4J_{\rm sw}$.
At an attempt boundary the algorithm snapshots only the mathematical state:
the active phase, $t$, loop counters, $x,w,\widehat g,\mathcal S$, and the
local residual-game state.  Shared resource counters are monotone and are
never rolled back.  A rejected attempt restores the snapshot, resets its
local stochastic-game and score-block counters, and draws fresh randomness;
all work already done remains charged.  Immediately after forming a proposed
Newton point, membership in $\Omega^\circ$ is tested before evaluating any
barrier quantity, and a failed test rejects the attempt.

On the local good event, the $B_{\rm ep}$ coarse events fail with total
probability at most $1/100$, the stopped residual game fails with probability
at most $1/20$, and raw score transport fails with probability at most
$1/40$.  Thus their intersection has conditional probability at least
$4/5$.  The enlarged choice of $A_E$ makes the last display in the proof of
Lemma~\ref{lem:residual_chasing} at most $K/4$.  On this event every nonzero
path instruction ends, after its path-parameter update, with
$\delta\leq R-R/(8c_k)$.  Therefore
\[
 \overline\delta
 \leq\overline c_\gamma(1-\tfrac1{8c_k})R
 \leq R-\frac{R}{32c_k}.
\]
The zero-centrality instruction has
$\overline\delta\leq\overline c_\gamma R/(16c_k)$ and obeys the same test.
An exact-score approximation
$g_{\rm chk}=e^{\pm\epsilon_{\rm chk}}g(x)$ passes
$\|\log w-\log g_{\rm chk}\|_\infty+\epsilon_{\rm chk}\leq K/2$.
Affine feasibility, interiority, weight positivity, and the active phase and
$t$ descriptor are checked exactly.

For an attempt of length cap $b$, the same good event gives $O(b)$ score
transitions and total effective drift $O(bE)$.  Hence the hybrid provider
uses
\[
 O(b+bE^{-1}\ell+E^{-2}\ell)
\]
systems, while all non-score operations use $O(b(\ell+q))$ systems,
$O(b(1+\log q))$ system rounds, and
$O((\nnz(A)+m)b\ell^2)$ scalar work.  If $b=B_{\rm ep}$, the two cold
checkpoint computations and epoch normalization are bounded by a constant
multiple of $P_{\rm norm}$.  Thus, after fixing $A_{\rm loc}$, a good
attempt never reaches any cap in Eq.~\eqref{eq:attempt_resource_caps}.
On an arbitrary bad attempt, those same caps force rejection before a larger
charge is incurred; no drift assertion is used for rejected work.

Lemma~\ref{lem:epoch_normalization} then returns the complete fresh-start
invariants, including computable centrality at most $\epsilon_{\rm rec}$,
potential at most $M_{\rm ch}/2$, an $E/8$-accurate auxiliary vector, and an
exact raw-score base.  The algorithm recomputes outward certificates after
normalization and rejects on any failure.  Hence the local good event implies
acceptance.  An accepted state contains every hypothesis of a fresh
invocation of the path lemmas: in particular the potential, not merely the
$\ell_\infty$ tracking bound, is reset.  Past sketches occur in no future
hypothesis, and the phase and $t$ encode all deterministic progress.  The
two costs share one residual game only if the switch occurs inside the same
attempt; every accepted checkpoint starts a new game.

Let $T$ be the number of accepted checkpoints strictly preceding the final
segment (which may itself contain $B_{\rm ep}$ instructions and complete
without a checkpoint).  Then $T\leq J_{\rm ep}-1$.  The conditional acceptance probabilities
stochastically dominate Bernoulli variables of mean $4/5$.  A Chernoff bound
shows that the first $3J_{\rm ep}$ attempts contain $T$ accepted full epochs
except with probability $e^{-\Omega(J_{\rm ep})}$; for bounded
$J_{\rm ep}$ the same claim follows directly after decreasing the absolute
success constant.  This leaves at least one of the total $4J_{\rm ep}$
attempts for the final segment.  That segment has at most
$B_{\rm ep}$ instructions and, conditionally on the state at which it
starts, its local good event has probability at least $4/5$ and implies a
correct output.  If its good event fails, an uncertified output is counted as
part of the theorem's constant failure probability.  The final segment is
not followed by a mandatory exact checkpoint, which preserves the cheaper
initialization branch.  Together with the one-time initialization event,
this gives constant overall success.

Every attempt announces local deterministic caps before work.  Let
$A_{\rm att}\leq4J_{\rm ep}$ be the number of attempts, and write $b_a$ for
the announced instruction cap of attempt $a$.  Because
$J_{\rm ep}=\lceil N_{\rm cap}/B_{\rm ep}\rceil+1$ and
$b_a\leq\min\{B_{\rm ep},N_{\rm cap}\}$,
\[
 \sum_{a=1}^{A_{\rm att}} b_a=O(N_{\rm cap}),\qquad
 A_{\rm att}=O(N_{\rm cap}/B_{\rm ep}+1).
\]
Summing Eq.~\eqref{eq:attempt_resource_caps} therefore gives
\[
 O(N_{\rm cap}(E^{-1}\ell+\ell+q)
 +(N_{\rm cap}/B_{\rm ep}+1)E^{-2}\ell)
\]
systems before normalization.  The choice of $A_{\rm ep}$ ensures
\[
 B_{\rm ep}\geq E^{-2}\ell,
 \qquad
 (N_{\rm cap}/B_{\rm ep})P_{\rm norm}=O(N_{\rm cap}).
\]
When
$N_{\rm cap}<B_{\rm ep}$ no normalization is invoked.  The same summation
applies to rounds and scalar work.  Every confidence logarithm is
$O(\ell)$ independently of $\kappa$.
The final partial segment pays no extra exact checkpoint.  System depth and
scalar work obey the same constant-factor accounting.
\end{proof}

\begin{algorithm}[!ht]
\caption{Two-phase linear-programming solver}
\label{alg:lp_solve_fresh}
\begin{algorithmic}[1]
\Procedure{LPSolve}{$x_0,U,\epsilon$}
\Comment{Theorem~\ref{thm:log_four_formal}}
\State $\ell\gets\log(4m)$, $q\gets\log(4m/n)$, and
$\kappa\gets A_\kappa(1+\log(emU/\epsilon))$.
\State Set $p,v,c_k,C,K$ as in Lemma~\ref{lem:sharper_parameters},
$L_*\gets\ell$,
$L_{\rm ch}\gets A_LL_*$, $E\gets K/(A_EL_{\rm ch})$, and
$R\gets c_RCE$.
\State $(N_{\rm cap},M_{\rm calls},L_{\rm conf},
B_{\rm sys},B_{\rm rnd},B_{\rm ar},H_{\rm cap})\gets$ the explicit quantities in
Lemma~\ref{lem:four_log_ledger}.
\State Initialize the five shared counters---score queries, permitted
systems, sequential system rounds, exact scalar operations, and scalar
parallel depth---at zero.  Before each operation, announce its deterministic
charges and compare them with $M_{\rm calls}$, $B_{\rm sys}$,
$B_{\rm rnd}$, $B_{\rm ar}$, and $H_{\rm cap}$, respectively; return
\textnormal{\textsc{Fail}} before work or randomness if any cap would be exceeded.
\State $\zeta\gets1/100$ and
$Y\gets\Call{FixedPoint}{x_0,\mathbf1,q,E/4,\zeta/3,L_{\rm conf},
\varnothing,\mathsf{auto}}$.
\Comment{Algorithm~\ref{alg:fresh_fixed_point}}
\If{$Y=\textnormal{\textsc{Fail}}$}
  \State \Return \textnormal{\textsc{Fail}}.
\EndIf
\State $(\widehat g,\mathcal S)\gets Y$.
\State $w\gets\widehat g$, $d\gets-w\odot\phi'(x_0)$, and
$(e,\theta)\gets(E/4,\bot)$.
\State $(t_{\rm sw},t_{\rm out},\tau_{\rm sw},\tau_{\rm out})\gets$ the
parameters of Lemma~\ref{lem:outer_reduction}.
\State Form the resumable stream $\mathfrak P$ from the two path-phase
descriptors $(d,1,t_{\rm sw},\tau_{\rm sw})$ and
$(c,t_{\rm sw},t_{\rm out},\tau_{\rm out})$.
Use the amplified Algorithm~\ref{alg:gram_center}, the path step and test
in Eq.~\eqref{eq:four_log_path_step} and Eq.~\eqref{eq:four_log_path_test},
and the terminal convention of Lemma~\ref{lem:four_log_terminal}, so the
second terminal loop is omitted from the stream.
Algorithm~\ref{alg:checkpointed_path}, with the local caps and strengthened
normalization in Lemma~\ref{lem:four_log_ledger}, supplies
$\zeta_{\rm step}$, $L_{\rm conf}$, and the current score state to each
atomic instruction.  Save and restore $(e,\theta)$ with the mathematical
state, and reset it to $(E/8,\bot)$ after normalization.
The cost switch shares the residual game only when it occurs inside one
attempt; every accepted checkpoint starts a fresh game.
\State $Y\gets\Call{CheckpointedPath}{\mathfrak P,N_{\rm cap}}$.
\Comment{Algorithm~\ref{alg:checkpointed_path}}
\If{$Y=\textnormal{\textsc{Fail}}$}
  \State \Return \textnormal{\textsc{Fail}}.
\EndIf
\State $(x,w,\ldots)\gets Y$.
\State \Return $\Call{FixedWeightTail}{x,w,t_{\rm out},c,\tau_{\rm out}}$.
\EndProcedure
\end{algorithmic}
\end{algorithm}

We prove the hybrid score-maintenance bound by transporting relative
leverage-score errors through diagonal rescalings.  The provider transports
the own-row response exactly and sketches only the nonlocal endpoint
discrepancy; no path-geometry or residual-game lemma is altered.
For a positive diagonal matrix $D$, write
\[
 P_D:=P(D),\qquad \sigma_i(D):=(P_D)_{ii}=\|P_De_i\|_2^2.
\]
All sigma-fields below are taken immediately before the fresh Gaussian
sketch of the transition under discussion.

\subsection{Exact diagonal self-transport}
\label{subsec:exact_diagonal_self_transport}

For $\gamma>0$ and $s\in[0,1]$, define
\[
 \mathcal T_\gamma(s):=
 \frac{\gamma^2s}{1+(\gamma^2-1)s}.
\]

Diagonal self-transport starts from the exact effect of rescaling a single
row, including its action on an imperfect incoming score.  The next lemma
records that identity and bounds the transported relative error.
\begin{lemma}[One-row transport and approximate anchor]
\label{lem:candidate_one_row_transport}
Let $D$ be positive diagonal, define $P=P_D$, $p_i=Pe_i$, and
$s_i=P_{ii}$.  Let $D^{(i,\gamma)}$ be obtained from $D$ by multiplying its
$i$-th diagonal entry by $\gamma$.  Then
\[
 P_{D^{(i,\gamma)}}e_i=
 \frac{\gamma}{1+(\gamma^2-1)s_i}
 (p_i+(\gamma-1)s_ie_i)
\]
and
\[
 \sigma_i(D^{(i,\gamma)})=\mathcal T_\gamma(s_i).
\]
For $t\in[0,1]$, define
\[
 q_i(t):=
 \frac{\gamma}{1+(\gamma^2-1)t}
 (p_i+(\gamma-1)te_i).
\]
Then, for every $s,t\in[0,1]$ with $s=s_i$,
\begin{equation}
\label{eq:candidate_anchor_identity}
 \mathcal T_\gamma(t)-\|q_i(t)\|_2^2
 =A_\gamma(t)(t-s),
 \qquad
 A_\gamma(t):=
 \frac{\gamma^2(1+2(\gamma-1)t)}
 {(1+(\gamma^2-1)t)^2}.
\end{equation}
If $s>0$, $|\log\gamma|\leq1/8$, $|t-s|\leq\epsilon s$, and
$0\leq\epsilon\leq1/4$, then $A_\gamma(t)\geq0$ and
\begin{equation}
\label{eq:candidate_anchor_multiplier}
 \frac{A_\gamma(t)s}{\mathcal T_\gamma(s)}
 \leq\exp(12\epsilon |\log\gamma|s).
\end{equation}
\end{lemma}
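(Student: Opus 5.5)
The plan is to prove the one-row formula by a rank-one (Sherman--Morrison) update, and then to obtain the anchor identity and the multiplier bound by direct scalar algebra.

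\textbf{Step 1: the one-row formula.} Write $M:=(DA)^\top DA$ and $a:=(DA)^\top e_i$, so that $a^\top M^{-1}a=s_i$. Rescaling row $i$ by $\gamma$ adds $(\gamma^2-1)aa^\top$ to $M$. Sherman--Morrison then gives
\[
 M_\gamma^{-1}a=\frac{M^{-1}a}{1+(\gamma^2-1)s_i}.
\]
The new projection column is $P_{D^{(i,\gamma)}}e_i=\Gamma DA\,M_\gamma^{-1}(\gamma a)$, where $\Gamma$ multiplies coordinate $i$ by $\gamma$. The vector $DAM^{-1}a$ is exactly $p_i$, and $\Gamma p_i=p_i+(\gamma-1)s_ie_i$, since $(p_i)_i=s_i$. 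Together these give the displayed column formula. Taking the $i$-th coordinate gives
\[
 \frac{\gamma(s_i+(\gamma-1)s_i)}{1+(\gamma^2-1)s_i}=\mathcal T_\gamma(s_i),
\]
and this equals the diagonal entry $\sigma_i$ because $P$ is an orthogonal projection.

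\textbf{Step 2: the anchor identity.} Expand $\|q_i(t)\|_2^2$ using $\|p_i\|_2^2=s$ and $\langle p_i,e_i\rangle=s$:
\[
 \|q_i(t)\|_2^2=\frac{\gamma^2\bigl(s+2(\gamma-1)ts+(\gamma-1)^2t^2\bigr)}{(1+(\gamma^2-1)t)^2}.
\]
Write $\mathcal T_\gamma(t)$ over the same denominator as $\gamma^2t(1+(\gamma^2-1)t)$. Subtracting, the $t$-terms combine as follows:
\[
 t+(\gamma^2-1)t^2-(\gamma-1)^2t^2=t+2(\gamma-1)t^2=t(1+2(\gamma-1)t).
\]
The $s$-terms give $-s(1+2(\gamma-1)t)$. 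Hence the difference is $\gamma^2(1+2(\gamma-1)t)(t-s)$ over the square denominator, which is Eq.~\eqref{eq:candidate_anchor_identity}. This is routine algebra, and I would only check it.

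\textbf{Step 3: the multiplier bound.} Nonnegativity follows from $|\gamma-1|\leq e^{1/8}-1<1/2$ and $t\leq1$, which give $1+2(\gamma-1)t>0$. For the bound, compute
\[
 \frac{A_\gamma(t)s}{\mathcal T_\gamma(s)}=\frac{(1+2(\gamma-1)t)(1+(\gamma^2-1)s)}{(1+(\gamma^2-1)t)^2}.
\]
Put $\beta:=\gamma-1$ and $c:=\gamma^2-1=\beta(\gamma+1)$, and take logarithms:
\[
 \log(1+2\beta t)+\log(1+cs)-2\log(1+ct).
\]
At $t=s$ this equals $\log(1+2\beta s)-\log(1+cs)$. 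Since $c-2\beta=\beta^2$, this is $-\log\bigl(1+\beta^2s/(1+2\beta s)\bigr)\leq0$.

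It remains to bound the derivative in $t$ over $|t-s|\leq\epsilon s$. The derivative is
\[
 \frac{2\beta}{1+2\beta t}-\frac{2c}{1+ct},
\]
which is $O(|\beta|)$ with explicit constants, because $|\beta|,|c|\leq 0.3$ and $t\leq1$. Integrating over $|t-s|\leq\epsilon s$ gives a total bound of order $|\beta|\epsilon s\leq 1.14|\log\gamma|\epsilon s$. The main step is pinning the constant to $12$. The crude bound on the derivative is $2|\beta|/(1-0.6)+2|c|/(1-0.3)$, and the term $|c|\leq 2.2|\beta|$ keeps it well within $12\cdot|\log\gamma|$ after using $|\beta|\leq1.07|\log\gamma|$. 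So the crude estimate suffices, and I would verify these numerics last.
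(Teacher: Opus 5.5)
Your proposal is correct and follows the paper's proof essentially step for step. Both use Sherman--Morrison for the rank-one Gram update, direct expansion for the anchor identity, and integration of the derivative of $\log\bigl(A_\gamma(t)s/\mathcal T_\gamma(s)\bigr)$ starting from $t=s$, where the ratio equals $1-\beta^2s/(1+cs)\leq1$.

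One numerical caution: your crude constants multiply out to $(5+\tfrac{2}{0.7}\cdot2.2)\cdot1.07\approx12.08$, which slightly exceeds $12$, so the estimate as written does not close. Use instead the sharper denominators $1+2\beta t\geq3/4$ and $1+ct\geq3/4$, which hold because $e^{-1/8}\leq\gamma\leq e^{1/8}$. These give a derivative bound below $10|\log\gamma|$, as in the paper.
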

\begin{proof}
Write $B=DA$, $M=B^\top B$, and let $b_i^\top$ be the $i$-th row of
$B$.  Thus $p_i=BM^{-1}b_i$ and $s_i=b_i^\top M^{-1}b_i$.  The new Gram
matrix is $M+(\gamma^2-1)b_ib_i^\top$.  Sherman--Morrison gives
\[
 (M+(\gamma^2-1)b_ib_i^\top)^{-1}b_i
 =\frac{M^{-1}b_i}{1+(\gamma^2-1)s_i}.
\]
Multiplying the $i$-th row of $B$ by $\gamma$ now gives the first display.
The norm identity follows because $P_{D^{(i,\gamma)}}$ is an orthogonal
projection.

For Eq.~\eqref{eq:candidate_anchor_identity}, define $b=\gamma-1$ and
$d(t)=1+(\gamma^2-1)t$.  Since
$\|p_i\|_2^2=e_i^\top p_i=s$, direct expansion gives
\[
 \|q_i(t)\|_2^2
 =\frac{\gamma^2}{d(t)^2}
 (s+2bts+b^2t^2).
\]
Subtracting this quantity from $\gamma^2t/d(t)$ yields
$\gamma^2(t-s)(1+2bt)/d(t)^2$.

When $|\log\gamma|\leq1/8$, one has $\gamma>1/2$, so
$1+2(\gamma-1)t\geq2\gamma-1>0$.  Define
\[
 R_\gamma(t):=
 \frac{A_\gamma(t)s}{\mathcal T_\gamma(s)}.
\]
At $t=s$,
\[
 R_\gamma(s)=
 \frac{1+2(\gamma-1)s}{1+(\gamma^2-1)s}
 =1-\frac{(\gamma-1)^2s}{1+(\gamma^2-1)s}
 \leq1.
\]
Moreover,
\[
 \frac{\d}{\d t}\log R_\gamma(t)
 =\frac{2(\gamma-1)}{1+2(\gamma-1)t}
 -\frac{2(\gamma^2-1)}{1+(\gamma^2-1)t}.
\]
Define $a:=|\log\gamma|\leq1/8$.  Then
$7/8\leq\gamma\leq8/7$, both denominators are at least $3/4$, and
$|\gamma-1|\leq8a/7$ and $|\gamma^2-1|\leq120a/49$.  Consequently,
\[
 |\frac{\d}{\d t}\log R_\gamma(t)|
 \leq(\frac{64}{21}+\frac{320}{49})a
 <10a<12a.
\]
Integrating from $s$ to $t$ and using
$|t-s|\leq\epsilon s$ proves
Eq.~\eqref{eq:candidate_anchor_multiplier}.
\end{proof}

The one-row identity must be combined with the simultaneous motion of all
other rows between two score queries.  The following endpoint estimate shows
that the nonlocal remainder is governed by the companion-weighted drift.
\begin{lemma}[Finite nonlocal endpoint bound]
\label{lem:candidate_nonlocal_endpoint}
Let $D_-,D_+$ and a positive vector $\widetilde w$ be measurable with
respect to a sigma-field $\mathcal F$.  Define
\[
 d:=\log\operatorname{diag}(D_+D_-^{-1}),\qquad
 \lambda_w:=\frac{\sum_{j=1}^m\widetilde w_jd_j}{\sum_{j=1}^m\widetilde w_j},
\]
\[
 \delta:=d-\lambda_w\mathbf1,\qquad
 \rho_w:=(\sum_{j=1}^m\widetilde w_j\delta_j^2)^{1/2}.
\]
Suppose
\begin{equation}
\label{eq:candidate_endpoint_hypotheses}
 \|\delta\|_\infty\leq1/8,\qquad
 \sigma_j(D_-)\leq Q\widetilde w_j\quad(j\in[m]),
\end{equation}
where $Q$ is an absolute constant.  For a fixed coordinate $i$, define
$s_i=\sigma_i(D_-)$, $\gamma_i=e^{\delta_i}$, and let $q_i(t)$ be the
vector in Lemma~\ref{lem:candidate_one_row_transport}, formed from
$P_{D_-}e_i$.  If $|t_i-s_i|\leq\epsilon s_i$ with
$0\leq\epsilon\leq1/4$, then
\begin{equation}
\label{eq:candidate_finite_product}
 \|P_{D_+}e_i-q_i(t_i)\|_2
 \|P_{D_+}e_i+q_i(t_i)\|_2
 \leq K_Q\rho_w\sigma_i(D_+)
\end{equation}
for an absolute constant $K_Q$ depending only on $Q$, provided
$\rho_w\leq\rho_Q$ for a sufficiently small absolute $\rho_Q>0$.
Furthermore,
\begin{equation}
\label{eq:candidate_score_comparison}
 e^{-K_Q\rho_w}\mathcal T_{\gamma_i}(s_i)
 \leq\sigma_i(D_+)
 \leq e^{K_Q\rho_w}\mathcal T_{\gamma_i}(s_i).
\end{equation}
\end{lemma}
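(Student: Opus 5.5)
First remove the common scalar. Since $P(cD)=P(D)$ for any scalar $c>0$, we may replace $D_+$ by $e^{-\lambda_w}D_+$. This leaves $P_{D_+}$ and $\sigma_i(D_+)$ unchanged, and the row-log displacement becomes $\delta$. Introduce the intermediate scaling $D^{(i)}$, obtained from $D_-$ by multiplying only its $i$-th entry by $\gamma_i=e^{\delta_i}$. By Lemma~\ref{lem:candidate_one_row_transport}, $P_{D^{(i)}}e_i=q_i(s_i)$ and $\sigma_i(D^{(i)})=\mathcal T_{\gamma_i}(s_i)$. The passage from $D^{(i)}$ to $D_+$ is then a diagonal rescaling $\Gamma$ that is trivial in row $i$ and has $\log\Gamma_{jj}=\delta_j$ for $j\neq i$.

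Next bound the exact nonlocal step $\|P_{D_+}e_i-P_{D^{(i)}}e_i\|_2$. Work as in Lemma~\ref{lem:relative_drift}: take $B$ with orthonormal columns spanning the range of $D^{(i)}A$. Then
\[
P_{D_+}e_i-P_{D^{(i)}}e_i=(\Gamma B(B^\top\Gamma^2B)^{-1}-B)c,
\qquad c:=B^\top e_i,\quad \Gamma_{ii}=1.
\]
In the Lemma~\ref{lem:relative_drift} estimate, replace the $\ell_\infty$ operator bounds by weighted ones. The key quantity is $B^\top(\Gamma^2-I)B=\sum_{j\ne i}(e^{2\delta_j}-1)b_jb_j^\top$. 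Its action on $c$ and its operator norm should be controlled by Cauchy--Schwarz against $\sum_j\|b_j\|^2\delta_j^2\lesssim\sum_j\sigma_j\delta_j^2$. The rows of $D^{(i)}A$ differ from those of $D_-A$ by at most $e^{\pm1/8}$, so $\|b_j\|^2=O(\sigma_j(D_-))=O(Q\widetilde w_j)$. This gives $\|B^\top(\Gamma^2-I)B\|\le\|B^\top(\Gamma^2-I)B\|_F=O(\sqrt Q\rho_w)$. Similarly $\|(\Gamma-I)Bc\|_2^2=\sum_j(e^{\delta_j}-1)^2(Bc)_j^2$, and the target bound for this term is $O(\rho_w^2)\sigma_i$. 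Here is the main obstacle: a pure Frobenius bound would give $O(\rho_w)$ absolutely, not relatively. My first attempt would use $(Bc)_j=P_{ij}$ and $\sum_jP_{ij}^2=\sigma_i$. That yields only $\|\delta\|_\infty^2\sigma_i$, which is not enough. So instead I would bound the product directly,
\[
\|u-v\|_2\|u+v\|_2,\qquad u:=P_{D_+}e_i,\ v:=P_{D^{(i)}}e_i.
\]
Using $u^\top u-v^\top v=u_i-v_i$ and polarization, write
\[
\|u-v\|^2=\sigma_i(D_+)+\sigma_i(D^{(i)})-2u^\top v.
\]
Then express $u^\top v$ through the Gram perturbation $F=(B^\top\Gamma^2B)^{-1}-I$ acting on $c$, giving a quadratic form $c^\top Fc$ with $|c^\top Fc|\le\|F\|\|c\|^2=O(\rho_w)\sigma_i$. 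If this closes, $\|u-v\|^2=O(\rho_w)\sigma_i$ and $\|u+v\|^2=O(\sigma_i)$, so the product is $O(\sqrt{\rho_w})\sigma_i$. That is still too weak, so I would refine: the genuine requirement is the product bound, which controls the score difference. I would aim for $|\sigma_i(D_+)-\mathcal T_{\gamma_i}(s_i)|=|c^\top Fc|$-type quantities, which are linear in $\rho_w$.

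The score comparison \eqref{eq:candidate_score_comparison} follows from the quadratic-form step. We have $\sigma_i(D_+)=e_i^\top\Gamma B(B^\top\Gamma^2B)^{-1}B^\top\Gamma e_i=c^\top(I+F)c$ because $\Gamma_{ii}=1$. Hence $|\sigma_i(D_+)/\sigma_i(D^{(i)})-1|\le\|F\|=O(\rho_w)$, provided $\rho_w\le\rho_Q$ makes $\|B^\top\Gamma^2B-I\|\le1/2$. For \eqref{eq:candidate_finite_product}, I would compare $P_{D_+}e_i$ with $q_i(t_i)$ through $q_i(s_i)$. The anchor difference is
\[
q_i(t_i)-q_i(s_i)=O(\epsilon)(\text{scalar})\,p_i+O(|\gamma-1|\epsilon s_i)e_i .
\]
I would split the product via the triangle inequality and interpret the product form as a bound on the inner-product defect, exactly as it is consumed in Lemma~\ref{lem:adaptive_increment}. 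If the literal product cannot be made linear in $\rho_w$ for the anchor term, I would fall back on the identity \eqref{eq:candidate_anchor_identity} together with the $\gamma$-dependence to absorb that term. Finally, $\sigma_i(D^{(i)})$ and $\sigma_i(D_+)$ are comparable, so every $\sigma_i$ may be replaced by $\sigma_i(D_+)$.
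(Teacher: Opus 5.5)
Your proposal has a genuine gap at the step you flagged as the main obstacle, and the workaround you suggest weakens the statement instead of proving it. The static decomposition itself is sound. With $v:=P_{D^{(i)}}e_i=Bc$ and $F=(B^\top\Gamma^2B)^{-1}-I$, one has $u-v=(\Gamma-I)Bc+\Gamma BFc$. Your bound $\|F\|=O(\sqrt Q\rho_w)$ gives $\|\Gamma BFc\|_2=O(\sqrt Q\rho_w)\sqrt{\sigma_i(D^{(i)})}$, and your score comparison via $\sigma_i(D_+)=c^\top(I+F)c$ is fine. What is missing for the first term is the entrywise inequality $P_{ji}^2\leq P_{jj}P_{ii}$ for the positive semidefinite $P=P_{D^{(i)}}$, i.e.\ $(b_j^\top c)^2\leq\|b_j\|_2^2\|c\|_2^2$. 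Instead of summing $P_{ji}^2$ against $\|\delta\|_\infty^2$, bound each term: $\|(\Gamma-I)Bc\|_2^2=\sum_{j\neq i}(e^{\delta_j}-1)^2P_{ji}^2\leq\sigma_i(D^{(i)})\sum_{j}(e^{\delta_j}-1)^2\sigma_j(D^{(i)})=O(Q\rho_w^2)\,\sigma_i(D^{(i)})$, using $\sigma_j(D^{(i)})\leq e^{1/2}\sigma_j(D_-)\leq e^{1/2}Q\widetilde w_j$. The paper uses the same inequality in differential form along $t\mapsto e^{t\delta_j}$ ($j\neq i$): $\dot Pe_i=(I-2P)\Delta Pe_i$ with $I-2P$ a reflection, so $\|\dot Pe_i\|_2^2=\sum_j\delta_j^2P_{ji}^2\leq\sigma_i\sum_j\sigma_j\delta_j^2$. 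Either way you get $\|P_{D_+}e_i-q_i(s_i)\|_2=O(\rho_w)\sqrt{\sigma_i(D_+)}$. Your polarization detour only yields $O(\sqrt{\rho_w})$. Reading \eqref{eq:candidate_finite_product} as a bound on $|\langle u-v,u+v\rangle|$ is not acceptable either, because the transition lemma needs the product itself: the conditional variance of $\langle S(u-v),S(u+v)\rangle$ is of order $\|u-v\|_2^2\|u+v\|_2^2/k$ even when the inner product vanishes.

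The anchor replacement is also left open. Differentiating $q_i$ gives $q_i'(t)=\gamma_i(\gamma_i-1)(1+(\gamma_i^2-1)t)^{-2}(e_i-(\gamma_i+1)p_i)$, with $\|e_i-(\gamma_i+1)p_i\|_2^2=1+(\gamma_i^2-1)s_i$. So both components of $q_i(t_i)-q_i(s_i)$ carry the factor $\gamma_i-1$; in particular the $p_i$ coefficient moves by $O(|\gamma_i^2-1|\epsilon s_i)$, not $O(\epsilon)$. Hence $\|q_i(t_i)-q_i(s_i)\|_2=O(\epsilon|\delta_i|s_i)$. The step that makes this linear in $\rho_w$ is $|\delta_i|\sqrt{s_i}\leq|\delta_i|\sqrt{Q\widetilde w_i}\leq\sqrt Q\rho_w$, which holds because $\widetilde w_i\delta_i^2$ is one summand of $\rho_w^2$. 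Then $\mathcal T_{\gamma_i}(s_i)=e^{\pm1/4}s_i$ and the score comparison turn $\sqrt{s_i}$ into $O(\sqrt{\sigma_i(D_+)})$. Adding the two column bounds and using $\|P_{D_+}e_i+q_i(t_i)\|_2=O(\sqrt{\sigma_i(D_+)})$ gives \eqref{eq:candidate_finite_product}.
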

\begin{proof}
If $s_i=0$, then row $i$ of $A$ is zero because $D_-$ is positive.
Positive diagonal rescaling preserves this property, so both projection
columns in the statement are zero and all conclusions are immediate.
Assume below that $s_i>0$.
Multiplying a diagonal query by a scalar does not change its projection, so
replace $D_+$ by $e^{-\lambda_w}D_+$.  First multiply only row $i$ of
$D_-$ by $\gamma_i$.  By
Lemma~\ref{lem:candidate_one_row_transport}, the resulting $i$-th projection
column is $q_i(s_i)$ and its squared norm is
$\mathcal T_{\gamma_i}(s_i)$.

Connect this intermediate query to $D_+$ by multiplying every row
$j\neq i$ by $e^{t\delta_j}$, $t\in[0,1]$.  Along this path write $P(t)$
for the projection, $\sigma_j(t)=P(t)_{jj}$, and
$\Delta=\operatorname{Diag}(\delta)$ with $\delta_i=0$.  Differentiation
of the projection gives
\[
 \dot P=(I-P)\Delta P+P\Delta(I-P).
\]
Since $\Delta e_i=0$,
\[
 \dot Pe_i=(I-2P)\Delta Pe_i.
\]
The matrix $I-2P$ is an orthogonal reflection.  Consequently
\begin{equation}
\label{eq:candidate_column_energy}
 \|\dot Pe_i\|_2^2
 =\sum_{j=1}^m\delta_j^2P_{ji}^2
 \leq\sigma_i(t)\sum_{j=1}^m\sigma_j(t)\delta_j^2,
\end{equation}
where the last step uses $P_{ji}^2\leq\sigma_j(t)\sigma_i(t)$.
Similarly,
\[
 \dot\sigma_i(t)=-2\sum_{j=1}^m\delta_jP_{ij}(t)^2.
\]
Weighted Cauchy--Schwarz and
$\sum_{j=1}^mP_{ij}^4/\sigma_j(t)\leq
\sigma_i(t)\sum_{j=1}^mP_{ij}^2=\sigma_i(t)^2$ show that
\begin{equation}
\label{eq:candidate_log_score_drift}
 \lvert\frac{\d}{\d t}\log\sigma_i(t)\rvert
 \leq2(\sum_{j=1}^m\sigma_j(t)\delta_j^2)^{1/2}.
\end{equation}

Every intermediate query is within logarithmic $\ell_\infty$ distance
$1/8$ of $D_-$.  Lemma~\ref{lem:relative_drift} and
Eq.~\eqref{eq:candidate_endpoint_hypotheses} therefore give
\[
 \sum_{j=1}^m\sigma_j(t)\delta_j^2
 \leq e^{1/2}Q\rho_w^2.
\]
Integrating Eq.~\eqref{eq:candidate_log_score_drift} proves
Eq.~\eqref{eq:candidate_score_comparison}, after enlarging $K_Q$.
Integrating Eq.~\eqref{eq:candidate_column_energy} and using the same score
comparison gives
\begin{equation}
\label{eq:candidate_exact_anchor_column}
 \|P_{D_+}e_i-q_i(s_i)\|_2
 \leq K_Q\rho_w\sqrt{\sigma_i(D_+)}.
\end{equation}

It remains to replace $s_i$ by $t_i$.  Direct differentiation gives
\[
 q_i'(t)=
 \frac{\gamma_i(\gamma_i-1)}
 {(1+(\gamma_i^2-1)t)^2}
 (e_i-(\gamma_i+1)P_{D_-}e_i),
\]
and
\[
 \|e_i-(\gamma_i+1)P_{D_-}e_i\|_2^2
 =1+(\gamma_i^2-1)s_i.
\]
Thus
\[
 \|q_i(t_i)-q_i(s_i)\|_2
 \leq3\epsilon|\delta_i|s_i
 \leq3\epsilon\sqrt Q\rho_w\sqrt{s_i}.
\]
Since $|\log\gamma_i|\leq1/8$, one has
$\mathcal T_{\gamma_i}(s_i)=e^{\pm1/4}s_i$.
Eq.~\eqref{eq:candidate_score_comparison} therefore converts the last
factor to $O(\sqrt{\sigma_i(D_+)})$.  Combining this estimate with
Eq.~\eqref{eq:candidate_exact_anchor_column} proves the first-factor bound
$O(\rho_w\sqrt{\sigma_i(D_+)})$.  Both columns have norm
$O(\sqrt{\sigma_i(D_+)})$ when $\rho_Q$ is sufficiently small.  Their
product proves Eq.~\eqref{eq:candidate_finite_product}.
\end{proof}

The endpoint estimate now lets us transport an already approximate score and
use fresh randomness only for the residual discrepancy.  The next lemma gives
the conditional recurrence that drives the maintained raw-relative error.
\begin{lemma}[Conditional self-transport transition]
\label{lem:candidate_self_transport_transition}
Assume the hypotheses of
Lemma~\ref{lem:candidate_nonlocal_endpoint}.  Suppose an
$\mathcal F$-measurable incoming state $h\in\mathbb R^m$ satisfies
\[
 |h_i-\sigma_i(D_-)|\leq\epsilon\sigma_i(D_-),
 \qquad 0\leq\epsilon\leq1/4.
\]
Define $t_i=\min\{\max\{h_i,0\},1\}$ and
\[
 a_i:=\frac{\gamma_i}{1+(\gamma_i^2-1)t_i},
 \qquad c_i:=a_i(\gamma_i-1)t_i,
\]
\[
 Q_t:=P_{D_-}\operatorname{Diag}(a)+\operatorname{Diag}(c).
\]
After all these quantities and both endpoints are fixed, draw a fresh
$S\in\R^{k\times m}$ with independent $N(0,1/k)$ entries and set
\[
 h_i^+:=\mathcal T_{\gamma_i}(t_i)
 +\|SP_{D_+}e_i\|_2^2-\|SQ_te_i\|_2^2.
\]
Then
\begin{equation}
\label{eq:candidate_relative_recurrence}
 \frac{h_i^+-\sigma_i(D_+)}{\sigma_i(D_+)}
 =b_i\frac{t_i-\sigma_i(D_-)}{\sigma_i(D_-)}+\xi_i,
 \qquad 0\leq b_i\leq e^{K_Q\rho_w},
\end{equation}
and, conditionally on $\mathcal F$,
\begin{equation}
\label{eq:candidate_relative_mgf}
 \E[e^{\lambda\xi_i}\mid\mathcal F]
 \leq\exp(K_Q^2\lambda^2\rho_w^2/k),
 \qquad |\lambda|\leq k/(4K_Q\rho_w).
\end{equation}
The update for all coordinates uses one application of each endpoint
projection per sketch row because
\[
 SQ_t=(SP_{D_-})\operatorname{Diag}(a)+S\operatorname{Diag}(c).
\]
If $\rho_w=0$, the two endpoints differ only by a scalar and the algorithm
returns $t$ without drawing a sketch.
If $\sigma_i(D_-)=0$, use the exact convention
$h_i^+=0$, $b_i=0$, and $\xi_i=0$ in
Eq.~\eqref{eq:candidate_relative_recurrence}.
\end{lemma}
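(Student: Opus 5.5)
The plan is to split $h_i^+-\sigma_i(D_+)$ into a deterministic transported-error term and a centered sketch innovation, and to bound each with Lemmas~\ref{lem:candidate_one_row_transport} and~\ref{lem:candidate_nonlocal_endpoint}. After the scalar normalization $D_+\mapsto e^{-\lambda_w}D_+$ (which changes no projection), the column $Q_te_i$ is exactly the vector $q_i(t_i)$ of Lemma~\ref{lem:candidate_one_row_transport}. Indeed, $Q_te_i=a_iP_{D_-}e_i+c_ie_i=a_i(p_i+(\gamma_i-1)t_ie_i)$. This also justifies the displayed factorization of $SQ_t$, so the cost claim is immediate.

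\emph{Decomposition.} Since $S$ is fresh given $\mathcal F$, and $\E\|Sv\|_2^2=\|v\|_2^2$, define
\[
 \xi_i':=\bigl(\|SP_{D_+}e_i\|_2^2-\|Sq_i(t_i)\|_2^2\bigr)-\bigl(\sigma_i(D_+)-\|q_i(t_i)\|_2^2\bigr).
\]
This has conditional mean zero. Then
\[
 h_i^+-\sigma_i(D_+)=\mathcal T_{\gamma_i}(t_i)-\|q_i(t_i)\|_2^2+\xi_i'
 =A_{\gamma_i}(t_i)(t_i-s_i)+\xi_i'
\]
by Eq.~\eqref{eq:candidate_anchor_identity}, where $s_i=\sigma_i(D_-)$. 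Dividing by $\sigma_i(D_+)$ gives the recurrence with
\[
 b_i:=\frac{A_{\gamma_i}(t_i)\,s_i}{\sigma_i(D_+)},\qquad \xi_i:=\frac{\xi_i'}{\sigma_i(D_+)}.
\]
Clipping preserves the hypothesis: because $s_i\in[0,1]$, $|t_i-s_i|\leq|h_i-s_i|\leq\epsilon s_i$.

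\emph{Bounding $b_i$.} We have $A_{\gamma}\geq0$ by Lemma~\ref{lem:candidate_one_row_transport}, so $b_i\geq0$. For the upper bound, Eq.~\eqref{eq:candidate_anchor_multiplier} gives $A_\gamma s\leq e^{12\epsilon|\delta_i|s_i}\mathcal T_\gamma(s_i)$. Eq.~\eqref{eq:candidate_score_comparison} gives $\mathcal T_\gamma(s_i)\leq e^{K_Q\rho_w}\sigma_i(D_+)$. The extra exponent is handled by noting $|\delta_i|\sqrt{s_i}\leq\sqrt{Q\widetilde w_i}|\delta_i|\leq\sqrt Q\rho_w$, using $s_i\leq Q\widetilde w_i$, and $s_i\leq\sqrt{s_i}$. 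Hence $12\epsilon|\delta_i|s_i=O(\rho_w)$, and enlarging $K_Q$ absorbs it. This constant bookkeeping, keeping $K_Q$ dependent only on $Q$, is routine.

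\emph{The MGF bound.} This is the main step. I would reuse the polarization argument of Lemma~\ref{lem:adaptive_increment}. Set $u=P_{D_+}e_i$ and $v=q_i(t_i)$. Then $\|Su\|^2-\|Sv\|^2=\langle S(u-v),S(u+v)\rangle$ is a sum of $k$ independent products of jointly Gaussian scalars, each with variance $\|u-v\|^2/k$ and $\|u+v\|^2/k$ respectively. Each centered product is therefore sub-exponential with scale $O(\|u-v\|_2\|u+v\|_2/k)$. The standard Bernstein MGF computation gives
\[
 \E[e^{\lambda\xi_i'}\mid\mathcal F]\leq\exp\bigl(O(\lambda^2\|u-v\|^2\|u+v\|^2/k)\bigr)
\]
for $|\lambda|\lesssim k/(\|u-v\|\|u+v\|)$. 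Eq.~\eqref{eq:candidate_finite_product} bounds the product by $K_Q\rho_w\sigma_i(D_+)$. Rescaling $\lambda$ by $\sigma_i(D_+)$ then yields Eq.~\eqref{eq:candidate_relative_mgf} after adjusting absolute constants; explicit constants come from the Gaussian-product MGF $(1-2\lambda\alpha-\lambda^2\beta)^{-1/2}$ type bound.

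\emph{Degenerate cases.} If $\rho_w=0$, then $\delta=0$ because $\widetilde w>0$. So $\gamma_i=1$, $\mathcal T_1(t)=t$, and the endpoints agree up to a scalar. Returning $t$ gives the recurrence with $b_i=1$ (or exact via $\sigma_i(D_+)=s_i$) and $\xi=0$. If $s_i=0$, the row is zero, and the stated convention holds trivially.
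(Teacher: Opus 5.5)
Your proposal is correct and follows the paper's proof essentially step for step. You identify $Q_te_i=q_i(t_i)$ and use the anchor identity to isolate $A_{\gamma_i}(t_i)(t_i-s_i)$, giving $b_i=A_{\gamma_i}(t_i)s_i/\sigma_i(D_+)$; you bound $b_i$ by the anchor multiplier, the score comparison and $|\delta_i|\sqrt{s_i}\leq\sqrt Q\rho_w$, and you control the centered innovation by polarization with the finite product bound before rescaling by $\sigma_i(D_+)$, handling the degenerate cases the same way.
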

\begin{proof}
Clipping to $[0,1]$ cannot increase distance from the true score, so
$|t_i-\sigma_i(D_-)|\leq\epsilon\sigma_i(D_-)$.  The $i$-th column of
$Q_t$ is exactly $q_i(t_i)$.  Conditional on $\mathcal F$, Gaussian
isotropy and Eq.~\eqref{eq:candidate_anchor_identity} give
\[
 \E[h_i^+-\sigma_i(D_+)\mid\mathcal F]
 =A_{\gamma_i}(t_i)(t_i-\sigma_i(D_-)).
\]
Thus Eq.~\eqref{eq:candidate_relative_recurrence} holds with
\[
 b_i:=\frac{A_{\gamma_i}(t_i)\sigma_i(D_-)}{\sigma_i(D_+)}.
\]
It is nonnegative by Lemma~\ref{lem:candidate_one_row_transport}.
Eq.~\eqref{eq:candidate_anchor_multiplier} and
Eq.~\eqref{eq:candidate_score_comparison}, together with
\[
 |\delta_i|\sqrt{\sigma_i(D_-)}
 \leq\sqrt Q\rho_w,
\]
give $b_i\leq e^{K_Q\rho_w}$ after enlarging $K_Q$.

For the centered innovation, use the identity
\[
 \|Su\|_2^2-\|Sv\|_2^2
 =\langle S(u-v),S(u+v)\rangle
\]
with $u=P_{D_+}e_i$ and $v=q_i(t_i)$.  The centered product of two jointly
Gaussian variables is sub-exponential.  Equation
Eq.~\eqref{eq:candidate_finite_product} therefore gives
\[
 \E[e^{\lambda X_i}\mid\mathcal F]
 \leq\exp(K_Q^2\lambda^2\rho_w^2\sigma_i(D_+)^2/k)
\]
for $|\lambda|\leq k/(4K_Q\rho_w\sigma_i(D_+))$, where $X_i$ is the
centered unnormalized sketch error.  Substitute
$\xi_i=X_i/\sigma_i(D_+)$ to obtain
Eq.~\eqref{eq:candidate_relative_mgf}.

If $\rho_w=0$, positivity of $\widetilde w$ gives $\delta=0$.  Hence the
endpoints differ only by a scalar, $P_{D_+}=P_{D_-}$, and the exact
transported value is $t$.
\end{proof}

\subsection{Hybrid maintenance and stopping}
\label{subsec:hybrid_maintenance_stopping}

For a transition define the ordinary centered drift
\[
 \lambda_\infty:=\frac{\max_{i\in[m]} d_i+\min_{i\in[m]} d_i}{2},
 \qquad
 \rho_\infty:=\|d-\lambda_\infty\mathbf1\|_\infty.
\]
Not every score transition satisfies the weighted eligibility condition for
self-transport.  The following ordinary transition provides a uniform fallback
whose cost is charged to centered $\ell_\infty$ drift.
\begin{lemma}[Ordinary relative transition]
\label{lem:candidate_ordinary_relative_transition}
Let $D_-,D_+$ be fixed by a sigma-field $\mathcal F$, define
$d=\log\operatorname{diag}(D_+D_-^{-1})$, and suppose
$\rho_\infty\leq1/8$.  If an incoming raw state $h$ obeys
$|h_i-\sigma_i(D_-)|\leq\epsilon\sigma_i(D_-)$ with
$0\leq\epsilon\leq1/4$, define $t_i=\min\{\max\{h_i,0\},1\}$.
After the endpoints and $t$ are fixed, draw a fresh
$S\in\R^{k\times m}$ with independent $N(0,1/k)$ entries and set
\[
 h_i^+:=t_i+\|SP_{D_+}e_i\|_2^2-\|SP_{D_-}e_i\|_2^2.
\]
For every coordinate with nonzero score,
\[
 \frac{h_i^+-\sigma_i(D_+)}{\sigma_i(D_+)}
 =b_i\frac{t_i-\sigma_i(D_-)}{\sigma_i(D_-)}+\xi_i,
 \qquad b_i:=\frac{\sigma_i(D_-)}{\sigma_i(D_+)},
\]
where $0\leq b_i\leq e^{4\rho_\infty}$ and, for absolute constants
$c,C>0$,
\[
 \E[e^{\lambda\xi_i}\mid\mathcal F]
 \leq\exp(C\lambda^2\rho_\infty^2/k),
 \qquad |\lambda|\leq ck/\rho_\infty.
\]
If the score is zero, then the corresponding row of every projection query
is zero and the transition returns zero exactly.  If $\rho_\infty=0$, the
two projections coincide and the procedure returns $t$ without a sketch.
\end{lemma}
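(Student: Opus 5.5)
The plan is to follow the template of Lemma~\ref{lem:candidate_self_transport_transition}, with the one-row transport removed, so that $P_{D_-}e_i$ itself serves as the anchor. First I dispose of the degenerate cases. If $\sigma_i(D_-)=0$, row $i$ of $A$ is zero, so it stays zero under every positive diagonal rescaling and both sketched terms vanish exactly. If $\rho_\infty=0$, then $d=\lambda_\infty\mathbf1$, the two queries differ by a scalar, and so $P_{D_+}=P_{D_-}$.

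In the general case I replace $D_+$ by $e^{-\lambda_\infty}D_+$, which leaves the projection unchanged. The two queries then satisfy $\|\log(D_+D_-^{-1})\|_\infty=\rho_\infty\leq1/8$, and Lemma~\ref{lem:relative_drift} applies with $\rho=\rho_\infty$. Its score comparison gives $\sigma_i(D_-)=e^{\pm4\rho_\infty}\sigma_i(D_+)$, and hence $0\leq b_i\leq e^{4\rho_\infty}$.

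For the recurrence, write $u=P_{D_+}e_i$ and $v=P_{D_-}e_i$, and define the centered sketch error $X_i:=\|Su\|_2^2-\|Sv\|_2^2-(\sigma_i(D_+)-\sigma_i(D_-))$. Then $h_i^+-\sigma_i(D_+)=(t_i-\sigma_i(D_-))+X_i$. Dividing by $\sigma_i(D_+)$ gives the displayed identity with $b_i=\sigma_i(D_-)/\sigma_i(D_+)$ and $\xi_i=X_i/\sigma_i(D_+)$. The clipping step only matters for $t_i$ entering linearly, and clipping to $[0,1]$ cannot move $t_i$ farther from the true score.

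For the moment generating function, I invoke Lemma~\ref{lem:adaptive_increment}. The sketch $S$ is drawn after $\mathcal F$ fixes both endpoints, so I apply that lemma with $a_i:=e^{4\rho_\infty}\sigma_i(D_+)$, which dominates both scores and is $\mathcal F$-measurable. This gives $\E[e^{\lambda X_i}\mid\mathcal F]\leq\exp(C_2\lambda^2\rho_\infty^2a_i^2/k)$ for $|\lambda|\leq c_2k/(\rho_\infty a_i)$. Substituting $\lambda\mapsto\lambda/\sigma_i(D_+)$ and absorbing $e^{8\rho_\infty}\leq e$ into the constants gives the stated bound $\exp(C\lambda^2\rho_\infty^2/k)$ on $|\lambda|\leq ck/\rho_\infty$.

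The only step needing care is the normalization. The increment lemma is stated relative to an upper bound $a_i$, while here the error must be relative to $\sigma_i(D_+)$. That is exactly why I pass through the centered drift $\rho_\infty$ rather than the raw drift: the scalar $\lambda_\infty$ is harmless for projections, and after removing it both the relative column drift and the score comparison depend only on $\rho_\infty$.
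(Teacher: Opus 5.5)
Your proposal is correct and follows essentially the same route as the paper: remove the common scalar $\lambda_\infty$, use Lemma~\ref{lem:relative_drift} for the score comparison and the bound $0\leq b_i\leq e^{4\rho_\infty}$, then apply Lemma~\ref{lem:adaptive_increment} and rescale by $\sigma_i(D_+)$. The only difference is cosmetic. You take $a_i=e^{4\rho_\infty}\sigma_i(D_+)$, whereas the paper takes $a_i=\max\{\sigma_i(D_-),\sigma_i(D_+)\}$; both choices are $\mathcal F$-measurable, dominate both scores, and cost only an absolute factor in the constants.
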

\begin{proof}
Projection matrices are invariant under a common scalar rescaling of $D$,
so remove $\lambda_\infty\mathbf1$ from $d$.  The relative-drift lemma gives
$\sigma_i(D_+)=e^{\pm4\rho_\infty}\sigma_i(D_-)$.  Conditional Gaussian
isotropy shows that the transition increment is unbiased.  Apply
Lemma~\ref{lem:adaptive_increment} with
$a_i=\max\{\sigma_i(D_-),\sigma_i(D_+)\}$, which is
$\mathcal F$-measurable.  After division by $\sigma_i(D_+)$, score
comparison turns its conditional moment-generating-function bound into the
displayed relative bound.  The formula for $b_i$ is the remaining
deterministic part.  Finally, clipping to $[0,1]$, an interval containing the
true score, multiplies the incoming relative error by a number in $[0,1]$.
\end{proof}

For the same transition, and for the positive companion
$\widetilde w^-$ stored with the old query, define
\[
 \lambda_w:=\frac{\sum_{i=1}^m\widetilde w_i^-d_i}
 {\sum_{i=1}^m\widetilde w_i^-},
 \qquad
 \rho_w:=(\sum_{i=1}^m\widetilde w_i^-(d_i-\lambda_w)^2)^{1/2}.
\]
Fix absolute constants $C_\infty,C_w$ large enough to dominate all constants
in the ordinary and weighted transition bounds, and then choose
\[
 h_0\leq\min\{\log(4/3),C_\infty/8,C_w\rho_Q\}.
\]
The ordinary branch has
effective drift $C_\infty\rho_\infty$ and is eligible when this quantity is
at most $h_0$.  The weighted branch is eligible only if
\[
 \|d-\lambda_w\mathbf1\|_\infty\leq1/8
 \quad\hbox{and}\quad C_w\rho_w\leq h_0;
\]
when eligible, its effective drift is $C_w\rho_w$.  Define $\eta$ as the
smaller eligible effective drift.  If no branch is eligible, charge the
transition the finite amount $h_0$ and force a fresh base at the new
endpoint; no transition recurrence is then used.  On every nonrestart
transition, both branches have a common recurrence
\begin{equation}
\label{eq:candidate_unified_recurrence}
 z_{t+1,i}=b_{t,i}r_{t,i}z_{t,i}+\xi_{t,i},
 \qquad 0\leq r_{t,i}\leq1,\qquad
 0\leq b_{t,i}\leq e^{\eta_t},
\end{equation}
and
\begin{equation}
\label{eq:candidate_unified_mgf}
 \E[e^{\lambda\xi_{t,i}}\mid\mathcal F_t]
 \leq e^{\lambda^2\eta_t^2/k_t},
 \qquad |\lambda|\leq k_t/(4\eta_t).
\end{equation}
where $z_{t,i}:=(h_{t,i}-\sigma_i(D_t))/\sigma_i(D_t)$ for a nonzero
score.  The zero-score coordinates are returned exactly and omitted from
this normalization.
Here the factor $r_{t,i}$ is the contraction caused by clipping the previous
raw state to $[0,1]$.  The ordinary branch follows from
Lemma~\ref{lem:candidate_ordinary_relative_transition}; the weighted branch
follows from Lemma~\ref{lem:candidate_self_transport_transition}.  The
constants $C_\infty,C_w$ are fixed
once so that Eq.~\eqref{eq:candidate_unified_recurrence} and
Eq.~\eqref{eq:candidate_unified_mgf} hold literally in both branches.

\begin{algorithm}[!ht]
\caption{Hybrid relative score maintenance}
\label{alg:candidate_hybrid_score}
\begin{algorithmic}[1]
\Procedure{HybridScore}{$D,\widetilde w,L_{\rm conf},\mathcal S$}
\Comment{Lemma~\ref{lem:candidate_hybrid_maintenance}}
\If{$\mathcal S=\varnothing$}
  \State Go to \textsc{FreshBase}.
\Else
  \State $(D_-,\widetilde w^-,h,H)\gets\mathcal S$ and
  $d\gets\log\operatorname{diag}(DD_-^{-1})$.
  \State Compute $\rho_\infty$ and $\rho_w$ from the two displays above.
  \State $\eta_\infty\gets C_\infty\rho_\infty$ if
  $C_\infty\rho_\infty\leq h_0$, and
  $\eta_\infty\gets+\infty$ otherwise.
  \State $\eta_w\gets C_w\rho_w$ if
  $\|d-\lambda_w\mathbf1\|_\infty\leq1/8$ and $C_w\rho_w\leq h_0$,
  and $\eta_w\gets+\infty$ otherwise.
  \State $\eta\gets\min\{\eta_\infty,\eta_w\}$, breaking ties
  deterministically.
  \If{$\eta=+\infty$}
    \State Charge effective drift $h_0$ and go to \textsc{FreshBase}.
  \ElsIf{$H+\eta>h_0$}
    \State Charge effective drift $\eta$ and go to \textsc{FreshBase}.
  \EndIf
  \State $t_i\gets\min\{\max\{h_i,0\},1\}$ for every $i\in[m]$.
  \If{$\eta=0$}
    \State $h\gets t$.
  \Else
    \State $k\gets\max\{1,\lceil C_1\eta E^{-2}L_{\rm conf}\rceil\}$
    and draw a fresh $S\in\R^{k\times m}$ with independent $N(0,1/k)$
    entries after all preceding quantities are fixed.
    \If{$\eta=\eta_\infty$}
      \State Update $h$ by
      Lemma~\ref{lem:candidate_ordinary_relative_transition}, using the
      already drawn matrix $S$.
    \Else
      \State Update $h$ by
      Lemma~\ref{lem:candidate_self_transport_transition}, using
      the already drawn matrix $S$ and $\widetilde w^-$ to define
      $\lambda_w$ and $\rho_w$.
    \EndIf
  \EndIf
  \State $H\gets H+\eta$ and go to \textsc{ReturnState}.
\EndIf
\Statex \textsc{FreshBase}:
\State $k_0\gets\lceil C_0E^{-2}L_{\rm conf}\rceil$, draw a fresh
$S\in\R^{k_0\times m}$ with independent $N(0,1/k_0)$ entries after
$D,\widetilde w$ are fixed, set
$h_i\gets\|SP_De_i\|_2^2$ for every $i$, and set $H\gets0$.
\State Go to \textsc{ReturnState}.
\Statex \textsc{ReturnState}:
\State $h_i^{\rm out}\gets\min\{\max\{h_i,0\},1\}$ for every $i$.
\State $\mathcal S\gets(D,\widetilde w,h,H)$.
\State \Return $(h^{\rm out},\mathcal S)$.
\EndProcedure
\end{algorithmic}
\end{algorithm}

The block analysis needs a maximal inequality that allows inherited error to
be predictably contracted before each fresh innovation is added.  The next
lemma supplies exactly this stopped, time-uniform concentration bound.
\begin{lemma}[Contractive maximal Bernstein]
\label{lem:candidate_contractive_bernstein}
Let $T\geq1$ and
\[
 Y_t=a_tY_{t-1}+X_t,\qquad0\leq a_t\leq1,
 \qquad t=1,\ldots,T,
\]
where $a_t$ is predictable and
\[
 \E[e^{\lambda X_t}\mid\mathcal F_{t-1}]
 \leq e^{\lambda^2v_t/2}
 \quad\hbox{for }|\lambda|\leq b_t^{-1}.
\]
If $\sum_{t=1}^Tv_t\leq V_0$, $\max_{t\in[T]}b_t\leq b_0$, and $|Y_0|\leq u_0$, then,
for $u>u_0$,
\[
 \Pr[\max_{t\in[T]}|Y_t|\geq u]
 \leq2\exp(-c\min\{
 \frac{(u-u_0)^2}{V_0},\frac{u-u_0}{b_0}
 \}).
\]
The conclusion remains valid at a first-exit stopping time, with the crossing
update included and the process frozen thereafter; equivalently, its
pre-exit activity indicator is predictable.
\end{lemma}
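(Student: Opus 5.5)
The plan is to unroll the recurrence into a weighted martingale sum and reduce to a standard maximal Bernstein inequality for a supermartingale. Write $\Pi_{s,t}:=\prod_{r=s+1}^t a_r\in[0,1]$, with $\Pi_{t,t}=1$. Then
\[
 Y_t=\Pi_{0,t}Y_0+\sum_{s=1}^t\Pi_{s,t}X_s .
\]
The first term has magnitude at most $u_0$. Because the weights $\Pi_{s,t}$ depend on the future index $t$, this sum is not directly a martingale in $t$. I would therefore avoid unrolling and work with an exponential supermartingale adapted to the contraction.

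Fix $\lambda\in(0,b_0^{-1}]$ and consider $\cosh(\lambda Y_t)$, or treat the two signs separately with $e^{\pm\lambda Y_t}$. The key step is the one-step bound. Because $a_t\in[0,1]$ is predictable, convexity of $y\mapsto e^{\lambda y}+e^{-\lambda y}$ together with evenness gives $\cosh(\lambda a_tY_{t-1})\leq\cosh(\lambda Y_{t-1})$. The conditional MGF hypothesis, applied at $\pm\lambda$, then gives
\[
 \E[\cosh(\lambda Y_t)\mid\mathcal F_{t-1}]
 =\cosh(\lambda a_tY_{t-1})\,\E[\cosh\lambda X_t\mid\cdot\,]\text{-type bound}
 \leq e^{\lambda^2v_t/2}\cosh(\lambda Y_{t-1}).
\]
More carefully, expand $\cosh(\alpha+\beta)=\cosh\alpha\cosh\beta+\sinh\alpha\sinh\beta$ and bound the cross term by $|\sinh\alpha|\,|\E\sinh\lambda X_t|$. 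Here the mean-zero issue appears: the hypothesis as stated implies $\E X_t=0$, since a sub-Gaussian MGF bound holding near $\lambda=0$ forces zero mean. Even so, $\E\sinh(\lambda X_t)$ is not zero. To sidestep this I would use the two one-sided exponentials. The process $M_t^\pm:=\exp(\pm\lambda Y_t-\tfrac{\lambda^2}{2}\sum_{s\leq t}v_s)$ is a supermartingale only when $a_t=1$; for $a_t<1$ we have $e^{\lambda a_tY}\leq \max\{e^{\lambda Y},1\}\leq e^{\lambda Y}+1$. This suggests using $\Psi_t:=e^{\lambda Y_t}+e^{-\lambda Y_t}$, which is dominated after contraction because $\cosh$ is increasing in $|y|$. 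For the cross term I plan to use the standard trick: $\E[e^{\lambda(aY+X)}]=e^{\lambda aY}\E e^{\lambda X}\leq e^{\lambda aY}e^{\lambda^2v/2}$, and likewise for the minus sign. Summing gives $\Psi_t\leq e^{\lambda^2v_t/2}\cdot 2\cosh(\lambda a_tY_{t-1})\leq e^{\lambda^2v_t/2}\Psi_{t-1}$ in conditional expectation. No cross term arises, because each exponential is handled separately before summing.

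Consequently $\Psi_t e^{-\lambda^2\sum_{s\leq t}v_s/2}$ is a nonnegative supermartingale with initial value at most $2e^{\lambda u_0}$. Ville's, or Doob's, maximal inequality then yields
\[
 \Pr[\max_t|Y_t|\geq u]\leq 2\exp(\lambda u_0+\lambda^2V_0/2-\lambda u).
\]
Optimizing over $\lambda=\min\{(u-u_0)/V_0,\,b_0^{-1}\}$ gives the two regimes $\exp(-(u-u_0)^2/(2V_0))$ and $\exp(-(u-u_0)/(2b_0))$, which combine into the stated bound with $c=1/2$.

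For the stopped version, multiply each step by the predictable activity indicator $\mathbf1_{\{t\leq\tau\}}$. Frozen steps have $X_t=0$ and $a_t=1$, so the supermartingale property persists and the same inequality applies to the stopped process, including the crossing update. I expect the main obstacle to be only the bookkeeping of the contraction under the exponential, that is, checking that $\cosh(\lambda a y)\leq\cosh(\lambda y)$ is applied after splitting into the two signs. Everything else is a standard application of Ville's inequality.
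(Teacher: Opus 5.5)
Your argument is correct and is essentially the paper's proof. The paper also uses the nonnegative supermartingale $\cosh(\lambda Y_t)\exp(-\tfrac{\lambda^2}{2}\sum_{j\le t}v_j)$, the contraction bound $\cosh(\lambda a_tY_{t-1})\le\cosh(\lambda Y_{t-1})$, Ville's inequality at the first crossing, and the same predictable freezing convention (coefficient one, innovation zero) for the stopped process. Summing the two one-sided exponentials is exactly what justifies the paper's first displayed inequality. Your choice $\lambda=\min\{(u-u_0)/V_0,b_0^{-1}\}$, in place of the paper's $(2b_0)^{-1}$, simply gives the explicit constant $c=1/2$.
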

\begin{proof}
For every fixed admissible $\lambda>0$,
\[
 \E[\cosh(\lambda Y_t)\mid\mathcal F_{t-1}]
 \leq e^{\lambda^2v_t/2}\cosh(\lambda a_tY_{t-1})
 \leq e^{\lambda^2v_t/2}\cosh(\lambda Y_{t-1}).
\]
Thus
\[
 \cosh(\lambda Y_t)
 \exp(-\frac{\lambda^2}{2}\sum_{j=1}^tv_j)
\]
is a nonnegative supermartingale.  Stop at the first crossing of $u$ and
apply Ville's inequality.  Since
$\cosh(\lambda u)\geq e^{\lambda u}/2$ and
$\cosh(\lambda Y_0)\leq e^{\lambda u_0}$, the crossing probability is at
most
\[
 2\exp(-\lambda(u-u_0)+\lambda^2V_0/2).
\]
Take $\lambda=\min\{(u-u_0)/V_0,(2b_0)^{-1}\}$.  For a stopped process,
multiply every innovation by the predictable indicator of not having
stopped, and after stopping take coefficient one and innovation zero.
\end{proof}

The two transition recurrences and the maximal inequality can now be combined
into one adaptive score provider.  The following lemma states its uniform
accuracy event and prices bases, transitions, and restarts in a single ledger.
\begin{lemma}[Hybrid self-transport score maintenance]
\label{lem:candidate_hybrid_maintenance}
Consider an adaptive sequence of at most $M_{\rm ep}$ score queries.  Each
query $D_t$ has a positive companion $\widetilde w^{(t)}$ fixed before its
score sketch and satisfying, before the companion stopping time,
\begin{equation}
\label{eq:candidate_companion_band}
 Q^{-1}(\sigma_i(D_t)+v_i)
 \leq\widetilde w^{(t)}_i
 \leq Q(\sigma_i(D_t)+v_i)
 \qquad(i\in[m])
\end{equation}
for an absolute $Q$.  Assume the block input satisfies the raw-relative
condition
\[
 |h_{b,i}-\sigma_i(D_b)|
 \leq(c_{\rm anc}E/8)\sigma_i(D_b).
\]
Restart immediately before cumulative effective drift would exceed a fixed
small $h_0$, and on a nonrestart transition use
\[
 k_t:=\max\{1,\lceil C_1\eta_tE^{-2}L_{\rm conf}\rceil\},
 \qquad L_{\rm conf}:=\log(40mM_{\rm ep}).
\]
Every transition sketch is fresh and is drawn only after both endpoints,
the old companion, both normalizations, the branch, $\eta_t$, and $k_t$ are
fixed.  Run Algorithm~\ref{alg:candidate_hybrid_score}.  In the accounting
below, $\eta_t$ is the selected finite drift whenever a branch is eligible,
including when that transition starts a fresh block, and is $h_0$ only when
no branch is eligible.  Then, for sufficiently
large absolute base and transition constants, the raw states satisfy
\begin{equation}
\label{eq:candidate_raw_relative_accuracy}
 |h_{t,i}-\sigma_i(D_t)|
 \leq c_{\rm anc}E\sigma_i(D_t)
\end{equation}
simultaneously before the companion stopping time, except with probability
at most $1/40$.  The same inequality then holds for the clipped outputs.
If $M$ is the actual number of transitions and
$\Omega:=\sum_{t=1}^M\eta_t$, the total number of permitted systems is
\begin{equation}
\label{eq:candidate_hybrid_score_cost}
 O(M+E^{-2}L_{\rm conf}(1+\Omega)).
\end{equation}
\end{lemma}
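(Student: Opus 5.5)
Within one block we analyze the normalized errors $z_{t,i}=(h_{t,i}-\sigma_i(D_t))/\sigma_i(D_t)$ through the unified recurrence Eq.~\eqref{eq:candidate_unified_recurrence}--\eqref{eq:candidate_unified_mgf}. First we check that the hypotheses of the two transition lemmas hold before the companion stopping time. For the weighted branch, Lemma~\ref{lem:candidate_nonlocal_endpoint} needs $\sigma_j(D_-)\leq Q\widetilde w_j$, and this is the left inequality of Eq.~\eqref{eq:candidate_companion_band}. It also needs $\|\delta\|_\infty\leq1/8$ and $\rho_w\leq\rho_Q$, which the eligibility test and $h_0\leq C_w\rho_Q$ enforce. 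For the ordinary branch, $h_0\leq C_\infty/8$ gives $\rho_\infty\leq1/8$. Both branches also require the incoming error to be at most $1/4$, so we maintain $|z_{t,i}|\leq c_{\rm anc}E\leq1/4$ inductively through a first-exit stopping time. On that stopped process each nonrestart transition yields Eq.~\eqref{eq:candidate_unified_recurrence}, with a multiplier $b_{t,i}r_{t,i}\leq e^{\eta_t}$ and an innovation that is sub-exponential with variance proxy $\eta_t^2/k_t$.

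The multipliers can exceed one, so we rescale before applying the maximal inequality. Inside a block, set $H_t:=\sum_{s\leq t}\eta_s\leq h_0$ and define $Y_{t,i}:=e^{-H_t}z_{t,i}$. Then $Y_{t,i}=a_{t,i}Y_{t-1,i}+e^{-H_t}\xi_{t,i}$ with $a_{t,i}=e^{-\eta_t}b_{t,i}r_{t,i}\in[0,1]$, and $a_{t,i}$ is predictable because $b_{t,i}$ and $r_{t,i}$ depend only on $\mathcal F_t$-measurable endpoints and the clipped state. Eq.~\eqref{eq:candidate_unified_mgf} gives $v_t=2\eta_t^2/k_t\leq2\eta_tE^2/(C_1L_{\rm conf})$ and $b_t=4\eta_t/k_t\leq4E^2/(C_1L_{\rm conf})$. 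Summing over the block and using $\sum\eta_t\leq h_0\leq1$ gives $V_0=O(E^2/(C_1L_{\rm conf}))$. We apply Lemma~\ref{lem:candidate_contractive_bernstein} with $u_0=c_{\rm anc}E/8$, the block input bound, and $u=c_{\rm anc}E/(2e^{h_0})$, which is at least $3c_{\rm anc}E/8$ because $h_0\leq\log(4/3)$. The resulting exponent is $\Omega(\min\{C_1c_{\rm anc}^2L_{\rm conf},C_1c_{\rm anc}L_{\rm conf}/E\})$, so choosing $C_1$ large relative to $c_{\rm anc}^{-2}$ makes each coordinate's failure probability at most $(40mM_{\rm ep})^{-2}$.

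At a fresh base we need the same input bound. The base sketch has $k_0=C_0E^{-2}L_{\rm conf}$ rows, and a chi-square bound gives relative error $c_{\rm anc}E/8$ with the same failure probability once $C_0$ is large. We then take a union bound over at most $m$ coordinates and at most $M_{\rm ep}$ blocks and transitions. The analysis uses the stopped, frozen versions of every process, with predictable activity indicators as Lemma~\ref{lem:candidate_contractive_bernstein} permits, so the adaptivity of queries and companions creates no dependence problem. This gives total failure probability at most $1/40$. Because clipping to $[0,1]$ contains the true score, the bound also holds for the clipped outputs.

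For the cost, every nonrestart transition uses $2k_t\leq2+2C_1\eta_tE^{-2}L_{\rm conf}$ systems. A block closes only after its cumulative drift plus the charged $\eta$ exceeds $h_0$, or when no branch is eligible, in which case we charge $h_0$. Hence the number of bases is at most $1+\Omega/h_0$, and each costs $O(E^{-2}L_{\rm conf})$. Summing the transition and base costs gives Eq.~\eqref{eq:candidate_hybrid_score_cost}. We expect the main obstacle to be the predictability of the contraction factors $a_{t,i}$ together with keeping the transition-lemma hypotheses valid only up to the stopping time; we would handle both by running the induction on the stopped process and invoking the stopped form of Lemma~\ref{lem:candidate_contractive_bernstein}.
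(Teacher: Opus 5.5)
Your proposal is correct and follows essentially the same route as the paper's proof. Like the paper, you rescale the normalized error by $e^{-H_t}$ within a block to get a contractive recurrence, apply the stopped contractive maximal Bernstein bound per coordinate and block, use a chi-square bound at fresh bases with a union bound, and charge restarts disjointly to $\Omega$; your restart count $1+\Omega/h_0$ is slightly sharper than the paper's $1+2\Omega/h_0$, but this only changes constants.
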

\begin{proof}
At every fresh base, condition on the complete pre-base sigma-field and draw
\[
 k_0=\lceil C_0E^{-2}L_{\rm conf}\rceil
\]
Gaussian rows.  For $\sigma_i(D_b)>0$,
\[
 \frac{\|SP_{D_b}e_i\|_2^2}{\sigma_i(D_b)}
 \stackrel{d}{=}\frac{\chi_{k_0}^2}{k_0}.
\]
Under the standing no-zero-row assumption, every score is positive.  If zero rows
are allowed, a zero score produces the exact zero column and is returned
exactly.  The chi-square tail and a union bound give the displayed base
condition with conditional failure at most $e^{-3L_{\rm conf}}$.

Fix a block with transition indices $b,\ldots,e-1$ and a coordinate, and stop
immediately before the first incoming raw-relative violation or companion
violation.  Let
$H_t=\sum_{j=b}^{t-1}\eta_j$.  All $\eta_j,H_t,k_t$ are predictable.
By Eq.~\eqref{eq:candidate_unified_recurrence},
\[
 Y_t:=e^{-H_t}z_t
\]
satisfies
\[
 Y_{t+1}=a_tY_t+\widetilde\xi_t,\qquad0\leq a_t\leq1.
\]
Eq.~\eqref{eq:candidate_unified_mgf} remains valid after multiplication
of the innovation by $e^{-H_{t+1}}\leq1$.  The sample rule gives
\[
 \sum_{t=b}^{e-1}\frac{\eta_t^2}{k_t}
 \leq\frac{h_0E^2}{C_1L_{\rm conf}},
 \qquad
 \max_{b\leq t\leq e-1}\frac{\eta_t}{k_t}
 \leq\frac{E^2}{C_1L_{\rm conf}}.
\]
Choose $h_0$ so that $e^{-h_0}\geq3/4$.  A crossing
$|z_t|\geq c_{\rm anc}E$ implies
$|Y_t|\geq3c_{\rm anc}E/4$, whereas the base has magnitude at most
$c_{\rm anc}E/8$.  Lemma~\ref{lem:candidate_contractive_bernstein}, with
$C_1$ sufficiently large, makes the coordinate-block crossing probability
at most $2e^{-3L_{\rm conf}}$.  The first violating update is included by
freezing the stopped process after it.  Thus there is no conditioning on a
future success event.

The restart decision is predictable.  If adding a transition would exceed
$h_0$, then either the completed block has accumulated at least $h_0/2$ or
the triggering transition has effective drift at least $h_0/2$.  These
charges are disjoint.  A transition with no eligible branch contributes
$h_0$ by definition.  Thus the number of fresh bases is at most
$1+2\Omega/h_0$.  Including a possible externally supplied initial base,
there are at most $M_{\rm ep}+1$ blocks.  The base
failure is at most $e^{-3L_{\rm conf}}$ per coordinate and block, and the
crossing failure is at most $2e^{-3L_{\rm conf}}$.  Hence their union is at
most
\[
 3m(M_{\rm ep}+1)e^{-3L_{\rm conf}}<1/40.
\]

Finally,
\[
 \sum_{t=1}^Mk_t=O(M+E^{-2}L_{\rm conf}\Omega),
\]
and the fresh bases cost
$O(E^{-2}L_{\rm conf}(1+\Omega))$.  Each row uses two endpoint systems for
a transition and one system for a base; constant factors are absorbed in
Eq.~\eqref{eq:candidate_hybrid_score_cost}.
\end{proof}

\subsection{Compatibility with the path algorithm}
\label{subsec:hybrid_path_compatibility}

The maintenance theorem assumes a two-sided companion band at every query.
The next lemma checks that all fixed-point and surrogate queries generated by
the path algorithm satisfy this hypothesis.
\begin{lemma}[Uniform query companions]
\label{lem:candidate_query_companions}

Every maintained-mode fixed-point and inflated-surrogate score query made
after the raw-relative state handoff and before the existing auxiliary
stopping time has a companion satisfying
Eq.~\eqref{eq:candidate_companion_band} for an absolute $Q$.
\end{lemma}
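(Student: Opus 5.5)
The plan is to check the two query types separately. In each case I identify the companion that Algorithm~\ref{alg:candidate_hybrid_score} actually stores with the query, and then compare it with $\sigma_i(D_t)+v_i$. The comparison uses the fixed-point identity $g=\sigma(G^{\vartheta/2}A_x)+v$ together with Lemma~\ref{lem:relative_drift}.

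\emph{Fixed-point queries.} The query is $D_j=\operatorname{Diag}(z^{(j)})^{\vartheta/2}D_x$ and its companion is $z^{(j)}$.
\begin{itemize}
\item Before the auxiliary stopping time, every maintained-mode iterate satisfies $\|\log z^{(j)}-\log g(x)\|_\infty\leq E$. This holds for the warm start by Lemma~\ref{lem:large_radius_warm_start}, and for the later rounds by the contraction in Lemma~\ref{lem:floor_homotopy_initialization}, which carries the raw-relative hypothesis until the stop.
\item Hence $D_j$ and the exact query $G^{\vartheta/2}D_x$ have row ratio $e^{\pm|\vartheta|E/2}$. Since $|\vartheta|\leq2$ and $E\leq1/16$, the drift is small enough for Lemma~\ref{lem:relative_drift}.
\item That lemma gives $\sigma_i(D_j)=e^{\pm2E}\sigma_i(G^{\vartheta/2}A_x)$, and therefore $\sigma_i(D_j)+v_i=e^{\pm2E}g_i(x)$.
\item Combining this with $z^{(j)}=e^{\pm E}g(x)$ gives Eq.~\eqref{eq:candidate_companion_band} with $Q=e^{3E}\leq2$.
\end{itemize}

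\emph{Inflated-surrogate queries.} Lemma~\ref{lem:certified_surrogate} uses the query $D^+_{\rm qry}=(W^+)^{\vartheta/2}\operatorname{Diag}(s(x))$, and Algorithm~\ref{alg:fresh_predicted_center} passes the companion $\operatorname{diag}(W^+)=e^{c_+E}\widehat g$. The argument is the same.
\begin{itemize}
\item Because $\|\log\widehat g-\log g(x)\|_\infty\leq E/2$, the companion is $e^{\pm(c_++1/2)E}g(x)$.
\item A global scalar does not change the projection, so the row drift between $D^+_{\rm qry}$ and $G^{\vartheta/2}D_x$ is $O(E)$.
\item Lemma~\ref{lem:relative_drift} then gives $\sigma_i(D^+_{\rm qry})+v_i=e^{\pm O(E)}g_i(x)$.
\item Hence $Q=e^{O(c_+E)}$, which is an absolute constant because $c_+$ is absolute and $E=O(1/c_k)\leq1$.
\end{itemize}

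Zero-score rows cannot occur under the no-zero-row assumption, and the floor $v_i>0$ keeps both sides of the band positive.

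The main obstacle is the first bullet of the fixed-point case: showing that every queried companion, not just the final output, is $O(E)$-close to $g$ at the relevant barrier point. This must hold for the intermediate $z^{(j)}$ before the auxiliary stopping time, and also for queries issued at $x^+$ while the base point has just changed by $O(R)$.

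To handle it, I would take all queries in one \textsc{FixedPoint} call to be at the fixed point $x^+$, so that the companion error is measured against $g(x^+)$ and not $g(x)$. The error recurrence $e_{j+1}\leq\varrho e_j+\delta^{\rm map}_j$ starts from $e_0\leq E$ and keeps $e_j\leq E$ throughout. The point is that the large $O(R)$ movement enters only through the query transition drift, which is charged separately by the hybrid provider, and never through the companion band. Any deviation of the band is then exactly the event excluded by the auxiliary stopping time.
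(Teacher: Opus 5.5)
Your proposal is correct and follows the paper's proof. Both arguments split into fixed-point and inflated-surrogate queries, use the identity $g=\sigma(G^{\vartheta/2}A_x)+v$ together with the $O(E)$ accuracy of the stored companion at the query's own barrier point, and transfer the bound through Lemma~\ref{lem:relative_drift}; the paper simply packages the companion accuracy as $z=e^{\pm A_{\rm q}E}g(x)$.

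One harmless slip: Lemma~\ref{lem:relative_drift} gives the factor $e^{\pm4\rho}$ with $\rho=|\vartheta|E/2$. The score comparison is therefore $e^{\pm2|\vartheta|E}$ (at most $e^{\pm4E}$) rather than $e^{\pm2E}$, which changes $Q$ only by an absolute factor.
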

\begin{proof}
For an exact fixed-point query $D_*(x)$,
$g_i(x)=\sigma_i(D_*(x))+v_i$.  Every fixed-point companion $z$ used by the
algorithm satisfies $z=e^{\pm A_{\rm q}E}g(x)$.  The query formed from $z$
is within logarithmic row drift $O(E)$ of $D_*(x)$, so
Lemma~\ref{lem:relative_drift} gives
$\sigma_i(D_z)=e^{\pm O(E)}\sigma_i(D_*(x))$.  Hence
$z_i=e^{\pm O(E)}(\sigma_i(D_z)+v_i)$.

For the surrogate query,
$W^+=e^{c_+E}\operatorname{Diag}(\widehat g)$ with
$\widehat g=e^{\pm E/2}g(x)$.  Its query is again within $O(E)$ row drift of
$D_*(x)$, and the same argument yields
$W^+_{ii}=e^{\pm O(E)}(\sigma_i(P^+)+v_i)$.  This gives both directions of
Eq.~\eqref{eq:candidate_companion_band}, not only the already used upper
bound $\sigma_i(P^+)\leq W^+_{ii}$.
\end{proof}

The score provider's thresholds must coexist with the earlier geometric and
path-following constants without a circular choice.  The next lemma gives an
explicit admissible ordering for all remaining constants.
\begin{lemma}[Compatible constants for the hybrid score provider]
\label{lem:candidate_hybrid_constant_hierarchy}
The constants in the hybrid provider can be fixed compatibly with
Lemma~\ref{lem:constant_hierarchy}.  In particular, they can be chosen in
the noncircular order
\[
 \begin{aligned}
 &Q,\quad (K_Q,\rho_Q),\quad (C_\infty,C_w),\quad h_0,
   \quad c_R,\quad A_E,\\
 &c_{\rm anc},\quad (C_0,C_1,C_{\rm fp}),\quad A_{\rm path},
   \quad c_{\rm norm},\\
 &(A_{\rm norm},A_{\rm ep},A_L,c_{\rm sw},A_{\rm sw},A_{\rm call}),
   \quad A_\kappa,\quad A_N,\quad A_{\rm loc},\quad A_B.
 \end{aligned}
\]
The parenthesized scale and budget constants are chosen jointly as in the
proof.  With these choices, before the auxiliary stopping time every query has the
companion band in Eq.~\eqref{eq:candidate_companion_band}, every successful
transition has an eligible branch, every transition lemma is invoked inside
its stated radius and incoming-error range, and the base and crossing
probabilities in Lemma~\ref{lem:candidate_hybrid_maintenance} have the
claimed bounds.
\end{lemma}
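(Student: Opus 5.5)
The proof is a bookkeeping argument. We list every constraint the earlier lemmas place on the constants. For each one we check that it only involves constants appearing earlier in the displayed order, or structural constants already frozen in Lemma~\ref{lem:constant_hierarchy}. Each constraint must also be monotone: it should be an upper bound on a smallness constant or a lower bound on a largeness constant. Then choosing the constants one at a time in that order satisfies all constraints at once, because a later choice never revisits an earlier one.

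\textbf{Score-provider constants.} First, Lemma~\ref{lem:candidate_query_companions} gives an absolute $Q$. Its proof uses only $A_{\rm q}$, $c_+$ and Lemma~\ref{lem:relative_drift}, with $E\leq1$ small enough that $e^{O(E)}$ is bounded. Given $Q$, Lemma~\ref{lem:candidate_nonlocal_endpoint} fixes $(K_Q,\rho_Q)$. Then $(C_\infty,C_w)$ are taken large enough that Eq.~\eqref{eq:candidate_unified_recurrence} and Eq.~\eqref{eq:candidate_unified_mgf} hold in both branches, using Lemmas~\ref{lem:candidate_ordinary_relative_transition} and~\ref{lem:candidate_self_transport_transition}. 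Next $h_0$ is set by its displayed minimum, together with $e^{-h_0}\geq3/4$.

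\textbf{Every successful transition has an eligible branch.} We show that consecutive queries have centered row drift at most $O(R)$ in $\ell_\infty$. For the first query of a step this follows from Lemma~\ref{lem:large_radius_warm_start}; for refinement queries the drift is $O(E)$. Hence decreasing $c_R$ (recall $R=c_RCE$, with $C E$ bounded) gives $C_\infty\rho_\infty\leq h_0$. This is why $c_R$ follows $h_0$ in the order. Then $A_E$ is enlarged to make all tracking displays at most $K/4$.

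\textbf{Base and crossing probabilities.} After that, $c_{\rm anc}$ is made small enough for the score consumers, namely Lemma~\ref{lem:certified_surrogate} (needs $c_{\rm anc}\leq c_0$) and Lemma~\ref{lem:floor_homotopy_initialization}. Then $(C_0,C_1,C_{\rm fp})$ are taken large relative to $c_{\rm anc}^{-2}$ and $h_0$, so that the chi-square base tail and the contractive-Bernstein crossing bound are each at most $e^{-3L_{\rm conf}}$ and $2e^{-3L_{\rm conf}}$. With $L_{\rm conf}=\log(40mM_{\rm ep})$, this gives the union bound $3m(M_{\rm ep}+1)e^{-3L_{\rm conf}}<1/40$ of Lemma~\ref{lem:candidate_hybrid_maintenance}.

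\textbf{Path, normalization and budget constants.} Next $A_{\rm path}$ and $c_{\rm norm}$ are fixed by Lemmas~\ref{lem:fresh_path_follow_correctness} and~\ref{lem:epoch_normalization}. The remaining constraints are then on lower bounds for the budget constants.

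\textbf{Main obstacle: the joint group.} The delicate step is the group $(A_{\rm norm},A_{\rm ep},A_L,c_{\rm sw},A_{\rm sw},A_{\rm call})$, which looks circular. Normalization cost grows like $(1+A_L)^{d_{\rm norm}}(4m)^{d_{\rm norm}}$, while $A_L$ must satisfy $L_{\rm ch}=A_L\ell\geq\log(A_0B_{\rm ep}m\beta)$, and $B_{\rm ep}=(4m)^{A_{\rm ep}}$ must exceed $(4m)^{A_{\rm norm}+10}$.

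The plan breaks the loop as follows. Lemma~\ref{lem:epoch_normalization} states that $D_{\rm norm},d_{\rm norm}$ are independent of $A_L$. Set $A_{\rm norm}:=2d_{\rm norm}+\log_4 D_{\rm norm}+1$ and $A_{\rm ep}:=A_{\rm norm}+11$. Then take $A_L\geq A_{\rm ep}+O(1)$; since $\log B_{\rm ep}=A_{\rm ep}\log(4m)$, this satisfies the chasing requirement. It remains to check that $(1+A_L)^{d_{\rm norm}}\leq(4m)^{d_{\rm norm}}$. This holds once $m$ exceeds an absolute threshold; smaller $m$ is absorbed by enlarging $D_{\rm norm}$, which is an absolute constant, into $A_{\rm norm}$.

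Then $c_{\rm sw}$ is set by Eq.~\eqref{eq:switch_constant_choice} using the already fixed $c_R,A_E,A_L$. After that $A_{\rm sw}$ is taken large relative to $A_0/c_{\rm sw}$, and $A_{\rm call}$ large enough to bound the number of calls per epoch. Finally, $A_\kappa$, $A_N$, $A_{\rm loc}$ and $A_B$ are pure lower bounds, each chosen to dominate the good-event costs that were already fixed, so they come last.
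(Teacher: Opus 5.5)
Your overall strategy matches the paper's: monotone constraints, resolved in the displayed order. Your treatment of $Q$, $(K_Q,\rho_Q)$, $(C_\infty,C_w)$, $h_0$, $c_{\rm anc}$ and $(C_0,C_1,C_{\rm fp})$ is essentially the paper's. There is, however, a genuine gap at exactly the step you flag as the main obstacle, and a smaller one in the eligibility paragraph.

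\textbf{The joint group is not decoupled.} Your choice $A_{\rm norm}=2d_{\rm norm}+\log_4D_{\rm norm}+1$ bounds the normalization cost $D_{\rm norm}(1+A_L)^{d_{\rm norm}}(4m)^{d_{\rm norm}}$ by $P_{\rm norm}=(4m)^{A_{\rm norm}}$ only when $1+A_L\le4m$. That threshold is not absolute: it depends on $A_L$, which you choose afterwards with $A_L\ge A_{\rm ep}+O(1)$.

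At $m=1$, $P_{\rm norm}=4^{A_{\rm norm}}$ is a fixed number while the cost grows with $A_L$. So any valid choice needs roughly $A_{\rm norm}\ge d_{\rm norm}+\log_4D_{\rm norm}+d_{\rm norm}\log_4(1+A_L)$. ``Absorbing small $m$ by enlarging $D_{\rm norm}$'' means replacing $D_{\rm norm}$ by about $D_{\rm norm}(1+A_L)^{d_{\rm norm}}$, which is no longer independent of $A_L$. It raises $A_{\rm norm}$, hence $A_{\rm ep}=A_{\rm norm}+11$, hence the lower bound on $A_L$, and the loop reappears.

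The missing observation is that this feedback is only logarithmic in $A_L$, whereas $A_L$ need only grow linearly with $A_{\rm ep}$, so a single parameter closes it. The paper sets $A_{\rm ep}=H$, $A_{\rm norm}=H-11$, $A_L=H^2$. It then takes $H$ with $D(1+H^2)^d\le4^{H-d-11}$, which gives $D(1+A_L)^d(4m)^d\le(4m)^{H-11}=P_{\rm norm}$ for every $m\ge1$. It also requires $H^2\ge H+d\log_4(1+H^2)+D_{\rm hor}$ for the confidence logarithms.

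The same polynomial envelope must also cover $E^{-2}\ell=(128A_E)^2A_L^2c_k\ell^3$, $4J_{\rm sw}$, and the cold checkpoint computations. This is what gives $B_{\rm ep}\ge E^{-2}\ell$ and $B_{\rm ep}>4J_{\rm sw}$, as Lemma~\ref{lem:checkpointed_path} requires; you never check these. They are why the paper also ties $c_{\rm sw}=b_{\rm sw}/H^2$, $A_{\rm sw}$ and $A_{\rm call}$ to $H$, which keeps $R/\tau_{\rm sw}$, and hence $J_{\rm sw}$, independent of $A_L$.

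\textbf{Eligibility.} Decreasing $c_R$ controls only the principal transition, whose drift is $O(R)$. The old-query-to-surrogate and same-point refinement transitions have drift $O(E)$, and $E=K/(A_EA_L\ell)$ does not depend on $c_R$. So $C_\infty\rho_\infty\le h_0$ for those transitions must come from enlarging $A_E$. The same holds for $E\le E_{\rm cmp}$, which the companion constant $Q$ you fixed first relies on.

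More importantly, making the principal transition ordinary-eligible is not what the paper needs downstream. Its ordinary charge is only bounded by $C_\infty A_\infty R=O(\sqrt q\,E)$. That would destroy the $O(E)$ per-call drift of Lemma~\ref{lem:candidate_step_effective_drift}, which invokes this lemma precisely for \emph{weighted}-branch eligibility. You need two things:
\begin{itemize}
\item $\rho_w\le A_wE$ together with $E\le h_0/(C_wA_w)$, obtained by enlarging $A_E$;
\item $\|d^\circ-\lambda_w\mathbf1\|_\infty\le2\|d^\circ\|_\infty\le2A_\infty R\le1/8$, obtained by decreasing $c_R$ and using that a weighted mean lies between the extreme coordinates.
\end{itemize}
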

\begin{proof}
Retain first all structural and deterministic constants already fixed in
Lemma~\ref{lem:constant_hierarchy}.  The proof of
Lemma~\ref{lem:candidate_query_companions} gives an absolute $E_{\rm cmp}>0$
and an absolute $Q$ for every $E\leq E_{\rm cmp}$.  Fix this $Q$.
Lemma~\ref{lem:candidate_nonlocal_endpoint} then fixes absolute constants
$K_Q$ and $\rho_Q$.

Let $c_{\rm ord},C_{\rm ord}$ be the constants in the moment-generating
function bound of Lemma~\ref{lem:candidate_ordinary_relative_transition}.
Choose
\[
 C_\infty\geq
 \max\{4,\sqrt{C_{\rm ord}},1/(4c_{\rm ord})\},
 \qquad C_w\geq\max\{K_Q,1\}.
\]
These choices make Eq.~\eqref{eq:candidate_unified_recurrence} and
Eq.~\eqref{eq:candidate_unified_mgf} literal for both transition branches.
Next choose
\[
 h_0\leq
 \min\{\log(4/3),C_\infty/8,C_w\rho_Q,1/16\}.
\]
Thus eligibility implies $\rho_\infty\leq1/8$ in the ordinary branch and
$\rho_w\leq\rho_Q$ in the weighted branch, while
$e^{-h_0}\geq3/4$.

The already proved query and movement bounds give absolute constants
$A_{\rm ord},A_w,A_\infty$ such that all old-query-to-surrogate and
same-point refinement transitions satisfy
$\rho_\infty\leq A_{\rm ord}E$, and the principal transition satisfies
\[
 \rho_w\leq A_wE,
 \qquad \|d^\circ\|_\infty\leq A_\infty R.
\]
Decrease the absolute radius multiplier $c_R$ once, and then increase $A_E$
once, so
that
\[
 E\leq\min\{E_{\rm cmp},1,
 h_0/(C_\infty A_{\rm ord}),h_0/(C_wA_w)\},
 \qquad A_E\geq400,
 \qquad 2A_\infty R\leq1/8.
\]
This is compatible with all old upper-bound requirements because
$E=K/(A_EL_{\rm ch})$ and $R=c_RCE$.  The ordinary transitions are now
eligible.  For the principal transition, its weighted mean lies between the
smallest and largest coordinates, and hence
\[
 \|d^\circ-\lambda_w\mathbf1\|_\infty
 \leq2\|d^\circ\|_\infty\leq1/8,
 \qquad C_w\rho_w\leq h_0.
\]
Thus its weighted branch is eligible as well.

Choose $c_{\rm anc}$ so that
$c_{\rm anc}\leq\min\{1/8,c_0/2\}$ and all old consumer tolerances hold.
This only
strengthens the old score requirements.  Since $E\leq1$, the incoming
relative error is then below $1/4$.  Finally choose
$C_0=\Omega(c_{\rm anc}^{-2})$ for the raw chi-square base and
$C_1=\Omega(c_{\rm anc}^{-2})$ for the maximal Bernstein bound, and choose
$C_{\rm fp}$ large enough for the fixed-point chi-square bound.  Increasing
these sample multipliers changes no earlier radius or eligibility choice.

Next choose $A_{\rm path}$ to dominate both deterministic path loops and
$c_{\rm norm}$ small enough for Lemma~\ref{lem:epoch_normalization}.
We choose the remaining scales jointly, retaining the dependence on $A_L$.
Fix an absolute $b_{\rm sw}>0$ with
\[
 b_{\rm sw}\leq\min\{2^{-22},c_R(\log4)^2/(32A_E)\},
\]
and fix an absolute $D_{\rm sw}$ sufficiently large relative to
$A_0/b_{\rm sw}$.  For every $A_L\geq1$, the prescriptions
$c_{\rm sw}=b_{\rm sw}/A_L$ and $A_{\rm sw}=D_{\rm sw}A_L$
satisfy Eq.~\eqref{eq:switch_constant_choice} and the cost-switch estimate.
Any additional fixed-factor normalization or switch accuracy is imposed
by decreasing $c_{\rm norm},b_{\rm sw}$ and increasing $D_{\rm sw}$ now.
These constants will not change below.

There are fixed $D\geq1$ and an integer $d\geq1$, independent of $A_L$,
such that
\begin{equation}\label{eq:joint_epoch_polynomial_bound}
 P_{\rm chk},\quad E^{-2}\ell+4J_{\rm sw}+1
 \leq D(1+A_L)^d(4m)^d.
\end{equation}
Here $P_{\rm chk}$ bounds each resource count for the two cold checkpoint
computations and normalization together.  For its first bound use
Eq.~\eqref{eq:normalization_uniform_cost} and the exact fixed-point
schedule.  For the second bound, use
$E^{-2}\ell=(128A_E)^2A_L^2c_k\ell^3$ and
\[
 \frac{R}{\tau_{\rm sw}}
 =\frac{c_R\Lambda_{\rm out}\ell^2}{8A_Eb_{\rm sw}}.
\]
In particular $J_{\rm sw}=O(c_k(1+\log\ell))$ uniformly in $A_L$;
the same bound holds with $4R$ in its definition.  Increase $D,d$ once
so that $D(1+A_L)^d$ also dominates the fixed number of score queries
per atomic instruction and the coefficient needed for the cold
initializer's $O(\ell+\log(1+A_L))$ rounds.  Exact checkpoint queries
will be covered by $P_{\rm chk}\leq B_{\rm ep}$ below.

For an integer $H>d+11$, define
\begin{equation}\label{eq:joint_epoch_constants}
 \begin{aligned}
 A_{\rm ep}&:=H,& A_{\rm norm}&:=H-11,& A_L&:=H^2,\\
 c_{\rm sw}&:=b_{\rm sw}/H^2,&
 A_{\rm sw}&:=D_{\rm sw}H^2,&
 A_{\rm call}&:=D(1+H^2)^d.
 \end{aligned}
\end{equation}
Choose $H$ sufficiently large that
\begin{equation}\label{eq:joint_epoch_absorption}
 D(1+H^2)^d\leq4^{H-d-11},
 \qquad
 H^2\geq H+\frac{d\log(1+H^2)}{\log4}+D_{\rm hor},
\end{equation}
where $D_{\rm hor}$ is fixed independently of $H$ and dominates the
remaining tracking and confidence constants.  Such an integer exists because the
first right-hand side grows exponentially in $H$, whereas its left-hand
side grows polynomially, and the second left-hand side grows quadratically.
We can also satisfy every earlier absolute lower bound on $A_L$.
For every $m\geq1$,
\[
 D(1+H^2)^d(4m)^d\leq(4m)^{H-11}
 =P_{\rm norm}<B_{\rm ep}=(4m)^H,
\]
where the first step follows from Eq.~\eqref{eq:joint_epoch_absorption}
and $4m\geq4$, the second step follows from
Eq.~\eqref{eq:joint_epoch_constants}, and the third step follows from
$4m>1$.  Thus $B_{\rm ep}\geq E^{-2}\ell$,
$B_{\rm ep}>4J_{\rm sw}$, and all checkpoint work is absorbed.
Moreover $\beta\leq256m$ and $q\ell+1\leq2B_{\rm ep}$, so both
$\log(A_0B_{\rm ep}m\beta)$ and
$\log(40m\lceil A_{\rm call}(B_{\rm ep}+q\ell+1)\rceil)$ are at most
\[
 (H+\frac{d\log(1+H^2)}{\log4}+D_{\rm hor})\ell
 \leq A_L\ell,
\]
where the first step follows from substituting
Eq.~\eqref{eq:joint_epoch_constants} and using $\ell\geq\log4$, and
the second step follows from Eq.~\eqref{eq:joint_epoch_absorption}.
The initializer and per-instruction confidence logarithms obey the same
bound after the fixed choice of $D_{\rm hor}$.  This fixes every constant
in the parenthesized block without increasing $A_L$ afterward.

Choose $A_\kappa$ to dominate both phase log-ratios and the terminal
logarithms, including the now-fixed switch constants.  Finally choose
$A_N$ to dominate the frozen analytic atomic-path instruction count,
then $A_{\rm loc}$ to dominate the four good-attempt
charges in Eq.~\eqref{eq:attempt_resource_caps}, and then $A_B$ to dominate
the global system, round, and scalar work constants, including
$C_0,C_1,C_{\rm fp}$ and $1/h_0$.

After $A_B$ no absolute constant is changed.  In particular, no radius is
shrunk after $c_{\rm sw}$ is selected and no analytic work constant is
increased after the caps are selected.  This proves the claimed order and
fixes one algorithm rather than a separate choice for each call.

Under the standing no-zero-row assumption, every score is positive.  If
zero rows are retained instead, positive diagonal rescaling preserves them,
their projection columns and sketches are identically zero, and the exact
zero conventions in the three transition lemmas apply.
\end{proof}

The score complexity is determined by the total effective drift accumulated
across all queries in one centering call.  The following lemma charges every
such transition and proves the required $O(E)$ bound.
\begin{lemma}[Effective drift of one centering call]
\label{lem:candidate_step_effective_drift}
Before the minimum of the existing coarse, tracking, companion, and raw-score
stopping times, all score transitions in one call to
Algorithm~\ref{alg:fresh_predicted_center} have total hybrid effective drift
$O(E)$ and use $O(1)$ transitions.
\end{lemma}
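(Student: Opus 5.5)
We enumerate the score queries issued by one call to Algorithm~\ref{alg:fresh_predicted_center} and charge each consecutive transition separately. The queries are, in order:
\begin{itemize}
\item[(i)] the surrogate query $D^+_{\rm qry}$ at the pre-Newton point $x$, with companion $\operatorname{diag}(W^+)$;
\item[(ii)] the $J=O(1)$ maintained-mode fixed-point queries $D_j$ at $x^+$, with companions $z^{(j)}$. Their count is $O(1)$ by Lemma~\ref{lem:floor_homotopy_initialization} in hybrid maintained mode, since $D=E$ and $\epsilon_{\rm fp}=E/2$.
\end{itemize}
The incoming state is the last fixed-point query of the previous call (or a fresh handoff). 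The transition into it is $\mathcal S\to D^+_{\rm qry}$. Hence there are $O(1)$ transitions, and it remains to bound each effective drift $\eta_t=\min\{C_\infty\rho_\infty,C_w\rho_w\}$. Restart charges do not change this, since a forced restart also charges at most $\eta_t$ or $h_0$. Eligibility is guaranteed by Lemma~\ref{lem:candidate_hybrid_constant_hierarchy}, so no transition is charged the default $h_0$.

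\emph{Same-point transitions.} Consecutive fixed-point queries $D_j,D_{j+1}$ share the scaling $s(x^+)$ and differ only through $z^{(j)}$ versus $z^{(j+1)}$. Both companions are within $A_{\rm q}E$ of $g(x^+)$, which follows from the warm start (Lemma~\ref{lem:large_radius_warm_start}) and the contraction of Lemma~\ref{lem:linf_contraction}. Therefore
\[
\|d\|_\infty\leq(|\vartheta|/2)\cdot 2A_{\rm q}E=O(E).
\]
The ordinary branch then gives $\rho_\infty=O(E)$ and $\eta=O(E)$.

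The same argument handles the transition from the surrogate query to $D_0$ only partly. The surrogate query sits at $x$ with weights $W^+\approx g(x)$, while $D_0$ sits at $x^+$ with companion $z=\widehat g\,e^{\widehat u}$. This transition, and the incoming transition from the previous call's last query to the current surrogate, are the two principal transitions. Their row-log displacement $d^\circ$ has $\ell_\infty$ size $O(R)$ rather than $O(E)$. For these we use the weighted branch, and this is the main obstacle.

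\emph{Principal transitions.} The goal is $\rho_w=O(E)$ even though $\|d^\circ\|_\infty=O(R)$. Write
\[
d^\circ=\psi+(\vartheta/2)\,(\log z-\log W^+)+O(E)\ \text{terms},
\]
where the $O(E)$ terms come from the companion errors. Up to $O(E)$, the companion difference equals the predicted increment $\widehat u$, which is within $s/32$ of $u=B\psi+O(\delta^2)$. So
\[
d^\circ\approx\psi-a'B\psi+O(E)
\]
coordinatewise, for a fixed constant $a'$.

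The weighted norm $\|\cdot\|_{\widetilde w}$ is comparable to $\|\cdot\|_g$ by Lemma~\ref{lem:candidate_query_companions}, so it is enough to bound the centered $g$-norm of $\psi-a'B\psi$. Two inputs are needed here:
\begin{itemize}
\item the bound $C\|\psi\|_g\leq2e^{K}\delta$ from the proof of Lemma~\ref{lem:large_radius_retained_movement};
\item $\delta\leq R=c_RCE$.
\end{itemize}
Together these give $\|\psi\|_g,\|B\psi\|_g=O(R/C)=O(c_RE)$. Centering by $\lambda_w$ only decreases the weighted norm, so $\rho_w=O(E)$; this is the constant $A_w$ already used in Lemma~\ref{lem:candidate_hybrid_constant_hierarchy}. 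The coordinatewise error terms $O(E)$ and $O(s_k)\le E/100$ contribute to $\rho_w$ through $\sqrt{\|\widetilde w\|_1}$, and that factor is not constant. If the contribution were bounded by $\|\cdot\|_\infty\sqrt{\|\widetilde w\|_1}$, the bound would be too weak. I expect this to be the delicate point.

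The fix I would try first is to route these small errors through the ordinary branch instead. Split the principal transition conceptually: the hybrid rule takes the minimum over branches, and I would argue that the $O(E)$ $\ell_\infty$ error part is separately covered. Failing that, I would instead measure the companion error in the $w$-norm. The companion errors come from $\widehat u-u$, whose $w$-norm is $O(s/C)$ by Lemma~\ref{lem:linear_defect_projection}, and from fixed-point errors that are themselves bounded in the $\widetilde w$-norm through the mixed-norm movement bounds.

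Summing the $O(1)$ transitions, each with $\eta=O(E)$, gives total effective drift $O(E)$, all before the stated stopping times.
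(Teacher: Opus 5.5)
There is a genuine gap. It is at the point you flag as delicate, and it comes from a misreading of the principal transition. The two queries are
$D^+_{\rm qry}=(W^+)^{\vartheta/2}\operatorname{Diag}(s(x))$ with $W^+=e^{c_+E}\operatorname{Diag}(\widehat g)$, and
$D_0=\operatorname{Diag}(z)^{\vartheta/2}\operatorname{Diag}(s(x^+))$ with $z=\widehat g\odot e^{\widehat u}$. Both are built from the same $\widehat g$, so it cancels exactly. Modulo the common scalar $c_+E$, the displacement is $d^\circ=\psi+\frac{\vartheta}{2}\widehat u$, with no companion-error terms.

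So the $O(E)$ and $O(s_k)$ coordinatewise errors you try to control do not arise. Your proof, as written, leaves them unresolved with two tentative fixes. The first fix cannot work: the provider applies a single branch to the entire displacement of a transition. The ordinary branch would then be charged $C_\infty\rho_\infty=\Theta(R)$, losing the factor $R/E=\Theta(\sqrt q)$. The second fix cites a conditional mean-square bound, which is not pointwise.

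The detour through $u\approx B\psi$ is also unnecessary. The mixed-norm movement bounds hold pointwise: $N_w(\psi)\le2\delta$ and $N_w(\widehat u)\le r\le\Delta$. Hence $C\|d^\circ\|_w\le N_w(\psi)+\frac{|\vartheta|}{2}N_w(\widehat u)=O(R)$. Since $W^+$ and $w$ agree up to an absolute coordinatewise factor on the tracking band, $\|d^\circ\|_{W^+}=O(R/C)=O(E)$. Centering by $\lambda_w$ only decreases this. Eligibility follows from $\|d^\circ-\lambda_w\mathbf1\|_\infty\le2\|d^\circ\|_\infty=O(R)\le1/8$. This is the paper's argument and the step your proof is missing.

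You also misclassify the incoming transition. The previous call's fixed-point queries were made at its $x^+$, which is the current $x$. After initialization or normalization, the handed-off query is likewise at the current $x$, and path-parameter updates do not move $x$.

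So the transition from the stored last query to the current surrogate is a same-point transition. Its companions $z^{(J-1)}$ and $W^+$ are both $e^{\pm O(E)}g(x)$. Its row-log drift is therefore $O(E)$ in $\ell_\infty$, and the ordinary branch charges $O(E)$. Your weighted-branch argument via $\psi-a'B\psi$ does not apply here, because no Newton step separates these two queries.

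With these two corrections, your treatment of the same-point refinements and your $O(1)$ transition count complete the proof.
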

\begin{proof}
There are exactly three types of transition.

First, the last old fixed-point query at $x$ moves to the inflated surrogate
query at the same $x$.  Its old companion and $W^+$ are both
$e^{\pm O(E)}g(x)$, so the ordinary centered drift is $O(E)$.  After the
absolute constants are fixed, the existing choice of sufficiently large
$A_E$ makes its effective drift at most $h_0$.

Second, the surrogate query at $x$ moves to the first predicted fixed-point
query at $x^+$.  Modulo the scalar $c_+E$, its row-log drift is
\[
 d^\circ=\psi+\frac{\vartheta}{2}\widehat u.
\]
The existing movement lemmas give
$N_w(\psi)=O(R)$ and $N_w(\widehat u)\leq r\leq\Delta=O(R)$.  Therefore
\[
 \|d^\circ\|_w=O(R/C)=O(E),
 \qquad \|d^\circ\|_\infty=O(R)\leq1/8.
\]
The path weight and $W^+$ are within an absolute coordinatewise factor before
the tracking stop.  Thus the computable weighted centered drift is at most
$\|d^\circ\|_{W^+}=O(E)$.  Since a weighted mean lies between the minimum
and maximum coordinates,
\[
\|d^\circ-\lambda_w\mathbf1\|_\infty
 \leq2\|d^\circ\|_\infty\leq1/8.
\]
These are precisely the inequalities enforced by the already frozen
$c_R,A_E$ in Lemma~\ref{lem:candidate_hybrid_constant_hierarchy}; in
particular $C_w\rho_w\leq h_0$, so the weighted branch is eligible.

Third, the remaining constant number of fixed-point refinements are all at
$x^+$.  Lemma~\ref{lem:large_radius_warm_start} gives total ordinary row
drift $O(E)$, and again every such branch is eligible.  At an ordinary
block boundary the algorithm charges the selected drift, not $h_0$; hence
adding the three contributions proves the stated $O(E)$ effective charge
even when the call starts a fresh block.  No score
query is made by the predictors, defect certificate, projection, greedy
correction, residual solve, path-parameter update, or cost-vector switch.
\end{proof}

Raw-relative maintenance also needs an exact specification of its initial
state and of the companion associated with the last actual query.  The next
lemma establishes this handoff from the fresh fixed-point routine.
\begin{lemma}[Raw-relative initialization and state handoff]
\label{lem:candidate_raw_initialization}
In the fresh branch of Algorithm~\ref{alg:fresh_fixed_point}, the final
Gaussian score sketch satisfies, on the same final-round event used in the
fixed-point proof,
\[
 |\widetilde\sigma_i^{(J-1)}-\sigma_i(D_{J-1})|
 \leq(c_{\rm anc}E/8)\sigma_i(D_{J-1})
 \qquad(i\in[m]).
\]
The exact branch satisfies the same statement with zero error.  The score
state handed to Algorithm~\ref{alg:candidate_hybrid_score} is
\[
 \mathcal S=
 (D_{J-1},z^{(J-1)},\widetilde\sigma^{(J-1)},0),
\]
where $z^{(J-1)}$ is the companion of the actual last query $D_{J-1}$;
the returned fixed-point vector $z^{(J)}$ is not substituted for this stored
companion.  In general the final Gaussian row count is
$O((\epsilon_{\rm fp}^{-2}+E^{-2})\log(2mJ/\zeta))$; in the outer solver
$\epsilon_{\rm fp}=E/4$, so this is $O(E^{-2}\log(2mJ/\zeta))$.
\end{lemma}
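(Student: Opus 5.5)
The statement has three parts: the final-round score accuracy, the exact identity of the handed-off state, and the row count. Each follows directly from Algorithm~\ref{alg:fixed_point_schedule} and the chi-square argument already made in the proof of Lemma~\ref{lem:floor_homotopy_initialization}. I will reuse that event rather than introduce a new one.

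First, the accuracy claim. In the schedule, the last round sets $\theta_{J-1}\le c_{\rm anc}E/16$ and $k_{J-1}=\lceil C_{\rm fp}\theta_{J-1}^{-2}\log(2mJ/\zeta)\rceil$. Conditional on $D_{J-1}$, which is fixed before the fresh $G_{J-1}$ is drawn, each nonzero score ratio $\widetilde\sigma_i^{(J-1)}/\sigma_i(D_{J-1})$ has the law $\chi^2_{k_{J-1}}/k_{J-1}$. The standard two-sided chi-square tail, combined with a union bound over $i\in[m]$, gives
\[
 |\widetilde\sigma_i^{(J-1)}-\sigma_i(D_{J-1})|\le\theta_{J-1}\sigma_i(D_{J-1})\le(c_{\rm anc}E/16)\sigma_i(D_{J-1})
\]
with conditional failure at most $\zeta/J$. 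This is the same per-round event counted in the fixed-point proof, so no additional failure budget is spent. Since $c_{\rm anc}E/16\le c_{\rm anc}E/8$, the claimed bound follows. Zero scores are returned exactly, because a zero score means the row is zero and every positive rescaling keeps it zero. In the exact branch $\widetilde\sigma^{(J-1)}=\sigma(D_{J-1})$ holds by construction, so the error is zero.

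Second, the handoff. I will read off the last lines of Algorithm~\ref{alg:fresh_fixed_point}. When $\mathcal S=\varnothing$, the algorithm sets $\mathcal S\gets(D_{J-1},z^{(J-1)},\widetilde\sigma^{(J-1)},0)$. Here $D_{J-1}=\operatorname{Diag}(z^{(J-1)})^{\vartheta/2}D_x$ is formed from $z^{(J-1)}$, so $z^{(J-1)}$ is exactly the companion of the last actual query. The vector $z^{(J)}$ is produced only afterward and is returned separately, never stored in $\mathcal S$. This step is pure bookkeeping.

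Third, the row count. Unfolding the minimum gives $\theta_{J-1}^{-2}\le\max\{64,(2p/\delta_{J-1}^{\rm map})^2,(16/(c_{\rm anc}E))^2\}$. In the fine regime $\delta_{J-1}^{\rm map}=(1-\varrho)e_{J-1}/4$, and $e_{J-1}=\Theta(\epsilon_{\rm fp})$, which follows from the last pre-update bound in the fixed-point proof together with $e_{J-1}>\epsilon_{\rm fp}$. In the coarse regime the map error is a constant. In both cases $(\delta_{J-1}^{\rm map})^{-2}=O(\epsilon_{\rm fp}^{-2}+1)$, with $p$ and $1-\varrho$ absolute, and $c_{\rm anc}$ is absolute. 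Hence
\[
 k_{J-1}=O\bigl((\epsilon_{\rm fp}^{-2}+E^{-2})\log(2mJ/\zeta)\bigr),
\]
where the constant $1$ is absorbed because $E\le1$. Substituting $\epsilon_{\rm fp}=E/4$ collapses this to $O(E^{-2}\log(2mJ/\zeta))$.

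The only step needing care is justifying $e_{J-1}=\Omega(\epsilon_{\rm fp})$ in the fine regime, so that the map-error term is controlled by $\epsilon_{\rm fp}^{-2}$ rather than by something smaller. This follows from the schedule's stopping rule: the loop continues while $e_j>\epsilon_{\rm fp}$, so $e_{J-1}>\epsilon_{\rm fp}$ whenever $J\ge2$. The case $J=1$ gives $e_0=D\ge\epsilon_{\rm fp}$, which suffices as well.
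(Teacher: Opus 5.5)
Your proposal is correct and follows essentially the same argument as the paper. You reuse the final-round chi-square event with $\theta_{J-1}\le c_{\rm anc}E/16$ at no extra failure cost, read the handoff directly off the pseudocode, and bound $k_{J-1}$ via $e_{J-1}\ge\epsilon_{\rm fp}$ from the stopping rule, including the forced one-round case $e_0=D\ge\epsilon_{\rm fp}$. Your extra coarse-regime case and the upper bound $e_{J-1}=O(\epsilon_{\rm fp})$ are unnecessary but harmless: the last round is always in the fine regime, and only the lower bound on $e_{J-1}$ is used.
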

\begin{proof}
Conditional on $D_{J-1}$, every nonzero coordinate of a $k$-row Gaussian
score sketch obeys
\[
 \frac{\widetilde\sigma_i^{(J-1)}}
 {\sigma_i(D_{J-1})}\stackrel{d}{=}\frac{\chi_k^2}{k}.
\]
The explicit schedule has
$\theta_{J-1}\leq c_{\rm anc}E/16$ and
\[
 k_{J-1}=\lceil
 C_{\rm fp}\theta_{J-1}^{-2}\log(2mJ/\zeta)\rceil
 =O((\epsilon_{\rm fp}^{-2}+E^{-2})\log(2mJ/\zeta)).
\]
Here minimality of $J$ gives $e_{J-1}\geq\epsilon_{\rm fp}$; equality can
occur in the forced one-round case because the input assumption has
$D\geq\epsilon_{\rm fp}$.  Hence the unclamped last-round tolerance is
$\Omega(\epsilon_{\rm fp})$, which justifies the displayed upper bound
after taking the minimum with $c_{\rm anc}E/16$.
The chi-square event of the fixed-point proof therefore gives the displayed
simultaneous raw-relative event with failure probability at most $\zeta/J$;
it is not a new event and requires no extra sketch.
For the call made by Algorithm~\ref{alg:lp_solve_fresh},
$\epsilon_{\rm fp}=E/4$, giving the claimed $E^{-2}$ specialization.
Zero-score coordinates and the exact branch are exact.  By construction,
$D_{J-1}$ was formed from $z^{(J-1)}$; $z^{(J)}$ is computed only after
that query response.  Storing the former therefore gives precisely the
old-query companion needed to compute the next weighted drift.
\end{proof}

The accuracy proof must stop at the first violation without conditioning the
past on a future success event.  The following exact-continuation coupling
provides that filtration-safe comparison process.
\begin{lemma}[Raw-relative score coupling with exact continuation]
\label{lem:candidate_raw_score_coupling}
Let $\tau_{\rm aux}$ be the existing first coarse, tracking, algorithmic,
or companion failure, taken immediately before the next score query.  Let
$\tau_{\rm sc}$ be the first query before $\tau_{\rm aux}$ whose stored raw
post-update state violates Eq.~\eqref{eq:candidate_raw_relative_accuracy}.
Couple the implemented process to an analysis process that agrees with it
strictly before $\tau_{\rm sc}$.  At $\tau_{\rm sc}$ and at every later
score query, the analysis process supplies the exact score vector instead
of the maintained response.  Thus every response consumed by the analysis
process either satisfies the raw-relative guarantee or is exact, and a
score violation never stops a centering call after its predictor noise has
been drawn.  Moreover,
\[
 \Pr[\tau_{\rm sc}<\tau_{\rm aux}\mid
 \text{raw-relative initialization event}]\leq1/40.
\]
The predictor bias and fluctuation identities retain their original
conditional sigma-fields, and the coupled process may be stopped only at
the existing pre-noise auxiliary stops.
\end{lemma}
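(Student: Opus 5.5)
The plan is to build the analysis process explicitly as a measurable modification of the implemented process, then transfer the probability bound from Lemma~\ref{lem:candidate_hybrid_maintenance}. First I would fix the filtration: for each score query, take the sigma-field immediately before its fresh sketch. With respect to it, $\tau_{\rm aux}$ is a stopping time whose occurrence is decided before the next query, and $\tau_{\rm sc}$ is a stopping time as well. The latter holds because the event that the stored post-update raw state at query $t$ violates Eq.~\eqref{eq:candidate_raw_relative_accuracy} is measurable once that query's sketch is drawn.

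Next I would define the coupling by coordinates on one probability space. All sketches are drawn as in the implementation, and the analysis process uses the same sketches. Strictly before $\tau_{\rm sc}$ it outputs the maintained response. From $\tau_{\rm sc}$ on it replaces every score response by $\sigma(D_t)$, while continuing to draw, but ignore, the same sketches. This keeps the randomness driving the predictors unchanged.

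Three consequences then follow directly.
\begin{itemize}
\item[(i)] The two processes agree on $\{t<\tau_{\rm sc}\}$.
\item[(ii)] Every response the analysis process consumes is either raw-relative, since it was produced before the first violation, or exact. The clipped output inherits the raw bound, as noted after Eq.~\eqref{eq:candidate_raw_relative_accuracy}.
\item[(iii)] The switch to exact responses is triggered only at a score query. Score queries precede the predictor noise draws within a centering call, by the within-step filtration of Section~\ref{subsec:within_step_hybrid_filtration}. So a violation never interrupts a call after its predictor noise has been drawn. In particular, the conditional statements of Lemmas~\ref{lem:large_radius_split_predictor} and~\ref{lem:linear_defect_projection} hold with the same $\mathcal G_k$, because the exact responses are deterministic functions of already-measurable query endpoints.
\end{itemize}

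For the probability bound I would invoke Lemma~\ref{lem:candidate_hybrid_maintenance} on the stopped process frozen at $\tau_{\rm aux}$. Before $\tau_{\rm aux}$, the companion band holds by Lemma~\ref{lem:candidate_query_companions}. Every transition lies in an eligible branch by Lemma~\ref{lem:candidate_hybrid_constant_hierarchy}. The block input is raw-relative on the initialization event by Lemma~\ref{lem:candidate_raw_initialization}. That lemma's stopped maximal Bernstein argument, Lemma~\ref{lem:candidate_contractive_bernstein} with predictable activity indicators, bounds the probability that some raw state violates the accuracy bound before the stop by $1/40$. The union over at most $M_{\rm ep}+1$ blocks and $m$ coordinates is already included in that bound. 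On $\{\tau_{\rm sc}<\tau_{\rm aux}\}$ such a violation occurs in the implemented process, which coincides with the analysis process up to and including query $\tau_{\rm sc}$. This gives the claimed $1/40$.

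The main obstacle is making sure the conditioning is legitimate. The bound must not condition on future score success, and each stopping rule must be pre-noise. I would handle this by noting two things. First, the maintenance lemma's crossing bound is stated for first-exit stopping with the crossing update included, so $\tau_{\rm sc}$ is exactly such a first exit. Second, the only stops applied to the coupled process are the existing pre-noise auxiliary stops, whose activity indicators are predictable.
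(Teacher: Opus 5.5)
Your route is the paper's. The $1/40$ bound comes from Lemma~\ref{lem:candidate_query_companions}, which supplies the companion band before $\tau_{\rm aux}$, followed by Lemma~\ref{lem:candidate_hybrid_maintenance} applied to the stopped implemented process. The coupling switches permanently to exact responses, which are deterministic functions of their queries, so nothing is conditioned on future score success. However, the justification in your item (iii) rests on a false premise. Score queries do not all precede the predictor noise within a centering call. In Algorithm~\ref{alg:fresh_predicted_center}, and explicitly in the within-step filtration of Section~\ref{subsec:within_step_hybrid_filtration}, the fixed-point score responses are drawn after $\mathcal F_{k,2}$, that is, after the weighted predictor; only the surrogate query comes before it. (In the lazy Algorithm~\ref{alg:gram_center} the preparation queries do come first, but the lemma is invoked for the original centering step too.) So $\tau_{\rm sc}$ can fall at a query made after that call's predictor noise has been drawn, and your timing argument does not establish the claim.

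The claim is still true, for the reason the paper gives: the analysis process never stops at $\tau_{\rm sc}$. It continues the current call with exact responses. The weighted predictor of that call was drawn with its original conditional law given $\mathcal G_k$. The later exact responses are deterministic functions of their query endpoints. Hence the bias and fluctuation statements for $e_k$ relative to $\mathcal G_k$ are untouched. The subsequent dependence of $\widehat g^+$, $d_{\rm obs}$, and the greedy move on $Z_k$ is already allowed by Lemma~\ref{lem:residual_chasing}, which asserts no independence between the greedy move and $Z_k$. This is exactly why the within-step filtration says the fixed-point responses need not be independent of the weighted predictor. Replace your timing argument with this no-censoring argument and the rest goes through. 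One harmless slip remains: the two processes coincide strictly before $\tau_{\rm sc}$, not ``up to and including'' it. This does not matter, because the violation event is read off the implemented process's raw state.
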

\begin{proof}
Strictly before $\tau_{\rm aux}$,
Lemma~\ref{lem:candidate_query_companions} supplies the companion band of
Lemma~\ref{lem:candidate_hybrid_maintenance}.  That lemma therefore bounds
the first raw-relative score violation by $1/40$.  All transition endpoints,
companions, branch choices, effective drifts, and row counts are functions
of the past and are fixed before the transition's fresh sketch.  At the
first bad response, switch the analysis process permanently to exact scores.
All later exact responses are deterministic functions of their queries,
while predictor randomness continues to be drawn with its original
conditional law.  Hence no current centering call is censored according to
a score event revealed after its weighted predictor.  The implemented and
analysis processes agree until the first violation and agree throughout on
the event that no violation occurs.  No argument conditions on that event,
so the existing predictor martingale identities are unchanged.  If an
independent auxiliary stop is used, it is taken only at its already specified
pre-noise boundary.
\end{proof}

The raw-relative guarantee must finally be translated into the additive score
accuracy expected by the fixed-point, surrogate, and predictor interfaces.
The next lemma performs this transfer, including clipping and zero scores.
\begin{lemma}[Transfer from raw-relative scores to the path interfaces]
\label{lem:candidate_score_interface}
On the raw-relative initialization event, consider the coupled process of
Lemma~\ref{lem:candidate_raw_score_coupling}.  At an executed query $D$, let
$\sigma_i=\sigma_i(D)$ and let
$\widehat\sigma_i=\min\{\max\{h_i,0\},1\}$.  At the first violating query
and at every later query replaced by the coupling, define
$\widehat\sigma=\sigma$.  If
$c_{\rm anc}E\leq1/2$ and $c_{\rm anc}\leq\min\{1/8,c_0\}$, then
\begin{equation}
\label{eq:candidate_anchor_transfer}
 |\widehat\sigma_i-\sigma_i|
 \leq c_{\rm anc}E\sigma_i
 \leq c_{\rm anc}E(\sigma_i+v_i).
\end{equation}
Thus every fixed-point score satisfies the hypothesis of
Lemma~\ref{lem:linf_contraction}, and every inflated-surrogate score
satisfies the additive hypothesis of
Lemma~\ref{lem:certified_surrogate}.  These are the only hybrid-score
consumers in Algorithm~\ref{alg:fresh_predicted_center}.  Every centering
call completed before the auxiliary stop incurs $O(E)$ total hybrid effective drift and
$O(1)$ score transitions, while its non-score work and depth are
$O(L_*+q)$ systems and $O(\log q)$ system rounds.
\end{lemma}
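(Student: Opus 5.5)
The argument splits into two cases according to the coupling of Lemma~\ref{lem:candidate_raw_score_coupling}. If the executed query lies at or after the first violation, the coupled process supplies the exact score and we set $\widehat\sigma=\sigma$, so Eq.~\eqref{eq:candidate_anchor_transfer} holds with zero error. Otherwise the stored raw state obeys Eq.~\eqref{eq:candidate_raw_relative_accuracy}, so $|h_i-\sigma_i|\leq c_{\rm anc}E\sigma_i$. The true score satisfies $\sigma_i\in[0,1]$, and clipping onto the interval $[0,1]$, which contains $\sigma_i$, is a projection onto a convex set. It is therefore nonexpansive toward $\sigma_i$, which gives $|\widehat\sigma_i-\sigma_i|\leq|h_i-\sigma_i|$. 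A zero score is returned exactly by the zero-row conventions of the transition lemmas, and under the standing no-zero-row assumption every score is positive anyway. The second inequality in Eq.~\eqref{eq:candidate_anchor_transfer} is immediate from $v_i\geq0$.

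Next we check the two consumers. For the fixed-point map, Lemma~\ref{lem:linf_contraction} asks for an additive error $O(\delta)(\sigma_i+v_i)$ in the score, equivalently relative accuracy $e^{\pm2\delta/p}$ of $\sigma_i+v_i$. Eq.~\eqref{eq:candidate_anchor_transfer} gives relative accuracy $1\pm c_{\rm anc}E$ of $\sigma_i+v_i$. Since $c_{\rm anc}E\leq1/2$, this converts to a logarithmic error at most $2c_{\rm anc}E$. After multiplying by $p/2$, the logarithmic map error is at most $c_{\rm anc}E\leq pE_{\rm req}/8$ for the accuracies requested in hybrid maintained mode of Lemma~\ref{lem:floor_homotopy_initialization}. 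Those accuracies are all $\Theta(E)$ with absolute constants, and $c_{\rm anc}$ was chosen small after them in Lemma~\ref{lem:candidate_hybrid_constant_hierarchy}.

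For the surrogate, the companion band of Lemma~\ref{lem:candidate_query_companions} together with the inequality $\sigma_i(P^+)\leq W^+_{ii}$ from the proof of Lemma~\ref{lem:certified_surrogate} gives $c_{\rm anc}E\sigma_i(P^+)\leq c_0EW^+_{ii}$, because $c_{\rm anc}\leq c_0$. This is exactly the additive interface that Lemma~\ref{lem:certified_surrogate} requires.

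To see that these are the only consumers, we inspect Algorithm~\ref{alg:fresh_predicted_center} line by line. Only \textsc{HybridScore} called directly for the surrogate and \textsc{FixedPoint} in auto mode issue score queries. The predictors, the certificate, the projection, the greedy oracle and the residual solve make none, as recorded in Lemma~\ref{lem:candidate_step_effective_drift}.

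The cost and drift claims then follow by quoting earlier results. Lemma~\ref{lem:candidate_step_effective_drift} gives $O(E)$ effective drift and $O(1)$ transitions per call before the stopping times. Lemma~\ref{lem:fresh_predicted_center_correctness} gives $O(L_*+q)$ non-score systems and $O(\log q)$ rounds. The one delicate point is filtration safety: the replaced responses must not alter the conditional laws used in those lemmas. For this we rely on the coupling statement that the replacement is applied only at query times, with predictor sigma-fields unchanged. We expect that bookkeeping, rather than any inequality, to be the main obstacle.
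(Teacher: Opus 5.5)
Your proof is correct and follows the paper's argument step for step: nonexpansive clipping onto $[0,1]$ plus zero error at replaced queries, the conversion $|\log\widehat\tau_i-\log\tau_i|\leq2c_{\rm anc}E$ for the fixed-point consumer, the bound $\sigma_i(P^+)\leq W^+_{ii}$ with $c_{\rm anc}\leq c_0$ for the surrogate, and Lemma~\ref{lem:candidate_step_effective_drift} plus the exact-continuation coupling for the drift and filtration-safety claims. Two small fixes. First, keep the factor $p$, so that $p\,c_{\rm anc}E\leq pE/8$ follows directly from the stated $c_{\rm anc}\leq1/8$ without appealing to later smallness. Second, take the non-score costs from Lemmas~\ref{lem:large_radius_split_predictor}, \ref{lem:computable_defect_projection}, and~\ref{lem:large_radius_warm_start}, as the paper does, rather than from Lemma~\ref{lem:fresh_predicted_center_correctness}, whose hypotheses already invoke the present lemma.
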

\begin{proof}
Every projection score lies in $[0,1]$.  Projection onto this interval is
nonexpansive and fixes $\sigma_i$, so the raw guarantee gives
\[
 |\widehat\sigma_i-\sigma_i|
 \leq|h_i-\sigma_i|
 \leq c_{\rm anc}E\sigma_i.
\]
At every replaced query the error is zero.  This proves
Eq.~\eqref{eq:candidate_anchor_transfer}.

At a fixed-point query, define $\tau_i=\sigma_i+v_i$ and
$\widehat\tau_i=\widehat\sigma_i+v_i$.  Then
\[
 (1-c_{\rm anc}E)\tau_i
 \leq\widehat\tau_i
 \leq(1+c_{\rm anc}E)\tau_i,
\]
and hence
\[
 |\log\widehat\tau_i-\log\tau_i|
 \leq\frac{c_{\rm anc}E}{1-c_{\rm anc}E}
 \leq2c_{\rm anc}E.
\]
After multiplication by the fixed-point exponent $p/2$, this is at most
$pE/8$ once $c_{\rm anc}\leq1/8$.  At the surrogate,
$\sigma_i(P^+)\leq W^+_{ii}$, and therefore
\[
 |\widehat\sigma_i-\sigma_i(P^+)|
 \leq c_{\rm anc}E\sigma_i(P^+)
 \leq c_{\rm anc}EW^+_{ii},
\]
which is the certified-surrogate hypothesis when
$c_{\rm anc}\leq c_0$.

Lemma~\ref{lem:candidate_raw_score_coupling} neither conditions on future
success nor changes any coarse conditional kernel.  Hence the bias and
second-moment statements of Lemmas~\ref{lem:large_radius_split_predictor}
and~\ref{lem:linear_defect_projection} retain their stated sigma-fields.
Lemma~\ref{lem:candidate_step_effective_drift} accounts for every score
transition in the call.  The coordinate predictor, certificate, projection,
Newton step, and fixed-point refinement have the non-score work and depth
claimed in Lemmas~\ref{lem:large_radius_split_predictor},
\ref{lem:computable_defect_projection}, and
\ref{lem:large_radius_warm_start}.  This proves the direct one-call interface.
\end{proof}

\section{Gram-pullback score transport and lazy repairs}
\label{sec:gram_transport_lazy_repairs}

We construct the score transport and auxiliary-weight maintenance used by
the path algorithm.  Pulling the new projector back to the old column space
charges a principal score transition to the retained defect.  We refine
the auxiliary weight only when its accumulated error requires it.
Section~\ref{sec:log_four} combines these routines with an amplified
predictor and the projected-residual potential to prove the final bound.

Section~\ref{subsec:gram_score_transport} derives the Gram-pullback
transition and its relative-score maintenance guarantee.
Section~\ref{subsec:gram_lazy_repairs} constructs the pointwise predictor
bound and lazy auxiliary repairs, and verifies the resulting centering
invariants.

Write $k:=c_k$, so that $C^2=256k$, and retain
$E=\Theta((\sqrt q\ell)^{-1})$, $R=c_RCE$, and $L_*=\ell$.
The constants in this section are fixed before the final radius, confidence,
and resource constants.  In particular, with $a_0:=1.1\eta_0$, strengthen
the choice of $\eta_0$ to
\begin{equation}\label{eq:gram_eta}
 a_0+\frac{a_0^2}{512B}\leq\frac1{16},
 \qquad c_{\rm anc}\leq\frac1{1024}.
\end{equation}
These are additional absolute restrictions, not input-dependent accuracies.

\subsection{Score transport through the old column space}
\label{subsec:gram_score_transport}

We first express a score transition in the old column space, so that its
error is controlled by the Gram perturbation.  The next lemma gives the
conditional relative-error bounds needed by the score-maintenance process.

\begin{lemma}[Gram-pullback transition]
\label{lem:gram_gram_transition}
Let $D_-,D_+$ and an incoming score state $h$ be fixed by a sigma-field
$\mathcal F$.  Define $\Gamma:=D_+D_-^{-1}$, $P_-:=P(D_-)$,
$P_+:=P(D_+)$, and $\mathcal K:=\Gamma^{-1}P_+\Gamma^{-1}$.
For analysis, write $P_-=VV^\top$ with $V^\top V=I$ and define
$F:=V^\top(\Gamma^2-I)V$ and $\nu:=\|F\|_F$.
Suppose a nonnegative $\mathcal F$-measurable number $\theta$ satisfies
$\nu\leq\theta\leq1/4$.  Define $H:=\mathcal K-P_-$ and
$t_i:=\operatorname{clip}(h_i,[0,1])$.  Using fresh independent
$r_1,\ldots,r_b\sim N(0,I_m)$, define
\begin{equation}\label{eq:gram_gram_update}
 h_i^+:=\gamma_i^2
 (t_i+\frac1b\sum_{j=1}^b(P_-r_j)_i(Hr_j)_i).
\end{equation}
This update satisfies the relative recurrence and MGF interface of
Eq.~\eqref{eq:candidate_unified_recurrence} and
Eq.~\eqref{eq:candidate_unified_mgf} with effective drift $8\theta$.
It uses two permitted systems per sample and $O(\operatorname{nnz}(A)+m)$
additional scalar work per sample.
\end{lemma}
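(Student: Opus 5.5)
The plan rests on one algebraic identity: the pulled-back operator $\mathcal K$ lives in the old column space and is a resolvent of the Gram perturbation $F$. Once that is in place, the recurrence and the MGF bound both follow from elementary spectral estimates on $(I+F)^{-1}$.

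\emph{Step 1: the pullback identity.} Since $D_+A=\Gamma D_-A$, the range of $D_+A$ equals the range of $\Gamma V$. Exactly as in the proof of Lemma~\ref{lem:relative_drift},
\[
P_+=\Gamma V(V^\top\Gamma^2V)^{-1}V^\top\Gamma .
\]
Because $V^\top\Gamma^2V=I+F$, this gives
\[
\mathcal K=V(I+F)^{-1}V^\top,\qquad H=VGV^\top,\qquad G:=(I+F)^{-1}-I .
\]
Writing $s_i:=\sigma_i(D_-)=\|V^\top e_i\|_2^2$ and $\gamma_i:=\Gamma_{ii}$, we get $\sigma_i(D_+)=(P_+)_{ii}=\gamma_i^2\mathcal K_{ii}$. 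Since $\|F\|_{2\to2}\leq\|F\|_F=\nu\leq\theta\leq1/4$, the eigenvalues of $(I+F)^{-1}$ lie in $[(1+\theta)^{-1},(1-\theta)^{-1}]$. Hence
\[
\mathcal K_{ii}=e^{\pm2\theta}s_i,\qquad \|G\|_{2\to2}\leq\tfrac43\theta .
\]
If $s_i=0$, the row is zero and the update returns zero exactly, matching the zero-score conventions. Note that nothing here depends on the size of $\gamma_i$; the own-row factor $\gamma_i^2$ will cancel in the relative normalization.

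\emph{Step 2: bias and recurrence.} Condition on $\mathcal F$ and write $g_j:=V^\top r_j\sim N(0,I_n)$. Then
\[
(P_-r_j)_i(Hr_j)_i=(v_i^\top g_j)(v_i^\top Gg_j),
\]
where $v_i=V^\top e_i$, and its conditional mean is $v_i^\top Gv_i=\mathcal K_{ii}-s_i$. Therefore
\[
\E[h_i^+\mid\mathcal F]-\sigma_i(D_+)=\gamma_i^2(t_i-s_i).
\]
Dividing by $\sigma_i(D_+)=\gamma_i^2\mathcal K_{ii}$ gives the recurrence of Eq.~\eqref{eq:candidate_unified_recurrence} with
\[
b_i=\frac{s_i}{\mathcal K_{ii}}\in[e^{-2\theta},e^{2\theta}],
\]
which is at most $e^{8\theta}$. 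The clipping factor $r_i\in[0,1]$ appears exactly as in Lemma~\ref{lem:candidate_ordinary_relative_transition}, because $t_i$ is the projection of $h_i$ onto an interval containing $s_i$.

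\emph{Step 3: innovation MGF (the main obstacle).} Each centered summand equals $g^\top M_ig-\tr M_i$, where $M_i$ is the symmetrization of $v_iv_i^\top G$. It satisfies
\[
\|M_i\|_{2\to2}\leq\|M_i\|_F\leq\|v_i\|_2\,\|Gv_i\|_2\leq\tfrac43\theta s_i .
\]
The standard Gaussian-chaos bound (Hanson--Wright MGF form) gives
\[
\E\exp\bigl(\mu(g^\top Mg-\tr M)\bigr)\leq\exp(2\mu^2\|M\|_F^2)\qquad\text{for }|\mu|\leq1/(4\|M\|_{2\to2}).
\]
Averaging $b$ independent copies and normalizing by $\gamma_i^2/\sigma_i(D_+)=1/\mathcal K_{ii}\leq e^{2\theta}/s_i$ yields
\[
\E[e^{\lambda\xi_i}\mid\mathcal F]\leq\exp(C\lambda^2\theta^2/b)
\]
for $|\lambda|\leq cb/\theta$, with absolute $C,c$. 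The delicate point is checking that these constants fit the literal interface of Eq.~\eqref{eq:candidate_unified_mgf} with $\eta=8\theta$ and $k=b$. This requires $C\leq64$ and $c\geq1/32$. I would verify this by tracking the chaos constants explicitly, using $\theta\leq1/4$ so that $e^{2\theta}\leq e^{1/2}$. If the explicit constants come out slightly worse, a larger multiple of $\theta$ is absorbed into the choice of $C_\infty$.

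\emph{Step 4: cost.} For each sample, $P_-r_j$ costs one permitted solve. We compute $Hr_j=\Gamma^{-1}P_+(\Gamma^{-1}r_j)-P_-r_j$ with one more solve, reusing $P_-r_j$. The remaining work consists of diagonal scalings and coordinatewise products, which cost $O(\nnz(A)+m)$ scalar work per sample.
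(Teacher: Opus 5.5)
Your proposal is correct and follows the paper's proof essentially step for step: the same pullback $\mathcal K=V(I+F)^{-1}V^\top$, the same bias computation giving $b_i=s_i/\mathcal K_{ii}$, the same diagonalized Gaussian-chaos MGF bound for the symmetrized rank-two matrix, and the same two-solve cost per sample (you merely work in the reduced coordinates $g=V^\top r$). The constant check you deferred closes with room to spare: $\|M_i\|_F/\mathcal K_{ii}\le\frac43\theta(1+\theta)\le\frac53\theta$ gives $\E[e^{\lambda\xi_i}\mid\mathcal F]\le\exp(\frac{50}{9}\lambda^2\theta^2/b)$ for $|\lambda|\le 3b/(20\theta)$, so the fallback to $C_\infty$ is never needed (and would not be legitimate anyway, since the lemma asserts drift exactly $8\theta$ and the ticket branch does not use $C_\infty$).
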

\begin{proof}
The projection formula gives
\begin{equation}\label{eq:gram_pullback}
 \mathcal K=V(I+F)^{-1}V^\top,
 \qquad P_-H=HP_-=H.
\end{equation}
Define $s_i:=(P_-)_{ii}$.  Gaussian isotropy gives
\[
 \E[(P_-r)_i(Hr)_i\mid\mathcal F]=H_{ii},\qquad
 \E[h_i^+-\sigma_i(D_+)\mid\mathcal F]
 =\gamma_i^2(t_i-s_i).
\]
For a nonzero score the inherited relative-error multiplier is therefore
$b_i=s_i/\mathcal K_{ii}$.  The eigenvalue bounds for $I+F$ imply
\[
 \frac{s_i}{1+\nu}\leq\mathcal K_{ii}\leq\frac{s_i}{1-\nu},
 \qquad 1-\nu\leq b_i\leq1+\nu\leq e^{8\theta}.
\]
Clipping multiplies the incoming relative error by a number in $[0,1]$.
Moreover $\|He_i\|_2\leq\nu\sqrt{s_i}/(1-\nu)$.  Define the symmetric
matrix
\[
 A_i:=\frac{(P_-e_i)(He_i)^\top+(He_i)(P_-e_i)^\top}
 {2\mathcal K_{ii}}.
\]
Both $\|A_i\|_F$ and $\|A_i\|_{2\to2}$ are at most $5\nu/3$.
For a standard Gaussian $r$, diagonalization gives
\[
 \log\E[e^{\lambda(r^\top A_i r-\tr A_i)}\mid\mathcal F]
 \leq\frac{\lambda^2\|A_i\|_F^2}
 {1-2|\lambda|\|A_i\|_{2\to2}}
\]
when the denominator is positive.  Applying this bound to the average of
$b$ samples proves
\[
 \E[e^{\lambda\xi_i}\mid\mathcal F]
 \leq e^{\lambda^2(8\theta)^2/b},
 \qquad |\lambda|\leq b/(32\theta),
\]
where $\xi_i$ is the centered relative innovation.  This is the claimed
interface.  If $\theta=0$, then $H=0$ and the update is the deterministic
value $\gamma_i^2t_i$, not $t_i$ in general.  If a score is zero, its row is
zero and the exact zero convention applies.

One sample computes $P_-r$ and
$\Gamma^{-1}P_+(\Gamma^{-1}r)$, whose difference is $Hr$.
Each projection application uses one permitted system.  The matrices $V$,
$F$, and $H$ are not formed by the algorithm.
\end{proof}

Define \textsc{TicketScore}$(D,\widetilde w,\theta,L_{\rm conf},\mathcal S)$
as follows.  The symbol $\theta=\bot$ means that no Gram bound is supplied;
in that case run Algorithm~\ref{alg:candidate_hybrid_score}.  For a finite
ticket use effective drift $\eta=8\theta$.  If $\eta>h_0$, use the
ordinary hybrid fallback.  Otherwise use the same block test, batch size
$\max\{1,\lceil C_1\eta E^{-2}L_{\rm conf}\rceil\}$, fresh-base rule,
clipped output, and stored-state convention as that algorithm, replacing
only the transition formula by Eq.~\eqref{eq:gram_gram_update}.
A zero ticket uses the deterministic update in the preceding proof.  At a
block restart charge $\eta$, as in the original provider.  All decisions
are made before the corresponding fresh samples.

\begin{corollary}[Maintenance with Gram tickets]
\label{cor:gram_ticket_maintenance}
Before the first invalid ticket or existing auxiliary stop, the hybrid
maintenance and exact-continuation conclusions remain valid for
\textnormal{\textsc{TicketScore}}.  Its cost is
\[
 O(M+E^{-2}L_{\rm conf}(1+\Omega)).
\]
\end{corollary}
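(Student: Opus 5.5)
The plan is to reduce Corollary~\ref{cor:gram_ticket_maintenance} to Lemma~\ref{lem:candidate_hybrid_maintenance} together with Lemma~\ref{lem:candidate_raw_score_coupling}. The proof of Lemma~\ref{lem:candidate_hybrid_maintenance} uses the individual transition formulas only through three properties. First, the unified recurrence Eq.~\eqref{eq:candidate_unified_recurrence}, with a predictable clipping factor in $[0,1]$ and multiplier at most $e^{\eta_t}$. Second, the conditional MGF bound Eq.~\eqref{eq:candidate_unified_mgf}. Third, predictability of $\eta_t$, $k_t$, the branch choice, and the restart decision. Everything else in that proof is branch-independent: the base chi-square event, the rescaling $Y_t=e^{-H_t}z_t$, the application of Lemma~\ref{lem:candidate_contractive_bernstein}, the restart count $1+2\Omega/h_0$, and the union bound $3m(M_{\rm ep}+1)e^{-3L_{\rm conf}}<1/40$.

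I would therefore list the three cases of \textsc{TicketScore} and check that each supplies these properties.

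\begin{itemize}
\item \textbf{Case $\theta=\bot$, or $8\theta>h_0$.} The routine literally runs Algorithm~\ref{alg:candidate_hybrid_score}, so the interface holds by the earlier analysis. The effective drift charged is the one already accounted for there.
\item \textbf{Case of a finite valid ticket with $0<8\theta\le h_0$.} Lemma~\ref{lem:gram_gram_transition} gives the recurrence with $b_i\le e^{8\theta}=e^{\eta}$ and the MGF bound $e^{\lambda^2\eta^2/b}$ for $|\lambda|\le b/(32\theta)=b/(4\eta)$. With batch size $b=k_t=\max\{1,\lceil C_1\eta E^{-2}L_{\rm conf}\rceil\}$, this matches Eq.~\eqref{eq:candidate_unified_mgf} exactly. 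It needs the hypotheses $\nu\le\theta\le1/4$, which follow from validity of the ticket and $h_0\le1/16$, so that $\theta\le h_0/8$.
\item \textbf{Case of a zero ticket.} The update is deterministic with $\xi=0$ and multiplier $b_i=s_i/\mathcal K_{ii}=1$, since $F=0$. The incoming relative error is preserved, which is the $\eta=0$ instance of the recurrence.
\end{itemize}

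Predictability is immediate because $\theta$ is $\mathcal F$-measurable and every decision is made before the samples are drawn.

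Next comes the stopping issue. The phrase ``before the first invalid ticket'' should be formalized as a stopping time $\tau_{\rm tk}$: the first query whose supplied $\theta$ fails $\nu\le\theta$. Because $\nu$ depends only on the fixed endpoints, the indicator that the ticket is valid is measurable with respect to the pre-sketch sigma-field. I would add $\tau_{\rm tk}$ to $\tau_{\rm aux}$, so that it becomes a pre-noise auxiliary stop. Both Lemma~\ref{lem:candidate_contractive_bernstein} (whose stopped version allows a predictable activity indicator) and the exact-continuation coupling of Lemma~\ref{lem:candidate_raw_score_coupling} already accept such stops. The coupling argument then goes through verbatim, since it uses only that endpoints, drifts, and row counts are fixed before fresh randomness. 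This is the step I expect to need the most care: the stopped process must not condition the past on future ticket validity, and the one-sided check that validity is decided before the sketch is what guarantees this.

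Finally, the cost bound. Each Gram sample uses two permitted systems (Lemma~\ref{lem:gram_gram_transition}), exactly as an ordinary transition row does. The sum $\sum_t k_t=O(M+E^{-2}L_{\rm conf}\Omega)$ and the base count $O(1+\Omega/h_0)$, each base costing $O(E^{-2}L_{\rm conf})$, are unchanged. Together these give $O(M+E^{-2}L_{\rm conf}(1+\Omega))$. Restart charges of $\eta$ are handled by the same disjoint-charging argument as in Lemma~\ref{lem:candidate_hybrid_maintenance}.
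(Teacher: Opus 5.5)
Your proposal is correct and follows essentially the paper's route: Lemma~\ref{lem:gram_gram_transition} supplies exactly the recurrence and MGF interface of Eq.~\eqref{eq:candidate_unified_recurrence}--\eqref{eq:candidate_unified_mgf} with $\eta=8\theta$, so the block, restart, and union-bound argument of Lemma~\ref{lem:candidate_hybrid_maintenance} and the exact-continuation coupling carry over unchanged. Your explicit case split (fallback, positive ticket with $\theta\le h_0/8\le1/4$, zero ticket) and your formalization of ticket validity as a pre-sketch stopping time spell out more than the paper does, which simply asserts the reduction and defers ticket validity on the coarse event to Lemma~\ref{lem:gram_principal_drift}.
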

\begin{proof}
Lemma~\ref{lem:gram_gram_transition} supplies the same two conditional
inequalities used in Lemma~\ref{lem:candidate_hybrid_maintenance}.
Its block proof therefore applies without changing its stopping or restart
charges.  The exact-continuation coupling supplies exact responses after
the first raw-relative violation and does not condition a predictor on any
later score event.  Tickets will be proved valid for every weighted-chain
outcome on the preceding coarse event.
\end{proof}

\subsection{A pointwise predictor bound and lazy auxiliary repairs}
\label{subsec:gram_lazy_repairs}

A Gram-drift ticket must remain valid for every realization of the weighted
predictor on the coarse event.  The next lemma obtains this pointwise
control by clipping around a coarse estimate, while preserving the
conditional predictor bounds.

\begin{lemma}[Coarse-center clipping]
\label{lem:gram_coarse_clip}
Retain the medoid $\widetilde y$ already formed by
Algorithm~\ref{alg:coarse_defect_data}.  With its notation define
\[
 p^c:=\frac2a(W^+)^{-1/2}(x-\widetilde y),\qquad
 \mathcal B_c:=\{v:\|v-p^c\|_w\leq A_c\overline d\}.
\]
There is an absolute $A_c$ such that, on the coarse event, $u\in\mathcal B_c$.
Replace $p^w$ by its weighted metric projection onto $\mathcal B_c$ before
calling Algorithm~\ref{alg:defect_projection}.  All conclusions of
Lemma~\ref{lem:linear_defect_projection} remain valid, and additionally
\begin{equation}\label{eq:gram_pointwise_predictor}
 \|\widehat u-u\|_w\leq A_p\overline d
\end{equation}
for an absolute $A_p$ and every realization of the independent weighted
chain on that coarse event.  The modification costs $O(m)$ scalar work and
no additional systems.
\end{lemma}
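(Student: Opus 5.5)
We first identify $p^c$ as the exact Jacobian action up to the deterministic and coarse-chain errors. In the surrogate notation, $K x=T x-Dx$ with $T=\frac2a(I-(I+aL)^{-1})$. Hence $T x=\frac2a(x-y)$ with $y=(I+aL)^{-1}x$, and the surrogate prediction $J^+\psi=(W^+)^{-1/2}T^+x^+$ equals $\frac2a(W^+)^{-1/2}(x-y)$. Thus
\[
 p^c-u=\tfrac2a(W^+)^{-1/2}(y-\widetilde y)+(J^+\psi-u),
\]
and the second term is $e_{\rm det}$ from the proof of Lemma~\ref{lem:large_radius_split_predictor}.

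On the coarse event, the medoid guarantee in the proof of Lemma~\ref{lem:defect_certificate} gives $\|\widetilde y-y\|_2\le3\epsilon_0\sqrt{e_0}$. Since $p^2e_0\le d_+^2\le\overline d^2$ and $a,p$ are bounded below, the first term has $\|\cdot\|_{W^+}$-norm $O(\overline d)$. Norm equivalence between $W^+$ and the maintained $w$ converts this to $O(\overline d)$ in $\|\cdot\|_w$. For the second term we already have $\|e_{\rm det}\|_w\le\epsilon_1\Delta/(4Cc_k)$. The certificate's additive floor gives $\overline d^2\ge 2F=2a_3\Delta^2/(C^2c_k)$, so this term is at most $O(\overline d/\sqrt{c_k})=O(\overline d)$. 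Choosing $A_c$ larger than the sum of these constants puts $u\in\mathcal B_c$. One point needs care: $\Delta$ may be large while $d_+$ is small, so the lower bound on $\overline d$ must come from the $F$ term. I expect this to be the main delicate step, and I would first verify that $a_3>0$ is available and not merely an upper-bound constant. If it is not, I would enlarge the additive term in $\overline d^2$ to $\Delta^2/(C^2c_k^2)$ times a constant, which is still compatible with $A_d$.

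Next, the conditional bounds survive the replacement. $\mathcal B_c$ is $\mathcal G_k$-measurable, since $\widetilde y$ and $\overline d$ are drawn before the weighted chain. It is convex and contains $u$. Metric projection in $\|\cdot\|_w$ onto such a set is nonexpansive toward $u$, so $\|\Pi p^w-u\|_w\le\|p^w-u\|_w$ pointwise. The conditional second-moment bound of Lemma~\ref{lem:large_radius_split_predictor} therefore transfers verbatim. The proof of Lemma~\ref{lem:linear_defect_projection} uses $p^w$ only through $\|\overline p^w-u\|_w\le\|p^w-u\|_w$ and the radial step, which is again a projection onto a convex set containing $u$. Composing the two projections preserves every displayed inequality, so the bias, fluctuation, and body claims follow unchanged.

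Finally, we prove the pointwise bound. Write $\pi$ for the exact projection of $\overline p$ onto $\mathcal C_k$. Because $u\in\mathcal C_k$, the projection variational inequality gives $\|\pi-u\|_w\le\|\overline p-u\|_w$. Radial scaling of the projected vector onto the ball of radius $r/C$, which contains $u$, keeps it within $\|\Pi p^w-u\|_w\le 2A_c\overline d$ of $u$, since both points lie in $\mathcal B_c$. Lemma~\ref{lem:computable_defect_projection} adds $\tau_{\rm P}=\epsilon_{\rm P}s/C$. Because $s=\eta_0(\overline d+\Delta/c_k)$ and, as above, $\Delta/(Cc_k)=O(\overline d)$, this addition is $O(\overline d)$. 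Summing gives $A_p$. The projection onto a weighted ball is an explicit rescaling toward $p^c$, costing $O(m)$ scalar work and no systems, since $\widetilde y$ is already stored.
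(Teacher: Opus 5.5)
Your proposal is correct and follows essentially the same route as the paper. The resolvent identity reduces $p^c-J^+\psi$ to the medoid error $\frac2a(W^+)^{-1/2}(y-\widetilde y)=O(d_+)$. The deterministic surrogate error is absorbed through the certificate floor $\overline d^2\geq2a_3\Delta^2/(C^2k)$. Because every projection is onto a convex set containing $u$, nonexpansiveness preserves the conditional bounds and gives the pointwise estimate, using $\tau_{\rm P}=\epsilon_{\rm P}s/C$ and $s=O(\overline d)$.

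The paper settles your concern about $a_3$ the way you suggest: since $a_3$ is only an upper-bound constant, it may be taken positive.
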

\begin{proof}
Write $y=(I+aL^+)^{-1}x$ and
$J^+=(W^+)^{-1/2}T^+(W^+)^{1/2}$.
The medoid guarantee and $p^2e_0\leq d_+^2$ give
\[
 \|p^c-J^+\psi\|_w=O(\|\widetilde y-y\|_2)=O(d_+).
\]
where the first step follows from the resolvent identity and equivalence of $w$
and $W^+$, and the second step follows from the coarse medoid bound.
The certificate construction gives the explicit lower bound
\begin{equation}\label{eq:gram_defect_floor}
 \overline d^2\geq2a_3\Delta^2/(C^2k),
 \qquad d_+\leq\overline d.
\end{equation}
Take $a_3>0$ in the absolute error upper bound.
Lemma~\ref{lem:large_radius_split_predictor} bounds the deterministic
error by $N_w(J^+\psi-u)\leq a_2(E\delta+\delta^2)$.
Since $E,\delta\leq r_0/k$ and $\delta\leq\Delta$, these displays prove
$\|p^c-u\|_w\leq A_c\overline d$ after fixing $A_c$.

The center and radius are measurable in the coarse sigma-field before
drawing the independent weighted chain.  Projection onto a closed convex
set containing $u$ cannot increase distance to $u$, so the conditional
second-moment bound for $p^w-u$ is unchanged.  Unbiasedness is not needed:
the existing bias bound follows from conditional Jensen.  The subsequent
radial ball and the exact defect body also contain $u$.  The approximate
last projection adds at most $\epsilon_{\rm P}s/C$.
The first projection has distance at most $2A_c\overline d$ from $u$;
Eq.~\eqref{eq:gram_defect_floor} and $C=16\sqrt k$ give
$s=O(\overline d)$.  This proves
Eq.~\eqref{eq:gram_pointwise_predictor}.  The existing coordinate
bound $\|\widehat u-u\|_\infty\leq s/32$ is unchanged.
\end{proof}

The pointwise predictor bound lets us measure a principal query transition
by the retained defect.  The next lemma turns this estimate into a
computable Gram-drift ticket fixed before the next score sketch.

\begin{lemma}[Certified principal Gram drift]
\label{lem:gram_principal_drift}
Use the canonical query
\[
 D(x,\widehat g):=\operatorname{Diag}(\widehat g)^{-a/2}
 \operatorname{Diag}(s(x)).
\]
For the predictor of Lemma~\ref{lem:gram_coarse_clip}, the transition
from $D(x,\widehat g)$ to
$D(x^+,\widehat g\odot e^{\widehat u})$ has
$\nu\leq A_G\overline d$ for an absolute $A_G$.
The ticket $\theta=A_G\overline d$ is valid for every weighted-chain
realization on the coarse event, and $8\theta\leq h_0$ after the final
absolute radius choice.
\end{lemma}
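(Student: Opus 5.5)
The plan is to write the row-log displacement of the transition explicitly and to bound the Gram perturbation $F=V^\top(\Gamma^2-I)V$ in Frobenius norm by a weighted norm of that displacement. With $D(x,\widehat g)=\operatorname{Diag}(\widehat g)^{-a/2}\operatorname{Diag}(s(x))$, the log-ratio is
\[
 \log\Gamma=\psi-\tfrac a2\widehat u,
\]
with $\psi=\log s(x^+)-\log s(x)$. Removing a common scalar is free, since a scalar does not change $P_+$. So I would bound $\nu$ for $\Gamma$ replaced by $e^{-\lambda}\Gamma$, with $\lambda$ chosen appropriately.

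For the Gram bound I would use
\[
 \|F\|_F^2=\sum_{i,j}(\gamma_i^2-1)(\gamma_j^2-1)(P_-)_{ij}^2 .
\]
Split $\gamma_i^2-1=2\log\gamma_i+O(\log^2\gamma_i)$ and write $Q_-=W^{-1/2}(P_-\circ P_-)W^{-1/2}$ with $W$ the companion diagonal. The leading term is $4\,y^\top Q_- y$ with $y=W^{1/2}\log\Gamma$. This is exactly the off-diagonal energy that the defect controls.

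\textbf{Step 1.} Decompose
\[
 \log\Gamma=\bigl(\psi-\tfrac a2 u\bigr)-\tfrac a2(\widehat u-u).
\]
For the second piece, Lemma~\ref{lem:gram_coarse_clip} gives $\|\widehat u-u\|_w\le A_p\overline d$ for every weighted-chain realization on the coarse event. Since $Q_-\preceq I$, this contributes $O(\overline d)$ to $\nu$ deterministically.

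\textbf{Step 2.} For the first piece, use the resolvent structure of Lemma~\ref{lem:resolvent_form} and the surrogate of Lemma~\ref{lem:certified_surrogate}. Replace $u$ by $J^+\psi=(W^+)^{-1/2}(D^++K^+)(W^+)^{1/2}\psi$, at deterministic cost $O(E\delta+\delta^2)$ in $N_w$; by the floor \eqref{eq:gram_defect_floor} this is $O(\overline d)$ in $\|\cdot\|_w$. Here $D^+=2(I+aS)^{-1}S$ and $\tfrac a2 D^+=I-(I+aS)^{-1}$. Hence $\psi-\tfrac a2 D^+\psi$ is the diagonal vector $(I+aS)^{-1}\psi$, a residual that is not small in the weighted norm. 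Two observations handle it.
\begin{itemize}
\item The diagonal part of $\Gamma$ does not enter $P_-$'s off-diagonal pairing to first order against the column space in the same way. Concretely, $F$ measures the Gram change, and a pure diagonal rescaling $c$ gives $y^\top Q_- y$ with $y=W^{1/2}c$.
\item The quantity actually needed is $\|F\|_F^2\lesssim (W^{1/2}\log\Gamma)^\top Q(W^{1/2}\log\Gamma)$.
\end{itemize}
In normalized coordinates the vector $W^{1/2}(\psi-\tfrac a2 J\psi)=(I-\tfrac a2 T)x=(I+aL)^{-1}x=y$. So the first-order term is $4\,y^\top Q y$, and the proof of Lemma~\ref{lem:defect_fixed_point} gives $a\,y^\top Q y=\mathcal D^2\le (a/p^2)d^2$. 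With $d\le d_+ + O(\Delta/(C\sqrt k))\le O(\overline d)$ by the certificate, this yields $\nu\le A_G\overline d$ to first order.

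\textbf{Step 3.} The quadratic remainder $O(\|\log\Gamma\|_\infty\,\|\log\Gamma\|_w)$ is handled by sharpening the bound on $\|\cdot\|_w$. The piece $(\psi-\tfrac a2 u)$ has weighted norm $O(\delta/C)$ and $\ell_\infty$ norm $O(R)$, so the remainder is $O(R\delta/C)$, which is $O(\overline d)$ only with help. Instead I would expand $\Gamma^2-I$ around the scalar-centered displacement and use $|e^{2z}-1-2z|\le 2z^2e^{2|z|}$. With $\|\log\Gamma\|_\infty=O(R)$ small, the square term satisfies
\[
 \|V^\top\operatorname{Diag}(z^2)V\|_F\le\|z\|_\infty\,\|V^\top|\operatorname{Diag}(z)|V\|_F ,
\]
which is again controlled by the $Q$-form of $|z|$, up to the absolute factor $O(R)$. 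This is the step I expect to be the main obstacle. The $Q$-form of $|y|$ is not the $Q$-form of $y$, so I would try to bound the nonlinear remainder directly through $\|z\|_\infty\|F_{\rm lin}\|$ using Frobenius submultiplicativity with $P_-$ entries. If that fails, I would absorb it into the floor $\overline d\gtrsim\Delta/(C\sqrt k)$, since $R\cdot\delta/C\le (R\sqrt k)\cdot\overline d$ and $R\sqrt k=O(1)$.

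\textbf{Step 4.} Validity of the ticket. Since $\overline d$ is $\mathcal G_k$-measurable and every bound above holds pointwise on the coarse event, $\theta=A_G\overline d$ is fixed before the score sketch and dominates $\nu$ for all weighted-chain realizations. Finally, $\overline d=O(\Delta/C)=O(R/C)=O(c_R E)$ by Lemma~\ref{lem:linear_defect_projection}. Decreasing $c_R$ once then gives $8A_G\overline d\le h_0$, and $\theta\le 1/4$, as Lemma~\ref{lem:gram_gram_transition} requires.
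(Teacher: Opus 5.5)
Your proposal is correct and follows essentially the same route as the paper's proof. The paper likewise writes $\log\Gamma=(W^+)^{-1/2}y+\tfrac a2(J^+\psi-u)+\tfrac a2(u-\widehat u)$ with $y=(I+aL^+)^{-1}(W^+)^{1/2}\psi$, bounds the first piece by $y^\top Q^+y\le d_+^2/p^2$, bounds the other two through $P\circ P\preceq W^+$ using the surrogate error and the pointwise clip, and charges the $O(\|z\|_\infty\|z\|_{W^+})$ exponential remainder to the floor \eqref{eq:gram_defect_floor}, which is exactly your Step~3 fallback.

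One caution: drop the proposed scalar centering. $\nu=\|V^\top(\Gamma^2-I)V\|_F$ is not invariant under $\Gamma\mapsto e^{-\lambda}\Gamma$, and the ticket must dominate the literal $\nu$. This causes no harm here, because the canonical query has no scalar offset and your actual computation takes $\lambda=0$.
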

\begin{proof}
Global inflation of $W^+$ changes the query only by a common scalar, so its
projector is that of the canonical query.  Write $P$ for this projector,
$z:=\psi-a\widehat u/2$, and $y=(I+aL^+)^{-1}(W^+)^{1/2}\psi$.
The resolvent identity gives
\[
 z=(W^+)^{-1/2}y+\frac a2(J^+\psi-\widehat u),\qquad
 \|P\operatorname{Diag}(v)P\|_F^2=v^\top(P\circ P)v.
\]
The defect factorization gives $y^\top Q^+y\leq d_+^2/p^2$.
Using $P\circ P\preceq W^+$, the deterministic surrogate error, and
Eq.~\eqref{eq:gram_pointwise_predictor}, we get
$\|P\operatorname{Diag}(z)P\|_F=O(\overline d)$.
The movement bounds give
$\|z\|_\infty=O(\delta)$ and $\|z\|_{W^+}=O(\delta/C)$, and hence
\[
 \|P\operatorname{Diag}(e^{2z}-1-2z)P\|_F
 \leq\|e^{2z}-1-2z\|_{W^+}
 \leq O(\|z\|_\infty\|z\|_{W^+})
 \leq O(\delta^2/C)=O(\overline d),
\]
where the first step follows from $P\circ P\preceq W^+$, the second
step follows from the scalar exponential remainder, the third step follows
from the movement bounds, and the last step follows from
Eq.~\eqref{eq:gram_defect_floor} and $\delta\leq r_0/k$.
Adding the linear part proves the Gram bound.  Finally the certificate
upper bound gives $\overline d=O(R/C)=O(E)$, so choosing the radius and
accuracy constants after $A_G$ makes $8A_G\overline d\leq h_0$.
\end{proof}

The state now includes an auxiliary error budget $e$ and a pending ticket
$\theta$.  At cold initialization set $e=E/4$, matching the accuracy
actually requested by Algorithm~\ref{alg:lp_solve_fresh}; after exact epoch
normalization set $e=E/8$.  In either case set $\theta=\bot$ for the
initial alignment from the last actual fixed-point query.

\begin{algorithm}[!ht]
\caption{Preparing a canonical query with lazy auxiliary repair}
\label{alg:gram_prepare}
\begin{algorithmic}[1]
\Procedure{LazyPrepare}{$x,\widehat g,e,\theta,L_{\rm conf},\mathcal S$}
\Comment{Lemma~\ref{lem:gram_lazy}}
\State $(h,\mathcal S)\gets\Call{TicketScore}
{D(x,\widehat g),\widehat g,\theta,L_{\rm conf},\mathcal S}$.
\State $\theta\gets\bot$.
\If{$e>E/4$}
  \For{$j=1,\ldots,4$}
    \State $\widehat g\gets\widehat g^{1-p/2}\odot(h+v)^{p/2}$.
    \State $(h,\mathcal S)\gets\Call{TicketScore}
    {D(x,\widehat g),\widehat g,\bot,L_{\rm conf},\mathcal S}$.
  \EndFor
  \State $e\gets E/8$.
\EndIf
\State \Return $(\widehat g,e,h,\mathcal S)$.
\EndProcedure
\end{algorithmic}
\end{algorithm}

The auxiliary weight need not be refined after every Newton step.  The
following lemma shows that threshold-triggered repairs preserve its
accuracy and charges their score drift to the accumulated predictor errors.

\begin{lemma}[Lazy repair and the score ledger]
\label{lem:gram_lazy}
After a projected predictor, replace the mandatory fixed-point call by
$\widehat g^+=\widehat g\odot e^{\widehat u}$,
$e^+=e+s/32$, and the pending ticket $A_G\overline d$.
Call Algorithm~\ref{alg:gram_prepare} before the next nonvacuous
centering call.  On the coarse and raw-relative-or-exact events, the auxiliary
error is always at most $e\leq E/4+E/3200<E/2$.  On a prefix beginning at
cold initialization or an accepted normalized checkpoint, all score queries
have total effective drift
\begin{equation}\label{eq:gram_lazy_drift}
 \Omega=O(E+\sum_j\overline d_j)
\end{equation}
and their number is $O(1+b)$ for a prefix of $b$ atomic instructions.
\end{lemma}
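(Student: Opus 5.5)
The proof splits into three claims, each reduced to an earlier lemma: the auxiliary-error invariant, the drift ledger Eq.~\eqref{eq:gram_lazy_drift}, and the query count.

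\emph{Auxiliary error.} I would prove by induction over centering calls that $\|\log\widehat g-\log g(x)\|_\infty\leq e$ at every moment.

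At cold initialization or after normalization, $e=E/4$ or $E/8$ holds by Lemma~\ref{lem:floor_homotopy_initialization} and Lemma~\ref{lem:epoch_normalization}.

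After a predictor, the target moves by $u$ and $\widehat g$ moves by $\widehat u$. Lemma~\ref{lem:linear_defect_projection}, which still applies after the clipping of Lemma~\ref{lem:gram_coarse_clip}, gives $\|u-\widehat u\|_\infty\leq s/32$. So $e^+=e+s/32$ is a valid bound.

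\texttt{LazyPrepare} is entered with $e\leq E/4$ or not at all. If it is entered with $e>E/4$, then $e\leq E/4+s/32\leq E/4+E/3200$, using $s\leq E/100$. At that point the four repair rounds are the inexact contraction of Lemma~\ref{lem:linf_contraction}. The score accuracy needed there comes from Lemma~\ref{lem:candidate_score_interface} through Corollary~\ref{cor:gram_ticket_maintenance}, and gives map error at most $c\,c_{\rm anc}E$ with $c_{\rm anc}\leq1/1024$. Since $(1-p/2)^4\cdot E/2+O(c_{\rm anc}E)\leq E/8$ for $p\geq3/4$, resetting $e=E/8$ is justified.

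Hence $e<E/2$ always holds, which is the hypothesis that Lemma~\ref{lem:certified_surrogate} and the query companions need.

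\emph{Drift ledger.} I classify transitions and charge each one.

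\begin{itemize}
\item \emph{Principal transitions} go from $D(x,\widehat g)$ to $D(x^+,\widehat g\odot e^{\widehat u})$. Each carries the valid ticket $\theta=A_G\overline d_j$ of Lemma~\ref{lem:gram_principal_drift}, so it is charged $8A_G\overline d_j$.
\item \emph{Surrogate alignments} go from the canonical query to the inflated surrogate at the same $x$. They differ only by a global scalar, so their drift is zero.
\item \emph{Repair transitions} have ordinary centered drift $O(E)$ per round, as in Lemma~\ref{lem:candidate_step_effective_drift}. A repair fires only after $e$ has grown from $E/8$ past $E/4$, which requires $\sum s_j/32\geq E/8$ over the preceding predictors. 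Since $s_j=\eta_0(\overline d_j+\Delta_j/k)$, each repair's $O(E)$ cost is charged to $O(\sum_{j\in\text{window}}s_j)$.
\item \emph{Residual terms} $\Delta_j/k$: the certificate floor Eq.~\eqref{eq:gram_defect_floor} gives $\Delta_j/k\leq C\overline d_j/\sqrt{2a_3k}=O(\overline d_j)$, because $C=16\sqrt k$. So $s_j=O(\overline d_j)$.
\item \emph{The initial alignment} with $\theta=\bot$ contributes $O(E)$.
\end{itemize}

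Summing these charges gives $\Omega=O(E+\sum_j\overline d_j)$.

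\emph{Query count.} Each atomic instruction issues $O(1)$ queries: one canonical query and the surrogate query. Repairs add $4$ queries each, and their number is at most $1$ plus the number of predictors. This gives $O(1+b)$.

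\emph{Main obstacle.} The delicate step is the pointwise validity of tickets together with the filtration ordering. Each ticket and each repair decision must be fixed before the fresh sketch it governs. Lemma~\ref{lem:gram_principal_drift} must hold for every realization of the weighted chain, so that Corollary~\ref{cor:gram_ticket_maintenance} applies without conditioning on future events. I would check this by placing $e$, $\theta$, and the repair test into $\mathcal F_{k,3}$ before the next call's score query is made.
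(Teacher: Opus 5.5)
Your proposal is correct and follows essentially the paper's argument: the $e\mapsto e+s/32$ induction closed by a four-round inexact contraction back to $E/8$, principal transitions charged $8A_G\overline d_j$ through their Gram tickets, repairs charged $O(E)$ each against accumulated increments with $s_j=O(\overline d_j)$ from the certificate floor, and an $O(E)$ initial alignment. Two small corrections: after cold initialization $e=E/4$, so the first repair fires after a single, possibly tiny, increment and must be charged to the additive $O(E)$ term (the paper's ``except for the first repair''); and there is no separate surrogate query to account for, since Algorithm~\ref{alg:gram_center} forms $S^+$ directly from the canonical score $h$.
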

\begin{proof}
The coordinate predictor bound gives the error update $e+s/32$ directly.
Before a new predictor the budget is at most $E/4$, either because no repair
was needed or because a repair has reset it.  Since $s\leq E/100$, the
displayed upper bound follows by induction.

For a repair, $p\geq4/5$ gives $1-p/2\leq3/5$.
Eq.~\eqref{eq:candidate_anchor_transfer} and
Eq.~\eqref{eq:gram_eta} bound each inexact fixed-point map error by
$c_{\rm anc}E$.  Starting with error at most $E/2$, four iterations give
\[
 e_{\rm new}\leq\frac12(\frac35)^4E
 +\frac{c_{\rm anc}E}{1-3/5}<E/8.
\]
All four queries and the final alignment are at the same $x$, with
companions within $O(E)$ of its ideal weight.  The ordinary branch therefore
charges $O(E)$ for the entire repair.  The last query supplies the score at
the returned vector, so no separate alignment is charged at the next call.

Except for the first repair, each repair follows accumulated increments
$s_j/32$ exceeding $E/8$.  Thus the number of repairs is
$O(1+\sum_j s_j/E)$.  By Eq.~\eqref{eq:gram_defect_floor},
$s_j=O(\overline d_j)$.  Every pending principal transition is consumed
before any repair at its new endpoint and costs $O(\overline d_j)$ by
Lemma~\ref{lem:gram_principal_drift}.  If there is no repair, the
canonical query is already the required surrogate query.  The initial
alignment costs $O(E)$ because the stored last-query companion and returned
auxiliary vector both approximate the same fixed point to $O(E)$.
This proves Eq.~\eqref{eq:gram_lazy_drift}.
There is at most one repair, with four extra queries, per centering call.
Discarding an unconsumed ticket at a terminal output or exact normalization
does not create another score query.
\end{proof}

Define \textsc{CoarseDefectWithCenter} to be
Algorithm~\ref{alg:coarse_defect_data} with its medoid assigned to
$\widetilde y$ and its return tuple extended by that vector; all samples and
their ordering are unchanged.  The next routine replaces only
\textsc{PredictedCenter} inside the existing path stream.

\begin{algorithm}[!ht]
\caption{Predicted centering with a coarse-center clip and a lazy auxiliary weight}
\label{alg:gram_center}
\begin{algorithmic}[1]
\Procedure{LazyPredictedCenter}{$x,w,\widehat g,e,\theta,t,c_{\rm path},
\zeta,L_{\rm conf},\mathcal S$}
\Comment{Lemma~\ref{lem:gram_center_correctness}}
\State $(\widehat g,e,h,\mathcal S)\gets\Call{LazyPrepare}
{x,\widehat g,e,\theta,L_{\rm conf},\mathcal S}$.
\State Form the projected Newton point $x^+$; return
\textnormal{\textsc{Fail}} if it is infeasible or not interior.
\State Compute $\psi$, $\Delta$, $W^+$, and the implicit $P^+$ as in
Algorithm~\ref{alg:fresh_predicted_center}; form $S^+$ using the score $h$.
\State $(\overline d^2,p^\infty,p^w,\widetilde y)\gets
\Call{CoarseDefectWithCenter}{W^+,S^+,P^+,\psi,\Delta,\zeta}$.
\State $\overline d\gets\sqrt{\overline d^2}$ and form $r,s$ and the
same nonemptiness and movement tests as in
Algorithm~\ref{alg:fresh_predicted_center}; return
\textnormal{\textsc{Fail}} if a test fails.
\State $p^c\gets(2/a)(W^+)^{-1/2}((W^+)^{1/2}\psi-\widetilde y)$.
\State $p^w\gets p^c+\min\{1,A_c\overline d/\|p^w-p^c\|_w\}(p^w-p^c)$,
with multiplier $1$ when $p^w=p^c$.
\State $\widehat u\gets\Call{DefectProjection}{w,C,r,s,p^\infty,p^w}$.
\State $\widehat g^+\gets\widehat g\odot e^{\widehat u}$,
$e^+\gets e+s/32$, and $\theta^+\gets A_G\overline d$.
\State $d_{\rm obs}\gets\log w+\widehat u-\log\widehat g^+$.
\State $\chi\gets\Call{MixedBallLinearOracle}
{w,C,1.1s,\nabla\Phi(d_{\rm obs}),\epsilon_{\rm g}}$.
\State $w^+\gets w\odot e^{\widehat u+\chi}$.
\State \Return $(x^+,w^+,\widehat g^+,e^+,\theta^+,\mathcal S)$.
\EndProcedure
\end{algorithmic}
\end{algorithm}

We now combine the clipped predictor and lazy auxiliary repairs into one
centering call.  The next lemma verifies the maintained invariants and the
retained-defect decrease for this procedure.

\begin{lemma}[Centering invariants and retained decrease]
\label{lem:gram_center_correctness}
On the coarse event, Algorithm~\ref{alg:gram_center} has the same
auxiliary, movement, and conditional residual-game guarantees as
Lemma~\ref{lem:fresh_predicted_center_correctness}.  It uses
$O(\ell+q)$ non-score systems, $O(\log q)$ system depth, and
$O((\operatorname{nnz}(A)+m)\ell^2)$ non-score scalar work.
Moreover, before the tracking stop, a nonvacuous call satisfies
\begin{equation}\label{eq:gram_retained_center}
 \delta_t(x^+,w^+)\leq\delta-\frac\delta k
 -\frac B3\frac{C^2\overline d^2}{\delta}.
\end{equation}
\end{lemma}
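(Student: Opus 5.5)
The plan is to reduce almost everything to Lemma~\ref{lem:fresh_predicted_center_correctness}, and then prove the new retained-decrease inequality \eqref{eq:gram_retained_center} by going back to the centrality composition with a sharper movement bound.

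\textbf{Step 1: inherited guarantees.} Algorithm~\ref{alg:gram_center} differs from Algorithm~\ref{alg:fresh_predicted_center} in three places: the lazy preparation, the coarse-center clip of $p^w$, and the replacement of the mandatory fixed point by $\widehat g\odot e^{\widehat u}$ with budget $e+s/32$. For each I invoke the lemma that covers it.
\begin{itemize}
\item By Lemma~\ref{lem:gram_lazy}, on the coarse and raw-relative-or-exact events the auxiliary vector at the start of the call is within $E/2$ of $g(x)$. This is the hypothesis under which Lemma~\ref{lem:certified_surrogate} and Lemmas~\ref{lem:large_radius_split_predictor}--\ref{lem:linear_defect_projection} were proved. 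The score $h$ returned by \textsc{TicketScore} meets the surrogate interface by Corollary~\ref{cor:gram_ticket_maintenance} and Lemma~\ref{lem:candidate_score_interface}.
\item By Lemma~\ref{lem:gram_coarse_clip}, the clip preserves every conclusion of Lemma~\ref{lem:linear_defect_projection}: the bias $N(b_k)\le s/8$, the bound $\|Z_k\|_\infty\le s/2$, the conditional second moment, and the inequality $r+1.1s\le(1-1/k)\delta$.
\item The auxiliary claim now reads $e^+\le E/4+E/3200<E/2$ (Lemma~\ref{lem:gram_lazy}). The observed residual $d_{\rm obs}$ differs from $q_k$ by at most $e^++s/8<E$, exactly as in the proof of Lemma~\ref{lem:fresh_predicted_center_correctness}.
\end{itemize}
Given these, the movement bound $N(\widehat u+\chi)\le r+1.1s$ and the conditional residual-game hypotheses follow verbatim.

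\textbf{Step 2: cost accounting.} The coordinate chain, certificate and medoid cost $O(\ell+q)$ systems, and the weighted chain costs $O(q)$ systems in $O(\log q)$ rounds. The clip costs $O(m)$ scalar work. The scalar work of the mixed-ball oracle and of the defect projection is $O(m\ell\log m)$ per call, by Lemmas~\ref{lem:mixed_ball_linear_oracle} and~\ref{lem:computable_defect_projection} with $J=O(\ell)$. The per-system sparse products cost $O(\operatorname{nnz}(A)+m)$ each. Together these give $O((\operatorname{nnz}(A)+m)\ell^2)$.

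\textbf{Step 3: retained decrease.} Write $z:=N_w(\log w^+-\log w)$. I will use the sharper bound $z\le r+1.1s$ in place of $(1-1/k)\delta$. Following the proof of Lemma~\ref{lem:centrality_path_composition},
\[
\delta_t(x^+,w^+)\le(1+4z)(4\delta^2+z).
\]
Bounding $r$ needs care, since $r$ is defined with $\Delta$ while the statement is in terms of $\delta$. From the proof of Lemma~\ref{lem:linear_defect_projection}, using $\Delta\le\overline c_\gamma\delta$ and the Young absorption of $1.1\eta_0\overline d$ with the strengthened constant \eqref{eq:gram_eta}, I aim for
\[
r+1.1s\le\Bigl(1-\tfrac{59}{48k}+\tfrac{1}{16k}+\tfrac{1}{128k}\Bigr)\delta-\tfrac{B}{2}\tfrac{C^2\overline d^2}{\Delta}.
\]
Then $C^2\overline d^2/\Delta\ge C^2\overline d^2/(\overline c_\gamma\delta)$ turns the last term into at least $\tfrac{B}{2}(1-o(1))\,C^2\overline d^2/\delta$.

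It remains to absorb the quadratic error. Since $\delta\le 1/(160k)$ and $z\le\delta$, the factor $(1+4z)$ and the term $4\delta^2$ cost at most about $\delta/(20k)$ plus a relative $O(\delta)$ factor on the negative term. With the margin $59/48-1/16-1/128-1/20>1$, this leaves $\delta-\delta/k-\tfrac{B}{3}C^2\overline d^2/\delta$.

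\textbf{Main obstacle.} I expect the hard part to be exactly this constant bookkeeping: converting between $\Delta$ and $\delta$ on both the linear and the defect terms, and checking that the strengthened $\eta_0$ condition \eqref{eq:gram_eta} leaves a full $B/3$ coefficient rather than only $B/12$. If the margin is too tight, I would first try shrinking $r_0$ so that the quadratic terms are $\le\delta/(100k)$, and only then revisit $\eta_0$.
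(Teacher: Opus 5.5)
Your proposal is correct and follows essentially the same route as the paper. You inherit the auxiliary, movement and residual-game guarantees from Lemmas~\ref{lem:gram_lazy}, \ref{lem:gram_coarse_clip} and the score-interface lemma; you then bound $z\le r+1.1s\le(1-\frac{7}{6k})\Delta-\frac B2\frac{C^2\overline d^2}{\Delta}$ by Young's inequality and Eq.~\eqref{eq:gram_eta}, pass to $\delta$ via $\delta\le\Delta\le(1+\frac1{128k})\delta$, and absorb $(1+4z)(4\delta^2+z)\le z+9\delta^2$ using $\delta\le1/(160k)$. The step you flagged as the main obstacle has ample slack: the negative coefficient is $\frac B2\cdot\frac{128k}{128k+1}>\frac B3$, and the linear coefficient satisfies $\frac76-\frac1{128}-\frac{9}{160}>1$.
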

\begin{proof}
Lemma~\ref{lem:gram_lazy} supplies the required auxiliary accuracy.
The score at the canonical query is also a score for the inflated surrogate,
because the two queries differ only by a global scalar.  The score-interface
lemma therefore supplies $S^+$, and
Lemma~\ref{lem:gram_coarse_clip} preserves the three conditional
predictor estimates in the original coarse sigma-field.  In particular the
weighted chain is not conditioned on future transport, work, or acceptance.
Writing $u-\widehat u=b_k+Z_k$, the actual observation obeys
\[
 d_{\rm obs}-(\log w-\log g(x)-Z_k)
 =\log g(x^+)-\log\widehat g^+-b_k.
\]
Its infinity norm is at most $E/2+s/8<E$, which is the original observed
softmax interface.  All feasibility and movement tests remain valid.

For the retained term, Young's inequality and
Eq.~\eqref{eq:gram_eta} give
\[
 r+1.1s\leq(1-\frac7{6k})\Delta
 -\frac B2\frac{C^2\overline d^2}{\Delta}
 \leq(1-\frac9{8k})\delta
 -\frac B3\frac{C^2\overline d^2}{\delta},
\]
where the first step follows from
$a_0\overline d\leq BC^2\overline d^2/(2\Delta)
+a_0^2\Delta/(2BC^2)$ and $C^2=256k$, and the second step follows from
$\delta\leq\Delta\leq(1+1/(128k))\delta$.
Define $z=N_w(\log w^+-\log w)$.  The Newton and weight-change bounds give
\[
 \delta_t(x^+,w^+)\leq(1+4z)(4\delta^2+z)
 \leq z+9\delta^2
 \leq\delta-\frac\delta k-\frac B3\frac{C^2\overline d^2}{\delta},
\]
where the first step follows from Lemma~\ref{lem:ls_path_facts}, the
second step follows from $0\leq z\leq\delta\leq1/16$, and the third
step follows from the preceding display and $\delta\leq1/(160k)$.
The additional clipping uses only $O(m)$ work, proving the stated costs.
\end{proof}

\section{A four-logarithm bound from the projected residual}
\label{sec:log_four}

We retain the Gram tickets and lazy auxiliary repairs of
Section~\ref{sec:gram_transport_lazy_repairs}.  We amplify the weighted predictor and
use a squared potential of the computed Newton residual.  The conservation
identity for the Lewis-weight Jacobian makes the path-parameter forcing
couple to the dissipated part of this potential.  This permits larger path
steps without increasing the total score drift.

Section~\ref{subsec:four_log_amplification_forcing} amplifies the weighted
predictor and proves the dissipative path-forcing estimate.
Section~\ref{subsec:four_log_finite_steps} controls finite Newton steps and
establishes the squared-residual potential inequality.  Finally,
Section~\ref{subsec:four_log_path_ledger} sets the path and terminal
conventions and accounts for all attempted work to prove
Theorem~\ref{thm:log_four_formal}.

Write $k:=c_k=8q$, so that $p=1-2/k$, $C=16\sqrt k$, and
$K=1/(128\sqrt k)$.  Retain $E=K/(A_EA_L\ell)$ and $R=c_RCE$.
All new constants below are absolute and are chosen before the final radius,
normalization, confidence, outer-switch, and resource constants.
For this section, defer the choice of $\eta_0$ and its dependent accuracy
constants until after the potential coefficient $\Lambda$ and the error
coefficient $\epsilon_*$ have been fixed.  The choices still satisfy
Eq.~\eqref{eq:constant_eta_condition} and Eq.~\eqref{eq:gram_eta}.

\subsection{Amplification and the forcing identity}
\label{subsec:four_log_amplification_forcing}

The squared-residual analysis needs the weighted predictor to be accurate
on the same high-probability event as the coordinate predictor and defect
certificate.  The next lemma obtains this guarantee by aggregating
independent weighted-chain copies.

\begin{lemma}[Amplified weighted predictor]
\label{lem:four_log_amplification}
In Algorithm~\ref{alg:gram_center}, replace its independent weighted
chain by $L=O(\log(1/\zeta))$ independent copies conditional on the original
coarse data.  Apply the finite medoid rule to their $W^{1/2}$-normalized
outputs before the existing coarse-center clip and defect-body projection.
The coordinate and certificate routines use failure budget $\zeta/2$ and
the medoid uses failure budget $\zeta/2$.  For sufficiently small absolute
predictor and projection accuracy constants, the enlarged coarse event has
conditional probability at least $1-\zeta$ and satisfies
\begin{equation}\label{eq:four_log_predictor_accuracy}
 \|\widehat u-u\|_\infty\leq s/32,
 \qquad C\|\widehat u-u\|_w\leq s/32,
 \qquad s=\eta_0(\overline d+\Delta/k).
\end{equation}
Here $u=\log g(x^+)-\log g(x)$ and $\Delta=\overline\delta_t(x,w)$.
All lazy-repair and Gram-ticket conclusions remain valid.  With
$\log(1/\zeta)=O(\ell)$, the non-score cost per call is $O(k\ell)$
systems, $O(\log k)$ system rounds, and
$O((\operatorname{nnz}(A)+m)\ell^2)$ scalar work.
\end{lemma}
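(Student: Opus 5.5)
The proof amplifies the conditional second-moment guarantee of Lemma~\ref{lem:large_radius_split_predictor} into a high-probability one. Condition on the coarse sigma-field $\mathcal G_k$, which now contains the coordinate chain, the certificate and the medoid $\widetilde y$, each run with budget $\zeta/2$. On the coarse event, the proof of Lemma~\ref{lem:large_radius_split_predictor} gives, for each independent weighted-chain copy $p^{w,(j)}$,
\[
 \E[C^2\|p^{w,(j)}-u\|_w^2\mid\mathcal G_k]
 \leq 256\epsilon_1^2(\overline d^2+\Delta^2/(C^2k))\cdot\frac{C^2}{k}\cdot\frac1{256}
 \leq 256\epsilon_1^2 s^2/\eta_0^2 .
\]
This is the same computation as in Lemma~\ref{lem:linear_defect_projection}, using $C^2\leq 256k$ and $s\geq\eta_0(\overline d+\Delta/k)$. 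Markov's inequality then shows that each copy satisfies $C\|p^{w,(j)}-u\|_w\leq 32\epsilon_1 s/\eta_0$ with conditional probability at least $3/4$. Passing to $W^{1/2}$-normalized coordinates turns this into a Euclidean statement, because $\|v\|_w=\|W^{1/2}v\|_2$, and it places us exactly in the setting of Lemma~\ref{lem:finite_medoid_aggregation}. With $L=O(\log(1/\zeta))$ copies and failure budget $\zeta/2$, the selected output satisfies $C\|p^w_{\rm med}-u\|_w\leq 96\epsilon_1 s/\eta_0$. A union bound with the coordinate and certificate events gives the enlarged coarse event with conditional probability at least $1-\zeta$.

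Next the estimate is passed through the two subsequent operations. The coarse-center clip of Lemma~\ref{lem:gram_coarse_clip} projects onto a convex set that contains $u$, so it does not increase the distance to $u$. The radial preprocessing and the exact metric projection in Lemma~\ref{lem:computable_defect_projection} are handled the same way, since the defect body contains $u$, exactly as in Lemma~\ref{lem:linear_defect_projection}. The approximate projection adds at most $\epsilon_{\rm P}s/C$ in $\|\cdot\|_w$. Altogether $C\|\widehat u-u\|_w\leq(96\epsilon_1/\eta_0+\epsilon_{\rm P})s$, and choosing $\epsilon_1,\epsilon_{\rm P}$ small after $\eta_0$ makes this at most $s/32$. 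The $\ell_\infty$ bound $\|\widehat u-u\|_\infty\leq s/32$ is unchanged, since it comes from feasibility of the body $\|v-p^\infty\|_\infty\leq s/64$ together with $u\in\mathcal C$.

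It remains to check that the earlier conditional conclusions survive, and this is the step I expect to need the most care. The medoid output is no longer a conditional second-moment estimator in the earlier sense. Even so, the bias and fluctuation hypotheses of Lemma~\ref{lem:adaptive_residual_game} follow from the pointwise bound on the enlarged event, by conditional Jensen as in Lemma~\ref{lem:gram_coarse_clip}: both $N(b_k)$ and $N(Z_k)$ are at most $2(s/32+s/32)<s/8$. The residual game tolerates the extra failure through the external stop. The lazy-repair bound of Lemma~\ref{lem:gram_lazy} and the Gram ticket of Lemma~\ref{lem:gram_principal_drift} only use the $\ell_\infty$ bound and the pointwise bound $\|\widehat u-u\|_w=O(\overline d)$. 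The latter still holds, because the clip is applied after the medoid, so both remain valid for every realization of the copies.

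For the cost, each copy uses $O(k)$ systems in $O(\log k)$ rounds, and the copies run in parallel. This gives $O(k\log(1/\zeta))=O(k\ell)$ systems and $O(\log k)$ rounds. The medoid costs $O(mL^2)=O(m\ell^2)$ scalar work, which fits within $O((\nnz(A)+m)\ell^2)$.
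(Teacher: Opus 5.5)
Your accuracy argument follows the paper's proof step for step, and those steps are correct:
\begin{itemize}
\item the per-copy conditional bound $\E[C^2\|p^{w}-u\|_w^2\mid\mathcal G_k]\le\epsilon_1^2(256\overline d^2+\Delta^2/k^2)$;
\item Markov's inequality at probability $3/4$;
\item the finite medoid rule in $W^{1/2}$-normalized coordinates;
\item nonexpansiveness toward $u$ of the clip, the radial projection and the exact body projection, plus the additive $\epsilon_{\rm P}s/C$;
\item the coordinate box for the $\ell_\infty$ claim;
\item ticket validity, because the clip is applied after the medoid;
\item the parallel cost count.
\end{itemize}

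The gap is in your last step, where you verify the hypotheses of Lemma~\ref{lem:adaptive_residual_game}. Lemma~\ref{lem:four_log_ledger} later relies on exactly these hypotheses from this lemma. You keep the original $\mathcal G_k$ as the pre-noise field. You then apply conditional Jensen to a pointwise bound that holds only on the medoid success event, and say the medoid failure is absorbed by the external stop.

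That event is a function of the weighted copies, which are drawn after $\mathcal G_k$. Lemma~\ref{lem:residual_chasing} allows only an external stop whose indicator is measurable in the pre-noise field, so you cannot freeze on it.

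Without freezing, $b_k=\E[u-\widehat u\mid\mathcal G_k]$ and the second moment of $Z_k$ also average over bad medoid outcomes. There the only control is the clip bound $\|\widehat u-u\|_w\le A_p\overline d$, so $C\|\widehat u-u\|_w$ can be of order $\sqrt k\,s/\eta_0$. Hence neither $N(b_k)\le s/16$ nor the pointwise bound $N(Z_k)\le s/8$ follows. You could rescue this route with an explicit smallness condition such as $\zeta\sqrt k/\eta_0\ll1$. That condition holds for $\zeta_{\rm step}$, but you neither state nor use it.

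The paper's fix is to include all $L$ copies in the new coarse sigma-field. Then:
\begin{itemize}
\item $\widehat u$ is measurable in that field;
\item the entire error $u-\widehat u$ is predictable bias, with $N_w(u-\widehat u)\le s/32+s/32=s/16$ on the enlarged event;
\item the centered noise is identically zero;
\item the enlarged coarse event becomes a legitimate pre-noise external stop.
\end{itemize}
No conditional Jensen is needed, and no mean-square statement is made after conditioning on a future success event.
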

\begin{proof}
Conditional on the original coarse data, the weighted-chain guarantee gives
\[
 \E[C^2\|p^w-u\|_w^2\mid\mathcal G]
 \leq\epsilon_1^2(256\overline d^2+\Delta^2/k^2).
\]
Choose $\epsilon_1$ sufficiently small relative to $\eta_0$.  Markov's
inequality then makes each copy lie within $s/(10^4C)$ of $u$ with
probability at least $3/4$.  Conditional independence and the finite medoid
rule give an output within $s/(10^3C)$ of $u$, except with probability
at most $\zeta/2$, after enlarging the absolute multiplier in $L$.
The clipping balls and exact defect body contain $u$ on the original
coarse event.  Their weighted metric projections cannot increase distance
to $u$, and the last approximate projection adds at most
$\epsilon_{\rm P}s/C$.  This proves the weighted claim for sufficiently
small $\epsilon_{\rm P}$.  The existing coordinate box gives the other
claim.  The clipped output still satisfies
Eq.~\eqref{eq:gram_pointwise_predictor} on the original coarse event
for every realization of the copies, so its pending Gram ticket is valid.

Include all copies in the new coarse sigma-field.  Before its first bad
event the residual game now has predictable bias $u-\widehat u$ with
$N_w(u-\widehat u)\leq s/16$ and centered noise zero.  The observation
identity in Lemma~\ref{lem:gram_center_correctness} still applies.
For the stopped analysis the decision to execute the greedy correction is
made after this enlarged coarse field; no conditional mean-square identity
is asserted after conditioning on a future success event.  The independent
copies can run in parallel.  Their systems and the finite medoid's
$O(mL^2)$ scalar work give the stated costs.
\end{proof}

For the rest of this section define the actual projected residual by
\[
 r:=P_{x,w}r_t^{c_{\rm path}}(x,w),\qquad
 X:=\|r\|_\infty,\qquad Y:=\|r\|_w,\qquad \Delta:=X+CY.
\]
Thus $r$, unlike the unprojected vector in
Definition~\ref{def:centrality_permitted}, is the vector used in the Newton
step $x^+=x-\operatorname{Diag}(s(x))r$.
Define $\zeta_i:=\phi_i'(x_i)/\sqrt{\phi_i''(x_i)}$,
$Z:=\operatorname{Diag}(\zeta)$, and $P:=P_{x,w}$.
The symbols $Z$ and $P$ in this section are deterministic operators, not
the centered predictor noise or the Lewis-query projector.
For the exact Lewis vector $g=g(x)$ and its Jacobian $B$, define
\begin{equation}\label{eq:four_log_losses}
\begin{aligned}
 z&:=Zr,& b&:=Bz,& v_0&:=Zb,& m_0&:=Pv_0,& f&:=P\zeta,\\
 d_0^2&:=p^2\|z\|_g^2-\|b\|_g^2,
 &j_1^2&:=Y^2-\|z\|_w^2,
 &j_2^2&:=\|b\|_w^2-\|v_0\|_w^2,\\
 j_3^2&:=\|(I-P)v_0\|_w^2,
 &\mathfrak D^2&:=d_0^2+j_1^2+j_2^2+j_3^2.
\end{aligned}
\end{equation}
These losses are used only in the analysis.

The path-parameter change introduces a forcing term in the projected
residual.  The next lemma bounds its interaction with the residual map
using the nonnegative losses defined above.

\begin{lemma}[Dissipative path forcing]
\label{lem:four_log_forcing}
Suppose $\|\log w-\log g(x)\|_\infty\leq K$.  All losses in
Eq.~\eqref{eq:four_log_losses} are nonnegative, and
\begin{equation}\label{eq:four_log_linear_energy}
 \|m_0\|_w^2\leq(p^2+32K^2)Y^2-\tfrac12\mathfrak D^2,
 \qquad
 |\langle m_0,f\rangle_w|\leq3\sqrt n(KY+\mathfrak D).
\end{equation}
Consequently there is an absolute $A_0$ such that, for every real $\xi$,
\begin{equation}\label{eq:four_log_forced_energy}
 \|m_0-\xi f\|_w^2
 \leq(1-3/k)Y^2-\tfrac14\mathfrak D^2+A_0n\xi^2.
\end{equation}
There is also an absolute $A_2$ such that
\begin{equation}\label{eq:four_log_linear_coordinate}
 \|m_0-\xi f\|_\infty
 \leq pX+A_2\mathfrak D+A_2\sqrt n|\xi|.
\end{equation}
\end{lemma}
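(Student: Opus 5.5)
Each loss is a norm-drop along the chain $r\mapsto z\mapsto b\mapsto v_0\mapsto m_0$, so the plan is to telescope the chain, collect the losses, and bound the cross term with $f$ using $\phi'''=2\phi'\phi''$ and $g^\top B=0$.

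\textbf{Nonnegativity of the losses.} We check the four losses one at a time.
\begin{itemize}
\item $d_0^2\ge0$ is Lemma~\ref{lem:jacobian_defect}(a), applied with $h=z$.
\item $j_1^2\ge0$ because $|\zeta_i|\le1$, which is the interval-barrier bound already used in Lemma~\ref{lem:epoch_normalization}. Hence $\|z\|_w\le\|r\|_w=Y$.
\item $j_2^2\ge0$ for the same reason applied to $b$, since $v_0=Zb$.
\item $j_3^2\ge0$ trivially.
\end{itemize}

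\textbf{Energy inequality.} Since $P$ is $w$-orthogonal (proof of Lemma~\ref{lem:pythagorean_projector}), we have $\|m_0\|_w^2=\|v_0\|_w^2-j_3^2=\|b\|_w^2-j_2^2-j_3^2$. Next we convert $\|b\|_w^2$ to $\|b\|_g^2=p^2\|z\|_g^2-d_0^2$ and $\|z\|_g^2$ back to $\|z\|_w^2=Y^2-j_1^2$.

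Each conversion costs a factor $e^{\pm K}$. A factor $1+O(K)$ in front of $p^2Y^2$ would be too large, so the transfer must be made quadratic, as in Lemma~\ref{lem:robust_consistency}. I would write $\|b\|_w^2-\|b\|_g^2=\langle b,(W-G)b\rangle$ and compare it with the same form for $pz$. This uses $\|b-pz\|_g\le d_0$ from Lemma~\ref{lem:jacobian_defect}(a), which gives
\[
|\langle b,(W-G)b\rangle-p^2\langle z,(W-G)z\rangle|\le O(K)(\|z\|_g d_0+d_0^2).
\]
The remaining term $p^2\langle z,(W-G)z\rangle$ cancels exactly against the conversion of $\|z\|_g^2$ to $\|z\|_w^2$. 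Young's inequality then turns $O(K)\|z\|_gd_0$ into $16K^2Y^2+\tfrac14 d_0^2$, which leaves $-\tfrac12\mathfrak D^2$ with room to spare. This gives the first bound in \eqref{eq:four_log_linear_energy}.

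\textbf{Cross term.} Write $\langle m_0,f\rangle_w=\langle Pv_0,P\zeta\rangle_w=\langle v_0,\zeta\rangle_w-\langle (I-P)v_0,(I-P)\zeta\rangle_w$. The second piece is at most $j_3\|\zeta\|_w\le j_3\sqrt{\|w\|_1}\le 2\sqrt n\,j_3$.

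For the first piece, $\langle v_0,\zeta\rangle_w=\sum_i w_i\zeta_i^2 b_i$. Here I would use $\phi'''=2\phi'\phi''$: it says $\zeta_i^2$ is the coefficient generated by the barrier chain rule, and the claim is that this makes $\zeta^2$ proportional to the weights for which conservation holds. The conservation identity is $g^\top B=0$, i.e. $\mathbf1^\top\Lambda=0$ for the resolvent form (Lemma~\ref{lem:resolvent_form}, where $\Lambda\mathbf1=0$ and $\Lambda$ is symmetric).

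Thus $\sum_i g_i b_i=0$, and $\sum_i w_i\zeta_i^2b_i=\sum_i g_ib_i+\sum_i(w_i\zeta_i^2-g_i)b_i$. I would split the remainder into two parts.
\begin{itemize}
\item The part $\sum_i (w_i-g_i)\zeta_i^2 b_i$ is bounded by $O(K)\sqrt n\,\|b\|_g=O(K\sqrt n\,Y)$.
\item The part $\sum_i g_i(\zeta_i^2-1)b_i$ is paired by Cauchy--Schwarz and controlled via $j_2$. This uses $\|b\|_w^2-\|Zb\|_w^2=\sum_i w_i(1-\zeta_i^2)b_i^2=j_2^2$ together with $1-\zeta_i^2\le1$, which gives $\sum_i w_i(1-\zeta_i^2)|b_i|\le\sqrt{\|w\|_1}\,j_2$.
\end{itemize}
Collecting the pieces gives $|\langle m_0,f\rangle_w|\le3\sqrt n(KY+\mathfrak D)$.

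\textbf{Forced energy and coordinate bound.} Expand
\[
\|m_0-\xi f\|_w^2=\|m_0\|_w^2-2\xi\langle m_0,f\rangle_w+\xi^2\|f\|_w^2,
\]
using $\|f\|_w^2\le\|w\|_1\le3n$. The cross term is handled by Young's inequality:
\[
6\sqrt n|\xi|(KY+\mathfrak D)\le K^2\cdot O(k)Y^2\cdot\tfrac1k+\tfrac18\mathfrak D^2+O(n\xi^2).
\]
Since $K^2=1/(128^2k)$, the $Y^2$ terms total at most $(p^2+O(1/(128^2 k)))Y^2\le(1-3/k)Y^2$, because $p^2\le1-4/k+4/k^2$. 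This gives \eqref{eq:four_log_forced_energy}.

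For \eqref{eq:four_log_linear_coordinate}:
\begin{itemize}
\item $\|\xi f\|_\infty\le|\xi|(1+\sqrt{2c_s}\|\zeta\|_w)=O(\sqrt n|\xi|)$.
\item $\|m_0\|_\infty\le\|v_0\|_\infty+\sqrt{2c_s}\,j_3$, by bounding $\|(I-P)v_0\|_\infty$ as in Lemma~\ref{lem:pythagorean_projector}.
\item $\|v_0\|_\infty\le\|b\|_\infty\le p\|z\|_\infty+\tfrac2p d_0\le pX+3d_0$, by Lemma~\ref{lem:jacobian_defect}(b) with $0\preceq D\preceq pI$ and $|\zeta|\le1$.
\end{itemize}

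\textbf{Main obstacle.} I expect the hardest step to be the conservation/forcing bound. It requires identifying exactly why $\zeta^2$ pairs with $g$ through $\phi'''=2\phi'\phi''$, since a naive bound only gives $O(\sqrt n\,Y)$ with no $K$ or $\mathfrak D$ factor.
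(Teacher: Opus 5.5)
Your proposal is correct and follows the paper's proof essentially step for step. It uses the same telescoping of the four losses with the quadratic metric transfer through $e_b=b-pz$ and $\|e_b\|_g\le d_0$, and the same splitting $\zeta^{\odot2}=\mathbf1-(\mathbf1-\zeta^{\odot2})$ combined with $g^\top B=0$ and Cauchy--Schwarz against $j_2$ and $j_3$. It also uses the same Young and sensitivity estimates for the forced and coordinate bounds.

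The correction needed is to your narrative, not your computation. The identity $\phi'''=2\phi'\phi''$ is used nowhere in this lemma: your cross-term estimate never invokes it, and $\zeta^{\odot2}$ is not proportional to $g$. It enters only upstream, in Lemma~\ref{lem:four_log_finite_transport}, where it identifies the row-log motion $\psi$ with $Zr$ to first order. So the step you flag as the main obstacle is already closed by your own argument.
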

\begin{proof}
The spectral and resolvent identities give $GB=B^\top G$, $B\mathbf1=0$,
and $0\preceq G^{1/2}BG^{-1/2}\preceq pI$, where
$G:=\operatorname{Diag}(g)$.  In particular $g^\top B=0$.
Since $|\zeta_i|\leq1$ and $P$ is a $w$-orthogonal projection, the four
losses are nonnegative.  If $w=g$, successive diagonal contractions and
the projection identity give
\[
 p^2Y^2-\|m_0\|_g^2=p^2j_1^2+d_0^2+j_2^2+j_3^2.
\]

For the maintained metric, define $e_b=b-pz$ and $T=W-G$.
Lemma~\ref{lem:jacobian_defect}-\hyperref[item:jacobian_defect_a]{(a)}
gives $\|e_b\|_g\leq d_0$.  Since $K\leq1/128$,
$|T|\preceq2KG$.  Thus
\[
\begin{aligned}
 p^2\|z\|_w^2-\|b\|_w^2
 &=d_0^2-2p\langle z,e_b\rangle_T-\langle e_b,e_b\rangle_T\\
 &\geq(1-2K)d_0^2-4K\|z\|_g d_0\\
 &\geq\tfrac12d_0^2-32K^2\|z\|_w^2,
\end{aligned}
\]
where the first step follows from $b=pz+e_b$, the second step follows
from Cauchy--Schwarz and $|T|\preceq2KG$, and the third step follows from
Young's inequality and $G\preceq e^KW$.  Adding the other three losses
and using $p^2\geq1/2$ proves the first inequality in
Eq.~\eqref{eq:four_log_linear_energy}.

Self-adjointness of $P$ and $v_0=Zb$ give
\begin{equation}\label{eq:four_log_forcing_identity}
 \langle m_0,f\rangle_w
 =\langle b,\mathbf1\rangle_w
 -\langle b,\mathbf1-\zeta^{\odot2}\rangle_w
 -\langle(I-P)v_0,\zeta\rangle_w.
\end{equation}
The first term equals $\langle b,\mathbf1\rangle_{w-g}$, since
$g^\top B=0$, and has magnitude at most
$(e^K-1)\sqrt{2n}\|b\|_g\leq1.5K\sqrt nY$.
The remaining terms have sum at most
$\sqrt{\sum_iw_i}(j_2+j_3)\leq2.1\sqrt n\mathfrak D$ by
Cauchy--Schwarz, $\sum_i g_i=2n$, and $(1-\zeta_i^2)^2\leq1-\zeta_i^2$.
This proves the second inequality in
Eq.~\eqref{eq:four_log_linear_energy}.

We have $\|f\|_w^2\leq2e^Kn$ and
$p^2+32K^2\leq1-7/(2k)$ for $k\geq8\log4$.
Expand $\|m_0-\xi f\|_w^2$ and use
\[
 6\sqrt n|\xi|\mathfrak D\leq\tfrac14\mathfrak D^2+36n\xi^2,
 \qquad
 6K\sqrt n|\xi|Y\leq Y^2/(2k)+18kK^2n\xi^2.
\]
Since $kK^2=1/128^2$, this proves
Eq.~\eqref{eq:four_log_forced_energy}.

For the coordinate bound, write $Bz=Dz+e$ using
Lemma~\ref{lem:jacobian_defect}-\hyperref[item:jacobian_defect_b]{(b)}.
Then $0\preceq D\preceq pI$ and $\|e\|_\infty\leq2d_0/p$.
The sensitivity bound for the projection onto $W^{-1/2}A_x$ gives
$\|(I-P)v_0\|_\infty\leq A j_3$ for an absolute $A$.
Indeed its $i$-th coordinate is at most
$\sqrt{\sigma_i(W^{-1/2}A_x)/w_i}\|(I-P)v_0\|_w$.
Also $\|P\zeta\|_\infty\leq1+A\|\zeta\|_w=O(\sqrt n)$.
These bounds prove Eq.~\eqref{eq:four_log_linear_coordinate}.
\end{proof}

\subsection{Finite Newton steps}
\label{subsec:four_log_finite_steps}

To use the linear energy estimate in the algorithm, we must control a
finite Newton step, the approximate weight update, and the changing
projection.  The following lemma bounds these errors in both the weighted
and coordinatewise components of the residual.

\begin{lemma}[Finite residual transport]
\label{lem:four_log_finite_transport}
Use the standard Lee--Sidford barriers: $-\log(x-l)$ on a lower
half-line, $-\log(u-x)$ on an upper half-line, and
$-\log\cos(a(x-(l+u)/2))$, $a=\pi/(u-l)$, on a finite interval.
Suppose $\Delta\leq R\leq r_0/k$, the tracking band holds, and the
amplified coarse event occurs.  Perform the amplified lazy centering step
and set $t^+=(1+\xi)t$, where $|\xi|\leq R$.
There are absolute $A_1,A_2,c_D>0$, independent of $\eta_0$ and
$\epsilon_*$, such that, for any prescribed sufficiently small absolute
$\epsilon_*>0$, choosing $\eta_0$ sufficiently small, then the predictor
and projection accuracies relative to $\eta_0$, and finally $r_0,c_R$
sufficiently small gives
\begin{equation}\label{eq:four_log_finite_components}
\begin{aligned}
 (Y^+)^2&\leq(1-2/k)Y^2-c_D\mathfrak D^2
             +A_1n\xi^2+\epsilon_*X^2/k^2,\\
 X^+&\leq(1-3/(2k))X+A_2\mathfrak D
             +\epsilon_*Y/\sqrt k+A_2\sqrt n|\xi|.
\end{aligned}
\end{equation}
Here $X^+,Y^+$ are computed at $(x^+,w^+,t^+)$, with the new weight in
$Y^+$.
\end{lemma}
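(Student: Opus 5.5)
Write the new projected residual at $(x^+,w^+,t^+)$ as the old residual propagated through one Newton step, and expand it to first order around $(x,w,t)$. The first-order part is the linear map $m_0-\xi f$ already controlled by Lemma~\ref{lem:four_log_forcing}. The plan is to show that every other contribution is either quadratically small or governed by $\eta_0$ and the predictor accuracies.

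\textbf{Linearization.} After the exact Newton step $x^+=x-\operatorname{Diag}(s(x))r$ at fixed $t,w$, the residual $t c+w\odot\phi'$ is corrected by the Hessian term, which cancels $r$. What survives has three pieces:
\begin{itemize}
\item the third-derivative term, which is $Z$ acting on the normalized step;
\item the weight change $\log w^+-\log w=\widehat u+\chi$;
\item the path forcing $\xi t c/(w\odot\sqrt{\phi''})$, which after projection differs from $\xi f$ only by the multiple of the old residual.
\end{itemize}
For the listed barriers one has $\phi'''=2\phi'\phi''$, so the normalized third-derivative term is exactly $Z(r\odot\cdot)$. Its first-order contribution combines with the weight response, because the ideal weight moves by $u\approx Bz$ with $z=Zr$. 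Replacing $\widehat u$ by $u=B z+O(\delta^2)$ via Lemma~\ref{lem:finite_segment_jacobian}, the linear part becomes $P(Zb)-\xi f=m_0-\xi f$ up to sign conventions.

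\textbf{Error terms.} Four errors remain, and I bound each in both components.
\begin{enumerate}
\item Predictor error $\widehat u-u$: by Eq.~\eqref{eq:four_log_predictor_accuracy} it has $\ell_\infty$ norm at most $s/32$ and $C$-weighted norm at most $s/32$, where $s=\eta_0(\overline d+\Delta/k)$. The certificate gives $\overline d^2\le A_d(d^2+\Delta^2/(C^2k))$, and I expect $d\lesssim\mathfrak D$ for the segment defect.
\item Greedy correction $\chi$: it has $N_w(\chi)\le1.1s$, so it is bounded in the same way.
\item Second-order Taylor remainders of the barrier, of the weight multiplication $e^{\widehat u+\chi}-1$, and of the finite segment: each is $O(\Delta^2)=O(R\Delta)$, hence at most $(r_0/k)\Delta$.
\item Change of projection $P_{x^+,w^+}-P_{x,w}$: it is $O(R)$ in the mixed norm by Lemma~\ref{lem:relative_drift}-type drift together with Lemma~\ref{lem:pythagorean_projector}. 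It acts on a vector of size $O(\Delta)$, giving $O(R\Delta)$ again.
\end{enumerate}

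\textbf{Assembling the $Y^+$ bound.} Square the weighted component using Eq.~\eqref{eq:four_log_forced_energy}. Cross terms between the main part and an error $\epsilon$ are absorbed by Young's inequality: $2Y\epsilon\le Y^2/(2k)+2k\epsilon^2$, and $k\epsilon^2$ with $\epsilon=O(\eta_0(\overline d+\Delta/k)/C)$ gives $O(\eta_0^2(\mathfrak D^2+\Delta^2/k^2))$. Splitting $\Delta^2\le 2X^2+2C^2Y^2$, the $X$ part yields $\epsilon_*X^2/k^2$ and the $Y$ part is at most $Y^2/(2k)$ once $\eta_0$ is small. With $c_D=1/8$, the $\mathfrak D^2$ part is absorbed into $\tfrac14\mathfrak D^2$. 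The new weight changes $\|\cdot\|_w$ by a factor $e^{O(R)}$, which costs only $O(R/k)Y^2$ once $r_0$ is small. Together these give the first inequality, from $1-3/k$ down to $1-2/k$.

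\textbf{Assembling the $X^+$ bound.} Use Eq.~\eqref{eq:four_log_linear_coordinate} and $p=1-2/k$. The coordinate errors contribute $\eta_0(\overline d+(X+CY)/k)$. Small $\eta_0$ absorbs the $X$ part into a further $1/(2k)$ slack, and the $CY/k=16Y/\sqrt k$ part becomes $\epsilon_*Y/\sqrt k$. The $R\Delta$ terms are handled similarly once $r_0\ll\eta_0\epsilon_*$.

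\textbf{Main obstacle.} The hardest step is the exact cancellation that identifies the linear part as $m_0-\xi f$. This requires checking that the third-derivative term and the weight term share the $Z$ factor, and that $\widehat u$ enters through $B z$ rather than $B\psi$ with $\psi=-\tfrac12\phi'''\phi''^{-3/2}$ times the step. I would first verify that $\psi=Z r$ coordinatewise to first order using $\phi'''=2\phi'\phi''$.
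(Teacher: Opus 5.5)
Your plan is correct and is essentially the paper's argument. You write $r^+=m_0-\xi f+e$, bound $e$ separately in $\|\cdot\|_w$ and $\|\cdot\|_\infty$ by $r_0$- and $\eta_0$-small multiples of $X$, $Y$, $\mathfrak D$ and $\sqrt n|\xi|$, square the weighted part with Lemma~\ref{lem:four_log_forcing} and Young's inequality, use its coordinate bound, and fix $\eta_0$ before $r_0$.

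There are two routing differences, neither essential:
\begin{itemize}
\item The paper makes the linearization exact by evaluating the new residual at the old multiplier scaled by $1+\xi$. This shows that the Newton remainder, third-derivative term included, is $O(r^{\odot2})$, and that the only first-order terms come from $(\mathbf1-(1+\xi)e^{-v})\odot\zeta$. It also proves separate weighted and coordinate remainder bounds.
\item You instead use mixed-norm $O(R\Delta)$ bounds. These also suffice, because the resulting $RX/C$ and $RCY$ terms fit into the $\epsilon_*X^2/k^2$ and $\epsilon_*Y/\sqrt k$ slack.
\end{itemize}

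Two small corrections are needed:
\begin{itemize}
\item Since $\psi=Zr+O(r^{\odot2})$ and the defect is a seminorm bounded by $p\|\cdot\|_g$, you only get $d(\psi)\le d_0+O(XY)$, not $d\lesssim\mathfrak D$. This is still enough for $\overline d=O(\mathfrak D+\Delta/k)$.
\item $P_{x^+,w^+}-P_{x,w}$ also acts on the forcing vector $\xi\zeta$, whose weighted size is $|\xi|\sqrt n$ rather than $O(\Delta)$. This gives an extra $O(R\sqrt n|\xi|)$ error, which is absorbed by the $A_1n\xi^2$ and $A_2\sqrt n|\xi|$ terms.
\end{itemize}
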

\begin{proof}
Direct differentiation of each of the three displayed barriers gives
\begin{equation}\label{eq:four_log_barrier_identity}
 \phi_i'''=2\phi_i'\phi_i'',\qquad
 (\log s_i)'=-\phi_i',\qquad
 \zeta_i'=(1-\zeta_i^2)\sqrt{\phi_i''}.
\end{equation}
Along $x_a=x-a\operatorname{Diag}(s(x))r$, scalar Taylor estimates and
self-concordant Hessian comparison therefore give, coordinatewise,
\[
 \psi:=\log s(x^+)-\log s(x)=Zr+O(r^{\odot2}),
 \qquad |\zeta(x^+)-\zeta(x)|=O(|r|).
\]
The row-log displacement on the segment is $O(X)$, and target stability
makes its exact-weight log displacement $O(X)$ as well.  Apply the
separate $w\to w$ and $\infty\to\infty$ estimates in the proof of
Lemma~\ref{lem:jacobian_stability}, and integrate the Jacobian along this
segment.  Since $\|r^{\odot2}\|_w\leq XY$, this gives
\begin{equation}\label{eq:four_log_separate_remainders}
 \|u-BZr\|_w\leq AXY,
 \qquad \|u-BZr\|_\infty\leq AX^2.
\end{equation}
No conversion from the infinity norm to the weighted norm is used here.

The defect is a seminorm, since it is induced by
$p^2G-B^\top GB\succeq0$, and it is bounded by $p\|\cdot\|_g$.
The certificate upper bound and the last Taylor estimate give
\begin{equation}\label{eq:four_log_certificate}
 \overline d^2\leq A(d_0^2+X^2Y^2+\Delta^2/(C^2k)),
 \qquad \overline d\leq A(d_0+\Delta/k).
\end{equation}
The second inequality uses $C=16\sqrt k$ and $X\leq R\leq r_0/k$.
Define $v=\log(w^+/w)$.  The amplified predictor and the greedy response
give $N_w(v)\leq\Delta$ and
$N_w(v-u)\leq1.2\eta_0(\overline d+\Delta/k)$.
In particular $\|v\|_\infty\leq R$.

Define $P^+=P_{x^+,w^+}$.  The ordinary projection onto $W^{-1/2}A_x$
changes under a row-log displacement $\psi-v/2=O(R)$.
Lemma~\ref{lem:relative_drift} bounds its operator difference by $O(R)$
and its $i$-th column difference by
$O(R)\sqrt{\sigma_i(W^{-1/2}A_x)}$.
Conjugate back to the physical projection.  In the difference $P^+-P$
the identity terms cancel, and the two diagonal conjugation errors are
also $O(R)$ times projection columns.  Dividing the row bound by
$\sqrt{w_i}$ and applying sensitivity gives
\begin{equation}\label{eq:four_log_projection_transport}
 \|P^+-P\|_{w\to w}\leq AR,
 \qquad \|P^+-P\|_{w\to\infty}\leq AR.
\end{equation}
Thus these constants do not depend on $\min_iw_i$.

Choose the old affine multiplier $\lambda$ satisfying
$tc_{\rm path}+w\odot\phi'(x)-A\lambda=W\operatorname{Diag}(s(x))^{-1}r$.
Define
\[
 n_N:=s(x^+)/s(x)\odot r+s(x^+)\odot(\phi'(x^+)-\phi'(x)).
\]
Newton cancellation gives $|(n_N)_i|\leq Ar_i^2$.
Evaluating the new normalized residual at $(1+\xi)\lambda$ gives the
exact identity
\begin{equation}\label{eq:four_log_exact_residual}
 r^+=P^+((1+\xi)e^{-v}\odot n_N
       +(\mathbf1-(1+\xi)e^{-v})\odot\zeta(x^+)).
\end{equation}
Indeed the old affine residual equals
$w\odot(s(x)^{-1}\odot r-\phi'(x))$ after subtracting $A\lambda$;
substitution of $w^+=w\odot e^v$ proves the formula.

Expand $\mathbf1-(1+\xi)e^{-v}=v-\xi\mathbf1+O(v^{\odot2}+|\xi v|)$.
Use Eq.~\eqref{eq:four_log_separate_remainders} to replace $u$ by $b$,
Eq.~\eqref{eq:four_log_projection_transport} to replace $P^+$ by $P$,
and $|\zeta(x^+)-\zeta(x)|=O(|r|)$ to replace the diagonal factor.
For an absolute $A$, uniform over $0<\eta_0\leq1$, this gives
\begin{equation}\label{eq:four_log_finite_error}
\begin{aligned}
 r^+&=m_0-\xi f+e,\\
 \|e\|_w&\leq ARY+A\eta_0(d_0+\Delta/k)/C+AR|\xi|\sqrt n,\\
 \|e\|_\infty&\leq AR(X+Y)+A\eta_0(d_0+\Delta/k)+AR|\xi|\sqrt n.
\end{aligned}
\end{equation}
For example, the Newton remainder has weighted norm $O(XY)$, the
exponential remainder has weighted norm
$O((R+|\xi|)\|v\|_w)$, and
$\|v\|_w\leq A Y+A\eta_0\Delta/(Ck)$; these give the displayed weighted
bound.  The coordinate bound follows from the same coordinate estimates
and $\|P^+h\|_\infty\leq\|h\|_\infty+A\|h\|_w$.
Here $\|v\|_\infty\leq A(X+Y)$: the diagonal-plus-defect estimate
gives $\|b\|_\infty\leq pX+2d_0/p$, while $d_0\leq AY$ and
$\Delta/k=X/k+16Y/\sqrt k$.  Thus the coordinate exponential
remainder does not introduce a factor $C$ in front of $RY$.

Define $F:=\sqrt n|\xi|$.  Since $d_0\leq\mathfrak D$ and
$\Delta=X+16\sqrt kY$, Eq.~\eqref{eq:four_log_finite_error} gives
\begin{equation}\label{eq:four_log_error_parameters}
\begin{aligned}
 \|e\|_w&\leq A((r_0+\eta_0)Y/k
       +\eta_0\mathfrak D/\sqrt k+\eta_0X/k^{3/2}+r_0F/k),\\
 \|e\|_\infty&\leq A((r_0+\eta_0)X/k
       +(r_0+\eta_0)Y/\sqrt k+\eta_0\mathfrak D+r_0F/k).
\end{aligned}
\end{equation}
The coefficient of the greedy correction is $A\eta_0$; increasing the
predictor accuracy alone does not make it smaller.

We can show, for sufficiently small fixed upper bounds on $r_0,\eta_0$,
\begin{equation}\label{eq:four_log_uniform_components}
\begin{aligned}
 (Y^+)^2&\leq(1-5/(2k)+A(r_0+\eta_0)/k)Y^2
       -\mathfrak D^2/8+AF^2+A\eta_0^2X^2/k^2,\\
 X^+&\leq(1-2/k+A(r_0+\eta_0)/k)X+A_2\mathfrak D
       +A(r_0+\eta_0)Y/\sqrt k+A_2F.
\end{aligned}
\end{equation}
Here $A,A_2$ are fixed before $\eta_0$ and $\epsilon_*$ are chosen.
For the first inequality, expand $\|m_0-\xi f+e\|_w^2$ and use
Eq.~\eqref{eq:four_log_forced_energy}, $\|m_0\|_w\leq AY$, and
$\|f\|_w\leq A\sqrt n$.  The additional terms are at most
$2AY\|e\|_w+2\|e\|_w^2+AF^2$.  We can show
\[
 A\eta_0Y\mathfrak D/\sqrt k
 \leq\mathfrak D^2/32+8A^2\eta_0^2Y^2/k,
 \qquad
 A\eta_0YX/k^{3/2}
 \leq Y^2/(8k)+2A^2\eta_0^2X^2/k^2,
\]
where both inequalities follow from Young's inequality.
The other products contribute at most
$A(r_0+\eta_0)Y^2/k+AF^2$, and the squared error contributes at most
$A\eta_0^2\mathfrak D^2/k+A\eta_0^2X^2/k^3$
in addition to terms of the same form.  Fix the upper bound on $\eta_0$
so that this extra loss term is at most $\mathfrak D^2/32$.
Finally $W^+\preceq e^RW$ costs at most another
$Ar_0Y^2/k$ and changes the absolute coefficients by bounded factors.
This leaves the displayed $1/8$ loss coefficient and $5/(2k)$ margin,
after enlarging $A$.  The second inequality follows from
Eq.~\eqref{eq:four_log_linear_coordinate} and
Eq.~\eqref{eq:four_log_error_parameters}; enlarging a fixed $A_2$ covers
all $0<\eta_0\leq1$ and $0<r_0\leq1$.

Choose $\eta_0$ and then $r_0$, within these fixed upper bounds, so that
\begin{equation}\label{eq:four_log_smallness}
 A(r_0+\eta_0)\leq\min\{1/2,\epsilon_*\},
 \qquad A\eta_0^2\leq\epsilon_*.
\end{equation}
Choose the predictor and projection accuracies relative to $\eta_0$
before fixing $r_0$, as required by
Lemma~\ref{lem:constant_hierarchy}.  Eq.~\eqref{eq:four_log_uniform_components}
now proves Eq.~\eqref{eq:four_log_finite_components}, with $c_D=1/8$ and
a fixed $A_1$.  Thus neither $A_1,A_2$ nor $c_D$ depends on the final
choice of $\epsilon_*$.
\end{proof}

The preceding component bounds must be combined without losing the negative
defect term.  The next lemma gives a one-step bound for a squared residual
potential, with both contraction and explicit defect dissipation.

\begin{lemma}[Squared residual potential]
\label{lem:four_log_potential}
There are absolute $\Lambda\geq256$, $c_*,A_*>0$ such that
\begin{equation}\label{eq:four_log_potential}
 \mathcal V:=X^2+\Lambda kY^2,
 \qquad
 \mathcal V^+\leq(1-1/(2k))\mathcal V
                  -c_*k\mathfrak D^2+A_*kn\xi^2.
\end{equation}
Also $\Delta^2\leq2\mathcal V$ and
$\mathcal V\leq L_\Lambda\Delta^2$, where
$L_\Lambda:=\max\{1,\Lambda/256\}$.
\end{lemma}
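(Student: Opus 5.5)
We combine the two component bounds of Lemma~\ref{lem:four_log_finite_transport} directly. First square the coordinate bound: with $\alpha:=1-3/(2k)$, write $X^+\leq\alpha X+T$ where $T:=A_2\mathfrak D+\epsilon_*Y/\sqrt k+A_2\sqrt n|\xi|$. Young's inequality in the form $(\alpha X+T)^2\leq(1+1/(2k))\alpha^2X^2+(1+2k)T^2$ gives $(X^+)^2\leq(1-2/k)X^2+3k\cdot3(A_2^2\mathfrak D^2+\epsilon_*^2Y^2/k+A_2^2n\xi^2)$, using $(1+1/(2k))(1-3/(2k))^2\leq1-2/k$ for $k\geq8\log4$. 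Then multiply the weighted bound by $\Lambda k$:
\[
 \Lambda k(Y^+)^2\leq\Lambda k(1-2/k)Y^2-\Lambda kc_D\mathfrak D^2+\Lambda kA_1n\xi^2+\Lambda\epsilon_*X^2/k.
\]

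Adding, the $X^2$ coefficient is $1-2/k+\Lambda\epsilon_*/k$, the $Y^2$ coefficient is $\Lambda k-2\Lambda+9\epsilon_*^2$, the $\mathfrak D^2$ coefficient is $-(\Lambda c_D-9A_2^2)k$, and the forcing term is $(\Lambda A_1+9A_2^2)kn\xi^2$. The ordering of constants is the key point: $A_1,A_2,c_D$ are fixed independently of $\epsilon_*$, so first choose $\Lambda\geq\max\{256,18A_2^2/c_D\}$, making the defect coefficient at most $-\Lambda c_Dk/2=:-c_*k$. Only then choose $\epsilon_*\leq1/(2\Lambda)$ and $9\epsilon_*^2\leq\Lambda/2$. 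This gives $X^2$ coefficient $\leq1-3/(2k)$ and $Y^2$ coefficient $\leq\Lambda k(1-3/(2k))$, so $\mathcal V^+\leq(1-1/(2k))\mathcal V-c_*k\mathfrak D^2+A_*kn\xi^2$ with $A_*:=\Lambda A_1+9A_2^2$. This ordering is legitimate precisely because Lemma~\ref{lem:four_log_finite_transport} allows $\epsilon_*$ to be prescribed after its absolute constants, with $\eta_0,r_0,c_R$ adjusted afterward.

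The main obstacle is the cross term $\epsilon_*X^2/k^2$ in the $Y$ bound. After multiplication by $\Lambda k$ it competes with the $X^2$ margin $2/k$, so $\epsilon_*$ must be chosen after $\Lambda$; the reverse order would be circular. The symmetric cross term $\epsilon_*Y/\sqrt k$ in the $X$ bound squares, with the factor $3k$, to $O(\epsilon_*^2)Y^2$. That is negligible against the $2\Lambda Y^2$ margin.

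For the norm equivalences, use $\Delta=X+CY$ with $C^2=256k$. Then $\Delta^2\leq2X^2+2C^2Y^2=2X^2+512kY^2\leq2\mathcal V$, since $\Lambda\geq256$. Conversely, $\mathcal V=X^2+(\Lambda/256)C^2Y^2\leq L_\Lambda(X^2+C^2Y^2)\leq L_\Lambda\Delta^2$.
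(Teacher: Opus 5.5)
Your proposal is correct and follows the paper's proof essentially step for step. Like the paper, you square the coordinate bound using the $(1+1/(2k))$ versus $(1+2k)$ Young split and add $\Lambda k$ times the weighted bound. You then fix $\Lambda$ from $A_2,c_D$ before choosing $\epsilon_*$ from $\Lambda$. Your thresholds $18A_2^2/c_D$ and $\epsilon_*\le1/(2\Lambda)$ differ only cosmetically from the paper's $32A_2^2/c_D$ and $\Lambda\epsilon_*\le1/4$. You also spell out the reverse comparison $\mathcal V\le L_\Lambda\Delta^2$, which the paper leaves implicit.
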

\begin{proof}
Square the coordinate inequality in
Eq.~\eqref{eq:four_log_finite_components}, using
$(a+b)^2\leq(1+1/(2k))a^2+(1+2k)b^2$ and then applying
$(a+b+c)^2\leq3(a^2+b^2+c^2)$ to its last three terms.
Since $k\geq8\log4$, this gives
\[
 (X^+)^2\leq(1-2/k)X^2+9kA_2^2\mathfrak D^2
             +9\epsilon_*^2Y^2+9kA_2^2n\xi^2.
\]
Choose $\Lambda\geq\max\{256,32A_2^2/c_D\}$ and then choose
$\epsilon_*$ so that $\Lambda\epsilon_*\leq1/4$ and
$9\epsilon_*^2\leq\Lambda/4$.
Adding $\Lambda k$ times the weighted inequality proves
Eq.~\eqref{eq:four_log_potential}, with positive slack in both contraction
terms and in the loss coefficient.  The norm comparisons follow from
$C^2=256k$ and $(X+CY)^2\leq2X^2+2C^2Y^2$.
\end{proof}

\subsection{Path steps, terminal accuracy, and checkpoint caps}
\label{subsec:four_log_path_ledger}

Define $\rho:=R/4$ and choose an absolute $\theta>0$ such that
$A_*\theta^2\leq1/4$.  Replace the path step by
\begin{equation}\label{eq:four_log_path_step}
 \alpha:=\frac{\theta\rho}{k\sqrt n},
 \qquad t^+:=\operatorname{median}\{(1-\alpha)t,t_{\rm goal},(1+\alpha)t\}.
\end{equation}
Use the amplified version of Algorithm~\ref{alg:gram_center}.
After each complete centering and path update, compute the new projected
residual and reject the attempt unless
\begin{equation}\label{eq:four_log_path_test}
 \mathcal V^+\leq\rho^2-\rho^2/(8k).
\end{equation}
The test also applies to a zero-residual path update, which uses no predictor.
All original feasibility, positivity, score, and local-resource tests remain.

At normalized checkpoints strengthen the requested accuracy to
$\epsilon_{\rm rec}\leq\rho/(8\sqrt{L_\Lambda})$ and certify
$\mathcal V\leq\rho^2/4$ in addition to the existing outward tests.
This is an absolute-factor strengthening of the polynomial normalization
accuracy.  Choose $c_{\rm sw}$ sufficiently small and then $A_{\rm sw}$
sufficiently large that the cost-switch estimate in
Lemma~\ref{lem:outer_reduction} gives
$\overline\delta^{c}_{t_{\rm sw}}\leq\rho/(4\sqrt{L_\Lambda})$.
Neither change introduces dependence on $\kappa$ into normalization.

The two path phases require different terminal-accuracy guarantees, and
checkpoint normalization can interrupt the first terminal loop.  The next
lemma specifies the restart and final-tail conventions that preserve both
guarantees.

\begin{lemma}[Terminal convention]
\label{lem:four_log_terminal}
For the first path phase use a fixed-$t$ amplified terminal loop of length
$J_{\rm sw}=\lceil8k\log(4R/\tau_{\rm sw})\rceil$.
If a checkpoint normalization occurs inside this loop, restart its counter
at zero.  Choose $B_{\rm ep}>4J_{\rm sw}$.
For the second phase, end the checkpointed stochastic stream when
$t=t_{\rm out}$; outside that stream run fixed-weight projected Newton
steps until the computable proxy is at most $\tau_{\rm out}$.
The latter tail uses no score queries or weight updates and is run only once.
On the good event these conventions give the two terminal guarantees of
Lemma~\ref{lem:outer_reduction}.  The first convention adds at most
$J_{\rm sw}$ successful-stream instructions.  The final tail costs
$O(1+\log(2+\log(R/\tau_{\rm out})))=O(\kappa+\ell)$ systems.
\end{lemma}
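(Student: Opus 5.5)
The proof will mirror Lemma~\ref{lem:checkpoint_terminal_completion}, replacing the old per-call contraction $1-1/(4c_k)$ by the potential inequality of Lemma~\ref{lem:four_log_potential} at $\xi=0$. There are three things to establish: the first-phase terminal loop, the once-only tail of the second phase, and the cost accounting.

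\emph{First phase.} At $t=t_{\rm sw}$ with $\xi=0$, Eq.~\eqref{eq:four_log_potential} gives $\mathcal V^+\leq(1-1/(2k))\mathcal V$ for each nonvacuous amplified call, and a zero-residual call leaves $\mathcal V=0$. The entry value is controlled by the path test~\eqref{eq:four_log_path_test} or by the checkpoint certificate, so $\mathcal V\leq\rho^2$ on entry. After $J$ consecutive calls we therefore have
\[
 \Delta^2\leq2\mathcal V\leq2e^{-J/(2k)}\rho^2 .
\]
Hence $J_{\rm sw}=\lceil8k\log(4R/\tau_{\rm sw})\rceil$ calls give
\[
 \Delta\leq\sqrt2\,(\tau_{\rm sw}/(4R))^{2}\rho\leq\tau_{\rm sw},
\]
and since $\delta\leq\Delta$ this is the switch threshold of Lemma~\ref{lem:outer_reduction}.

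A normalization in the middle of the loop does not preserve the small residual; it only guarantees $\mathcal V\leq\rho^2/4$. Restarting the counter restores the count of consecutive contractions. Because $B_{\rm ep}>4J_{\rm sw}$, a loop restarted at an accepted checkpoint finishes before the next checkpoint, so at most one restart succeeds and the successful stream gains at most $J_{\rm sw}$ instructions. The cost switch then proceeds exactly as in Lemma~\ref{lem:outer_reduction}, with the strengthened $c_{\rm sw},A_{\rm sw}$ giving $\mathcal V\leq\rho^2$ after the switch.

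\emph{Second phase tail.} When the stream ends at $t_{\rm out}$, the last path test gives $\Delta\leq\sqrt{2}\rho\leq R$ and tracking error at most $K/4$. We then follow the argument of Lemma~\ref{lem:checkpoint_terminal_completion}. Fixed-weight Newton steps satisfy $\overline\delta_{j+1}\leq8\overline\delta_j^2$ by Lemma~\ref{lem:ls_path_facts} together with the proxy equivalence~\eqref{eq:computable_centrality}. The summed normalized movement is $O(R)$, so target stability in Lemma~\ref{lem:linf_contraction} keeps the tracking band, and interiority is checked before each barrier evaluation.

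Quadratic convergence reaches $\tau_{\rm out}$ after $O(1+\log(2+\log(R/\tau_{\rm out})))$ steps. This is $O(\kappa+\ell)$, because $\log(R/\tau_{\rm out})=O(\log(mU/\epsilon)+\ell)$. The tail uses no scores or weights and sits outside the retry loop, so it is charged once. Lemma~\ref{lem:fixed_weight_terminal_distance} and the duality-gap argument then give the output guarantee.

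The main obstacle is the first phase. We must verify that every hypothesis of Lemma~\ref{lem:four_log_potential} (the tracking band, the amplified coarse event, $\Delta\leq R$) persists throughout the fixed-$t$ loop and across a mid-loop checkpoint. The approach is to observe that the potential is monotone at $\xi=0$, so $\Delta\leq\sqrt{2L_\Lambda}\,\rho\leq R$ is maintained by induction, while tracking is supplied by the stopped residual game exactly as on the path steps.
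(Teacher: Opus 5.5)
Your proposal is correct and follows the paper's proof essentially step for step. It uses contraction of $\mathcal V$ by $1-1/(2k)$ at $\xi=0$ together with $\Delta^2\le2\mathcal V$ for the first terminal loop, a counter reset justified by $B_{\rm ep}>4J_{\rm sw}$, and a once-only fixed-weight tail with $\overline\delta_{j+1}\le8\overline\delta_j^2$, where $O(R)$ total movement closes the tracking induction and Lemma~\ref{lem:fixed_weight_terminal_distance} gives the output. One small slip in your last paragraph: $\sqrt{2L_\Lambda}\,\rho\le R$ need not hold, because $\Lambda$ is only bounded below (for example $\Lambda\ge256A_2^2$) and so $L_\Lambda$ may exceed $8$. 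You only need $\Delta\le\sqrt2\rho=\sqrt2R/4<R$, which follows directly from $\Delta^2\le2\mathcal V\le2\rho^2$, as you already used in your second-phase paragraph.
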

\begin{proof}
With $\xi=0$, Eq.~\eqref{eq:four_log_potential} contracts $\mathcal V$
by $1-1/(2k)$.  The comparison $\Delta^2\leq2\mathcal V$ proves the
first terminal guarantee after $J_{\rm sw}$ consecutive iterations.
Normalization can raise a small residual, so a pre-normalization counter
does not certify these consecutive iterations.  Resetting it restores
that implication.  Since $J_{\rm sw}<B_{\rm ep}$, after such an accepted
checkpoint the complete first terminal loop fits before the next
checkpoint.  Thus at most one successful reset is needed.  Its length is
$O(k\log\ell)$, since $\tau_{\rm sw}$ is independent of $U,\epsilon$.

At the end of the second path loop, $\Delta\leq\sqrt2\rho$ and tracking
holds within $K/4$.  Keep $w,t$ fixed.  The Newton estimate and proxy
comparison give $\Delta_{j+1}\leq8\Delta_j^2$ while the tracking band
holds.  Starting with sufficiently small $R$, the sum of the normalized
steps is $O(R)$; target stability then changes $\log g$ by at most
$O(R)\leq K/4$, after the final radius choice.  This closes the tracking
and interiority induction.  Iterating the quadratic inequality gives the
displayed number of steps.  Announce this deterministic tail cap in advance,
test feasibility and the proxy at each step, and return
\textnormal{\textsc{Fail}} if the cap is reached without success.
On the good event it is not reached.  At termination
Lemma~\ref{lem:fixed_weight_terminal_distance} and the final paragraph of
Lemma~\ref{lem:outer_reduction} give the output guarantee.  Since the tail
is outside the retry loop, its cost is not multiplied by the number of
attempts.
\end{proof}

The squared-potential decrease controls both the number of path instructions
and the score drift accumulated within each attempted epoch.  The next
lemma converts these bounds into deterministic resource caps, including
the costs of rejected attempts.

\begin{lemma}[Four-logarithm prefix and retry ledger]
\label{lem:four_log_ledger}
The preceding path and terminal conventions, executed with the existing
instruction-quota checkpoint wrapper and the strengthened normalization,
have a successful-stream cap
\begin{equation}\label{eq:four_log_call_cap}
 N:=\lceil A_N'\sqrt n k\ell\kappa\rceil.
\end{equation}
For every good prefix of at most $b$ atomic instructions in an attempt,
\begin{equation}\label{eq:four_log_prefix}
 \sum_j\overline d_j^2\leq A\frac{\rho^2}{k}(1+b/k),
 \qquad
 \Omega\leq A E(1+\sqrt b+b/\sqrt k).
\end{equation}
Use the deterministic local system cap
\begin{equation}\label{eq:four_log_attempt_cap}
 S_{\rm att}^{(4)}(b):=A_{
 \rm loc}'(b(k\ell+1)+E^{-2}\ell
 +E^{-1}\ell(1+\sqrt b+b/\sqrt k)
 +\mathbf1_{\{b=B_{\rm ep}\}}P_{\rm norm}).
\end{equation}
Keep the query and round caps from Eq.~\eqref{eq:attempt_resource_caps}
with larger absolute multipliers and take the scalar cap to be
$A_{\rm loc}'(\operatorname{nnz}(A)+m)(S_{\rm att}^{(4)}(b)+b\ell^2)$.
In addition to these four caps, charge scalar parallel depth to the
attempt-local cap
\begin{equation}\label{eq:four_log_scalar_depth_cap}
 H_{\rm att}(b):=A_{\rm loc}'
 (b\ell^2+1+\mathbf1_{\{b=B_{\rm ep}\}}P_{\rm norm}).
\end{equation}
Each scalar block announces its depth before execution; exceeding this cap
rejects the attempt.  The corresponding shared counter is never rolled back.  Define
\begin{equation}\label{eq:four_log_global_caps}
\begin{aligned}
 N_{\rm cap}&:=N, & B_{\rm ep}&:=(4m)^{A_{\rm ep}},\\
 M_{\rm ep}&:=\lceil A_{\rm call}(B_{\rm ep}+q\ell+1)\rceil,
 & L_{\rm conf}&:=\log(40mM_{\rm ep}),\\
 M_{\rm calls}&:=\lceil A_B(N+q\kappa+\log(q/E)+1)\rceil,\\
 B_{\rm sys}&:=\lceil A_B(\sqrt n q\ell^3\kappa
                    +\min\{q\ell^3,n\log\ell\})\rceil,\\
 B_{\rm rnd}&:=\lceil A_B(\sqrt n q\ell\log q\kappa
                    +\kappa+\log\ell+1)\rceil,\\
 B_{\rm ar}&:=\lceil A_B(\operatorname{nnz}(A)+m)B_{\rm sys}\rceil,
 & H_{\rm cap}&:=\lceil A_{\rm dep}N\ell^2\rceil.
\end{aligned}
\end{equation}
Here $A_B,A_{\rm dep}$ are sufficiently large absolute constants chosen
after the geometric, confidence, normalization, and local-cap constants.
These shared caps include the one-time initialization and terminal tail.
Save and restore the auxiliary error budget $e$ and pending ticket $\theta$
with every mathematical checkpoint, and reset $(e,\theta)$ to $(E/8,\bot)$
after normalization.  Resource counters are never rolled back.
Including rejected attempts and the final deterministic tail, the system
count is
\begin{equation}\label{eq:four_log_system_count}
 O(\sqrt n q\ell^3\kappa+\min\{q\ell^3,n\log\ell\}).
\end{equation}
The corresponding scalar work is at most
$O(\operatorname{nnz}(A)+m)$ times this bound.
\end{lemma}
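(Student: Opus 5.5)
The plan has three stages: first the path-length cap, then the prefix bounds by telescoping the squared potential, and finally summation of the deterministic caps over all attempts.

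\textbf{Path-length cap.} On the good event, each atomic path instruction takes a full step $|\xi|=\alpha=\theta\rho/(k\sqrt n)$. Lemma~\ref{lem:four_log_potential} with $A_*\theta^2\leq1/4$ then gives
\[
\mathcal V^+\leq(1-\tfrac1{2k})\mathcal V+\tfrac{\rho^2}{4k}.
\]
Starting from $\mathcal V\leq\rho^2-\rho^2/(8k)$, or from $\rho^2/4$ after a checkpoint or the cost switch, this shows that test Eq.~\eqref{eq:four_log_path_test} is preserved.

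Each step changes $|\log t|$ by $\Omega(\alpha)$. The two phases have total log-ratio $O(\kappa+\ell)$ by the choice of $A_\kappa$ in Lemma~\ref{lem:outer_reduction}. Hence the path loop uses $O(k\sqrt n(\kappa+\ell)/\rho)$ instructions. Since $\rho=\Theta(R)=\Theta(1/\ell)$, this is $O(\sqrt n k\ell\kappa)$, because $\kappa\geq1$ absorbs $\ell$ by the convention $\kappa=A_\kappa(1+\log(emU/\epsilon))$ combined with $N$ carrying the $\ell$ factor. The terminal loop of Lemma~\ref{lem:four_log_terminal} adds $O(k\log\ell)$ instructions. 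Together these give Eq.~\eqref{eq:four_log_call_cap}.

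\textbf{Prefix bounds.} For Eq.~\eqref{eq:four_log_prefix}, telescope Eq.~\eqref{eq:four_log_potential} over a good prefix of $b$ instructions with the same forcing bound:
\[
c_*k\sum_j\mathfrak D_j^2\leq\mathcal V_0+b\rho^2/(4k).
\]
Now insert the certificate bound Eq.~\eqref{eq:four_log_certificate}:
\[
\overline d_j^2\lesssim d_{0,j}^2+X_j^2Y_j^2+\Delta_j^2/(C^2k).
\]
Each term is handled separately.
\begin{itemize}
\item The term $d_{0,j}^2\leq\mathfrak D_j^2$ sums to $O(\rho^2/k^2\cdot(k+b/k))$ by the telescoped bound.
\item The floor term $\Delta_j^2/(C^2k)=O(\mathcal V_j/k^2)$ is handled by telescoping the contraction itself. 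The bound $\sum_j\mathcal V_j\leq 2k(\mathcal V_0+\sum\text{forcing})=O(k\rho^2+b\rho^2)$ gives $O(\rho^2/k+b\rho^2/k^2)$.
\item The term $X^2Y^2\leq R^2\mathcal V/(\Lambda k)$ is dominated by the floor term.
\end{itemize}
This yields the first display.

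For the second display, use Lemma~\ref{lem:gram_lazy}, which gives $\Omega=O(E+\sum_j\overline d_j)$, together with Cauchy--Schwarz:
\[
\sum_j\overline d_j\leq\sqrt{b\sum_j\overline d_j^2}=O\!\left(\rho\sqrt{b/k}+\rho b/k\right).
\]
Since $\rho/\sqrt k=\Theta(CE/\sqrt k)=\Theta(E)$, this gives $O(E(\sqrt b+b/\sqrt k))$.

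\textbf{Summing caps over attempts.} Per attempt, Corollary~\ref{cor:gram_ticket_maintenance} prices the scores at $O(b+E^{-2}\ell(1+\Omega))$. Substituting $\Omega$ gives
\[
E^{-2}\ell+E^{-1}\ell(1+\sqrt b+b/\sqrt k).
\]
Lemma~\ref{lem:four_log_amplification} adds $O(bk\ell)$ non-score systems. This justifies the cap Eq.~\eqref{eq:four_log_attempt_cap}, so a good attempt never hits it, and bad attempts are truncated at it.

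Summing over the $O(N/B_{\rm ep}+1)$ attempts with $\sum_a b_a=O(N)$, exactly as in Lemma~\ref{lem:checkpointed_path}, gives the following terms:
\begin{itemize}
\item $Nk\ell=O(\sqrt n q\ell^3\kappa)$, using $k=8q$;
\item $E^{-1}\ell\,N/\sqrt k=O(\sqrt q\ell^2\cdot\sqrt n\ell\kappa)$, which is no larger;
\item $\sum_a\sqrt{b_a}\leq\sqrt{A_{\rm att}N}$, which is dominated by $N$;
\item per-attempt $E^{-2}\ell$ and normalization costs, absorbed by $B_{\rm ep}\geq E^{-2}\ell$ and $P_{\rm norm}\leq B_{\rm ep}$;
\item the last segment's $E^{-2}\ell=O(q\ell^3)$ and the cold initializer $\min\{q\ell^3,n\log\ell\}$ from Lemma~\ref{lem:floor_homotopy_initialization};
\item the final tail $O(\kappa+\ell)$.
\end{itemize}
The same summation handles depth and scalar work. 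Constant success probability then follows by the Chernoff argument of Lemma~\ref{lem:checkpointed_path}.

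\textbf{Main obstacle.} The hardest step is the floor-term telescoping. The additive $\Delta^2/(C^2k)$ per step must be paid by contraction of $\mathcal V$ rather than by the dissipated losses $\mathfrak D^2$, and the forcing injects $\rho^2/(4k)$ each step. I would verify carefully that summing $\mathcal V_j$ yields only $O(b\rho^2)$, so that the per-step floor is $O(\rho^2/k^2)$ and not $O(\rho^2/k)$; otherwise $\Omega$ would gain a factor and break the $\ell^3$ count.
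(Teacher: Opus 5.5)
\textbf{Overall.} Your proposal follows the paper's proof essentially step by step. The path test is preserved because $A_*kn\alpha^2\le\rho^2/(4k)$. The prefix bound comes from telescoping Eq.~\eqref{eq:four_log_potential} while keeping both the contraction term $\frac1{2k}\sum_j\mathcal V_j$ and the loss term. The certificate then gives $\overline d_j^2\lesssim\mathfrak D_j^2+\mathcal V_j/k^2$, Cauchy--Schwarz plus Lemma~\ref{lem:gram_lazy} give $\Omega$, and the caps are summed over attempts as in Lemma~\ref{lem:checkpointed_path}. The step you single out as the main obstacle is actually immediate. On a good prefix the path test Eq.~\eqref{eq:four_log_path_test} already gives $\mathcal V_j\le\rho^2$, so $\sum_j\mathcal V_j/k^2\le b\rho^2/k^2$ without any further telescoping.

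\textbf{The step that fails as written.} The $\sqrt b$ part of Eq.~\eqref{eq:four_log_attempt_cap} is multiplied by $E^{-1}\ell=\Theta(\sqrt k\,\ell^2)$, while the target is $\Theta(N\ell^2)=\Theta(\sqrt n\,q\ell^3\kappa)$. You therefore need $\sum_a\sqrt{b_a}=O(N/\sqrt k)$. Concluding only that $\sqrt{A_{\rm att}N}$ is ``dominated by $N$'' loses a factor $\sqrt k=\Theta(\sqrt q)$ and gives just $O(\sqrt n\,q^{3/2}\ell^3\kappa)$. Your own inequality repairs this: $\sqrt{A_{\rm att}N}=O(N/\sqrt{B_{\rm ep}}+\sqrt N)=O(N/\sqrt k)$ once $N,B_{\rm ep}\ge k$, which is exactly the condition the paper imposes.

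\textbf{Smaller points.}
\begin{itemize}
\item Absorbing $\kappa+\ell$ into $\kappa$ requires $\ell=O(\kappa)$. This holds because $U\ge4$ and $\epsilon<1$ give $\log(emU/\epsilon)\ge\ell$, not because $N$ carries a factor $\ell$.
\item $\sum_j d_{0,j}^2$ is $O(\frac{\rho^2}{k}(1+b/k))$, not $O(\frac{\rho^2}{k^2}(k+b/k))$.
\item The term $E^{-1}\ell N/\sqrt k$ equals $\Theta(N\ell^2)$. It is of the same order as the target, not smaller by $\sqrt q$.
\item Besides $\mathcal V_0$, the telescoped sum must absorb the one-time jump of at most $\rho^2$ at the cost switch.
\item You do not verify that a good instruction stays within the scalar-depth cap Eq.~\eqref{eq:four_log_scalar_depth_cap}. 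This needs the $O(\ell^2)$ per-instruction depth of the two bisection optimizers, which run $O(\ell)$ rounds of depth $O(\log m)$ each.
\end{itemize}
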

\begin{proof}
If $\mathcal V\leq\rho^2$, Eq.~\eqref{eq:four_log_potential} and
$|\xi|\leq\alpha$ give
\[
 \mathcal V^+\leq(1-1/(2k))\rho^2+A_*kn\alpha^2
 \leq(1-1/(4k))\rho^2,
\]
where the first step discards the nonnegative dissipated loss and the
second step follows from Eq.~\eqref{eq:four_log_path_step} and
$A_*\theta^2\leq1/4$.  Thus a good step passes
Eq.~\eqref{eq:four_log_path_test} with slack.  If $r=0$, its new residual
is exactly $-\xi P\zeta$, and the same bound follows after enlarging
$A_*$.  The initialization has $r=0$.  The strengthened switch and
normalization restore the potential invariant, and $\Delta\leq\sqrt2\rho<R$
licenses every centering call.  A full signed path update changes
$|\log t|$ by $\Omega(\alpha)$.  The original outer parameters have total
logarithmic path length $O(\kappa)$, and
Lemma~\ref{lem:four_log_terminal} adds only $O(k\log\ell)$ instructions.
This proves Eq.~\eqref{eq:four_log_call_cap} for a sufficiently large
$A_N'$.

Sum Eq.~\eqref{eq:four_log_potential} over a prefix and keep both negative
terms.  The cost switch can occur only once and adds at most $\rho^2$
to the potential; terminal iterations have $\xi=0$.  Consequently
\[
 \frac1{2k}\sum_j\mathcal V_j+c_*k\sum_j\mathfrak D_j^2
 \leq2\rho^2+A_*kn\sum_j\xi_j^2
 \leq2\rho^2+A_*\theta^2 b\rho^2/k.
\]
where the first step follows from telescoping and the nonnegativity of the final
potential, and the second step follows from $|\xi_j|\leq\alpha$.
Eq.~\eqref{eq:four_log_certificate}, $R\leq r_0/k$, and
$\Delta^2\leq2\mathcal V$ give
$\overline d_j^2\leq A(\mathfrak D_j^2+\mathcal V_j/k^2)$.
This proves the squared-defect bound in Eq.~\eqref{eq:four_log_prefix}.
Next,
\[
 \sum_j\overline d_j
 \leq\sqrt{b\sum_j\overline d_j^2}
 \leq A\frac\rho{\sqrt k}(\sqrt b+b/\sqrt k),
\]
where the first step follows from Cauchy--Schwarz and the second step
follows from the squared-defect bound and
$\sqrt{b(1+b/k)}\leq\sqrt b+b/\sqrt k$.
Since $\rho/\sqrt k=\Theta(E)$,
Lemma~\ref{lem:gram_lazy} proves the drift bound.
The preparation queries of a partially completed last instruction consume
only the preceding ticket and at most one repair.  Their cost is already
charged to the preceding defect and accumulated error increments.
Alternatively, any one extra good principal ticket satisfies
$\overline d=O(R/C)=O(E)$ by the certificate upper bound, and any one
extra repair costs $O(E)$.  These are absorbed by the initial $E$ term.
No bound on a bad prefix is asserted.

On an attempted epoch, use the exact-continuation score coupling and stop
before a bad enlarged coarse event.  Lemma~\ref{lem:four_log_amplification}
supplies the bias-only residual-game hypotheses on this stopped process.
The coarse union failure, score failure, and tracking failure are at most
$1/100$, $1/40$, and $1/20$, respectively, with the same polynomial
confidence horizon.  Their intersection has probability at least $4/5$
conditionally on every certified starting state.  On this event every
prefix fits Eq.~\eqref{eq:four_log_attempt_cap}; all endpoint tests pass,
and the strengthened exact normalizer returns a valid next state.
At the first-terminal checkpoint reset the terminal counter as specified
in Lemma~\ref{lem:four_log_terminal}.  No other descriptor is reset.
The acceptance and retry proof of Lemma~\ref{lem:checkpointed_path}
therefore applies with the new successful-stream cap $N$.
The final uncertified segment ends before the deterministic tail, so its
good event licenses that tail and its output guarantee.

On other outcomes the preannounced local caps reject before larger work;
no defect estimate is asserted for a bad prefix.  With
$A\leq4(\lceil N/B_{\rm ep}\rceil+1)$ attempts and announced quotas
$b_1,\ldots,b_A$, the same deterministic bounds give
$\sum_a b_a=O(N)$ and $A=O(N/B_{\rm ep}+1)$.
Choose $N,B_{\rm ep}\geq k$.  Then Cauchy--Schwarz gives
$\sum_a\sqrt{b_a}=O(N/\sqrt k)$.
Summing every cap, including rejected attempts, gives
\[
 O(Nk\ell+NE^{-1}\ell/\sqrt k
 +(N/B_{\rm ep}+1)E^{-2}\ell
 +(N/B_{\rm ep})P_{\rm norm}).
\]
The original polynomial epoch choice absorbs the normalization and repeated
base costs.  Substitute $N=O(\sqrt n k\ell\kappa)$,
$E^{-1}=\Theta(\sqrt k\ell)$, and $k=8q\leq8\ell$ to obtain
$O(\sqrt n q\ell^3\kappa+q\ell^3)$.
The isolated base and deterministic tail are absorbed by the leading term.
Adding the unchanged cold initialization proves
Eq.~\eqref{eq:four_log_system_count}.
The scalar term $O((\operatorname{nnz}(A)+m)N\ell^2)$ has the same
leading order.  No confidence parameter depends on $\kappa$.
The query-cap sum is $O(N)$, including checkpoint queries, since
$(N/B_{\rm ep})P_{\rm norm}=O(N)$.  Adding initialization gives
$O(N+q\kappa+\log(q/E)+1)$ queries.  The round-cap sum, initialization,
and once-only tail give
$O(\sqrt n q\ell\log q\kappa+\kappa+\log\ell+1)$ rounds.
Thus sufficiently large fixed multipliers in
Eq.~\eqref{eq:four_log_global_caps} cover every good trajectory.
On every other trajectory the counters reject the next operation before
any shared cap is exceeded.

We also verify the scalar-depth cap.  On a good instruction, the two
scalar optimizers have $J=O(\ell)$ iterations, each of depth $O(\ell)$,
by Lemmas~\ref{lem:mixed_ball_linear_oracle}
and~\ref{lem:computable_defect_projection}.  Sketch rows and amplified
copies run in parallel.  Every batch inside an attempt has polynomial
size in $m$, so its reductions have depth $O(\ell)$; the $O(\log q)$
successive chain rounds and the constant number of auxiliary repairs
therefore fit in $O(\ell^2)$ scalar depth per instruction.  Normalization
has at most $P_{\rm norm}$ scalar operations and hence at most that depth.
Thus a good attempt never reaches Eq.~\eqref{eq:four_log_scalar_depth_cap}
after increasing $A_{\rm loc}'$.  On any other outcome the cap rejects
before a larger depth is incurred, including when a scalar optimizer
requests more iterations outside the tracking band.
Summing the local caps uses $\sum_a b_a=O(N)$ and
$(N/B_{\rm ep})P_{\rm norm}=O(N)$, giving $O(N\ell^2)$ scalar depth
for the attempted stream.  Cold initialization has
$O(\ell\log\ell)$ scalar depth, and the deterministic tail has
$O((\kappa+\ell)\ell)$ scalar depth.  Since
$\kappa\geq\ell$ and $N\geq\sqrt n q\ell\kappa$ after fixing its
absolute multiplier, both are absorbed by the shared scalar-depth cap.
\end{proof}

\begin{theorem}[Linear programming]
\label{thm:log_four_formal}
Let the linear program in Eq.~\eqref{eq:intro_lp}, its interior point $x_0$,
and a finite certified magnitude bound $U$ supplied to the algorithm satisfy
the assumptions in Section~\ref{sec:intro}, and let $\epsilon\in(0,1)$.
Define $\ell:=\log(4m)$, $q:=\log(4m/n)$,
$\kappa:=A_\kappa(1+\log(emU/\epsilon))$, and $L_*:=\ell$.
Take the geometric parameters from Lemma~\ref{lem:sharper_parameters},
$L_{\rm ch}:=A_L\ell$, $E:=K/(A_EL_{\rm ch})$, and $R:=c_RCE$.
Run Algorithm~\ref{alg:lp_solve_fresh} with the amplified lazy predictor of
Lemma~\ref{lem:four_log_amplification}, the path step and tests in
Eq.~\eqref{eq:four_log_path_step} and Eq.~\eqref{eq:four_log_path_test},
and the terminal and checkpoint conventions and resource caps of
Lemmas~\ref{lem:four_log_terminal} and~\ref{lem:four_log_ledger}.
The algorithm halts on every outcome within
\[
 O(\sqrt n q\ell^3\kappa+\min\{q\ell^3,n\log\ell\})
\]
permitted systems and, with constant probability, returns a feasible
$\epsilon$-optimal solution.  Its work is
\[
 O(\sqrt n\log^4(4m)\log(emU/\epsilon)\mathcal T_w).
\]
Its linear-system depth is
\[
 O((\sqrt n q\ell\log q\kappa+\kappa+\log\ell)\mathcal T_d).
\]
Including scalar operations, its parallel depth is
\[
 O(\sqrt n\log^{3.5}(4m)\log(emU/\epsilon)\mathcal T_d).
\]
\end{theorem}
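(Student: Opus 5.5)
The theorem is an assembly statement, so I would prove it by composing the earlier lemmas rather than by new analysis. It has three parts: termination, correctness with constant probability, and resource counts.

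\emph{Termination.} Every operation in Algorithm~\ref{alg:lp_solve_fresh} is preceded by an announced deterministic charge. The charge is compared with the shared caps of Eq.~\eqref{eq:four_log_global_caps}, and the routine returns \textsc{Fail} before any cap is exceeded. The checkpoint wrapper is limited to $4J_{\rm ep}$ attempts, and the fixed-weight tail has the preannounced cap $J_{\rm tail}$. Hence every outcome halts. The permitted-system total is at most $B_{\rm sys}=O(\sqrt n q\ell^3\kappa+\min\{q\ell^3,n\log\ell\})$, which is Eq.~\eqref{eq:four_log_system_count} of Lemma~\ref{lem:four_log_ledger}.

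\emph{Correctness.} The cold \textsc{FixedPoint} call succeeds with probability $1-\zeta/3$ by Lemma~\ref{lem:floor_homotopy_initialization}. On that event the raw-relative handoff of Lemma~\ref{lem:candidate_raw_initialization} also holds. Because the artificial cost $d$ makes $x_0$ exactly central, the starting potential is zero. The potential invariant $\mathcal V\le\rho^2$ is then propagated along good steps by Lemma~\ref{lem:four_log_potential} together with the path test Eq.~\eqref{eq:four_log_path_test}. It is restored at the switch and at normalized checkpoints by the strengthened constants.

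The retry argument of Lemma~\ref{lem:checkpointed_path}, rerun with cap $N$ as in Lemma~\ref{lem:four_log_ledger}, gives acceptance of each attempt with probability $\ge4/5$. It therefore completes the stream with constant probability. Since good trajectories never hit the shared caps, those caps do not reduce this probability. Lemma~\ref{lem:four_log_terminal} then supplies the two terminal accuracies, and Lemma~\ref{lem:outer_reduction} converts the final state into a feasible $x$ with $c^\top x\le\mathrm{OPT}+\epsilon$.

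\emph{Work and depth.} This is arithmetic on the parameters. Since $q\le\ell$ and $\kappa=\Theta(\log(emU/\epsilon))$, we have $\sqrt n q\ell^3\kappa\le\sqrt n\log^4(4m)\log(emU/\epsilon)$. The additive $\min\{q\ell^3,n\log\ell\}$ term is dominated because $\kappa\ge1$ and $n\ge1$. The scalar work is $O(\operatorname{nnz}(A)+m)$ per system, and $\mathcal T_w\ge\operatorname{nnz}(A)+m$ by assumption. Hence total work is $O(\text{systems}\cdot\mathcal T_w)$.

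Linear-system depth is $B_{\rm rnd}\cdot\mathcal T_d$. Scalar depth is at most $H_{\rm cap}=O(N\ell^2)=O(\sqrt n q\ell^3\kappa)$. The step I expect to be the main obstacle is reconciling the $\log^{3.5}$ parallel-depth claim with these two bounds. I would use $\mathcal T_d\ge1$ together with $q\le\ell$ and $\log q\le\ell$. The system-round term $\sqrt n q\ell\log q\,\kappa$ is then at most $\sqrt n\ell^{3}\kappa$.

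For the scalar-depth term, $N\ell^2=O(\sqrt n q\ell^3\kappa)$ must be bounded by $\sqrt n\ell^{3.5}\kappa$. I would use $q\le\ell$ only under a square root, writing $q\ell^3=\sqrt q\cdot\sqrt q\,\ell^3$. If that does not close, I would refine the per-instruction $O(\ell^2)$ scalar-depth charge to $O(\ell^{1.5}\sqrt{q})$ or $O(\ell\sqrt{\ell})$. The basis would be that the optimizers' $J=O(\ell)$ iterations each have depth $O(\log m)$ but only $O(\sqrt q)$ sequential chain rounds dominate. Concretely, I would re-derive the depth bound in the last paragraph of the proof of Lemma~\ref{lem:four_log_ledger} to extract an exponent of $3.5$ rather than $4$. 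I would then verify that it matches the combined bound $O(\sqrt n\log^{3.5}(4m)\log(emU/\epsilon)\mathcal T_d)$.
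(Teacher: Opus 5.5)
The termination, correctness, and work parts of your outline match the paper's assembly. One small omission: $\mathcal V=0$ at the start is not the only initial condition. You must also check the residual game's softmax hypothesis. The paper does this from $\|\log\widehat g-\log g(x_0)\|_\infty\le E/4$ and $\mu=1/(100E)$, which give $\Phi\le 2me^{1/400}\le 3m\le M_{\rm ch}$.

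The genuine gap is the final parallel-depth step. The scalar depth $H_{\rm cap}=O(N\ell^2)=O(\sqrt n\,q\ell^3\kappa)$ is \emph{not} multiplied by $\mathcal T_d$. With only $\mathcal T_d\ge1$ and $q\le\ell$ you get $\sqrt n\,\ell^4\kappa$.

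Rewriting $q\ell^3=\sqrt q\cdot\sqrt q\,\ell^3$ cannot help. When $n=O(1)$ you have $q=\Theta(\ell)$, so you would need $\mathcal T_d=\Omega(\sqrt\ell)$.

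Your fallback of cutting the per-instruction scalar depth to $O(\ell^{1.5})$ is also unsupported. The $\ell^2$ does not come from stochastic-chain rounds; there are only $O(\log q)$ of those. It comes from the two bisection optimizers of Lemmas~\ref{lem:mixed_ball_linear_oracle} and~\ref{lem:computable_defect_projection}. Each runs $J=O(\ell)$ sequential iterations, and each iteration contains $m$-term reductions of depth $\Omega(\log m)$ in a bounded-fan-in model. Wider parallel search to shorten this would break the per-instruction work bound.

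The missing idea is a lower bound on $\mathcal T_d$ itself, namely $\mathcal T_d=\Omega(q)$. The argument runs as follows.
\begin{itemize}
\item Fix $\mathbf D$ and set $M=A^\top\mathbf DA$. Choose $\mathbf q$ off the finitely many hyperplanes $a_i^\top M^{-1}\mathbf q=0$; this is possible because every row $a_i$ is nonzero.
\item Then $\partial(M^{-1}\mathbf q)/\partial\mathbf D_{ii}=-M^{-1}a_ia_i^\top M^{-1}\mathbf q\neq0$ for every $i$. So each of the $m$ diagonal inputs influences one of the $n$ outputs, and some output depends on at least $m/n$ inputs.
\item In a depth-$d$ bounded-fan-in computation, each output depends on at most $a^d$ inputs. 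Preprocessing of $A$ is independent of $\mathbf D$, so it cannot remove this dependence.
\item Hence $a^{\mathcal T_d}\ge m/n$. Together with $\mathcal T_d\ge1$ this gives $\mathcal T_d=\Omega(1+\log(m/n))=\Omega(q)$.
\end{itemize}

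It follows that $\sqrt n\,q\ell^3\kappa=O(\sqrt n\,\ell^3\kappa\,\mathcal T_d)$. This is already stronger than the claimed $\ell^{3.5}$ bound. The extra half-exponent in the theorem is slack, not something to be extracted by balancing $\sqrt q$ against $\sqrt\ell$.
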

\begin{proof}
Fix the inherited geometric and certificate constants, including $B$,
and the uniform constants $A_1,A_2,c_D$ in
Lemma~\ref{lem:four_log_finite_transport} first.  Choose $\Lambda$, then
$\epsilon_*$, then $\eta_0$ small enough for
Eq.~\eqref{eq:four_log_smallness}, Eq.~\eqref{eq:constant_eta_condition},
and Eq.~\eqref{eq:gram_eta}.  Choose the predictor and projection
accuracies relative to this $\eta_0$, including the amplification
requirements.  Next choose $r_0,c_R$ sufficiently small
and the accuracy multipliers sufficiently large.  Complete the existing
joint normalization, epoch, confidence, and switch choice of
Lemma~\ref{lem:candidate_hybrid_constant_hierarchy}, including the
additional absolute normalization and switch constraints above before
its choice of $H$.  Then choose the resource caps.  No radius or switch
constant is changed after this joint choice.

For the floor $v=(n/m)\mathbf1$, the ideal initial weight satisfies
$n/m\leq g_i(x_0)\leq2$.  Hence the cold start $\mathbf1$ has logarithmic
error at most $q$.  Lemma~\ref{lem:floor_homotopy_initialization}, with
$D=q$ and target accuracy $E/4$, gives
\[
 \|\log\widehat g-\log g(x_0)\|_\infty\leq E/4.
\]
We can show
\[
 \Phi(\log\widehat g-\log g(x_0))
 \leq2m e^{1/400}\leq3m\leq M_{\rm ch},
\]
where the first step follows from the preceding accuracy bound and the
definition of $\Phi$, the second step follows from $2e^{1/400}<3$, and
the third step follows from the definition of $M_{\rm ch}$.
Lemma~\ref{lem:candidate_raw_initialization} makes the final initializer
response raw-relative and stores the companion of its actual query.
This event is measurable before either path phase.
Setting $w=\widehat g$ and $d=-w\odot\phi'(x_0)$ gives zero projected
residual for the artificial cost.  Initialize the lazy state by
$(e,\theta)=(E/4,\bot)$, carry it through the two path phases, and reset
it to $(E/8,\bot)$ after normalization.
Lemmas~\ref{lem:four_log_potential} and~\ref{lem:four_log_ledger} give
constant success for the checkpointed stream.  The first terminal loop
and cost switch supply the second path's hypotheses.  The final
fixed-weight tail is deterministic and, on this good event, the outer
reduction gives a feasible $\epsilon$-optimal output.

Use the shared caps in Eq.~\eqref{eq:four_log_global_caps} and the local
caps in Lemma~\ref{lem:four_log_ledger}.  Charge scalar depth to
Eq.~\eqref{eq:four_log_scalar_depth_cap} and its shared counter, and
charge the one-time deterministic tail separately before each permitted solve.
All counters are monotone through rejected attempts, and every operation
announces its charge before work or randomness.  Thus every outcome
terminates within the displayed bounds, while no cap changes a good
trajectory.  The copies and sketch rows are parallel, leaving
$O(\log q)$ system rounds per path instruction.  Adding initialization
and the once-only tail gives the linear-system depth bound.

By Lemma~\ref{lem:four_log_ledger}, total parallel depth is
\begin{equation}\label{eq:four_log_total_depth}
 O((\sqrt n q\ell\log q\kappa+\kappa+\log\ell)\mathcal T_d
      +\sqrt n q\ell^3\kappa).
\end{equation}
To absorb the scalar term, we use the bounded-fan-in model stated in
Section~\ref{sec:intro}.  Write the $i$-th row of $A$ as $a_i^\top$ and
define $M:=A^\top\mathbf DA$.  For fixed positive $\mathbf D$, choose
a right-hand side $\mathbf q$ outside the finitely many hyperplanes
$a_i^\top M^{-1}\mathbf q=0$.  This is possible because every row is
nonzero.  Then
\[
 \frac{\partial(M^{-1}\mathbf q)}{\partial\mathbf D_{ii}}
 =-M^{-1}a_i a_i^\top M^{-1}\mathbf q\neq0.
\]
Thus every one of the $m$ diagonal inputs affects an output coordinate,
and some one of the $n$ outputs depends on at least $m/n$ inputs.
A depth-$d$ bounded-fan-in computation depends on at most $a^d$ inputs
for an absolute $a>1$.  Since preprocessing is independent of the
diagonal inputs and $\mathcal T_d\geq1$, this proves
$\mathcal T_d=\Omega(1+\log(m/n))=\Omega(q)$.

We can show
\[
 \sqrt n q\ell^3\kappa
 \leq O(\sqrt n\ell^3\kappa\mathcal T_d)
 \leq O(\sqrt n\ell^{3.5}\kappa\mathcal T_d),
\]
where the first step follows from $q=O(\mathcal T_d)$, and the second
step follows from $\ell\geq1$.  The system-depth term in
Eq.~\eqref{eq:four_log_total_depth} obeys the same bound because
$q\leq\ell$, $\log q\leq\ell$, and $\kappa\geq\ell$.
This proves the claimed total parallel depth.

Finally $q\leq\ell$ gives
$q\ell^3\leq\ell^4$, and
the work convention $\mathcal T_w\geq\operatorname{nnz}(A)+m$ in
Section~\ref{sec:intro} absorbs the scalar work.
\end{proof}

\section*{Acknowledgments and AI Disclosure}
The author has been working on this project since January 2025. The first version of this draft was completed on September 2, 2026 (\url{10.5281/zenodo.22242576}). The AI tools used in preparing the first version were mainly ChatGPT Pro 5.6 and Codex Sol 5.6, and the author used them only to improve the grammar and presentation of the paper. The current version obtains a $\log^4$ factor with the help of ChatGPT Pro 6 and Codex 6. All of the proofs have been completely rewritten by the author. The author takes full responsibility for the correctness of the proofs in this paper.

\bibliographystyle{alpha}
\bibliography{ref}

\end{document}